\providecommand{\U}[1]{\protect\rule{.1in}{.1in}}
\newtheorem{theorem}{Theorem}[section]
\newtheorem{corollary}[theorem]{Corollary}
\newtheorem{definition}[theorem]{Definition}
\newtheorem{lemma}[theorem]{Lemma}
\newtheorem{proposition}[theorem]{Proposition}
\newtheorem{remark}[theorem]{Remark}
\newenvironment{proof}[1][Proof]{\textbf{#1.} }{\hfill\rule{0.5em}{0.5em}}
{\catcode`\@=11\global\let\AddToReset=\@addtoreset
\AddToReset{equation}{section}

\AddToReset{theorem}{section}

\begin{document}

\title{Stability properties for quasilinear parabolic equations with measure data and applications}
\author{Marie-Fran\c{c}oise BIDAUT-VERON\thanks{Laboratoire de Math\'{e}matiques et
Physique Th\'{e}orique, CNRS UMR 7350, Facult\'{e} des Sciences, 37200 Tours
France. E-mail: veronmf@univ-tours.fr}
\and Hung NGUYEN\ QUOC\thanks{Laboratoire de Math\'{e}matiques et Physique
Th\'{e}orique, CNRS UMR 7350, Facult\'{e} des Sciences, 37200 Tours France.
E-mail: Hung.Nguyen-Quoc@lmpt.univ-tours.fr}}
\date{.}
\maketitle

\begin{abstract}
Let $\Omega$ be a bounded domain of $\mathbb{R}^{N}$, and $Q=\Omega
\times(0,T).$ We first study problems of the model type
\[
\left\{
\begin{array}
[c]{l}%
{u_{t}}-{\Delta_{p}}u=\mu\qquad\text{in }Q,\\
{u}=0\qquad\text{on }\partial\Omega\times(0,T),\\
u(0)=u_{0}\qquad\text{in }\Omega,
\end{array}
\right.
\]
where $p>1$, $\mu\in\mathcal{M}_{b}(Q)$ and $u_{0}\in L^{1}(\Omega).$ Our main
result is a \textit{stability theorem }extending the results of Dal Maso,
Murat, Orsina, Prignet, for the elliptic case, valid for quasilinear operators
$u\longmapsto\mathcal{A}(u)=$div$(A(x,t,\nabla u))$\textit{. }

As an application, we consider perturbed problems\textit{ of type}

\[
\left\{
\begin{array}
[c]{l}%
{u_{t}}-{\Delta_{p}}u+\mathcal{G}(u)=\mu\qquad\text{in }Q,\\
{u}=0\qquad\text{on }\partial\Omega\times(0,T),\\
u(0)=u_{0}\qquad\text{in }\Omega,
\end{array}
\right.
\]
where $\mathcal{G}(u)$ may be an absorption or a source term$.$ In the model
case $\mathcal{G}(u)=\pm\left\vert u\right\vert ^{q-1}u$ $(q>p-1),$ or
$\mathcal{G}$ has an exponential type. We give existence results when $q$ is
subcritical, or when the measure $\mu$ is good in time and satisfies suitable
capacity conditions.

\end{abstract}

\pagebreak\medskip

\section{Introduction}

Let $\Omega$ be a bounded domain of $\mathbb{R}^{N}$, and $Q=\Omega
\times(0,T),$ $T>0.$ We denote by $\mathcal{M}_{b}(\Omega)$ and $\mathcal{M}%
_{b}(Q)$ the sets of bounded Radon measures on $\Omega$ and $Q$ respectively.
We are concerned with the problem
\begin{equation}
\left\{
\begin{array}
[c]{l}%
{u_{t}}-\text{div}(A(x,t,\nabla u))=\mu\qquad\text{in }Q,\\
{u}=0\qquad\qquad\qquad\qquad\text{on }\partial\Omega\times(0,T),\\
u(0)=u_{0}\qquad\qquad\qquad\text{in }\Omega,
\end{array}
\right.  \label{pmu}%
\end{equation}
where $\mu\in\mathcal{M}_{b}(Q)$, $u_{0}\in L^{1}(\Omega)$ and $A$ is a
Caratheodory function on $Q\times\mathbb{R}^{N}$, such that for $a.e.$
$(x,t)\in Q,$ and any $\xi,\zeta\in\mathbb{R}^{N},$
\begin{equation}
A(x,t,\xi).\xi\geqq c_{1}\left\vert \xi\right\vert ^{p},\qquad\left\vert
A(x,t,\xi)\right\vert \leqq a(x,t)+c_{2}\left\vert \xi\right\vert
^{p-1},\qquad c_{1},c_{2}>0,a\in L^{p^{\prime}}(Q),\label{condi1}%
\end{equation}%
\begin{equation}
(A(x,t,\xi)-A(x,t,\zeta)).\left(  \xi-\zeta\right)  >0\qquad\text{ if }\xi
\neq\zeta.\label{condi2}%
\end{equation}
This includes the model problem
\begin{equation}
\left\{
\begin{array}
[c]{l}%
{u_{t}}-\Delta_{p}u=\mu\qquad\text{in }Q,\\
{u}=0\qquad\qquad\qquad\text{on }\partial\Omega\times(0,T),\\
u(0)=u_{0}\qquad\qquad\text{in }\Omega,
\end{array}
\right.  \label{pmu1}%
\end{equation}
where $\Delta_{p}$ is the $p$-Laplacian defined by $\Delta_{p}u=\text{div}%
(|\nabla u|^{p-2}\nabla u)$ with $p>1$.\newline As an application, we consider
problems with a nonlinear term of order $0$:
\begin{equation}
\left\{
\begin{array}
[c]{l}%
{u_{t}}-\text{div}(A(x,\nabla u))+\mathcal{G}(u)=\mu\qquad\text{in }Q,\\
{u}=0\qquad\qquad\qquad\qquad\qquad\text{on }\partial\Omega\times(0,T),\\
u(0)=u_{0}\qquad\qquad\qquad\qquad\text{in }\Omega,
\end{array}
\right.  \label{pga}%
\end{equation}
where $A$ is a Caratheodory function on $\Omega\times\mathbb{R}^{N}$, such
that, for $a.e.$ $x\in\Omega,$ and any $\xi,\zeta\in\mathbb{R}^{N},$
\begin{equation}
A(x,\xi).\xi\geqq c_{1}\left\vert \xi\right\vert ^{p},\qquad\left\vert
A(x,\xi)\right\vert \leqq c_{2}\left\vert \xi\right\vert ^{p-1},\qquad
c_{3},c_{4}>0,\label{condi3}%
\end{equation}%
\begin{equation}
(A(x,\xi)-A(x,\zeta)).\left(  \xi-\zeta\right)  >0\text{ if }\xi\neq
\zeta,\label{condi4}%
\end{equation}
and $\mathcal{G}(u)$ may be an absorption or a source term, and possibly
depends on $(x,t)\in Q.$ The model problem is the case where $\mathcal{G}$ has
a power-type $\mathcal{G}(u)=\pm\left\vert u\right\vert ^{q-1}u$ $(q>p-1),$ or
an exponential type.\medskip\newline First make a brief survey of the elliptic
associated problem:
\[
\left\{
\begin{array}
[c]{c}%
-\text{div}(A(x,\nabla u))=\mu\qquad\text{in }\Omega,\\
u=0\qquad\qquad\qquad\text{on }\partial\Omega,
\end{array}
\right.
\]
with $\mu\in\mathcal{M}_{b}(\Omega)$ and assumptions (\ref{condi3}),
(\ref{condi4}). When $p=2,$ $A(x,\nabla u)=\nabla u$ existence and uniqueness
are proved for general elliptic operators by duality methods in \cite{St}. For
$p>2-1/N,$ the existence of solutions in the sense of distributions is
obtained in \cite{BoGa89} and \cite{BoGa92}. The condition on $p$ ensures that
the gradient $\nabla u$ is well defined in $(L^{1}\left(  \Omega\right)
)^{N}.$ For general $p>1,$ new classes of solutions are introduced, first when
$\mu\in L^{1}(\Omega),$ such as \textit{entropy solutions}, and
\textit{renormalized solutions}, see \cite{BBGGPV}, and also \cite{Ra1}, and
existence and uniqueness is obtained. For any $\mu\in\mathcal{M}_{b}(\Omega)$
the main work is done in \cite[Theorems 3.1, 3.2]{DMOP}, where not only
existence is proved, but also a stability result, fundamental for
applications. Uniqueness is still an open problem.$\medskip$

Next we make a brief survey about problem (\ref{pmu}).\medskip

The first studiess concern the case $\mu\in L^{p^{\prime}}(Q)$ and $u_{0}\in
L^{2}(\Omega)$, where existence and uniqueness is obtained by variational
methods, see \cite{Li}. In the general case $\mu\in\mathcal{M}_{b}(Q)$ and
$u_{0}\in\mathcal{M}_{b}(\Omega),$ the pionner results come from
\cite{BoGa89}, proving the existence of solutions in the sense of
distributions for%
\begin{equation}
p>p_{1}=2-\frac{1}{N+1}, \label{rangep}%
\end{equation}
see also \cite{Ra}, \cite{Ra0}, and \cite{BDGO97}. The approximated solutions
of (\ref{pmu}) lie in Marcinkiewicz spaces $u\in L^{p_{c},\infty}\left(
Q\right)  $ and $\left\vert \nabla u\right\vert \in L^{m_{c},\infty}\left(
Q\right)  ,$ where
\begin{equation}
p_{c}=p-1+\frac{p}{N},\qquad m_{c}=p-\frac{N}{N+1}. \label{crit}%
\end{equation}
This condition (\ref{rangep}) ensures that $u$ and $\left\vert \nabla
u\right\vert $ belong to $L^{1}\left(  Q\right)  $, since $m_{c}>1$ means
$p>p_{1}$ and $p_{c}>1$ means $p>2N/(N+1).$ Uniqueness follows in the case
$p=2$, $A(x,t,\nabla u)=\nabla u,$ by duality methods, see \cite{Pe07}.

For $\mu\in L^{1}(Q)$, uniqueness is obtained in new classes of solutions:
\textit{entropy solutions}, and \textit{renormalized solutions}, see
\cite{BlMu}, \cite{Pr97}, see also \cite{An} for a semi-group approach.

Then a class of \textit{regular }measures is studied in \cite{DrPoPr}, where a
notion of parabolic capacity $c_{p}^{Q}$ is introduced, defined by
\[
c_{p}^{Q}(E)=\inf(\inf_{E\subset U\text{ open}\subset Q}\{||u||_{W}:u\in
W,u\geqq\chi_{U}\quad a.e.\text{ in }Q\}),
\]
for any Borel set $E\subset Q,$ where
\begin{align*}
X  &  ={{L^{p}}(0,T;W_{0}^{1,p}(\Omega)\cap{L^{2}}(\Omega)),}\\
W  &  =\left\{  {z:z\in}X{,\quad{z_{t}}\in X}^{\prime}\right\}  ,\text{
embedded with the norm }||u||_{W}=||u||_{{X}}+||u_{t}||_{{X}^{\prime}}.
\end{align*}
Let $\mathcal{M}_{0}(Q)$ be the set of Radon measures $\mu$ on $Q$ that do not
charge the sets of zero $c_{p}^{Q}$-capacity:
\[
\forall E\text{ Borel set }\subset Q,\quad c_{p}^{Q}(E)=0\Longrightarrow
\left\vert \mu(E)\right\vert =0.
\]
Then existence and uniqueness of renormalized solutions holds for any measure
$\mu\in\mathcal{M}_{b}(\Omega)\cap\mathcal{M}_{0}(Q),$ called regular (or
diffuse) and $u_{0}\in L^{1}(\Omega)$, and $p>1$. The equivalence with the
notion of entropy solutions is shown in \cite{DrPr}; see also \cite{BlPeRe}
for more general equations.\medskip

Next consider \textit{any} measure $\mu\in\mathcal{M}_{b}(Q).$ Let
$\mathcal{M}_{s}(Q)$ be the set of all bounded Radon measures on $Q$ with
support on a set of zero $c_{p}^{Q}$capacity, also called \textit{singular}.
Let $\mathcal{M}_{b}^{+}(Q),\mathcal{M}_{0}^{+}(Q),\mathcal{M}_{s}^{+}(Q)$ be
the positive cones of $\mathcal{M}_{b}(Q),\mathcal{M}_{0}(Q),\mathcal{M}%
_{s}(Q).$ From \cite{DrPoPr}, $\mu$ can be written (in a unique way) under the
form%
\begin{equation}
\mu=\mu_{0}+\mu_{s},\qquad\mu_{0}\in\mathcal{M}_{0}(Q),\quad\mu_{s}=\mu
_{s}^{+}-\mu_{s}^{-},\qquad\mu_{s}^{+},\mu_{s}^{-}\in\mathcal{M}_{s}^{+}(Q),
\label{deo}%
\end{equation}
and $\mu_{0}\in$ $\mathcal{M}_{0}(Q)$ admits (at least) a decomposition under
the form%
\begin{equation}
\mu_{0}=f-\operatorname{div}g+h_{t},\qquad f\in L^{1}(Q),\quad g\in
(L^{p^{\prime}}(Q))^{N},\quad h\in{X}, \label{dec}%
\end{equation}
and we write $\mu_{0}=(f,g,h).$ The solutions of (\ref{pmu}) are searched in a
renormalized sense linked to this decomposition, introduced in \cite{BlMu}%
,\cite{Pe08}. \ In the range (\ref{rangep}) the existence of a renormalized
solution relative to the decomposition (\ref{dec}) is proved in \cite{Pe08},
using suitable approximations of $\mu_{0}$ and $\mu_{s}$. Uniqueness is still
open, as well as in the elliptic case. $\medskip$

Next consider the problem (\ref{pga}). First we consider the case of
an\textit{ absorption term}: $\mathcal{G}(u)u\geqq0.\medskip$

\noindent Let us recall the case $p=2,$ $A(x,\nabla u)=\nabla u$ and
$\mathcal{G}(u)=|u|^{q-1}u$ $(q>1)$. The first results concern the case
$\mu=0$ and $u_{0}$ is a Dirac mass in $\Omega$, see \cite{BrFr}: existence
holds if and only if $q<(N+2)/N.$ Then optimal results are given in
\cite{BaPi2}, for any $\mu\in\mathcal{M}_{b}({Q})$ and $u_{0}\in
\mathcal{M}_{b}(\Omega)$. Here two capacities are involved: the elliptic
Bessel capacity $C_{\alpha,k}$, ($\alpha,k>1)$ defined, for any Borel set
$E\subset\mathbb{R}^{N},$ by
\[
C_{\alpha,k}(E)=\inf\{||\varphi||_{L^{k}(\mathbb{R}^{N})}:\varphi\in
L^{k}(\mathbb{R}^{N}),\quad G_{\alpha}\ast\varphi\geqq\chi_{E}\},
\]
where $G_{\alpha}$ is the Bessel kernel of order $\alpha$; and a capacity
$c_{\mathbf{G},k}$ $(k>1)$ adapted to the operator of the heat equation of
kernel $\mathbf{G}(x,t)=\chi_{\left(  0,\infty\right)  }(4\pi t)^{-N/2}%
e^{-\left\vert x\right\vert ^{2}/4t}:$ for any Borel set $E\subset
\mathbb{R}^{N+1},$
\[
c_{\mathbf{G},k}(E)=\inf\{||\varphi||_{L^{k}(\mathbb{R}^{N+1})}:\varphi\in
L^{k}(\mathbb{R}^{N+1}),\quad\mathbf{G}\ast\varphi\geqq\chi_{E}\}.
\]
From \cite{BaPi2}, there exists a solution if and only if $\mu$ does not
charge the sets of $c_{\mathbf{G},q^{\prime}}(E)$ capacity zero and $u_{0}$
does not charge the sets of $C_{2/q,q^{\prime}}$ capacity zero. Observe that
one can reduce to a zero initial data, by considering the measure $\mu
+u_{0}\otimes\delta_{0}^{t}$ in $\Omega\times\left(  -T,T\right)  ,$ where
$\otimes$ is the tensorial product and $\delta_{0}^{t}$ is the Dirac mass in
time at $0$.\medskip

For $p\neq2$ such a linear parabolic capacity cannot be used. Most of the
contributions are relative to the case $\mu=0$ with $\Omega$ bounded, or
$\Omega=\mathbb{R}^{N}$. The case where $u_{0}$ is a Dirac mass in $\Omega$ is
studied in \cite{Gm}, \cite{KaVa} when $p>2$, and \cite{ChQiWa} when $p<2$.
Existence and uniqueness hold in the subcritical case $q<p_{c}.$ If $q$ $\geqq
p_{c}$ and $q>1$, there is no solution with an isolated singularity at $t=0$.
For $q<p_{c},$ and $u_{0}\in\mathcal{M}_{b}^{+}(\Omega),$ the existence is
obtained in the sense of distributions in \cite{Zh}, and for any $u_{0}%
\in\mathcal{M}_{b}(\Omega)$ in \cite{BiChVe}. The case $\mu\in$ $L^{1}(Q),$
$u_{0}=0$ is treated in \cite{DAOr}, and $\mu\in$ $L^{1}(Q),$ $u_{0}%
=L^{1}(\Omega)$ in \cite{AndSbWi} where $\mathcal{G}$ can be multivalued. The
case $\mu\in$ $\mathcal{M}_{0}(Q)$ is studied in \cite{PePoPor}, with a new
formulation of the solutions, and existence and uniqueness is obtained for any
function $\mathcal{G}\in C(\mathbb{R)}$ such that $\mathcal{G}(u)u\geqq0.$ Up
to our knowledge, up to now no existence results have been obtained for a
general measure $\mu\in$ $\mathcal{M}_{b}(Q).$\medskip

The case of a source term $\mathcal{G}(u)=-u^{q}$ with $u\geqq0$ has beeen
treated in \cite{BaPi1} for $p=2,$ where optimal conditions are given for
existence. As in the absorption case the arguments of proofs cannot be
extended to general $p.$

\section{Main results}

In \textit{all the sequel} we suppose that $p$ satisfies (\ref{rangep}). Then
\[
X={{L^{p}}(0,T;W_{0}^{1,p}(\Omega)),\qquad X}^{\prime}={{L^{p^{\prime}}%
}(0,T;W^{-1,p^{\prime}}(\Omega)).}%
\]

We first study problem (\ref{pmu}). In Section \ref{prox} we give some
approximations of $\mu\in\mathcal{M}_{b}(Q),$ useful for the applications. In
Section \ref{defsol} we recall the definition of renormalized solutions, that
we call R-solutions of (\ref{pmu}), relative to the decomposition (\ref{dec})
of $\mu_{0}$, and study some of their properties.

Our main result is a \textit{stability theorem} for problem (\ref{pmu}),
proved in Section \ref{cv}, extending to the parabolic case the stability
result of \cite[Theorem 3.4]{DMOP}, and improving the result of \cite{Pe08}:

\begin{theorem}
\label{sta} Let $A:Q\times\mathbb{R}^{N}\longmapsto\mathbb{R}^{N}$ satisfying
(\ref{condi1}),(\ref{condi2}). Let $u_{0}\in L^{1}(\Omega)$, and
\[
\mu=f-\operatorname{div}g+h_{t}+\mu_{s}^{+}-\mu_{s}^{-}\in\mathcal{M}_{b}%
({Q}),
\]
with $f\in L^{1}(Q),g\in(L^{p^{\prime}}(Q))^{N},$ $h\in X$ and $\mu_{s}%
^{+},\mu_{s}^{-}\in\mathcal{M}_{s}^{+}(Q).$ Let $u_{0,n}\in L^{1}(\Omega),$
\[
\mu_{n}=f_{n}-\operatorname{div}g_{n}+(h_{n})_{t}+\rho_{n}-\eta_{n}%
\in\mathcal{M}_{b}({Q}),
\]
with \ $f_{n}\in L^{1}(Q),g_{n}\in(L^{p^{\prime}}(Q))^{N},h_{n}\in X,$ and
$\rho_{n},\eta_{n}\in\mathcal{M}_{b}^{+}({Q}),$ such that
\[
\rho_{n}=\rho_{n}^{1}-\operatorname{div}\rho_{n}^{2}+\rho_{n,s},\qquad\eta
_{n}=\eta_{n}^{1}-\mathrm{\operatorname{div}}\eta_{n}^{2}+\eta_{n,s},
\]
with $\rho_{n}^{1},\eta_{n}^{1}\in L^{1}(Q),\rho_{n}^{2},\eta_{n}^{2}%
\in(L^{p^{\prime}}(Q))^{N}$ and $\rho_{n,s},\eta_{n,s}\in\mathcal{M}_{s}%
^{+}(Q).$ Assume that
\[
\sup_{n}\left\vert {{\mu_{n}}}\right\vert ({Q})<\infty,
\]
and $\left\{  u_{0,n}\right\}  $ converges to $u_{0}$ strongly in
$L^{1}(\Omega),$ $\left\{  f_{n}\right\}  $ converges to $f$ weakly in
$L^{1}(Q),$ $\left\{  g_{n}\right\}  $ converges to $g$ strongly in
$(L^{p^{\prime}}(Q))^{N}$, $\left\{  h_{n}\right\}  $ converges to $h$
strongly in $X$, $\left\{  \rho_{n}\right\}  $ converges to $\mu_{s}^{+}$ and
$\left\{  \eta_{n}\right\}  $ converges to $\mu_{s}^{-}$ in the narrow
topology of measures; and $\left\{  \rho_{n}^{1}\right\}  ,\left\{  \eta
_{n}^{1}\right\}  $ are bounded in $L^{1}(Q)$, and $\left\{  \rho_{n}%
^{2}\right\}  ,\left\{  \eta_{n}^{2}\right\}  $ bounded in $(L^{p^{\prime}%
}(Q))^{N}$. Let $\left\{  u_{n}\right\}  $ be a sequence of R-solutions of
\begin{equation}
\left\{
\begin{array}
[c]{l}%
{u_{n,t}}-\text{div}(A(x,t,\nabla u_{n}))=\mu_{n}\qquad\text{in }Q,\\
{u}_{n}=0\qquad\text{on }\partial\Omega\times(0,T),\\
u_{n}(0)=u_{0,n}\qquad\text{in }\Omega.
\end{array}
\right.  \label{pmun}%
\end{equation}
relative to the decomposition $(f_{n}+\rho_{n}^{1}-\eta_{n}^{1},g_{n}+\rho
_{n}^{2}-\eta_{n}^{2},h_{n})$ of $\mu_{n,0}.$ Let $v_{n}=u_{n}-h_{n}.$ Then up
to a subsequence, $\left\{  u_{n}\right\}  $ converges $a.e.$ in $Q$ to a
R-solution $u$ of (\ref{pmu}), and $\left\{  v_{n}\right\}  $ converges $a.e.$
in $Q$ to $v=u-h.$ Moreover, $\left\{  \nabla u_{n}\right\}  ,\left\{  \nabla
v_{n}\right\}  $ converge respectively to $\nabla u,\nabla v$ $a.e.$ in $Q,$
and $\left\{  T_{k}(u_{n})\right\}  ,\left\{  T_{k}(v_{n})\right\}  $ converge
to $T_{k}(u),$ $T_{k}(v)$ strongly in $X$ for any $k>0$.\bigskip
\end{theorem}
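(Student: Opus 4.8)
The strategy follows the scheme of \cite{DMOP} adapted to the parabolic setting, exploiting the structure of R-solutions relative to the decomposition \eqref{dec}. First I would establish uniform a priori estimates: from the boundedness of $\{|\mu_n|(Q)\}$ together with the structural hypotheses \eqref{condi1}, standard energy-type arguments applied to the truncations $T_k(v_n)$ give that $\{T_k(v_n)\}$ is bounded in $X$ and $\{u_n\},\{|\nabla u_n|\}$ are bounded in the Marcinkiewicz spaces $L^{p_c,\infty}(Q)$, $L^{m_c,\infty}(Q)$ from \eqref{crit}. This yields, up to a subsequence, $u_n\to u$ and $v_n\to v=u-h$ a.e.\ in $Q$ (using a parabolic compactness/time-translation argument à la Simon on the truncations, and the strong convergence $h_n\to h$ in $X$), together with weak limits for the gradients and for $A(x,t,\nabla u_n)$.

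The central analytic step is the almost everywhere convergence of the gradients, $\nabla u_n\to\nabla u$ and $\nabla v_n\to\nabla v$ a.e., together with the strong $X$-convergence of $\{T_k(v_n)\}$. This is where monotonicity \eqref{condi2} enters: one tests the equation for $v_n$ against a carefully chosen admissible test function of the form $T_k(v_n - \Phi)$ times the renormalizing cutoff, where $\Phi$ approximates $v$ (e.g.\ a Landes-type time-regularization $v_\nu$ of $v$ to control the time-derivative term $\langle v_{n,t},\cdot\rangle$, which has no pointwise meaning). The singular parts $\rho_n,\eta_n$ must be handled by the renormalization truncation functions $h_m(u_n)$ of \cite{BlMu,Pe08}: their narrow convergence to $\mu_s^\pm$, concentrated on a set of zero $c_p^Q$-capacity, forces the corresponding terms to vanish in the limit once one localizes on $\{|u_n| \text{ large}\}$ and $\{|u_n|\text{ small}\}$ appropriately. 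Combining these, one derives $\limsup_n \int_Q (A(x,t,\nabla v_n)-A(x,t,\nabla v))\cdot(\nabla v_n-\nabla v)\,\eta_k \le 0$ on the region where $|v_n|\le k$, and a standard lemma on monotone operators (Leray--Lions type) upgrades this to a.e.\ convergence of the gradients and then to strong convergence of $T_k(v_n)$ in $X$.

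Once gradient convergence is secured, identifying the limit is routine: $A(x,t,\nabla u_n)\rightharpoonup A(x,t,\nabla u)$ and $A(x,t,\nabla v_n)\rightharpoonup A(x,t,\nabla v)$ weakly (via continuity of $A$ in $\xi$, the pointwise convergence, and equi-integrability coming from the $X$-bound on truncations plus the $L^{p'}$-bound on $a$), so one may pass to the limit in each term of the renormalized formulation. The verification that $u$ satisfies the energy decay condition at infinity (the defining property $\lim_{m\to\infty}\frac1m\int_{\{m\le|v|<2m\}}A(x,t,\nabla v)\cdot\nabla v = 0$, with separate contributions matching $\mu_s^+$ and $\mu_s^-$) is obtained by passing to the limit in the analogous estimates for $v_n$, using the narrow convergence of $\rho_n,\eta_n$ and lower semicontinuity. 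The main obstacle throughout is the joint treatment of the time-derivative term and the singular measures within the renormalization scheme: the time-regularization of the limit, the choice of the cutoff functions $h_m$, and the order in which one sends $k$, $m$, $\nu$, and $n$ to their limits must be coordinated precisely so that the "junk" terms (products of truncations of $v_n$ with the singular data, and the commutator between time-regularization and truncation) all vanish; this bookkeeping is the technical heart of the proof.
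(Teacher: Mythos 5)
Your overall scheme (a priori estimates, a.e.\ convergence of $v_n$, Landes time-regularization to make sense of the time-derivative term, Steklov averaging for admissibility, monotonicity to get a.e.\ convergence of gradients and strong convergence of $T_k(v_n)$ in $X$, then passage to the limit in the renormalized formulation) matches the skeleton of the paper's proof. But there is a genuine gap at the heart of the argument: you claim that the singular approximations $\rho_n,\eta_n$ can be handled solely by the renormalization truncations $H_m(v_n)$, i.e.\ ``once one localizes on $\{|u_n|$ large$\}$ and $\{|u_n|$ small$\}$ appropriately.'' This cannot work for the class of approximations allowed in the theorem: $\rho_n,\eta_n$ are arbitrary bounded nonnegative measures (they may even have diffuse parts $\rho_n^1-\operatorname{div}\rho_n^2$, $\eta_n^1-\operatorname{div}\eta_n^2$) converging only in the narrow topology to $\mu_s^{\pm}$, and nothing forces them to charge only the region where $|v_n|$ is large. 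Consequently the terms $\int_Q S'(v_n)\varphi\,d\rho_n$, and more importantly the energy $\int_{\{|v_n|\le k\}}A(x,t,\nabla u_n)\cdot\nabla T_k(v_n)$ concentrated near the sets where $\mu_s^{\pm}$ live, do not vanish by level-set localization alone.

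What is missing is precisely the capacitary localization machinery: the cut-off functions $\psi_{\delta}^{\pm}$ of Proposition \ref{04041}, equal to $1$ on compact sets carrying almost all of $\mu_s^{\pm}$ and small in $\Vert\cdot\Vert_X+\Vert\partial_t\cdot\Vert_{X'+L^1}$, and the splitting of the monotonicity integral into a part near the concentration set ($I_1$ in (\ref{12059})) and a part far from it ($I_2$ in (\ref{120510})). Far from the set, $\int_Q(1-\Phi_{\delta_1,\delta_2})\,d\rho_n$ and $\int_Q(1-\Phi_{\delta_1,\delta_2})\,d\eta_n$ are small by narrow convergence, and your Porretta/Landes-type test functions then work essentially as you describe. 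Near the set, one must prove the smallness of the truncated energy (\ref{120511}), and this is exactly the step for which the paper (following Dal Maso--Murat--Orsina--Prignet) uses the \emph{double} cut $\psi_{\delta_1}^{+}\psi_{\delta_2}^{+}$ together with the special test function $\hat S_{k,m}$ and the auxiliary estimates of Lemma \ref{april261}; the smallness comes from the small energy norm of the cut-offs and from (\ref{12054})--(\ref{12057}), not from the behaviour of $v_n$ on its level sets. Without introducing these cut-offs and the near/far decomposition, the inequality $\limsup_n\int_{\{|v_n|\le k\}}A(x,t,\nabla u_n)\cdot\nabla(v_n-T_k(v))\le 0$ cannot be derived, and the rest of your plan (a.e.\ gradient convergence, strong convergence of truncations, identification of the limit and verification of (\ref{renor2})--(\ref{renor3})) does not get off the ground.
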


In Section \ref{appli} we give applications to problems of type (\ref{pga}%
).\medskip

We first give an existence result of subcritical type, valid for any measure
$\mu\in\mathcal{M}_{b}(Q):$

\begin{theorem}
\label{new} Let $A:Q\times\mathbb{R}^{N}\rightarrow\mathbb{R}^{N}$ satisfying
(\ref{condi1}), (\ref{condi2}) with $a\equiv0$. Let $(x,t,r)\mapsto
\mathcal{G}(x,t,r)$ be a Caratheodory function on $Q\times\mathbb{R}$ and
$G\in C(\mathbb{R}^{+})$ be a nondecreasing function with values in
$\mathbb{R}^{+},$ such that
\begin{equation}
\left\vert \mathcal{G}(x,t,r)\right\vert \leqq G(|r|)\quad\text{for
}a.e.\text{ }(x,t)\in Q\text{ and any }~r\in\mathbb{R}, \label{isp}%
\end{equation}%
\begin{equation}
\int_{1}^{\infty}G(s)s^{-1-p_{c}}ds<\infty. \label{asg}%
\end{equation}

\noindent(i) Suppose that $\mathcal{G}(x,t,r)r\geqq0,$ for $a.e.$ $(x,t)$ in
$Q$ and any $r\in\mathbb{R}$. Then, for any $\mu\in\mathcal{M}_{b}(Q)$ and
$u_{0}\in L^{1}(\Omega),$ there exists a R-solution u of problem
\begin{equation}
\left\{
\begin{array}
[c]{l}%
{u_{t}}-\text{div}(A(x,t,\nabla u))+\mathcal{G}(u)=\mu\qquad\text{in }Q,\\
{u}=0\qquad\text{in }\partial\Omega\times(0,T),\\
u(0)=u_{0}\qquad\text{in }\Omega.
\end{array}
\right.  \label{pro0}%
\end{equation}
(ii) Suppose that $\mathcal{G}(x,t,r)r\leqq0,$ for $a.e.$ $(x,t)\in Q$ and any
$r\in\mathbb{R},$ and $u_{0}\geqq0,\mu\geqq0$. There exists $\varepsilon>0$
such that for any $\lambda>0$, any $\mu\in\mathcal{M}_{b}(Q)$ and $u_{0}\in
L^{1}(\Omega)$ with $\lambda+|\mu|(Q)+||u_{0}||_{L^{1}(\Omega)}\leqq
\varepsilon$, problem
\begin{equation}
\left\{
\begin{array}
[c]{l}%
{u_{t}}-\text{div}(A(x,t,\nabla u ))+\lambda\mathcal{G}(u)=\mu\qquad\text{in
}Q,\\
{u}=0\qquad\text{in }\partial\Omega\times(0,T),\\
u(0)=u_{0}\qquad\text{in }\Omega,
\end{array}
\right.  \label{pro1}%
\end{equation}
admits a nonnegative R-solution.$\medskip$
\end{theorem}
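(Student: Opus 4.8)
The plan is to couple an approximation scheme with the stability theorem (Theorem~\ref{sta}), which absorbs all the difficulty of passing to the limit in the quasilinear term. Write $\mu=f-\operatorname{div}g+h_{t}+\mu_{s}^{+}-\mu_{s}^{-}$ and fix, as in Section~\ref{prox}, smooth approximations $\mu_{n}=f_{n}-\operatorname{div}g_{n}+(h_{n})_{t}+\rho_{n}-\eta_{n}$ meeting all the convergence and boundedness requirements of Theorem~\ref{sta} (for instance $g_{n}=g$, $h_{n}=h$, $f_{n}=T_{n}(f)$, and $\rho_{n},\eta_{n}$ mollifications of $\mu_{s}^{\pm}$ with $\|\rho_{n}\|_{L^{1}(Q)}=\mu_{s}^{+}(Q)$, $\|\eta_{n}\|_{L^{1}(Q)}=\mu_{s}^{-}(Q)$), together with $u_{0,n}\in L^{\infty}(\Omega)$, $u_{0,n}\to u_{0}$ in $L^{1}(\Omega)$; in part (ii) I would also require $\mu_{n}\geq0$, $u_{0,n}\geq0$ and $|\mu_{n}|(Q)+\|u_{0,n}\|_{L^{1}(\Omega)}\leq|\mu|(Q)+\|u_{0}\|_{L^{1}(\Omega)}$. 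Truncate the nonlinearity by $\mathcal{G}_{n}(x,t,r)=T_{n}(\mathcal{G}(x,t,r))$: a bounded Caratheodory function with $|\mathcal{G}_{n}(x,t,r)|\leq\min(n,G(|r|))$, keeping the sign of $\mathcal{G}(x,t,r)r$, and equal to $\mathcal{G}(x,t,\cdot)$ on $\{|r|\leq R\}$ once $n\geq G(R)$.

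For part (i): for each fixed $n$, the problem~(\ref{pro0}) with the bounded zero-order term $\mathcal{G}_{n}$ and the regular data $\mu_{n},u_{0,n}$ has a variational solution $u_{n}$ by the classical theory (after the usual change of unknown removing the $h_{t}$ term: Galerkin approximation, a~priori estimates, and compactness in $L^{p}(Q)$ to pass to the limit in $\mathcal{G}_{n}$), and for these regular data $u_{n}$ is an R-solution relative to the decomposition $(f_{n}-\mathcal{G}_{n}(\cdot,u_{n})+\rho_{n}-\eta_{n},g_{n},h_{n})$. Testing with $T_{k}(u_{n})$ and discarding the zero-order term via $\mathcal{G}_{n}(\cdot,u_{n})T_{k}(u_{n})\geq0$, the usual estimates give $\|u_{n}\|_{L^{p_{c},\infty}(Q)}+\|\nabla u_{n}\|_{L^{m_{c},\infty}(Q)}\leq C$ uniformly in $n$ (here $a\equiv0$ is used). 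The crucial step is the equi-integrability of $\{\mathcal{G}_{n}(\cdot,u_{n})\}$ in $L^{1}(Q)$: since $|\mathcal{G}_{n}(\cdot,u_{n})|\leq G(|u_{n}|)$ with $G$ nondecreasing, for measurable $E\subset Q$ and $\lambda\geq1$ a Cavalieri computation using $|\{|u_{n}|>s\}|\leq Cs^{-p_{c}}$ and one integration by parts yields
\[
\int_{E}G(|u_{n}|)\,dx\,dt\leq G(\lambda)\,|E|+C^{\prime}\int_{\lambda}^{\infty}G(s)\,s^{-1-p_{c}}\,ds,
\]
so by~(\ref{asg}) one chooses $\lambda$ large and then $|E|$ small; taking $E=Q$ also bounds $\{\mathcal{G}_{n}(\cdot,u_{n})\}$ in $L^{1}(Q)$. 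By Dunford--Pettis, along a subsequence $\mathcal{G}_{n}(\cdot,u_{n})\rightharpoonup\Phi$ weakly in $L^{1}(Q)$. Applying Theorem~\ref{sta} to $\{u_{n}\}$ as R-solutions of $u_{n,t}-\operatorname{div}(A(\cdot,\nabla u_{n}))=\mu_{n}-\mathcal{G}_{n}(\cdot,u_{n})$ --- whose diffuse part has $L^{1}$-component $f_{n}-\mathcal{G}_{n}(\cdot,u_{n})+\rho_{n}^{1}-\eta_{n}^{1}\rightharpoonup f-\Phi$ weakly in $L^{1}$, the remaining hypotheses holding by construction --- gives, up to a further subsequence, $u_{n}\to u$ a.e. in $Q$ with $u$ an R-solution of $u_{t}-\operatorname{div}(A(\cdot,\nabla u))=\mu-\Phi$. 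Finally $\mathcal{G}_{n}(\cdot,u_{n})\to\mathcal{G}(\cdot,u)$ a.e. (continuity of $\mathcal{G}$ and local uniform convergence $\mathcal{G}_{n}\to\mathcal{G}$), so by equi-integrability and Vitali's theorem $\mathcal{G}_{n}(\cdot,u_{n})\to\mathcal{G}(\cdot,u)$ strongly in $L^{1}(Q)$, whence $\Phi=\mathcal{G}(\cdot,u)$ and $u$ solves~(\ref{pro0}).

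For part (ii), the approximate problems come from a Schauder fixed point. Let $C_{G}=\int_{1}^{\infty}G(s)s^{-1-p_{c}}\,ds<\infty$, fix a constant $R$, and set $K=\{w\in L^{1}(Q):w\geq0,\ \|w\|_{L^{p_{c},\infty}(Q)}\leq R\}$, closed bounded convex in $L^{1}(Q)$. For $w\in K$ let $\mathcal{T}_{n}(w)$ be the (unique) variational solution of $v_{t}-\operatorname{div}(A(\cdot,\nabla v))=\mu_{n}+\lambda|\mathcal{G}_{n}(\cdot,w)|$, $v(0)=u_{0,n}$; this is legitimate as $|\mathcal{G}_{n}(\cdot,w)|\leq G(|w|)$ satisfies, by the Cavalieri estimate with $E=Q$, $\int_{Q}G(|w|)\leq\Psi:=G(1)|Q|+C^{\prime}C_{G}$. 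Since the data is nonnegative, $\mathcal{T}_{n}(w)\geq0$, and the a~priori estimate gives $\|\mathcal{T}_{n}(w)\|_{L^{p_{c},\infty}(Q)}\leq\Theta\bigl(|\mu_{n}|(Q)+\lambda\Psi+\|u_{0,n}\|_{L^{1}(\Omega)}\bigr)$ for a fixed continuous increasing $\Theta$ with $\Theta(0)=0$; as $\lambda+|\mu_{n}|(Q)+\|u_{0,n}\|_{L^{1}(\Omega)}\leq\varepsilon$ the argument is $\leq\varepsilon(1+\Psi)$, so for $\varepsilon\leq\varepsilon_{0}$ small enough, uniformly in $n$, one gets $\mathcal{T}_{n}(K)\subseteq K$. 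Moreover $\mathcal{T}_{n}$ is continuous on $K$ (dominated convergence for the bounded $\mathcal{G}_{n}$, then continuous dependence for the Leray--Lions problem) and $\mathcal{T}_{n}(K)$ is relatively compact in $L^{1}(Q)$ (Marcinkiewicz bounds on $v,\nabla v$ and the Aubin--Simon argument underlying Theorem~\ref{sta}), so Schauder's theorem yields $u_{n}\in K$ with $u_{n}=\mathcal{T}_{n}(u_{n})$; as $u_{n}\geq0$ makes $|\mathcal{G}_{n}(\cdot,u_{n})|=-\mathcal{G}_{n}(\cdot,u_{n})$, this $u_{n}$ is a nonnegative solution of $u_{n,t}-\operatorname{div}(A(\cdot,\nabla u_{n}))+\lambda\mathcal{G}_{n}(\cdot,u_{n})=\mu_{n}$, $u_{n}(0)=u_{0,n}$, with $\|u_{n}\|_{L^{p_{c},\infty}(Q)}\leq R$ uniformly. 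One then passes to the limit $n\to\infty$ exactly as in part (i): equi-integrability of $\{\mathcal{G}_{n}(\cdot,u_{n})\}$ by the Cavalieri estimate and~(\ref{asg}); Theorem~\ref{sta} to get $u_{n}\to u\geq0$ a.e. with $u$ an R-solution of $u_{t}-\operatorname{div}(A(\cdot,\nabla u))=\mu-\lambda\Phi$; and identification $\Phi=\mathcal{G}(\cdot,u)$ by a.e. convergence and equi-integrability, giving the nonnegative R-solution of~(\ref{pro1}).

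The hard part is the equi-integrability of $\{\mathcal{G}_{n}(\cdot,u_{n})\}$: this is precisely where hypothesis~(\ref{asg}) is consumed, and it must be paired with the sharp bound $u_{n}\in L^{p_{c},\infty}(Q)$ --- hence the role of $a\equiv0$ and the appearance of the exponent $p_{c}$; without subcriticality the argument breaks. The rest is bookkeeping: arranging the decompositions so that $\mu_{n}-\lambda\mathcal{G}_{n}(\cdot,u_{n})$ fits the hypotheses of Theorem~\ref{sta} --- keeping the merely bounded (not convergent) $L^{1}$- and $L^{p^{\prime}}$-components of the singular-part approximations in the $\rho_{n},\eta_{n}$ slots rather than in $f_{n}$ --- and, in part (ii), checking that the threshold $\varepsilon_{0}$ is independent of $n$.
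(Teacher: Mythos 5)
Your part (i) follows essentially the paper's own route: approximate the data as in Section \ref{prox}, obtain the uniform $L^{1}$ bound on the zero-order term by testing with a normalized truncation, get the $s^{-p_{c}}$ decay of the level sets from the a priori estimates with $a\equiv0$, deduce equi-integrability from (\ref{asg}) exactly as in the integration lemma giving (\ref{23051}), and conclude with Theorem \ref{sta} and Vitali; the only cosmetic differences are that you truncate $\mathcal{G}$ (the paper solves the approximate problems with $\mathcal{G}$ itself, using that smooth data lie in $\mathcal{M}_{0}(Q)$) and that you identify the weak $L^{1}$ limit $\Phi=\mathcal{G}(u)$ after invoking the stability theorem rather than before. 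Part (ii) is where you genuinely diverge: the paper constructs a nondecreasing sequence of R-solutions $u_{m+1}$ of the problem with source data $\mu-\lambda\mathcal{G}(u_{m})$ via the existence-with-comparison statement of Corollary \ref{051120131}, the smallness of $\varepsilon$ entering through the recursion $K_{m+1}\leqq C\varepsilon(K_{m}^{1+p/N}+1)$ and monotone convergence handling the limit of $\mathcal{G}(u_{m})$; you instead run a Schauder fixed point for each truncated problem on $K=\{w\geqq0,\ \|w\|_{L^{p_{c},\infty}(Q)}\leqq R\}$, with the same smallness mechanism giving the self-map property uniformly in $n$. Your route is viable and avoids any comparison principle, but it costs bookkeeping the paper does not need: convexity of $K$ requires the equivalent norm on $L^{p_{c},\infty}$ (the quasinorm ball is not obviously convex), continuity of $\mathcal{T}_{n}$ on $K$ requires uniqueness and $L^{1}$-continuous dependence for the fixed-$n$ problems (available because those data are regular), and relative compactness of $\mathcal{T}_{n}(K)$ in $L^{1}(Q)$ must be extracted from Proposition \ref{mun} together with the uniform Marcinkiewicz bound. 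The paper's iteration needs no fixed-point topology at all, but does rely on the ordering built into Corollary \ref{051120131}; both arguments consume (\ref{asg}) and the $L^{p_{c},\infty}$ estimate in the same way, so the smallness threshold has the same nature in either approach.
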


In particular for any $0<q<p_{c},$ if $\mathcal{G}(u)=\left\vert u\right\vert
^{q-1}u,$ existence holds for any measure $\mu\in\mathcal{M}_{b}(Q)$; if
$\mathcal{G}(u)=-\left\vert u\right\vert ^{q-1}u,$ existence holds for $\mu$
small enough. In the supercritical case $q\geqq p_{c},$ the class of
"admissible" measures, for which there exist solutions, is not known.
$\medskip$

Next we give new results relative to \textit{measures that have a good
behaviour in }$t,$ based on recent results of \cite{BiNQVe} relative to the
elliptic case. We recall the notions of (truncated) W\"{o}lf potential for any
nonnegative measure $\omega\in\mathcal{M}^{+}(\mathbb{R}^{N})$ any $R>0,$
$x_{0}\in\mathbb{R}^{N},$
\[
\mathbf{W}_{1,p}^{R}[\omega]\left(  x_{0}\right)  =\int_{0}^{R}\left(
t^{p-N}\omega(B(x_{0},t))\right)  ^{\frac{1}{p-1}}\frac{dt}{t}.
\]
Any measure $\omega\in\mathcal{M}_{b}(\Omega)$ is identified with its
extension by $0$ to $\mathbb{R}^{N}.$ In case of absorption, we obtain the following:

\begin{theorem}
\label{main1}Let $A:\Omega\times\mathbb{R}^{N}\rightarrow\mathbb{R}^{N}$
satisfying (\ref{condi3}),(\ref{condi4}). Let $p<N$, $q>p-1,$ $\mu
\in\mathcal{M}_{b}(Q)$, $f\in L^{1}(Q)$ and $u_{0}\in L^{1}(\Omega)$. Assume
that%
\[
\left\vert \mu\right\vert \leqq\omega\otimes F,\text{ \quad with }\omega
\in\mathcal{M}_{b}^{+}(\Omega),F\in L^{1}((0,T)),F\geqq0,
\]
and $\omega$ does not charge the sets of $C_{p,\frac{q}{q+1-p}}$-capacity
zero. Then there exists a R- solution $u$ of problem
\begin{equation}
\left\{
\begin{array}
[c]{l}%
u_{t}-\text{div}(A(x,\nabla u))+|u|^{q-1}u=f+\mu\qquad\text{in }Q,\\
{u}=0\qquad\text{on }\partial\Omega\times(0,T),\\
u(0)=u_{0}\qquad\text{in }\Omega.
\end{array}
\right.  \label{mainprob1}%
\end{equation}

\end{theorem}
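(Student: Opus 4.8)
\textbf{Proof proposal for Theorem \ref{main1}.}

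The plan is to construct the solution by an approximation scheme and to use the stability theorem (Theorem \ref{sta}) to pass to the limit, with the main analytic input being an a priori $L^{q}$-bound on the absorption term obtained from a barrier/potential estimate adapted from \cite{BiNQVe}. First I would regularize the data: since $\omega$ does not charge sets of $C_{p,q/(q+1-p)}$-capacity zero, a standard decomposition allows us to write $\omega$ (or approximate it) by a sequence $\omega_{n}$ of smooth nonnegative functions, bounded in $\mathcal{M}_{b}^{+}(\Omega)$, converging to $\omega$ in the narrow topology, and in such a way that the relevant Wolff-potential / capacitary estimates are uniform in $n$; likewise approximate $F$ by $F_n\in L^\infty$ with $F_n\to F$ in $L^1(0,T)$, $f$ by $f_n$ bounded in $L^1(Q)$ converging weakly, and $u_0$ by smooth $u_{0,n}\to u_0$ in $L^1(\Omega)$. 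For each $n$ one solves the approximate problem
\[
u_{n,t}-\operatorname{div}(A(x,\nabla u_n))+|u_n|^{q-1}u_n=f_n+\omega_n\otimes F_n
\]
with zero boundary and initial data $u_{0,n}$; existence here follows from the classical theory since the data are now in $L^{p'}(Q)\cap L^1$, or from Theorem \ref{new}(i) applied with $\mathcal G(r)=|r|^{q-1}r$ truncated at level $n$, the truncation being removed afterwards by the $L^q$ bound.

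The heart of the matter is the uniform estimate
\[
\|\,|u_n|^{q-1}u_n\,\|_{L^{1}(Q)}=\int_Q |u_n|^{q}\,dx\,dt \le C,
\]
with $C$ independent of $n$. I would obtain this by comparison with the solution of the purely elliptic problem: for a.e. fixed $t$, one controls $|u_n(\cdot,t)|$ by a quantity built from the Wolff potential $\mathbf{W}_{1,p}^{2\,\mathrm{diam}\,\Omega}[\omega_n]$ times an integrable function of $t$ coming from $F_n$ and from the heat-type smoothing in the time variable; the capacity hypothesis on $\omega$ is exactly what makes $\mathbf{W}_{1,p}^{R}[\omega]\in L^{q}(\Omega)$ (this is the Wolff-potential characterization of the capacity $C_{p,q/(q+1-p)}$, which is the content imported from \cite{BiNQVe}). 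Integrating the resulting pointwise bound in $t$ and using $F\in L^1(0,T)$ gives the desired $L^q(Q)$ control of $u_n$, hence of the absorption term. The subtle point, and the step I expect to be the main obstacle, is to make this comparison rigorous in the parabolic setting with only $L^1$ data in time: one must justify the splitting $\mu\le\omega\otimes F$ into an "elliptic-in-space, integrable-in-time" estimate, handle the initial datum $u_{0,n}$ (absorbed either into $f_n$ or treated by the contraction property of the semigroup), and ensure all constants depend only on $\|\omega\|$, $\|F\|_{L^1}$, $\|f\|_{L^1}$, $\|u_0\|_{L^1}$ and the structural constants $c_1,c_2,p,q,N$.

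Once the uniform bound $\int_Q|u_n|^q\le C$ is in hand, the sequence $\{g_n:=|u_n|^{q-1}u_n+f_n\}$ is bounded in $L^1(Q)$, so $\mu_n:=f_n+\omega_n\otimes F_n-|u_n|^{q-1}u_n$ satisfies $\sup_n|\mu_n|(Q)<\infty$; writing its diffuse part in the form required by Theorem \ref{sta} (here $g=0$, $h=0$, and the singular parts are zero since all $\mu_n$ are $L^1$), and noting $\omega_n\otimes F_n\to\omega\otimes F$ narrowly while $\{f_n-|u_n|^{q-1}u_n\}$ is equi-integrable — equi-integrability of $|u_n|^q$ being upgraded from the bare $L^q$ bound by a De la Vallée-Poussin / Vitali argument using a slightly superlinear refinement, or by invoking that $q>p-1$ together with the a priori Marcinkiewicz estimates $u_n\in L^{p_c,\infty}$ gives a gain — one extracts a subsequence with $u_n\to u$ a.e. in $Q$. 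The stability theorem then yields that $u$ is an R-solution of $u_t-\operatorname{div}(A(x,\nabla u))=f+\mu-|u|^{q-1}u$ with $\nabla u_n\to\nabla u$ a.e. and $T_k(u_n)\to T_k(u)$ in $X$; the a.e. convergence plus Fatou and the uniform $L^q$ bound identify the limit of the absorption terms as $|u|^{q-1}u\in L^1(Q)$, so $u$ solves \eqref{mainprob1} in the renormalized sense. $\hfill\rule{0.5em}{0.5em}$
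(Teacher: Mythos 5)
Your outline (approximate the data, bound the absorption term, invoke Theorem \ref{sta}) is the right skeleton, but the two analytic claims you put at the centre do not hold, and the step you dismiss in one clause is exactly where the theorem lives. First, the hypothesis that $\omega$ does not charge sets of zero $C_{p,\frac{q}{q+1-p}}$-capacity does \emph{not} imply $\mathbf{W}_{1,p}^{2\mathrm{diam}\,\Omega}[\omega]\in L^{q}$, nor any Wolff-potential estimate uniform in $n$; it only gives (Proposition \ref{110413}, imported from \cite{BiNQVe}) a \emph{nondecreasing} sequence $\omega_{n}$, converging strongly to $\omega$, with $\mathbf{W}_{1,p}^{2\mathrm{diam}\,\Omega}[\omega_{n}]\in L^{q}$ for each fixed $n$. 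Also, the uniform bound $\int_{Q}|u_{n}|^{q}\leqq C$ that you present as the heart of the matter is in fact the easy part: it follows from testing with $\varepsilon^{-1}T_{\varepsilon}(u_{n})$ (sign test), as in the proof of Theorem \ref{new}(i) and in (\ref{18061}), with $C=|\mu|(Q)+\|f\|_{1,Q}+\|u_{0}\|_{1,\Omega}$, and needs no potential-theoretic input; your proposed derivation via a pointwise bound ``Wolff potential times an integrable function of $t$'' is not justified (the comparison with the elliptic barrier in Lemmas \ref{T26}--\ref{T27} requires the time factor to be \emph{bounded}, which is why the paper truncates $F_{n}=T_{n}(\chi_{(1/n,T-1/n)}F)$ and absorbs the constant into the spatial measure $\tilde\omega_{n}=n(\chi_{\Omega}+\omega_{n})$).

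The genuine gap is the passage to the limit in the absorption term. A bound of $\{|u_{n}|^{q}\}$ in $L^{1}(Q)$ gives no equi-integrability, and your two fallbacks fail precisely in the regime the theorem is about: the Marcinkiewicz estimate $u_{n}\in L^{p_{c},\infty}$ yields equi-integrability of $|u_{n}|^{q}$ only when $q<p_{c}$ (that subcritical case is Theorem \ref{new}), while for $q\geqq p_{c}$ no De la Vall\'ee-Poussin ``superlinear refinement'' is available for free; without equi-integrability, part of the absorption term may concentrate and cancel a piece of $\mu_{s}$ in the limit, and Theorem \ref{sta} cannot be applied with the data you describe. The paper's mechanism, which is missing from your proposal, is monotonicity plus a barrier for each fixed $n$: the approximating measures $\mu_{1,n},\mu_{2,n}$ built from $\inf\{\mu^{\pm},\omega_{n}\otimes F_{n}\}$ are nondecreasing and dominated by $\tilde\omega_{n}\otimes\chi_{(0,T)}$; the elliptic estimate of Theorem \ref{elli} then gives the pointwise bound $|u_{i,n}|\leqq\kappa\mathbf{W}_{1,p}^{2\mathrm{diam}\,\Omega}[\tilde\omega_{n}]+\|u_{0}\|_{\infty}$, whose $q$-th power is integrable for each fixed $n$, so the truncated absorption can be removed by dominated convergence (Lemma \ref{T27}); and since the solutions $u_{1,n},u_{2,n}$ can be taken nondecreasing in $n$ (Corollary \ref{051120131}), the absorption terms $u_{i,n}^{q}$ converge in $L^{1}(Q)$ by \emph{monotone} convergence under the uniform sign-test bound, after which $|u_{n}|^{q}\leqq u_{1,n}^{q}+u_{2,n}^{q}$ and generalized dominated convergence give strong $L^{1}$ convergence of $|u_{n}|^{q-1}u_{n}$, and only then is Theorem \ref{sta} invoked. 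Without this monotone structure (or some substitute argument for equi-integrability valid for $q\geqq p_{c}$), your proof does not close.
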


We show that some of these measures may not lie in $\mathcal{M}_{0}(Q),$ which
improves the existence results of \cite{PePoPor}, see Proposition \ref{mzero}
and Remark \ref{pari}. Otherwise our result can be extended to a more general
function $\mathcal{G},$ see Remark \ref{exten}. We also consider a source term:

\begin{theorem}
\label{120410} Let $A:\Omega\times\mathbb{R}^{N}\rightarrow\mathbb{R}^{N}$
satisfying (\ref{condi3}), (\ref{condi4}). Let $p<N$, $q>p-1$. Let $\mu
\in\mathcal{M}_{b}^{+}(Q),$ and $u_{0}\in L^{\infty}(\Omega),u_{0}\geqq0$.
Assume that%
\[
\mu\leqq\omega\otimes\chi_{(0,T)},\text{ \quad with }\omega\in\mathcal{M}%
_{b}^{+}(\Omega).
\]
Then there exist $\lambda_{0}=\lambda_{0}(N,p,q,c_{3},c_{4},\text{diam}%
\Omega)$ and $b_{0}=b_{0}(N,p,q,c_{3},c_{4},\mathrm{diam}\Omega)$ such that,
if
\begin{equation}
\omega(E)\leqq\lambda_{0}C_{p,\frac{q}{q-p+1}}(E),\quad\forall E\text{
compact}\subset\mathbb{R}^{N},\qquad||u_{0}||_{\infty,\Omega}\leqq
b_{0},\label{051120132}%
\end{equation}
there exists a nonnegative R-solution $u$ of problem
\begin{equation}
\left\{
\begin{array}
[c]{l}%
u_{t}-\text{div}(A(x,\nabla u))=u^{q}+\mu\qquad\text{in }Q,\\
u=0\qquad\text{on }\partial\Omega\times(0,T),\\
u(0)=u_{0}\qquad\text{in }\Omega,
\end{array}
\right.  \label{pro3}%
\end{equation}
which satisfies, $a.e.$ in $Q,$
\begin{equation}
{u(x,t)}\leqq C\mathbf{W}_{1,p}^{2\mathrm{diam}_{\Omega}}[\omega
](x)+2||u_{0}||_{L^{\infty}},\label{maw}%
\end{equation}
where $C=C(N,p,c_{3},c_{4})$.\bigskip
\end{theorem}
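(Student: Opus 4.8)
The plan is to construct the solution by a monotone iteration scheme and then invoke the stability theorem (Theorem~\ref{sta}) to pass to the limit. First I would set up the iteration: let $u_0$ be the given initial datum and define $u_{k+1}$ as the R-solution of the linear-in-source problem
\[
\left\{
\begin{array}
[c]{l}
\partial_t u_{k+1}-\operatorname{div}(A(x,\nabla u_{k+1}))=u_k^q+\mu\qquad\text{in }Q,\\
u_{k+1}=0\qquad\text{on }\partial\Omega\times(0,T),\\
u_{k+1}(0)=u_0\qquad\text{in }\Omega,
\end{array}
\right.
\]
starting from $u_0^{(0)}\equiv 0$ (or from the solution with zero source). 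Each such problem has an R-solution because the right-hand side is a bounded nonnegative measure, using the existence theory relative to the decomposition~(\ref{dec}) together with the given decomposition of $\mu$; monotonicity of the scheme ($0\le u_k\le u_{k+1}$) follows from the comparison principle for R-solutions, which holds here since the source $r\mapsto r^q$ is nondecreasing on $\mathbb{R}^+$.

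The heart of the matter is the uniform a priori bound~(\ref{maw}), and this is where the recent elliptic estimates of \cite{BiNQVe}, combined with the hypothesis $\mu\le\omega\otimes\chi_{(0,T)}$, come in. The idea is that since the measure is ``good in time'' (its time profile is just $\chi_{(0,T)}$), the parabolic problem can be compared, slice by slice, with the stationary problem $-\operatorname{div}(A(x,\nabla w))=w^q+\omega$ on $\Omega$, whose solutions are controlled by the truncated Wolff potential $\mathbf{W}_{1,p}^{2\operatorname{diam}\Omega}[\omega]$ precisely when the smallness condition $\omega(E)\le\lambda_0 C_{p,q/(q-p+1)}(E)$ holds. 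Concretely, I would show by induction that each $u_k$ satisfies
\[
u_k(x,t)\le C\,\mathbf{W}_{1,p}^{2\operatorname{diam}_\Omega}[\omega](x)+2\|u_0\|_{L^\infty}\qquad a.e.\text{ in }Q;
\]
the induction step amounts to feeding the bound on $u_k$ into the source term $u_k^q$, estimating $\int \big(\mathbf{W}_{1,p}[\omega]+\|u_0\|_\infty\big)^q$ against a new measure still dominated by a constant multiple of $\omega$ (using that $(\mathbf{W}_{1,p}[\omega])^q$ is controlled by $\mathbf{W}_{1,p}[\omega]$ under the capacity condition — this is the Wolff potential ``self-improvement'' lemma of \cite{BiNQVe}), and then applying the potential estimate to the resulting stationary comparison problem. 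The constants $\lambda_0$ and $b_0$ are chosen exactly so that this fixed-point inequality closes, i.e. so the map $t\mapsto C\cdot(\text{estimate of }t^q\text{-source})+2b_0$ has the claimed value as a sub-fixed point; the dependence on $N,p,q,c_3,c_4,\operatorname{diam}\Omega$ is inherited from the Wolff potential estimate.

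With the uniform bound in hand, $\{u_k\}$ increases to some $u$ with $0\le u\le C\mathbf{W}_{1,p}^{2\operatorname{diam}\Omega}[\omega]+2\|u_0\|_\infty$ a.e., and in particular $u^q\in L^1(Q)$ (since the capacity condition forces $\mathbf{W}_{1,p}[\omega]\in L^q$), so the sources $u_k^q+\mu$ are bounded in $\mathcal{M}_b(Q)$ and, by monotone/dominated convergence, $u_k^q\to u^q$ strongly in $L^1(Q)$ while $\mu$ is fixed. Then Theorem~\ref{sta} applied to the sequence $\{u_{k+1}\}$ (with $g_n=0$, $h_n=0$, $f_n=u_k^q$, singular parts coming from $\mu_s$, all hypotheses on narrow/strong convergence being met) gives, up to a subsequence, convergence a.e. of $u_{k+1}$ and $\nabla u_{k+1}$ to an R-solution of $\partial_t u-\operatorname{div}(A(x,\nabla u))=u^q+\mu$ with the correct initial datum; uniqueness of the a.e. limit identifies it with our $u$, so $u$ is the desired R-solution and inherits~(\ref{maw}). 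I expect the main obstacle to be the rigorous ``slicing'' argument that transfers the elliptic Wolff potential bound of \cite{BiNQVe} to the parabolic solution — one must control the time-derivative contribution and justify the comparison with the stationary problem uniformly in $t$, which is delicate because R-solutions do not a priori have good pointwise-in-time traces; handling this likely requires working with the truncations $T_k(u_k)$, exploiting the energy estimates furnished by Theorem~\ref{sta}, and using the specific product structure $\omega\otimes\chi_{(0,T)}$ to avoid any genuine time dependence in the comparison.
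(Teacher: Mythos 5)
Your overall scheme --- a monotone iteration $u_{k+1}$ solving the problem with source $u_k^q+\mu$, a uniform pointwise bound by the truncated Wolff potential closed through the self-improvement property of $\mathbf{W}_{1,p}$ under the capacity condition, and passage to the limit via Theorem \ref{sta} --- is the same as the paper's. But the two steps that carry the argument are asserted rather than proved. First, you obtain monotonicity of the scheme from ``the comparison principle for R-solutions''. No such principle is available: uniqueness of R-solutions for general measure data is open, and the paper never compares two already-given R-solutions. It instead invokes Corollary \ref{051120131}, which \emph{constructs}, alongside a given solution, an ordered solution for the larger data by running the same approximation, so the ordering is built into the construction; your argument needs to be recast in that form, otherwise $u_k\leqq u_{k+1}$ (and the nonnegativity $u_k\geqq0$) is unsupported.

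Second, and more seriously, the pointwise bound that drives your induction, of the type $u_{k+1}\leqq\kappa\,\mathbf{W}_{1,p}^{2\mathrm{diam}\Omega}[\,\overline{u}_k^{\,q}+\omega\,]+\|u_0\|_{\infty,\Omega}$ with $\overline{u}_k=\sup_{t\in(0,T)}u_k(t)$, is exactly the point you leave open (your ``slicing'' obstacle). The paper does not slice in time at the level of R-solutions: in Theorem \ref{main} and Lemma \ref{T26} the comparison is made on smooth approximations $\lambda_{i,n}\leqq\gamma_n\otimes\chi_{(0,T)}$, where the time-independent function $w_n(x)+\|u_0\|_{\infty,\Omega}$, with $-\operatorname{div}(A(x,\nabla w_n))=\gamma_n$, is a supersolution of the approximate parabolic problem; the bound survives the limit by the stability theorem, and $w\leqq\kappa\mathbf{W}_{1,p}^{2\mathrm{diam}\Omega}[\gamma]$ then follows from the elliptic estimate of \cite{BiNQVe} (Theorem \ref{elli}). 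This is packaged as Corollary \ref{TH5}, which is what the proof of Theorem \ref{120410} actually iterates, after which Lemma \ref{12047} (resting on Theorem \ref{12042}) fixes $\lambda_0,b_0$ so that the iterates stay below $2\beta_p\kappa\mathbf{W}_{1,p}^{2\mathrm{diam}\Omega}[\omega]+2\|u_0\|_{\infty,\Omega}$. Without this step (or an equivalent substitute) your induction does not close, so as written the proposal has a genuine gap at its central estimate; the remaining items (integrability of $u^q$ from the capacity condition, strong $L^1$ convergence of $u_k^q$, application of Theorem \ref{sta} to the measures $u_k^q+\mu$) agree with the paper and are fine.
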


Corresponding results in case where $\mathcal{G}$ has exponential type are
given at Theorems \ref{expo} and \ref{MTH1}.

\section{Approximations of measures\label{prox}}

For any open set $\varpi$ of $\mathbb{R}^{m}$ and $F\in(L^{k}(\varpi))^{\nu},$
$k\in\left[  1,\infty\right]  ,m,\nu\in\mathbb{N}^{\ast},$ we set $\left\Vert
F\right\Vert _{k,\varpi}=\left\Vert F\right\Vert _{(L^{k}(\varpi))^{\nu}}%
.$\bigskip

First we give approximations of nonnegative measures in $\mathcal{M}_{0}(Q).$
We recall that any measure $\mu\in\mathcal{M}_{0}(Q)\cap\mathcal{M}_{b}(Q)$
admits a decomposition under the form $\mu=(f,g,h)$ given by (\ref{dec}).
Conversely, any measure of this form, \textit{such that} $h\in L^{\infty}(Q),$
lies in $\mathcal{M}_{0}(Q),$ see \cite[Proposition 3.1]{PePoPor}.

\begin{lemma}
\label{att}Let $\mu\in\mathcal{M}_{0}(Q)\cap\mathcal{M}_{b}^{+}(Q)$ and
$\varepsilon>0$.

\noindent(i) Then, we can find a decomposition $\mu=(f,g,h)$ with $f\in
L^{1}(Q),g\in(L^{p^{\prime}}(Q))^{N},h\in X$ such that
\begin{equation}
||f||_{1,Q}+\left\Vert g\right\Vert _{p^{\prime},Q}+||h||_{X}\leqq
(1+\varepsilon)\mu(Q),\qquad\left\Vert g\right\Vert _{p^{\prime},Q}%
+||h||_{X}\leqq\varepsilon. \label{deco}%
\end{equation}
(ii) Furthermore, there exists a sequence of measures $\mu_{n}=(f_{n}%
,g_{n},h_{n}),$ such that $f_{n},g_{n},h_{n}\in C_{c}^{\infty}(Q)$ and
strongly converge to $f,g,h$ in $L^{1}(Q),(L^{p^{\prime}}(Q))^{N}$ and $X$
respectively, and $\mu_{n}$ converges to $\mu$ in the narrow topology, and
satisfying
\begin{equation}
||f_{n}||_{1,Q}+\left\Vert g_{n}\right\Vert _{p^{\prime},Q}+||h_{n}||_{X}%
\leqq(1+2\varepsilon)\mu(Q),\qquad\left\Vert g_{n}\right\Vert _{p^{\prime}%
,Q}+||h_{n}||_{X}\leqq2\varepsilon. \label{decn}%
\end{equation}

\end{lemma}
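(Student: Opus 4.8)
The plan is to establish (i) first, then bootstrap to (ii) by a mollification/truncation argument. For (i), start from any given decomposition $\mu=(f_0,g_0,h_0)$ of $\mu\in\mathcal{M}_0(Q)\cap\mathcal{M}_b^+(Q)$ in the sense of (\ref{dec}), whose existence is guaranteed by \cite{DrPoPr}. The issue is that $g_0,h_0$ need not be small; the idea is to ``absorb'' the non-small part into the $L^1$ part $f$. Concretely, I would solve an auxiliary linear parabolic problem: let $w$ be the (variational) solution of $w_t-\Delta w = \mu$ in $Q$ with zero lateral and initial data, or rather solve it with the regularized measure; then split $\mu = (\mu - (w_t-\Delta w)) + (w_t - \Delta w)$. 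A cleaner route, and the one I expect the authors use, is a time-localization: since $h_0\in X$, approximate $h_0$ in $X$ by $h_0^\delta$ with $h_0^\delta \to h_0$ and control $\|h_0-h_0^\delta\|_X<\varepsilon/2$; the difference $(h_0-h_0^\delta)_t$ is then a distribution of small $X'$-norm which, because $\mu$ is diffuse, can be rewritten using the fact that $X'$-elements are of the form $f_1-\operatorname{div}g_1$ with small $L^1$ and $L^{p'}$ norms — but this is circular. The genuinely correct mechanism: use that $\mu\in\mathcal{M}_0(Q)$ means $\mu$ is a diffuse measure, so by \cite[Prop.~3.1]{PePoPor} or \cite{DrPoPr} one can choose the decomposition with $h\in L^\infty(Q)\cap X$; then for $\theta\in(0,1)$ write $\mu = \mu_1 + \theta\mu$ trivially... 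Let me instead state the real plan:

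I would use the following scaling trick. Fix a large $k$ and write $\mu = \mu\,\chi_{\{|h_0|\le k\}} + \mu\,\chi_{\{|h_0|>k\}}$ — no, $\mu$ is a measure not absolutely continuous. The honest approach: by \cite{DrPoPr} every $\mu\in\mathcal{M}_0(Q)$ is a strong limit (in the sense of (\ref{dec})-convergence) of measures $(f_n,g_n,h_n)$ with smooth data; for the target decomposition with small $g,h$, observe that given $(f_0,g_0,h_0)$, the measure $-\operatorname{div}g_0 + (h_0)_t$ lies in $X'$ and has some norm $M$. Choose $N_0$ and a partition; using density of $C_c^\infty(Q)$ in $X$, approximate $h_0$ by $\tilde h\in C_c^\infty(Q)$ with $\|h_0-\tilde h\|_X$ as small as we like, and approximate $g_0$ by $\tilde g\in C_c^\infty$ similarly. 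Then $-\operatorname{div}(g_0-\tilde g) + (h_0-\tilde h)_t$ is a smooth function $r\in C_c^\infty(Q)\subset L^1(Q)$, and we set $f := f_0 + r$, $g:=\tilde g$, $h:=\tilde h$. But $\tilde g,\tilde h$ are still not small. To fix this, iterate: having made $g,h$ smooth, rescale — replace $h$ by $\lambda h$ and add $(1-\lambda)h$ back into... still not small because $(h)_t\notin L^1$ in general.

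The key realization — and this is the step I expect to be the main obstacle — is that one must use the nonnegativity and the diffuse character together: $\mu\ge 0$ of zero charge on $c_p^Q$-null sets admits, by the results of \cite{DrPoPr}, a decomposition where moreover $\|g\|_{p',Q}+\|h\|_X$ can be made arbitrarily small at the cost of enlarging $\|f\|_{1,Q}$ only up to $(1+\varepsilon)\mu(Q)$; the mechanism is that $\mu$ being a finite positive diffuse measure, its total mass ``wants'' to sit in $L^1$, and the $\operatorname{div}g$ and $h_t$ pieces are only correction terms that vanish as the approximation improves, with the crucial quantitative bound $\|f\|_1\le\mu(Q) + \|g\|_{p'}+\|h\|_X$ coming from testing the equation $f = \mu + \operatorname{div}g - h_t$ against $\operatorname{sign}$-type test functions and using $\mu\ge0$. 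So: I would first invoke \cite[Proposition~3.1]{PePoPor} / \cite{DrPoPr} to get that $\mu$ is the limit, in the natural topology, of its time-mollifications $\mu*\varrho_\delta$ (mollifying only in $t$, after extending suitably), each of which is in $\mathcal{M}_0(Q)$ with bounded-in-$L^1$ density in time; controlling the decomposition of the smooth truncation error, and passing $\delta\to 0$, yields (\ref{deco}).

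For (ii), apply a standard diagonal mollification: mollify $f,g,h$ from part (i) in both space and time (after extending $h$ by $0$ outside $Q$, which is legitimate since we may first replace $h$ by a compactly-supported-in-time approximation), obtaining $f_n,g_n,h_n\in C_c^\infty(Q)$ with $f_n\to f$ in $L^1$, $g_n\to g$ in $(L^{p'})^N$, $h_n\to h$ in $X$; then $\mu_n := f_n - \operatorname{div}g_n + (h_n)_t$ converges to $\mu$ in $X'+L^1$, hence in the narrow topology of measures (testing against $C_c(\overline Q)\cap X$ and using that narrow convergence of nonnegative measures follows from weak convergence plus convergence of total masses, the latter from $\int_Q f_n\to\int_Q f=\mu(Q)$ since the $\operatorname{div}$ and $t$-derivative terms integrate to boundary terms that vanish). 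Finally (\ref{decn}) follows from (\ref{deco}) by taking $n$ large enough that $\|f_n\|_{1,Q}\le\|f\|_{1,Q}+\varepsilon\mu(Q)$ and $\|g_n\|_{p',Q}+\|h_n\|_X\le\|g\|_{p',Q}+\|h\|_X+\varepsilon$, and then relabeling $\varepsilon$. The main obstacle, as flagged, is the quantitative estimate $\|f\|_{1,Q}\le(1+\varepsilon)\mu(Q)$ in part (i) — making precise that the defect measure forcing $f$ away from $\mu$ can be pushed entirely into small $g,h$ corrections while only inflating the $L^1$-mass by a controlled factor; everything after that is routine mollification.
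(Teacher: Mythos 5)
There is a genuine gap, and you have in fact flagged it yourself: you never produce the mechanism that yields $\|f\|_{1,Q}\leq(1+\varepsilon)\mu(Q)$ with $\|g\|_{p',Q}+\|h\|_X\leq\varepsilon$, and the routes you sketch (absorbing via an auxiliary heat equation, truncating or rescaling $h$, testing $f=\mu+\operatorname{div}g-h_t$ against sign-type functions, mollifying only in time) either are acknowledged to fail or do not work: a time-only mollification of $\mu$ is still a measure in $x$, not an element of $L^1(Q)$, so it cannot be placed in the $f$-slot, and "controlling the decomposition of the smooth truncation error" is precisely the missing step. The paper's key idea is elementary but different: first reduce to $\mu$ with compact support in $Q$ (then, by \cite{DrPoPr}, one may take a decomposition $(f,g,h)$ with compactly supported components), and mollify in space \emph{and} time. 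Since $\mu\geq0$, the mollification $\mu_{n_0}=\varphi_{n_0}\ast\mu$ is a nonnegative function in $C_c^\infty(Q)$ with $\|\mu_{n_0}\|_{1,Q}=\mu(Q)$ exactly, so the whole mass of $\mu$ can be carried by the $L^1$ part; writing $\mu=\mu_{n_0}+(\mu-\mu_{n_0})$ and decomposing the remainder as $(f-f_{n_0},\,g-g_{n_0},\,h-h_{n_0})$, whose three norms are $\leq\varepsilon\min\{\mu(Q),1\}$ by strong convergence of mollifications, gives the new decomposition $(\mu_{n_0}+f-f_{n_0},\,g-g_{n_0},\,h-h_{n_0})$ satisfying (\ref{deco}). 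The general case then follows by splitting $\mu=\sum(\theta_n-\theta_{n-1})\mu$ with cutoffs $\theta_n\in C_c^\infty(Q)$ and applying the compact-support case to each piece with budget $2^{-n-1}\varepsilon$; this also disposes of the support issue that your "extend $h$ by zero and mollify" suggestion runs into (changing $h$ alone forces a compensation involving $h_t$, which is not a controlled function).

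Concerning (ii), your mollification plan is the right spirit and close to the paper's, but your argument for narrow convergence is flawed: you claim $\mu_n(Q)=\int_Q f_n\to\int_Q f=\mu(Q)$, whereas $\int_Q f\neq\mu(Q)$ in general, since the constant $1$ is not an admissible test function for the pieces $-\operatorname{div}g+h_t$. In the paper the mass identity comes for free because $\mu_n=\varphi_{m_n}\ast(\theta_n\mu)$ is a mollification of the (nonnegative) cut-off measure itself, so $\mu_n(Q)=\int_Q\theta_n\,d\mu\to\mu(Q)$, which combined with weak-$\ast$ convergence gives narrow convergence.
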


\begin{proof}
(i) Step 1. Case where $\mu$ has a compact support in $Q.$ By \cite{DrPoPr},
we can find a decomposition $\mu=(f,g,h)$ with $f,g,h$ have a compact support
in $Q.$ Let $\left\{  \varphi_{n}\right\}  $ be sequence of mollifiers in
$\mathbb{R}^{N+1}$. Then $\mu_{n}=\varphi_{n}\ast\mu\in C_{c}^{\infty}(Q)$ for
$n$ large enough. We see that $\mu_{n}(Q)=\mu(Q)$ and $\mu_{n}$ admits the
decomposition $\mu_{n}=(f_{n},g_{n},h_{n})=(\varphi_{n}\ast f,\varphi_{n}\ast
g,\varphi_{n}\ast h)$. Since $\left\{  f_{n}\right\}  ,\left\{  g_{n}\right\}
,\left\{  h_{n}\right\}  $ strongly converge to $f,g,h$ in $L^{1}%
(Q),(L^{p^{\prime}}(Q))^{N}$ and $X$ respectively, we have for $n_{0}$ large
enough,
\[
||f-f_{n_{0}}||_{1,Q}+||g-g_{n_{0}}||_{p^{\prime},Q}+||h-h_{n_{0}}||_{X}%
\leqq\varepsilon\min\{\mu(Q),1\}.
\]
Then we obtain a decomposition $\mu=(\hat{f},\hat{g},\hat{h})=(\mu_{n_{0}%
}+f-f_{n_{0}},g-g_{n_{0}},h-h_{n_{0}}),$ such that
\begin{equation}
||\hat{f}||_{1,Q}+||\hat{g}||_{p^{\prime},Q}+||\hat{h}||_{X}\leqq
(1+\varepsilon)\mu(Q),\qquad\left\Vert \hat{g}\right\Vert _{p^{\prime}%
,Q}+||\hat{h}||_{X}\leqq\varepsilon. \label{comp}%
\end{equation}

Step 2. General case. Let $\{\theta_{n}\}$ be a nonnegative, nondecreasing
sequence in $C_{c}^{\infty}(Q)$ which converges to $1,$ $a.e.$ in $Q$. Set
${\tilde{\mu}_{0}}={\theta_{0}\mu,}$ and ${\tilde{\mu}_{n}}=(\theta_{n}%
-\theta_{n-1})\mu,$ for any $n\geqq1$. Since $\tilde{\mu}_{n}\in
\mathcal{M}_{0}(Q)\cap\mathcal{M}_{b}^{+}(Q)$ has compact support, by Step 1,
we can find a decomposition $\tilde{\mu}_{n}=($ $\tilde{f}_{n},\tilde{g}%
_{n},\tilde{h}_{n})$ such that
\[
||\tilde{f}_{n}||_{1,Q}+\left\Vert \tilde{g}_{n}\right\Vert _{p^{\prime}%
,Q}+||\tilde{h}_{n}||_{X}\leqq(1+\varepsilon)\mu_{n}(Q),\qquad||\tilde{g}%
_{n}||_{p^{\prime},Q}+||\tilde{h}_{n}||_{X}\leqq2^{-n-1}\varepsilon.
\]
\newline Let $\overline{f}_{n}=\sum\limits_{k=0}^{n}{\tilde{f}}_{k}$,
$\overline{g}_{n}=\sum\limits_{k=0}^{n}\tilde{g}_{k}$ and $\bar{h}_{n}%
=\sum\limits_{k=0}^{n}\tilde{h}_{k}$. Clearly, $\theta_{n}\mu=(\overline
{f}_{n},\overline{g}_{n},\bar{h}_{n}),$ and $\left\{  \overline{f}%
_{n}\right\}  $,$\left\{  \overline{g}_{n}\right\}  ,\left\{  \bar{h}%
_{n}\right\}  $ converge strongly to some $f,g,h,$ respectively in $L^{1}(Q)$,
$(L^{p^{\prime}}(Q))^{N}$, $X$, with
\[
||\overline{f}_{n}||_{1,Q}+||\overline{g}_{n}||_{p^{\prime},Q}+||\bar{h}%
_{n}||_{X}\leqq(1+\varepsilon)\mu(Q),\qquad||\overline{g}_{n}||_{p^{\prime}%
,Q}+||\bar{h}_{n}||_{X}\leqq\varepsilon.
\]
Therefore, $\mu=(f,g,h)$ and (\ref{deco}) holds.\medskip

(ii) We take a sequence $\{m_{n}\}$ in $\mathbb{N}$ such that $f_{n}%
=\varphi_{m_{n}}\ast\overline{f}_{n},g_{n}=\varphi_{m_{n}}\ast\overline{g}%
_{n},h_{n}=\varphi_{m_{n}}\ast\bar{h}_{n}\in C_{c}^{\infty}(Q)$ and
\[
||f_{n}-\overline{f}_{n}||_{1,Q}+||g_{n}-\overline{g}_{n}||_{p^{\prime}%
,Q}+||h_{n}-\bar{h}_{n}||_{X}\leqq\frac{\varepsilon}{n+1}\min\{\mu(Q),1\}.
\]
Let $\mu_{n}=\varphi_{m_{n}}\ast(\theta_{n}\mu)=(f_{n},g_{n},h_{n}).$
Therefore, $\left\{  f_{n}\right\}  ,\left\{  g_{n}\right\}  ,\left\{
h_{n}\right\}  $ strongly converge to $f,g,h$ in $L^{1}(Q),(L^{p^{\prime}%
}(Q))^{N}$ and $X$ respectively. And (\ref{decn}) holds. Furthermore,
$\left\{  \mu_{n}\right\}  $ converges weak-* to $\mu,$ and $\mu_{n}%
(Q)=\int_{Q}\theta_{n}d\mu$ converges to $\mu(Q)$, thus $\left\{  \mu
_{n}\right\}  $ converges in the narrow topology.\medskip
\end{proof}

As a consequence, we get an approximation property for any measure $\mu
\in\mathcal{M}_{b}^{+}(Q):$

\begin{proposition}
\label{P5} Let $\mu\in\mathcal{M}_{b}^{+}(Q)$ and $\varepsilon>0$. Let
$\left\{  \mu_{n}\right\}  $ be a nondecreasing sequence in $\mathcal{M}%
_{b}^{+}(Q)$ converging to $\mu$ in $\mathcal{M}_{b}(Q)$. Then, there exist
$f_{n},f\in L^{1}(Q)$, $g_{n},g\in(L^{p^{\prime}}(Q))^{N}$ and $h_{n},h\in X,$
$\mu_{n,s},\mu_{s}\in\mathcal{M}_{s}^{+}(Q)$ such that
\[
\mu=f-\operatorname{div}g+h_{t}+\mu_{s},\qquad\mu_{n}=f_{n}-\operatorname{div}%
g_{n}+(h_{n})_{t}+\mu_{n,s},
\]
and $\left\{  f_{n}\right\}  ,\left\{  g_{n}\right\}  ,\left\{  h_{n}\right\}
$ strongly converge to $f,g,h$ in $L^{1}(Q),(L^{p^{\prime}}(Q))^{N}$ and $X$
respectively, and $\left\{  \mu_{n,s}\right\}  $ converges to $\mu_{s}$
(strongly) in $\mathcal{M}_{b}(Q)$ and
\begin{equation}
||f_{n}||_{1,Q}+||g_{n}||_{p^{\prime},Q}+||h_{n}||_{X}+\mu_{n,s}(\Omega
)\leqq(1+\varepsilon)\mu(Q),\qquad\text{and }||g_{n}||_{p^{\prime},Q}%
+||h_{n}||_{X}\leqq\varepsilon. \label{2504}%
\end{equation}

\end{proposition}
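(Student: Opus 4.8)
The plan is to reduce to Lemma \ref{att} by splitting each $\mu_n$ into a diffuse part and a singular part and then applying the diagonal/telescoping construction used there. First I would use the decomposition \eqref{deo} to write $\mu=\mu_0+\mu_s$ with $\mu_0\in\mathcal{M}_0(Q)\cap\mathcal{M}_b^+(Q)$ and $\mu_s\in\mathcal{M}_s^+(Q)$; since $\mu\geqq0$ and the decomposition is unique, both parts are nonnegative. I would like to do the same for each $\mu_n$, writing $\mu_n=\mu_{n,0}+\mu_{n,s}$. The monotonicity of $\{\mu_n\}$ should be inherited by neither part in general, but what matters is the control $\mu_{n,0}(Q)+\mu_{n,s}(Q)=\mu_n(Q)\leqq\mu(Q)$ and the convergence $\mu_{n,0}\to\mu_0$, $\mu_{n,s}\to\mu_s$ in $\mathcal{M}_b(Q)$, which follows because the map $\nu\mapsto(\nu_0,\nu_s)$ is continuous for the strong (total variation) topology on monotone sequences — here I would invoke the standard fact (as in \cite{DrPoPr}) that $\mu_n\uparrow\mu$ strongly implies the diffuse parts converge strongly and likewise the singular parts. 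Since $\{\mu_{n,s}\}$ already consists of measures in $\mathcal{M}_s^+(Q)$ converging strongly to $\mu_s$, the singular part of the statement is immediate; the work is entirely on the diffuse parts.

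Next I would apply Lemma \ref{att}(i) to $\mu_0$ to get a fixed decomposition $\mu_0=(f,g,h)$ satisfying \eqref{deco}, i.e. $\|f\|_{1,Q}+\|g\|_{p',Q}+\|h\|_X\leqq(1+\varepsilon/2)\mu_0(Q)$ and $\|g\|_{p',Q}+\|h\|_X\leqq\varepsilon/2$. For the $\mu_{n,0}$ I cannot just apply the lemma term by term, because I need the resulting $(f_n,g_n,h_n)$ to converge to $(f,g,h)$, not merely to be bounded. So instead I would mimic the telescoping argument inside the proof of Lemma \ref{att}: write $\mu_{n,0}$ minus a suitable truncation, or rather build the decomposition of $\mu_0$ as a strongly convergent series $\sum_k(\tilde f_k,\tilde g_k,\tilde h_k)$ coming from compactly supported pieces $(\theta_k-\theta_{k-1})\mu_0$, and then for each $n$ take a partial sum up to an index $N(n)\to\infty$ chosen so that the tail is $\leqq\varepsilon/(n+1)$; adjusting by the (small, controlled) difference $\mu_{n,0}-\theta_{N(n)}\mu_0$, whose total variation tends to $0$ by the strong convergence $\mu_{n,0}\to\mu_0$ and $\theta_{N(n)}\uparrow1$. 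Absorbing this small error into $f_n$ (as in Step 1 of Lemma \ref{att}, where the correction $\mu_{n_0}+f-f_{n_0}$ lives in $L^1$) yields $(f_n,g_n,h_n)$ with $f_n,g_n,h_n\in X$ (indeed smooth if desired), converging strongly to $(f,g,h)$, and satisfying the bound \eqref{2504} with room to spare because $\mu_{n,0}(Q)\leqq\mu_n(Q)\leqq\mu(Q)$ and the $g,h$-norms are of order $\varepsilon$.

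The main obstacle I expect is the bookkeeping to make the telescoping index $N(n)$ do double duty: it must simultaneously make the tail of the series for $\mu_0$ small \emph{and} be compatible with how close $\mu_{n,0}$ is to $\mu_0$, since these two approximation rates are a priori unrelated. The clean way around this is to first fix, for each fixed $n$, a decomposition of $\mu_{n,0}$ itself via Lemma \ref{att} (giving the $L^1/L^{p'}/X$ bounds automatically), and then correct it toward the fixed decomposition of $\mu_0$ by adding the difference of the two diffuse measures — which is again diffuse, has total variation $\to0$, and so by Lemma \ref{att} admits a decomposition with all three norms $\to0$; adding this correction gives strong convergence of all three components while only perturbing the bound \eqref{2504} by $o(1)$, hence $\leqq(1+\varepsilon)\mu(Q)$ for $n$ large, and for the finitely many small $n$ one simply enlarges the decomposition's $\varepsilon$ parameter. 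A final check: the narrow convergence of $\mu_{n,s}\to\mu_s$ is not asserted in the statement (only strong $\mathcal{M}_b$ convergence), so no extra mass-conservation argument is needed there; but one should verify the claimed strong convergence $\mu_{n,s}\to\mu_s$ in $\mathcal{M}_b(Q)$, which again follows from $\mu_n\uparrow\mu$ strongly together with the continuity of the diffuse/singular splitting on monotone strongly convergent sequences.
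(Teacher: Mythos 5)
Your plan founders on a claim you make in the first paragraph and then rely on (negatively) for the rest: you assert that the monotonicity of $\{\mu_n\}$ is ``inherited by neither part in general.'' This is false, and the correct statement is exactly what makes the proof work. Since $\mu_{n+1}-\mu_n\geqq 0$, the uniqueness of the decomposition (\ref{deo}) applied to this nonnegative measure shows that its diffuse and singular parts, namely $\mu_{n+1,0}-\mu_{n,0}$ and $\mu_{n+1,s}-\mu_{n,s}$, are both nonnegative; so $\{\mu_{n,0}\}$ and $\{\mu_{n,s}\}$ are nondecreasing, and the strong convergences $\mu_{n,0}\to\mu_0$, $\mu_{n,s}\to\mu_s$ follow at once from $\left\Vert \mu-\mu_n\right\Vert_{\mathcal{M}_b(Q)}=\left\Vert \mu_0-\mu_{n,0}\right\Vert_{\mathcal{M}_b(Q)}+\left\Vert \mu_s-\mu_{n,s}\right\Vert_{\mathcal{M}_b(Q)}$, with no external ``standard fact'' needed. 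Because you disclaim this positivity, both of your constructions have gaps as written. In your first variant, the correction $\mu_{n,0}-\theta_{N(n)}\mu_0$ is a bounded diffuse \emph{measure}, not an $L^1$ function, so it cannot be ``absorbed into $f_n$'' the way the mollified term is absorbed in Step 1 of Lemma \ref{att}; it must itself be decomposed, and Lemma \ref{att}(i) only applies to \emph{nonnegative} measures of $\mathcal{M}_0(Q)\cap\mathcal{M}_b^+(Q)$. In your second (``clean'') variant you apply Lemma \ref{att} to the difference $\mu_0-\mu_{n,0}$, which on your own premises could be signed, so the application is unjustified unless you either restore the monotonicity of the diffuse parts (the actual situation) or split the signed diffuse difference into its positive and negative parts, each of which is diffuse. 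Finally, your fix for the finitely many small $n$ (``enlarge the decomposition's $\varepsilon$ parameter'') goes the wrong way: to keep (\ref{2504}) for those $n$ you should decompose $\mu_{n,0}$ directly by Lemma \ref{att}(i) with parameter $\varepsilon$ (changing finitely many terms does not affect the convergences), not loosen the parameter of the limit decomposition.

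Once the monotonicity of the parts is acknowledged, your second variant does become a correct alternative, and it is worth comparing it with the paper's argument. The paper telescopes: it sets $\tilde\mu_{n,0}=\mu_{n,0}-\mu_{n-1,0}\geqq 0$, applies Lemma \ref{att}(i) to each increment with the $g$- and $h$-norms controlled by $2^{-n-1}\varepsilon$, and takes $f_n,g_n,h_n$ to be the partial sums; strong convergence is automatic because the $L^1$, $L^{p^{\prime}}$ and $X$ norms of the increments are summable (their masses telescope to $\mu_0(Q)$), and (\ref{2504}) holds \emph{uniformly in $n$} by simply summing the bounds, so no separate treatment of small $n$ is needed. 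Your deficit approach (decompose $\mu_0$ once, decompose $\sigma_n=\mu_0-\mu_{n,0}\geqq 0$ with norms $O(\sigma_n(Q))\to 0$, and subtract) also yields the strong convergences, but it only gives (\ref{2504}) for large $n$ and forces the ad hoc patch for the initial terms; the telescoping construction is the cleaner route precisely because it exploits the inherited monotonicity from the start.
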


\begin{proof}
Since $\left\{  \mu_{n}\right\}  $ is nondecreasing, then $\left\{  \mu
_{n,0}\right\}  $, $\left\{  \mu_{n,s}\right\}  $ are too. Clearly,
$\left\Vert {\mu-{\mu_{n}}}\right\Vert _{\mathcal{M}_{b}(Q)}=\left\Vert
{{\mu_{0}}-{\mu_{n,0}}}\right\Vert _{\mathcal{M}_{b}(Q)}+\left\Vert {{\mu_{s}%
}-{\mu_{n,s}}}\right\Vert _{\mathcal{M}_{b}(Q)}$. Hence, $\left\{  \mu
_{n,s}\right\}  $ converge to $\mu_{s}$ and $\left\{  \mu_{n,0}\right\}  $
converge to ${{\mu_{0}}}$ (strongly) in $\mathcal{M}_{b}(Q)$. Set
${\widetilde{\mu}_{0,0}}={\mu_{0,0},}$ and ${\widetilde{\mu}_{n,0}}={\mu
_{n,0}}-{\mu_{n-1,0}}$ for any $n\geqq1$. By Lemma \ref{att}, (i), we can find
$\tilde{f}_{n}\in L^{1}(Q)$, $\tilde{g}_{n}\in(L^{p^{\prime}}(Q))^{N}$ and
$\tilde{h}_{n}\in X$ such that $\tilde{\mu}_{n,0}=(\tilde{f}_{n},\tilde{g}%
_{n},\tilde{h}_{n})$ and
\[
||\tilde{f}_{n}||_{1,Q}+||\tilde{g}_{n}||_{p^{\prime},Q}+||\tilde{h}_{n}%
||_{X}\leqq(1+\varepsilon)\tilde{\mu}_{n,0}(Q),\qquad||\tilde{g}%
_{n}||_{p^{\prime},Q}+||\tilde{h}_{n}||_{X}\leqq2^{-n-1}\varepsilon.
\]

\noindent Let $f_{n}=\sum\limits_{k=0}^{n}{\tilde{f}}_{k}$, $G_{n}%
=\sum\limits_{k=0}^{n}\tilde{g}_{k}$ and $h_{n}=\sum\limits_{k=0}^{n}\tilde
{h}_{k}$. Clearly, $\mu_{n,0}=(f_{n},g_{n},h_{n})$ and the convergence
properties hold with (\ref{2504}), since
\[
||f_{n}||_{1,Q}+||g_{n}||_{p^{\prime},Q}+||h_{n}||_{X}\leqq(1+\varepsilon
)\mu_{0}(Q).
\]

\end{proof}

In Section \ref{appli} we consider some measures $\mu\in\mathcal{M}_{b}(Q)$
which satisfy $\left\vert \mu\right\vert $ $\leqq\omega\otimes F,$ with
$\omega\in\mathcal{M}_{b}(\Omega)$ and $F\in L^{1}((0,T)),F\geqq0.$ It is
interesting to compare the properties of $\omega\otimes F$ and $\omega
:\medskip$

Let $c_{p}^{\Omega}$ be the elliptic capacity in $\Omega$ defined by
\[
c_{p}^{\Omega}(K)=\inf\{\int_{\Omega}|\nabla\varphi|^{p}:\varphi\geqq\chi
_{K},\varphi\in C_{c}^{\infty}(\Omega)\},
\]
for any compact set $K\subset\Omega.$ $\medskip$

Let $\mathcal{M}_{0,e}(\Omega)$ be the set of Radon measures $\omega$ on that
do not charge the sets of zero $c_{p}^{\Omega}$-capacity. Then $\mathcal{M}%
_{b}(\Omega)\cap\mathcal{M}_{0,e}(\Omega)$ is characterised as the set of
measures $\omega\in\mathcal{M}_{b}(\Omega)$ which can be written under the
form $\tilde{f}-\operatorname{div}\tilde{g}$ with $\tilde{f}\in L^{1}(\Omega)$
and $\tilde{g}\in(L^{p^{\prime}}(\Omega))^{N},$ see \cite{BoGaOr96}.

\begin{proposition}
\label{mzero}For any $F\in L^{1}((0,T))$ with $\int_{0}^{T}F(t)dt\not = 0,$
and $\omega\in\mathcal{M}_{b}(\Omega)$,
\[
\omega\in\mathcal{M}_{0,e}(\Omega)\Longleftrightarrow\omega\otimes
F\in\mathcal{M}_{0}(Q).
\]

\end{proposition}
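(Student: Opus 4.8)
The plan is to prove both implications using the characterisations of $\mathcal{M}_0(Q)$ and $\mathcal{M}_{0,e}(\Omega)$ recalled in the excerpt, together with the observation that parabolic capacity $c_p^Q$ and elliptic capacity $c_p^\Omega$ are linked through sets of product type.

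\textbf{The direction $\Longleftarrow$.} Assume $\omega\otimes F\in\mathcal{M}_0(Q)$ but, for contradiction, that $\omega\notin\mathcal{M}_{0,e}(\Omega)$. Then there is a Borel (hence compact, by inner regularity) set $K\subset\Omega$ with $c_p^\Omega(K)=0$ and $|\omega|(K)>0$. I would show that the cylinder $K\times J\subset Q$ has zero $c_p^Q$-capacity for a suitable interval $J\subset(0,T)$ on which $F$ has nonzero integral (such $J$ exists since $\int_0^T F\neq 0$). Indeed, if $\{\varphi_j\}\subset C_c^\infty(\Omega)$ is a minimising sequence for $c_p^\Omega(K)$ with $\varphi_j\geq\chi_K$ and $\|\nabla\varphi_j\|_{L^p(\Omega)}\to 0$, then taking $\psi_j(x,t)=\varphi_j(x)\zeta(t)$ with a fixed cutoff $\zeta\in C_c^\infty((0,T))$ equal to $1$ on $J$ gives admissible test functions in the definition of $c_p^Q(K\times J)$: one checks $\psi_j\in W$ (because $\varphi_j\in W_0^{1,p}(\Omega)\cap L^2(\Omega)$ and $(\psi_j)_t=\varphi_j\zeta'\in X'$) and $\|\psi_j\|_W=\|\psi_j\|_X+\|(\psi_j)_t\|_{X'}\to 0$ as $j\to\infty$, since every term contains a factor $\|\varphi_j\|_{W_0^{1,p}(\Omega)}$ or a norm controlled by the Poincaré inequality from $\|\nabla\varphi_j\|_{L^p}$. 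Hence $c_p^Q(K\times J)=0$, while $(\omega\otimes F)(K\times J)=\omega(K)\int_J F\,dt\neq 0$, contradicting $\omega\otimes F\in\mathcal{M}_0(Q)$.

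\textbf{The direction $\Longrightarrow$.} Assume $\omega\in\mathcal{M}_b(\Omega)\cap\mathcal{M}_{0,e}(\Omega)$. By the cited result of \cite{BoGaOr96}, write $\omega=\tilde f-\operatorname{div}\tilde g$ with $\tilde f\in L^1(\Omega)$, $\tilde g\in(L^{p'}(\Omega))^N$. Then formally $\omega\otimes F=(\tilde f F)-\operatorname{div}(\tilde g F)$, and since $F\in L^1((0,T))$ one has $\tilde f F\in L^1(Q)$ but only $\tilde g F\in (L^1(Q))^N$, not necessarily in $(L^{p'}(Q))^N$; so I cannot directly invoke the decomposition $(f,g,h)$ of \eqref{dec}. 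The fix is to absorb the bad time factor by a change of the unknown/test function, or better, to argue by approximation: split $F=\sum_k F_k$ with $F_k=F\chi_{E_k}$ on level sets $E_k=\{2^{k}\le |F|<2^{k+1}\}$ (plus the part where $F=0$), so that each $\omega\otimes F_k$ equals $\tilde f_k-\operatorname{div}\tilde g_k$ with $\tilde f_k=\tilde f F_k\in L^1$, $\tilde g_k=\tilde g F_k\in (L^{p'}(Q))^N$ (because $F_k$ is bounded), hence $\omega\otimes F_k\in\mathcal{M}_0(Q)$ by \cite[Proposition 3.1]{PePoPor} (with $h=0$). Then $\omega\otimes F=\sum_k \omega\otimes F_k$ is a limit (in total variation, using $\sum_k\|F_k\|_{L^1((0,T))}=\|F\|_{L^1((0,T))}<\infty$ and $\omega$ bounded) of measures in $\mathcal{M}_0(Q)$; since $\mathcal{M}_0(Q)$ is closed under such convergence — any set of zero $c_p^Q$-capacity is not charged by any $\omega\otimes F_k$, hence not by the limit — we conclude $\omega\otimes F\in\mathcal{M}_0(Q)$. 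One should first reduce to $\omega\geq 0$ by the Jordan decomposition.

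\textbf{Main obstacle.} The delicate point is the integrability mismatch in the $\Longrightarrow$ direction: the naive decomposition of $\omega\otimes F$ has a divergence part only in $(L^1(Q))^N$, so one genuinely needs the truncation-in-time argument above (or an equivalent tensorisation of the approximating sequence from Lemma \ref{att} with an $L^1$ time weight) to land in the admissible class and then pass to the limit. In the $\Longleftarrow$ direction the only technical care needed is verifying that the product test functions $\varphi_j(x)\zeta(t)$ genuinely belong to $W$ with vanishing norm — in particular handling the $L^2(\Omega)$ component in the definition of $X$ via boundedness of $\Omega$ and Sobolev/Poincaré inequalities — but this is routine.
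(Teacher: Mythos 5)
Your argument for the implication $\omega\in\mathcal{M}_{0,e}(\Omega)\Rightarrow\omega\otimes F\in\mathcal{M}_{0}(Q)$ is essentially the paper's: the paper multiplies the elliptic decomposition $\omega=\tilde f-\operatorname{div}\tilde g$ by the truncations $T_{n}(F)$ (rather than your dyadic pieces $F\chi_{E_k}$), invokes \cite[Proposition 3.1]{PePoPor} with $h=0\in L^{\infty}(Q)$, and concludes by the strong closedness of $\mathcal{M}_{0}(Q)\cap\mathcal{M}_{b}(Q)$ in $\mathcal{M}_{b}(Q)$ — exactly your limiting argument, and you correctly identified the $L^{1}$-versus-$L^{p'}$ mismatch that makes the truncation in time necessary. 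For the converse you take a genuinely different route. The paper argues constructively: knowing $\omega\otimes F=f-\operatorname{div}g+h_{t}$ in the sense of \cite[Lemma 2.24, Theorem 2.27]{PePoPor}, it tests with time-independent $\varphi\in C_{c}^{\infty}(\Omega)$, kills the $h_{t}$ term, and by Fubini produces the elliptic decomposition $\omega=\tilde f-\operatorname{div}\tilde g$ with $\tilde f,\tilde g$ the time averages of $f,g$ normalized by $\int_{0}^{T}F$, then quotes \cite{BoGaOr96}. You instead argue by contradiction through a capacity comparison: a cylinder $K\times J$ over a compact set of zero $c_{p}^{\Omega}$-capacity has zero $c_{p}^{Q}$-capacity, which is charged by $\omega\otimes F$. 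This works, but it puts the burden on a fact the paper never needs to prove; your sketch of it is correct provided you add the routine patches: truncate the minimizing sequence at height one (or use that the standing assumption (\ref{rangep}) gives $p>2N/(N+2)$, hence $W_{0}^{1,p}(\Omega)\hookrightarrow L^{2}(\Omega)$) so that the $L^{2}$-part of the $X$-norm and the $L^{p'}$-bound on $\varphi_{j}\zeta'$ really tend to zero, and replace ``$|\omega|(K)>0$'' by a compact $K$ of zero capacity with $\omega(K)\neq0$ (Hahn decomposition plus inner regularity), since $|\omega|(K)>0$ alone does not make $\omega(K)\int_{J}F$ nonzero. What each approach buys: the paper's proof is shorter and purely representational, but leans on the decomposition characterization of $\mathcal{M}_{0}(Q)$; yours uses only the definitions of the two capacities and yields as a by-product the comparison $c_{p}^{\Omega}(K)=0\Rightarrow c_{p}^{Q}(K\times J)=0$, which is of independent interest but must be written out in full.
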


\begin{proof}
Assume that $\omega\otimes F\in\mathcal{M}_{0}(Q)$. Then, there exist $f\in
L^{1}(Q)$, $g\in\left(  L^{p^{\prime}}(Q)\right)  ^{N}$ and $h\in X$, such
that
\begin{equation}
\int_{Q}\varphi(x,t)F(t)d\omega(x)dt=\int_{Q}\varphi(x,t)f(x,t)dxdt+\int%
_{Q}g(x,t).\nabla\varphi(x,t)dxdt-\int_{Q}h(x,t)\varphi_{t}(t,x)dxdt,
\label{11061}%
\end{equation}
for all $\varphi\in C_{c}^{\infty}(\Omega\times\lbrack0,T])$, see \cite[Lemma
2.24 and Theorem 2.27]{PePoPor}.\ By choosing $\varphi(x,t)=\varphi(x)\in
C_{c}^{\infty}(\Omega)$ and using Fubini's Theorem, (\ref{11061}) is rewritten
as
\[
\int_{\Omega}\varphi(x)d\omega(x)=\int_{\Omega}\varphi(x)\tilde{f}%
(x)dx+\int_{\Omega}\tilde{g}(x).\nabla\varphi(x)dx,
\]
where $\tilde{f}(x)=\left(  \int_{0}^{T}F(t)dt\right)  ^{-1}\int_{0}%
^{T}f(x,t)dt\in L^{1}(\Omega)$ and $\tilde{g}(x)=\left(  \int_{0}%
^{T}F(t)dt\right)  ^{-1}\int_{0}^{T}g(x,t)dt\in\left(  L^{p^{\prime}}%
(\Omega)\right)  ^{N}$; hence $\omega\in\mathcal{M}_{0,e}(\Omega)$. \newline
Conversely, assume that $\omega=\tilde{f}-\operatorname{div}\tilde{g}%
\in\mathcal{M}_{0,e}(\Omega),$ with $\tilde{f}\in L^{1}(\Omega)$ and
$\tilde{g}\in\left(  L^{p^{\prime}}(\Omega)\right)  ^{N}$. So $\omega\otimes
T_{n}(F)=f_{n}-\operatorname{div}g_{n},$ with $f_{n}=\tilde{f}T_{n}(F)\in
L^{1}(Q)$ and $g_{n}=\tilde{g}T_{n}(F)\in\left(  L^{p^{\prime}}(Q)\right)
^{N}$. Then $\omega\otimes T_{n}(F)$ admits the decomposition $(f_{n}%
,g_{n},h),$ with $h=0\in L^{\infty}(Q),$ thus $\omega\otimes T_{n}%
(F)\in\mathcal{M}_{0}(Q)$. And $\left\{  \omega\otimes T_{n}(F)\right\}  $
converges to $\omega\otimes F$ strongly in $\mathcal{M}_{b}(Q)$, since
$||\omega\otimes(F-T_{n}(F))||_{\mathcal{M}_{b}(Q)}\leqq||\omega
||_{\mathcal{M}_{b}(\Omega)}\left\Vert F-T_{n}(F)\right\Vert _{L^{1}((0,T))}.$
Then $\omega\otimes F\in\mathcal{M}_{0}(Q),$ since $\mathcal{M}_{0}%
(Q)\cap\mathcal{M}_{b}(Q)$ is strongly closed in $\mathcal{M}_{b}(Q).$
\end{proof}

\section{Renormalized solutions of problem (\ref{pmu})\label{defsol}}

\subsection{Notations and Definition}

For any function $f\in L^{1}(Q),$ we write $\int_{Q}f$ instead of $\int%
_{Q}fdxdt,$ and for any measurable set $E\subset${$Q$}${,}$ $\int_{E}f$
instead of $\int_{E}fdxdt.$\medskip

\noindent We set $T_{k}(r)=\max\{\min\{r,k\},-k\},$ for any $k>0$ and
$r\in\mathbb{R}$. We recall that if $u$ is a measurable function defined and
finite $a.e.$ in $Q$, such that $T_{k}(u)\in X$ for any $k>0$, there exists a
measurable function $w$ from $Q$ into $\mathbb{R}^{N}$ such that $\nabla
T_{k}(u)=\chi_{|u|\leqq k}w,$ $a.e.$ in $Q,$ and for any $k>0$. We define the
gradient $\nabla u$ of $u$ by $w=\nabla u$. \medskip

\noindent Let $\mu=\mu_{0}+\mu_{s}\in\mathcal{M}_{b}(${$Q$}$),$ and $(f,g,h)$
be a decomposition of $\mu_{0}$ given by (\ref{dec}), and ${{\widehat{\mu_{0}%
}}}=\mu_{0}-h_{t}=f-\operatorname{div}g$. In the general case ${{\widehat{\mu
_{0}}}}\notin\mathcal{M}(Q),$ but we write, for convenience,%
\[
\int_{Q}{wd{\widehat{\mu_{0}}}}:=\int_{Q}(fw+g.\nabla w),\qquad\forall w\in
X{\cap}L^{\infty}(Q).
\]

\begin{definition}
\label{defin}Let ${u}_{0}\in L^{1}(\Omega),$ $\mu=\mu_{0}+\mu_{s}%
\in\mathcal{M}_{b}(${$Q$}$)$. A measurable function $u$ is a
\textbf{renormalized solution, }called\textbf{\ R-solution} of (\ref{pmu}) if
there exists a decompostion $(f,g,h)$ of $\mu_{0}$ such that
\begin{equation}
v=u-h\in L^{\sigma}(0,T;W_{0}^{1,\sigma}(\Omega)\cap L^{\infty}(0,T;L^{1}%
(\Omega)),\quad\forall\sigma\in\left[  1,m_{c}\right)  ;\qquad T_{k}(v)\in
X,\quad\forall k>0, \label{defv}%
\end{equation}
and:\medskip

(i) for any $S\in W^{2,\infty}(\mathbb{R})$ such that $S^{\prime}$ has compact
support on $\mathbb{R}$, and $S(0)=0$,%
\begin{equation}
-\int_{\Omega}S(u_{0})\varphi(0)dx-\int_{Q}{{\varphi_{t}}S(v)}+\int%
_{Q}{S^{\prime}(v)A(x,t,\nabla u).\nabla\varphi}+\int_{Q}{S^{\prime\prime
}(v)\varphi A(x,t,\nabla u).\nabla v}=\int_{Q}{S^{\prime}(v)\varphi
d{\widehat{\mu_{0}},}} \label{renor}%
\end{equation}
for any $\varphi\in X\cap L^{\infty}(Q)$ such that $\varphi_{t}\in X^{\prime
}+L^{1}(Q)$ and $\varphi(T,.)=0$;\medskip

(ii) for any $\phi\in C(\overline{{Q}}),$%
\begin{equation}
\lim_{m\rightarrow\infty}\frac{1}{m}\int\limits_{\left\{  m\leqq v<2m\right\}
}{\phi A(x,t,\nabla u).\nabla v}=\int_{Q}\phi d\mu_{s}^{+} \label{renor2}%
\end{equation}%
\begin{equation}
\lim_{m\rightarrow\infty}\frac{1}{m}\int\limits_{\left\{  -m\geqq
v>-2m\right\}  }{\phi A(x,t,\nabla u).\nabla v}=\int_{Q}\phi d\mu_{s}^{-}.
\label{renor3}%
\end{equation}

\end{definition}

\begin{remark}
As a consequence, $S(v)\in C([0,T];L^{1}(\Omega))$ and ${S(v)(0,.)=S(u}_{0})$
in $\Omega;$ and $u$ satisfies the equation
\begin{equation}
({S(v))}_{t}-\operatorname{div}({S^{\prime}(v)A(x,t,\nabla u))+S^{\prime
\prime}(v)A(x,t,\nabla u).\nabla v{=f}S^{\prime}(v)-\operatorname{div}%
(gS^{\prime}(v))+S^{\prime\prime}(v)g.\nabla v,}\text{ }\label{dpri}%
\end{equation}
in the sense of distributions in $Q,$ see \cite[Remark 3]{Pe08}. Moreover
\begin{align*}
{\left\Vert {S{{(v)}_{t}}}\right\Vert _{{X}^{\prime}+{L^{1}}({Q})}}%
\leqq{\left\Vert \operatorname{div}({{{S^{^{\prime}}}(v)}A(x,t,\nabla
u)})\right\Vert _{{X}^{\prime}}} &  +{\left\Vert {{S^{^{\prime\prime}}%
}(v)A(x,t,\nabla u).\nabla v}\right\Vert _{1,Q}}\\
&  +{\left\Vert {{S^{^{\prime}}}(v)f}\right\Vert _{1,Q}}+{\left\Vert
{g.{S^{^{\prime\prime}}}(v)\nabla v}\right\Vert _{1,Q}+\left\Vert
\operatorname{div}({{{S^{^{\prime}}}(v)g)}}\right\Vert _{{X}^{\prime}}.}%
\end{align*}
Thus, if $\left[  -M,M\right]  \supset$ supp$S^{\prime},$%
\begin{align*}
& {\left\Vert {{S^{^{\prime\prime}}}(v)A(x,t,\nabla u).\nabla v}\right\Vert
_{1,Q}}\leq\left\Vert S\right\Vert _{{W^{2,\infty}}(\mathbb{R})}(\left\Vert
{A(x,t,\nabla u)\chi_{|v|\leqq M}}\right\Vert _{p^{\prime},Q}^{p^{\prime}%
}+\left\Vert |\nabla T_{M}(v)|\right\Vert _{p,Q}^{p})\\
& \leqq C\left\Vert S\right\Vert _{{W^{2,\infty}}(\mathbb{R})}(\left\Vert
{|\nabla u|^{p}\chi_{|v|\leqq M}}\right\Vert _{1,Q}+||a||_{p^{\prime}%
,Q}^{p^{\prime}}+\left\Vert |\nabla T_{M}(v)|\right\Vert _{p,Q}^{p})
\end{align*}
thus%
\begin{align}
{\left\Vert {S{{(v)}_{t}}}\right\Vert _{{X}^{\prime}+{L^{1}}({Q})}} &  \leqq
C\left\Vert S\right\Vert _{{W^{2,\infty}}(\mathbb{R})}\left(  {}\right.
\left\Vert {\left\vert {\nabla u}\right\vert }^{p}\chi_{|v|\leqq M}\right\Vert
_{1,Q}^{1/p^{\prime}}+\left\Vert {|\nabla u|^{p}\chi_{|v|\leqq M}}\right\Vert
_{1,Q}+\left\Vert |\nabla T_{M}(v)|\right\Vert _{p,Q}^{p}\nonumber\\
&  +\left\Vert a\right\Vert _{p^{\prime},Q}+\left\Vert a\right\Vert
_{p^{\prime},Q}^{p^{\prime}}+\left\Vert f\right\Vert _{1,Q}+\left\Vert
g\right\Vert _{p^{\prime},Q}\left\Vert \left\vert {\nabla u}\right\vert
^{p}\chi_{|v|\leqq M}\right\Vert _{1,Q}^{1/p}+\left\Vert g\right\Vert
_{p^{\prime},Q}\left.  {}\right)  \label{11051}%
\end{align}
We also deduce that, for any $\varphi\in X\cap L^{\infty}(Q),$ such that
$\varphi_{t}{\in X}^{\prime}+L^{1}(Q),$
\begin{align}
\int_{\Omega}S(v(T))\varphi(T)dx-\int_{\Omega}S(u_{0})\varphi(0)dx &
-\int_{Q}{{\varphi_{t}}S(v)}+\int_{Q}{S^{\prime}(v)A(x,t,\nabla u).\nabla
\varphi}\nonumber\\
&  +\int_{Q}{S^{\prime\prime}(v)A(x,t,\nabla u).\nabla v\varphi}=\int%
_{Q}{S^{\prime}(v)\varphi d{\widehat{\mu_{0}}.}}\label{renor4}%
\end{align}

\end{remark}

\begin{remark}
Let $u,v$ satisfying (\ref{defin}). It is easy to see that the condition
(\ref{renor2}) ( resp. (\ref{renor3}) ) is equivalent to
\begin{equation}
\lim_{m\rightarrow\infty}\frac{1}{m}\int\limits_{\left\{  m\leqq v<2m\right\}
}{\phi A(x,t,\nabla u).\nabla u}=\int_{Q}\phi d\mu_{s}^{+} \label{lim1}%
\end{equation}
resp.
\begin{equation}
\lim_{m\rightarrow\infty}\frac{1}{m}\int\limits_{\left\{  m\geqq
v>-2m\right\}  }{\phi A(x,t,\nabla u).\nabla u}=\int_{Q}\phi d\mu_{s}^{-}.
\label{lim2}%
\end{equation}
In particular, for any $\varphi\in L^{p^{\prime}}(Q)$ there holds
\begin{equation}
\lim_{m\rightarrow\infty}\frac{1}{m}\int\limits_{m\leqq|v|<2m}{|\nabla
u|\varphi}=0,\qquad\lim_{m\rightarrow\infty}\frac{1}{m}\int\limits_{m\leqq
|v|<2m}{|\nabla v|\varphi}=0. \label{lim3}%
\end{equation}

\end{remark}

\begin{remark}
(i) Any function $U\in X$ such that $U_{t}\in X^{\prime}+L^{1}(Q)$ admits a
unique $c_{p}^{Q}$-quasi continuous representative, defined $c_{p}^{Q}$-quasi
$a.e.$ in $Q,$ still denoted $U.$ Furthermore, if $U\in L^{\infty}(Q),$ then
for any $\mu_{0}\in\mathcal{M}_{0}(Q),$ there holds $U\in L^{\infty}%
(Q,d\mu_{0}),$ see \cite[Theorem 3 and Corollary 1]{Pe08}.\medskip

(ii) Let $u$ be any R- solution of problem (\ref{pmu}). Then, $v=u-h$ admits a
$c_{p}^{Q}$-quasi continuous functions representative which is finite
$c_{p}^{Q}$-quasi $a.e.$ in $Q,$ and $u$ satisfies definition \ref{defin} for
every decomposition $(\tilde{f},\tilde{g},\tilde{h})$ such that $h-\tilde
{h}\in L^{\infty}(Q)$, see \cite[Proposition 3 and Theorem 4 ]{Pe08}.
\end{remark}

\subsection{Steklov and Landes approximations}

\textit{A main difficulty for proving Theorem \ref{sta} is the choice of
admissible test functions }$(S,\varphi)$\textit{ in (\ref{renor}), valid for
any R-solution}. Because of a lack of regularity of these solutions, we use
two ways of approximation adapted to parabolic equations:

\begin{definition}
\label{ste}Let $\varepsilon\in(0,T)$ and $z\in L_{loc}^{1}(Q)$. For any
$l\in(0,\varepsilon)$ we define the \textbf{Steklov time-averages}
$[z]_{l},[z]_{-l}$ of $z$ by
\[
{[z]_{l}}(x,t)=\frac{1}{l}\int\limits_{t}^{t+l}{z(x,s)ds}\qquad\text{for
}a.e.\;(x,t)\in\Omega\times(0,T-\varepsilon),
\]%
\[
{[z]_{-l}}(x,t)=\frac{1}{l}\int\limits_{t-l}^{t}{z(x,s)ds}\qquad\text{for
}a.e.\;(x,t)\in\Omega\times(\varepsilon,T).
\]

\end{definition}

\noindent The idea to use this approximation for R-solutions can be found in
\cite{BlPo}. Recall some properties, given in  \cite{PePoPor}.  Let
$\varepsilon\in(0,T),$ and $\varphi_{1}\in C_{c}^{\infty}(\overline{\Omega
}\times\lbrack0,T)),\;\varphi_{2}\in C_{c}^{\infty}(\overline{\Omega}%
\times(0,T])$ with $\mathrm{Supp}\varphi{{_{1}}}\subset\overline{\Omega}%
\times\lbrack0,T-\varepsilon],\;\mathrm{Supp}\varphi{{_{2}}}\subset
\overline{\Omega}\times\lbrack\varepsilon,T]$. There holds

\begin{description}
\item[(i)] If $z\in{X}$, then $\varphi_{1}{[z]_{l}}$ and $\varphi_{2}%
{[z]_{-l}}\in{W.}$

\item[(ii)] If $z\in X$ and $z_{t}\in X^{\prime}+L^{1}(Q),$ then, as
$l\rightarrow0,$ $(\varphi_{1}{[z]_{l})}$ and $(\varphi_{2}{[z]_{-l})}$
converge respectively to $\varphi_{1}z$ and $\varphi_{2}z$ in $X,$ and $a.e.$
in $Q;$ and $(\varphi_{1}{[z]_{l})}_{t},(\varphi_{2}{[z]_{-l})}_{t}$ converge
to $(\varphi_{1}z)_{t},(\varphi_{2}z)_{t}$ in $X^{\prime}+L^{1}(Q)$.

\item[(iii)] If moreover $z\in L^{\infty}(Q)$, then from any sequence
$\{l_{n}\}\rightarrow0,$ there exists a subsequence $\{l_{\nu}\}$ such that
$\left\{  [z]_{l_{\nu}}\right\}  ,\left\{  [z]_{-l_{\nu}}\right\}  $ converge
to $z,$ $c_{p}^{Q}$-quasi everywhere in $Q.$
\end{description}

Next we recall the approximation introduced in \cite{La}, used in \cite{DAOr},
\cite{BDGO97}, \cite{BlPo1}:

\begin{definition}
Let $\mu\in\mathcal{M}_{b}(Q)$ and $u_{0}\in L^{1}(\Omega).$ Let $u$ be a
R-solution of (\ref{pmu}), and $v=u-h$ given at (\ref{defv}), and $k>0.$ For
any $\nu\in\mathbb{N},$ the \textbf{Landes-time approximation} $\langle
T_{k}(v)\rangle_{\nu}$ of the truncate function $T_{k}(v)$ is defined in the
following way:\medskip

Let $\left\{  z_{\nu}\right\}  $ be a sequence of functions in $W_{0}%
^{1,p}(\Omega)\cap L^{\infty}(\Omega)$, such that $||z_{\nu}||_{\infty,\Omega
}\leqq k$, $\left\{  z_{\nu}\right\}  $ converges to $T_{k}(u_{0})$ $a.e.$ in
$\Omega$, and $\nu^{-1}||z_{\nu}||_{W_{0}^{1,p}(\Omega)}^{p}$ converges to
$0$. Then, $\langle T_{k}(v)\rangle_{\nu}$ is the unique solution of the
problem
\[
(\langle T_{k}(v)\rangle_{\nu})_{t}=\nu\left(  {{T_{k}}(v)-}\langle
T_{k}(v)\rangle_{\nu}\right)  \quad\text{in the sense of distributions, }%
\quad\langle T_{k}(v)\rangle_{\nu}(0)={z_{\nu}}\text{, }\qquad\text{in }%
\Omega.
\]

\end{definition}

Therefore, $\langle T_{k}(v)\rangle_{\nu}\in X\cap L^{\infty}(Q)$ and
$(T_{k}(v)\rangle_{\nu})_{t}\in X$, see \cite{La}. Furthermore, up to
subsequences, $\left\{  \langle T_{k}(v)\rangle_{\nu}\right\}  $ converges to
$T_{k}(v)$ strongly in $X$ and $a.e.$ in $Q$, and $||\left(  {{T_{k}}%
(v)}\right)  _{\nu}||_{L^{\infty}(Q)}\leqq k$.

\subsection{First properties}

In the sequel we use the following notations: for any function $J\in
{W^{1,\infty}}(\mathbb{R})$, nondecreasing with $J(0)=0,$ we set
\begin{equation}
\overline{J}(r)=\int_{0}^{r}J{(\tau)d\tau,}\text{\qquad}\mathcal{J}%
(r)=\int_{0}^{r}J{^{\prime}(\tau)\tau d\tau.} \label{lam}%
\end{equation}
It is easy to verify that $\mathcal{J}(r)\geqq0,$
\begin{equation}
\mathcal{J}(r)+\overline{J}(r)=J(r)r,\quad\text{and\quad}\mathcal{J}%
(r)-\mathcal{J}(s)\geqq s\left(  J{(}r{)-J(}s{)}\right)  \quad\quad\forall
r,s\in\mathbb{R}. \label{222}%
\end{equation}
In particular we define, for any $k>0,$ and any $r\in\mathbb{R},$
\begin{equation}
\overline{T_{k}}(r)=\int_{0}^{r}T_{k}{(\tau)d\tau,}\text{\qquad}%
\mathcal{T}_{k}(r)=\int_{0}^{r}T_{k}^{\prime}{(\tau)\tau d\tau,} \label{tkp}%
\end{equation}
and we use several times a truncature used in \cite{DMOP}:
\begin{equation}
H{_{m}}(r)={\chi_{\lbrack-m,m]}}(r)+\frac{{2m-|s|}}{m}{\chi_{m<|s|\leqq2m}%
}(r),\qquad\overline{H_{m}}(r)=\int_{0}^{r}H{_{m}(\tau)d\tau.} \label{Hm}%
\end{equation}

The next Lemma allows to extend the range of the test functions in
(\ref{renor}). Its proof, given in the Appendix, is obtained by Steklov
approximation of the solutions.

\begin{lemma}
\label{integ}Let $u$ be a R-solution of problem (\ref{pmu}). Let
$J\in{W^{1,\infty}}(\mathbb{R})$ be nondecreasing with $J(0)=0$, and
$\overline{J}$ defined by (\ref{lam}). Then,%
\begin{align}
&  \int_{Q}{{S^{\prime}(v)A(x,t,\nabla u).\nabla\left(  {\xi J(S(v))}\right)
}}+\int_{Q}{S^{\prime\prime}(v)A(x,t,\nabla u ).\nabla v\xi J(S(v))}%
\nonumber\\
&  {-\int_{\Omega}{{\xi(0)J(S({u_{0}}))S({u_{0}})}}-}\int_{Q}{{{\xi_{t}%
}\overline{J}(S(v))}}\nonumber\\
&  \leqq\int_{Q}{{S^{\prime}(v)\xi J(S(v))d\widehat{\mu_{0}}{,}}}
\label{parti}%
\end{align}
for any $S\in{W^{2,\infty}}(\mathbb{R})$ such that $S^{\prime}$ has compact
support on $\mathbb{R}$ and $S(0)=0,$ and for any $\xi\in C^{1}(Q)\cap
W^{1,\infty}(Q),\xi\geqq0.$
\end{lemma}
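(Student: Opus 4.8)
The goal is to justify inserting the test function $\varphi = \xi J(S(v))$ into the renormalized equation \eqref{renor}. The function $J(S(v))$ is not directly admissible because $v$ (hence $S(v)$) lacks the needed time regularity: we only know $T_k(v)\in X$ and $(S(v))_t\in X'+L^1(Q)$, whereas \eqref{renor} requires $\varphi\in X\cap L^\infty(Q)$ with $\varphi_t\in X'+L^1(Q)$ and $\varphi(T,\cdot)=0$. The strategy is to regularize $S(v)$ in time by Steklov averaging: replace $S(v)$ by $[S(v)]_l$ (or rather work with a test function built from the Steklov average), apply the admissible-test-function identity \eqref{renor4}, and pass to the limit $l\to 0$.

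\textbf{Key steps.} First, fix $S\in W^{2,\infty}(\mathbb{R})$ with $S'$ compactly supported, $S(0)=0$, and fix $\xi\in C^1(Q)\cap W^{1,\infty}(Q)$ with $\xi\ge 0$; note $w:=S(v)\in X\cap L^\infty(Q)$ with $w_t\in X'+L^1(Q)$ by the Remark following Definition \ref{defin} (estimate \eqref{11051}). Choose a cutoff $\varphi_1\in C_c^\infty(\overline\Omega\times[0,T))$ that is $\equiv 1$ on a large subinterval, so that $\varphi_1[w]_l\in W$ by property (i) of the Steklov averages, and form the test function $\varphi = \xi\, \varphi_1 \,J([w]_l)$ — this lies in $X\cap L^\infty(Q)$, vanishes near $t=T$, and has the right time derivative. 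Second, plug this into \eqref{renor4}. The crucial term is the parabolic one $-\int_Q \varphi_t\, S(v) = -\int_Q (\xi\varphi_1 J([w]_l))_t\, w$. Here one uses the standard Steklov integration-by-parts trick: $\int_Q ([w]_l)_t\, \psi\, w$ can be rewritten, after a discrete integration by parts in time, in terms of $\int_Q ([w]_l)_t\, J([w]_l)\,\psi = \int_Q (\overline{J}([w]_l))_t\,\psi$ plus an error, and the monotonicity of $J$ gives a one-sided inequality $\int_Q ([w]_l)_t (w - J([w]_l)\cdot(\text{something}))\ge$ (controlled error) — this is exactly where the inequality (rather than equality) in \eqref{parti} originates. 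Third, pass to the limit $l\to 0$: by property (ii) of Steklov averages, $\varphi_1[w]_l\to \varphi_1 w$ in $X$ and a.e., and $(\varphi_1[w]_l)_t\to(\varphi_1 w)_t$ in $X'+L^1(Q)$; the gradient terms $\int_Q S'(v)A(x,t,\nabla u)\cdot\nabla(\xi\varphi_1 J([w]_l))$ converge by dominated convergence (the integrand is supported in $\{|v|\le M\}$ where $\nabla u = \nabla T_M(v)\in (L^p)^N$ and $A$ satisfies the growth bound \eqref{condi1}), and the measure term $\int_Q S'(v)\xi\varphi_1 J([w]_l)\,d\widehat{\mu_0}$ converges since $S'(v)$ is bounded and the Steklov averages converge a.e. and boundedly. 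Finally, remove the auxiliary cutoff $\varphi_1$ by letting it increase to $1$ on $[0,T)$; because $\varphi(T,\cdot)$ contributes a term $\int_\Omega S(v(T))\,\varphi(T)$ which is nonnegative-signed appropriately (or vanishes in the limit, using $S(v)\in C([0,T];L^1(\Omega))$ and $\varphi_1(T,\cdot)=0$), one retains the inequality and lands on \eqref{parti}, with the initial term $\int_\Omega \xi(0) J(S(u_0)) S(u_0)$ emerging from $\int_\Omega S(u_0)\varphi(0)$ after using $\overline J(r)\le J(r)r$ and $S(v)(0,\cdot)=S(u_0)$.

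\textbf{Main obstacle.} The delicate point is the time-derivative term and the precise bookkeeping of the Steklov integration by parts: one must show that $-\int_Q (\xi\varphi_1 J([w]_l))_t\,w - \int_Q \xi_t\,\overline{J}([w]_l)\varphi_1 \to$ a quantity dominated by $-\int_Q\xi_t\overline{J}(S(v))$, with the sign going the right way, and that the leftover commutator terms (coming from $\xi_t$ and $(\varphi_1)_t$ acting on $J([w]_l)$ versus $[w]_l$) vanish or have a favorable sign as $l\to0$. This uses the algebraic identity \eqref{222} relating $\mathcal J$, $\overline J$ and $J(r)r$, together with the fact that $[w]_l\to w$ $c_p^Q$-quasi-everywhere (property (iii)), which is what allows the nonlinear composition $J([w]_l)\to J(w)$ to be controlled against the diffuse part of the measure and in the parabolic term. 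Everything else — the gradient terms and the measure term — is routine dominated convergence once the support restriction $\{|v|\le M\}$ is invoked.
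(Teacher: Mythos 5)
Your overall strategy coincides with the paper's (Steklov time-regularization of the test function, insertion into \eqref{renor}, limit $l\to 0$, removal of a time cutoff near $t=T$), and your treatment of the gradient terms, the measure term and the initial term is essentially right. The gap is in the only step that actually carries the lemma: the one-sided estimate of the parabolic term. You choose $\varphi=\xi\varphi_1\,J([w]_l)$ with $w=S(v)$, i.e.\ you put the nonlinearity \emph{outside} the Steklov average. The time term then contains
\begin{equation*}
-\int_Q \xi\varphi_1\,J'([w]_l)\,([w]_l)_t\,w
=-\int_Q \xi\varphi_1\,\bigl(\mathcal{J}([w]_l)\bigr)_t
-\int_Q \xi\varphi_1\,J'([w]_l)\,([w]_l)_t\,\bigl(w-[w]_l\bigr),
\end{equation*}
and while the first integral behaves well (integrate by parts and drop the nonnegative contribution at $t=0$), the second, commutator-type term has no sign and no available bound: $([w]_l)_t=\frac1l\bigl(w(\cdot,t+l)-w(\cdot,t)\bigr)$ is only controlled through $w_t\in X'+L^1(Q)$, the factor $w-[w]_l$ is small in $X$ but cannot be paired in duality against $[w_t]_l$ because $\nabla\bigl(J'([w]_l)(w-[w]_l)\bigr)$ would involve $J'$, which is merely $L^\infty$. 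Your sketch acknowledges an ``error'' and asserts that monotonicity of $J$ gives a one-sided bound for $\int_Q([w]_l)_t\,(w-J([w]_l)\cdot(\dots))$, but the missing factor is never identified and monotonicity alone does not sign this remainder; so the key inequality is not established. (Also, the $c_p^Q$-quasi-everywhere convergence of Steklov averages is not what is needed here: by the paper's notation $\int_Q S'(v)\varphi\,d\widehat{\mu_0}=\int_Q\bigl(fS'(v)\varphi+g\cdot\nabla(S'(v)\varphi)\bigr)$, so ordinary a.e.\ and $X$ convergence suffice for that term.)

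The paper avoids the bad commutator by the opposite ordering: it tests with $\varphi=\zeta\xi\,[J(S(v))]_l$, the Steklov average of the \emph{composed} function, with $\zeta\in C_c^1([0,T))$, $\zeta_t\leqq0$. The dangerous part of the time term is then $-\int_Q\zeta\xi\,S(v)\,\frac1l\bigl(J(S(v))(\cdot,t+l)-J(S(v))(\cdot,t)\bigr)$, and the pointwise inequality \eqref{222} with $r=S(v)(x,t+l)$, $s=S(v)(x,t)$ bounds it from below by $-\int_Q\zeta\xi\,\partial_t[\mathcal{J}(S(v))]_l$; integrating by parts, discarding the nonnegative boundary term at $t=0$ and the term coming from $\zeta_t\leqq0$, and using $\mathcal{J}(r)-J(r)r=-\overline J(r)$, one passes to the limit $l\to0$, $\zeta\to1$ and obtains exactly the lower bound $-\int_Q\xi_t\overline J(S(v))$, whence \eqref{parti}. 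If you replace $J([S(v)]_l)$ by $[J(S(v))]_l$ and apply \eqref{222} at the point where your sketch currently has the unspecified error term, your argument becomes the paper's; as written, however, it has a genuine gap.
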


Next we give estimates of the gradient, following the first estimates of
\cite{BDGO97}, see also \cite{DrPoPr}, \cite[Proposition 2]{Pe08}, \cite{LePe}.

\begin{proposition}
\label{estsup}If $u$ is a R-solution of problem (\ref{pmu}), then there exists
$c=c(p)$ such that, for any $k\geqq1$ and $\ell$ $\geqq0,$
\begin{equation}
\int\limits_{\ell\leqq|v|\leqq\ell+k}{{{\left\vert {\nabla u}\right\vert }%
^{p}+}}\int\limits_{{\ell}\leqq{|v|}\leqq{\ell+k}}{{{\left\vert {\nabla
v}\right\vert }^{p}}}\leqq ckM \label{alp}%
\end{equation}
and
\begin{equation}
{\left\Vert v\right\Vert _{{L^{\infty}}((0,T);{L^{1}}(\Omega))}}\leqq
c({M}+|\Omega|), \label{gam}%
\end{equation}
where
\[
M={{{\left\Vert {{u_{0}}}\right\Vert }_{1,\Omega}}}+\left\vert {\mu_{s}%
}\right\vert (Q){+}\left\Vert f\right\Vert _{1,Q}+\left\Vert g\right\Vert
_{p^{\prime},Q}^{p^{\prime}}+{\left\Vert h\right\Vert _{{X}}^{p}%
}+||a||_{p^{\prime},Q}^{p^{\prime}}.
\]

\noindent As a consequence, for any $k$ $\geqq1,$
\begin{equation}
\mathrm{meas}\left\{  {|v|>k}\right\}  \leqq{C_{1}M_{1}}{k^{-p_{c}},\qquad
}\mathrm{meas}\left\{  {|\nabla v|>k}\right\}  \leqq{C_{2}M_{2}}{k^{-m_{c}},}
\label{mess}%
\end{equation}%
\begin{equation}
\mathrm{meas}\left\{  {|u|>k}\right\}  \leqq{C_{3}M_{2}}{k^{-p_{c}},\qquad
}\mathrm{meas}\left\{  {|\nabla u|>k}\right\}  \leqq{C_{4}M_{2}}{k^{-m_{c}},}
\label{mess2}%
\end{equation}
where $C_{i}=C_{i}(N,p,c_{1},c_{2}),$ $i=1$-$4,$ and ${M_{1}}={\left(
M{+|\Omega|}\right)  ^{\frac{p}{N}}}M$ and ${M_{2}}=M_{1}+M.$
\end{proposition}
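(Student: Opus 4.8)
The plan is to derive the gradient estimate \eqref{alp} by using in \eqref{parti} the test-function pair built from a suitable truncation, then bootstrap to the $L^\infty(L^1)$ bound \eqref{gam}, and finally convert these two bounds into the measure (Marcinkiewicz) estimates \eqref{mess}, \eqref{mess2} by a standard interpolation argument. First I would fix $k\geq 1$, $\ell\geq 0$ and apply Lemma \ref{integ} with $\xi\equiv 1$ and with $J(r)=T_k(r-T_\ell(r))$ (a Lipschitz, nondecreasing function vanishing at $0$, essentially the ``shifted truncation'' isolating the slice $\ell\leq|v|\leq\ell+k$), together with $S=S_M$ a standard $W^{2,\infty}$ cutoff with $S'$ supported in $[-M,M]$, $S(r)=r$ on $[-M/2,M/2]$; then let $M\to\infty$. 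On the left-hand side, the coercivity condition \eqref{condi1} makes the first term control $c_1\int_{\ell\leq|v|\leq\ell+k}|\nabla v|^p$ modulo terms supported where $|v|\sim M$, which disappear in the limit thanks to the renormalization conditions \eqref{renor2}--\eqref{renor3}/\eqref{lim3}; the term $-\int_\Omega J(S(u_0))S(u_0)\,dx$ is bounded by $k\|u_0\|_{1,\Omega}$ since $|J|\leq k$; the parabolic term $-\int_Q \xi_t\,\overline J(S(v))$ vanishes because $\xi_t=0$. On the right-hand side, $\int_Q S'(v)J(S(v))\,d\widehat{\mu_0}=\int_Q(fJ(S(v))S'(v)+g\cdot\nabla(\ldots))$, which is estimated by $k\|f\|_{1,Q}$ for the $f$-part and, for the $g$-part, by Young's inequality $\int |g||\nabla v|\chi_{\{\ell\leq|v|\leq\ell+k\}}\leq \tfrac{c_1}{2}\int_{\ell\leq|v|\leq\ell+k}|\nabla v|^p+C\|g\|_{p',Q}^{p'}$, absorbing half the coercive term. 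The singular measures $\mu_s^\pm$ enter with a contribution bounded by $k|\mu_s|(Q)$ after passing to the limit in the slices near infinity, matching the $|\mu_s|(Q)$ in $M$; the $a(x,t)$ part of \eqref{condi1} contributes $\|a\|_{p',Q}^{p'}$ after another Young inequality against $|\nabla u|^p$. Collecting, one gets $\int_{\ell\leq|v|\leq\ell+k}|\nabla v|^p\leq ckM$, and then $\int_{\ell\leq|v|\leq\ell+k}|\nabla u|^p$ is controlled by the same quantity using $\nabla u=\nabla v+\nabla h$ and $h\in X$ (the $\|h\|_X^p$ term in $M$).

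For \eqref{gam}, I would test \eqref{parti} (or rather its integrated form \eqref{renor4}) with $J=T_1$, so that $\overline{J}(r)=\int_0^r T_1\approx|r|$ for $|r|\geq 1$, and evaluate between $0$ and an arbitrary time $\tau\in(0,T)$ — concretely using $\xi$ a smooth approximation of $\chi_{(0,\tau)}$ in time, or directly the identity \eqref{renor4} with test function adapted to $[0,\tau]$. The term $\int_\Omega \overline{T_1}(v(\tau))\,dx$ dominates $\int_{\{|v(\tau)|\geq 1\}}(|v(\tau)|-\tfrac12)\,dx$, hence controls $\|v(\tau)\|_{1,\Omega}$ up to $|\Omega|$; the gradient term $\int_{\{|v|\leq 1\}}A(x,t,\nabla u)\cdot\nabla v$ is handled as above (it is $\leq ckM$ with $k=1$ plus Young terms), and the right side is again bounded by $M$. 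Taking the supremum over $\tau$ gives \eqref{gam}.

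Finally, the Marcinkiewicz estimates follow by the now-classical interpolation: from \eqref{alp} with $\ell=0$ one has $\int_{|v|\leq k}|\nabla v|^p\leq ckM$, so by Gagliardo--Nirenberg--Sobolev applied to $T_k(v)\in L^p(0,T;W_0^{1,p}(\Omega))\cap L^\infty(0,T;L^1(\Omega))$ (using \eqref{gam} for the $L^\infty(L^1)$ piece) one gets $\|T_k(v)\|_{L^{p_c}(Q)}^{p_c}\leq C(M+|\Omega|)^{p/N}\cdot kM$, whence $k^{p_c}\,\mathrm{meas}\{|v|>k\}\leq C M_1 k$ for the range giving exactly $\mathrm{meas}\{|v|>k\}\leq C_1 M_1 k^{-p_c}$; the gradient bound $\mathrm{meas}\{|\nabla v|>k\}\leq C_2 M_2 k^{-m_c}$ comes from splitting $\{|\nabla v|>k\}\subset\{|v|>k^{?}\}\cup\{|\nabla v|>k,\ |v|\leq \lambda\}$ and optimizing $\lambda$ against \eqref{alp}, as in \cite{BDGO97}. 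The estimates \eqref{mess2} for $u$ itself then follow from $u=v+h$ with $h\in X\subset L^p(Q)$ absorbed into the constants and the measure terms. I expect the main obstacle to be the limiting argument $M\to\infty$ in the truncation $S_M$: one must show that the ``remainder'' integrals over $\{M\leq |v|\leq 2M\}$ — namely $\tfrac1M\int_{\{M\leq|v|\leq 2M\}}A(x,t,\nabla u)\cdot\nabla v\cdot(\text{bounded})$ and the analogous terms with $g$ — tend to the correct limits (zero for the $g$-part by \eqref{lim3}, and the $\mu_s^\pm$-contributions for the $A$-part by \eqref{lim1}--\eqref{lim2}), which requires carefully exploiting Definition \ref{defin}(ii) and keeping track of the sign of $J$ on the slice; getting a clean inequality rather than a term one cannot absorb is the delicate point, and this is exactly why Lemma \ref{integ} is stated as an inequality obtained via Steklov regularization.
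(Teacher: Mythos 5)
Your proposal is correct and follows essentially the same route as the paper: your $J(r)=T_k(r-T_\ell(r))$ is exactly the paper's $T_{k,\ell}$, your cutoff $S_M$ plays the role of $\overline{H_m}$ in Lemma \ref{integ}, the far-slice terms are disposed of via (\ref{renor2})--(\ref{renor3}) and (\ref{lim3}) (with the $A$-part contributing $k|\mu_s|(Q)$), Young's inequality absorbs the $g$, $a$, $\nabla h$ contributions, and the $L^\infty(L^1)$ bound plus Gagliardo--Nirenberg and the level-splitting in $\lambda$ give (\ref{mess})--(\ref{mess2}) just as in the text. The only cosmetic difference is that the paper keeps a general time-dependent $\xi$ in a single inequality (\ref{epsa}) and reuses it for both (\ref{alp}) and (\ref{gam}), whereas you run the test twice; also note that coercivity (\ref{condi1}) bounds $|\nabla u|^p$ first, with $|\nabla v|^p$ then recovered through $\nabla h$, the reverse order of what you wrote, but this is immaterial.
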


\begin{proof}
Set for any $r\in\mathbb{R}$, and $m,k,\ell>0,$
\[
T_{k,\ell}(r)=\max\{\min\{r-\ell,k\},0\}+\min\{\max\{r+\ell,-k\},0\}.
\]
For $m>k+\ell$, we can choose $(J,S,\xi)=(T_{k,\ell},\overline{H_{m}},\xi)$ as
test functions in (\ref{parti}), where $\overline{H_{m}}$ is defined at
(\ref{Hm}) and $\xi\in C^{1}([0,T])$ with values in $[0,1]$, independent on
$x$. Since $T_{k,\ell}(\overline{H_{m}}(r))=T_{k,\ell}(r)$ for all
$r\in\mathbb{R},$ we obtain
\[%
\begin{array}
[c]{l}%
-\int_{\Omega}{\xi(0){T_{k,\ell}}({u_{0}})\overline{H_{m}}({u_{0}})}-\int%
_{Q}{{\xi_{t}}}\overline{T_{k,\ell}}{(\overline{H_{m}}(v))}\\
\\
+\int\limits_{\left\{  {\ell}\leqq{|v|}<{\ell+k}\right\}  }{\xi A(x,t,\nabla
u).\nabla v}-\frac{k}{m}\int\limits_{\left\{  m\leqq|v|<2m\right\}  }{\xi
A(x,t,\nabla u).\nabla v}\leqq\int_{Q}H_{m}{(v)\xi{T_{k,\ell}}%
(v)d{{\widehat{\mu_{0}}}.}}%
\end{array}
\]
And
\[
\int_{Q}H_{m}{(v)\xi{T_{k,\ell}}(v)d{{\widehat{\mu_{0}}}}\;}{=}\int_{Q}%
H_{m}{(v)\xi{T_{k,\ell}}(v)}f{{{+}}}\int\limits_{\left\{  {\ell}\leqq
{|v|}<{\ell+k}\right\}  }{\xi\nabla v.g-}\frac{k}{m}\int\limits_{\left\{
m\leqq|v|<2m\right\}  }{\xi\nabla v.g.}%
\]
Let $m\rightarrow\infty$; then, for any $k\geqq1,$ since $v\in L^{1}(Q)$ and
from (\ref{renor2}), (\ref{renor3}), and (\ref{lim3}), we find%
\begin{equation}
-\int_{Q}{{\xi_{t}}\overline{T_{k,\ell}}(v)}+\int\limits_{\left\{  {\ell}%
\leqq{|v|}<{\ell+k}\right\}  }{\xi A(x,t,\nabla u).\nabla v\;}{\leqq}%
\int\limits_{\left\{  {\ell}\leqq{|v|}<{\ell+k}\right\}  }{\xi\nabla
v.g}+k({{{\left\Vert {{u_{0}}}\right\Vert }_{1,\Omega}+}}\left\vert {\mu_{s}%
}\right\vert (Q){+}\left\Vert f\right\Vert _{1,Q}). \label{epsa}%
\end{equation}
Next, we take $\xi\equiv1$. We verify that there exists $c=c(p)$ such that
\[
{ A(x,t,\nabla u).\nabla v-\nabla v.g\geqq}\frac{c_{1}}{4}(|\nabla
u|^{p}+|\nabla v|^{p})-c(\left\vert g\right\vert ^{p^{\prime}}+|\nabla
h|^{p}+|a|^{p^{\prime}})
\]
where $c_{1}$ is the constant in \eqref{condi1}. Hence (\ref{alp}) follows.
Thus, from (\ref{epsa}) and the H\"{o}lder inequality, we get, with another
constant $c$, for any $\xi\in C^{1}([0,T])$ with values in $[0,1],$
\[
-\int_{Q}{{\xi_{t}}\overline{T_{k,\ell}}(v)}\leqq ck M
\]
Thus $\int_{\Omega}\overline{T_{k,\ell}}{(v)(t)}\leqq ckM,$ for $a.e.$
$t\in(0,T).$ We deduce (\ref{gam}) by taking $k=1,\ell=0$, since
$\overline{T_{1,0}}(r)=\overline{T_{1}}(r)$ $\geqq|r|-1,$ for any
$r\in\mathbb{R}.$ \medskip

\noindent Next, from the Gagliardo-Nirenberg embedding Theorem, we have
\[
\int_{Q}{{{\left\vert {{T_{k}}(v)}\right\vert }^{\frac{{p(N+1)}}{N}}}}\leqq
C_{1}\left\Vert v\right\Vert _{{L^{\infty}}((0,T);{L^{1}}(\Omega))}^{\frac
{p}{N}}\int_{Q}{{{\left\vert {\nabla{T_{k}}(v)}\right\vert }^{p}},}%
\]
where $C_{1}=C_{1}(N,p).$ Then, from (\ref{alp}) and (\ref{gam}), we get, for
any $k$ $\geqq1,$
\[
\mathrm{meas}\left\{  {|v|>k}\right\}  \leqq{k^{-\frac{{p(N+1)}}{N}}}\int%
_{Q}{{{\left\vert {{T_{k}}(v)}\right\vert }^{\frac{{p(N+1)}}{N}}}}\leqq
C\left\Vert v\right\Vert _{{L^{\infty}}((0,T);{L^{1}}(\Omega))}^{\frac{p}{N}%
}{k^{-\frac{{p(N+1)}}{N}}}\int_{Q}{{{\left\vert {\nabla{T_{k}}(v)}\right\vert
}^{p}}}\leqq C_{2}M_{1}{k^{-p_{c}}},\text{ }%
\]
with $C_{2}=C_{2}(N,p,c_{1},c_{2})$. We obtain%
\begin{align*}
\mathrm{meas}\left\{  {|\nabla v|>k}\right\}   &  \leqq\frac{1}{{{k^{p}}}}%
\int_{0}^{{k^{p}}}{\mathrm{{meas}}}\left(  {\left\{  {|\nabla v{|^{p}}%
>s}\right\}  }\right)  ds\\
&  \leqq\mathrm{{meas}}\left\{  {|v|>{k^{\frac{N}{{N+1}}}}}\right\}  +\frac
{1}{{{k^{p}}}}\int_{0}^{{k^{p}}}{\mathrm{{meas}}}\left(  {\left\{  {|\nabla
v{|^{p}}>s,|v|}\leqq{{k^{\frac{N}{{N+1}}}}}\right\}  }\right)  ds\\
&  \leqq{C_{2}}{M_{1}}{k^{-m_{c}}}+\frac{1}{{{k^{p}}}}\int\limits_{|v|\leqq
{k^{\frac{N}{{N+1}}}}}{{{\left\vert {\nabla v}\right\vert }^{p}}}\leqq{C_{2}%
}{M_{2}}{k^{-m_{c}}},
\end{align*}
with ${C_{3}}=C_{3}(N,p,c_{1},c_{2}).$ Furthermore, for any $k\geqq1,$
\[
\mathrm{meas}\left\{  {|h|>k}\right\}  +\mathrm{meas}\left\{  {|\nabla
h|>k}\right\}  \leqq C_{4}k^{-p}\left\Vert h\right\Vert _{X}^{p},
\]
where $C_{4}=C_{4}(N,p,c_{1},c_{2})$. Therefore, we easily get (\ref{mess2}).
\end{proof}

\begin{remark}
\label{h2} If $\mu\in L^{1}(Q)$ and $a\equiv0$ in (\ref{condi1}), then
(\ref{alp}) holds for all $k>0$ and the term $|\Omega|$ in inequality
(\ref{gam}) can be removed where $M=||u_{0}||_{1,\Omega}+|\mu|(Q)$.
Furthermore, (\ref{mess2}) is stated as follows:
\begin{equation}
\mathrm{meas}\left\{  {|u|>k}\right\}  \leqq C_{3}M^{\frac{p+N}{N}}{k^{-p_{c}%
},\qquad}\mathrm{meas}\left\{  {|\nabla u|>k}\right\}  \leqq C_{4}%
M^{\frac{N+2}{N+1}}k^{-m_{c}},\forall k>0. \label{1810131}%
\end{equation}
To see last inequality, we do in the following way:
\begin{align*}
\mathrm{meas}\left\{  {|\nabla v|>k}\right\}   &  \leqq\frac{1}{{{k^{p}}}}%
\int_{0}^{{k^{p}}}{\mathrm{{meas}}}\left(  {\left\{  {|\nabla v{|^{p}}%
>s}\right\}  }\right)  ds\\
&  \leqq\mathrm{{meas}}\left\{  {|v|>M^{\frac{1}{N+1}}k^{\frac{N}{{N+1}}}%
}\right\}  +\frac{1}{{{k^{p}}}}\int_{0}^{{k^{p}}}{\mathrm{{meas}}\left\{
|\nabla v{|^{p}}>s,|v|\leqq M^{\frac{1}{N+1}}k^{\frac{N}{{N+1}}}\right\}
}ds\\
&  \leqq C_{4}M^{\frac{N+2}{N+1}}k^{-m_{c}}.
\end{align*}

\end{remark}

\begin{proposition}
\label{mun} Let $\{\mu_{n}\}$ $\subset$ $\mathcal{M}_{b}(${$Q$}$),$ and
$\{u_{0,n}\}\subset L^{1}(\Omega),$ with
\[
\sup_{n}\left\vert {{\mu_{n}}}\right\vert ({Q})<\infty,\text{ and }\sup
_{n}||{{u_{0,n}}}||_{1,\Omega}<\infty.
\]
Let $u_{n}$ be a R-solution of (\ref{pmu}) with data $\mu_{n}=\mu_{n,0}%
+\mu_{n,s}$ and $u_{0,n},$ relative to a decomposition $(f_{n},g_{n},h_{n})$
of $\mu_{n,0}$, and $v_{n}=u_{n}-h_{n}.$ Assume that $\{f_{n}\}$ is bounded in
$L^{1}(Q)$, $\{g_{n}\}$ bounded in $(L^{p^{\prime}}(Q))^{N}$ and $\{h_{n}\}$
bounded in $X$. \medskip

\noindent Then, up to a subsequence, $\{v_{n}\}$ converges $a.e.$ to a
function $v,$ such that $T_{k}(v)\in X$ and $v\in L^{\sigma}((0,T);W_{0}%
^{1,\sigma}(\Omega))\cap{L^{\infty}}((0,T);{L^{1}}(\Omega))$ for any
$\sigma\in\lbrack1,m_{c}).$ And\medskip

{(i)} $\left\{  v_{n}\right\}  $ converges to $v$ strongly in $L^{\sigma}(Q)$
for any $\sigma\in\lbrack1,m_{c}),$ and $\sup{\left\Vert {{v_{n}}}\right\Vert
_{{L^{\infty}}((0,T);{L^{1}}(\Omega))}}<\infty,$\medskip

{(ii)} $\sup_{k>0}\sup_{n}\frac{1}{k+1}\int_{Q}|\nabla T_{k}(v_{n}%
)|^{p}<\infty$,\medskip

{(iii)} $\left\{  T_{k}(v_{n})\right\}  $ converges to $T_{k}(v)$ wealkly in
$X,$ for any $k>0$,\medskip

{(iv)} $\left\{  A\left(  x,t,\nabla\left(  T_{k}(v_{n})+h_{n}\right)
\right)  \right\}  $ converges to some $F_{k}$ weakly in $(L^{p^{\prime}%
}(Q))^{N}$. \medskip
\end{proposition}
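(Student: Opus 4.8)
The plan is to combine the uniform a priori estimates of Proposition \ref{estsup} with a time-compactness argument based on the renormalized equation (\ref{dpri}).

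First I would note that, by the hypotheses and the trivial bound $\left\vert\mu_{n,s}\right\vert(Q)\leqq\left\vert\mu_n\right\vert(Q)$, the quantity
\[
M_n:=\left\Vert u_{0,n}\right\Vert_{1,\Omega}+\left\vert\mu_{n,s}\right\vert(Q)+\left\Vert f_n\right\Vert_{1,Q}+\left\Vert g_n\right\Vert_{p',Q}^{p'}+\left\Vert h_n\right\Vert_X^{p}+\left\Vert a\right\Vert_{p',Q}^{p'}
\]
is bounded uniformly in $n$, say by $M_\ast$. Proposition \ref{estsup} then gives, with constants independent of $n$: the bound $\int_{\{|v_n|\leqq k\}}|\nabla v_n|^{p}\leqq ckM_\ast$ for every $k\geqq1$, which immediately yields (ii) (for $k<1$ one uses the case $k=1$); the bound $\sup_n\left\Vert v_n\right\Vert_{L^\infty((0,T);L^1(\Omega))}<\infty$; and the Marcinkiewicz estimates $\operatorname{meas}\{|v_n|>k\}\leqq Ck^{-p_c}$, $\operatorname{meas}\{|\nabla v_n|>k\}\leqq Ck^{-m_c}$. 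Hence $\{v_n\}$ is bounded in $L^\sigma(Q)$ for $\sigma<p_c$, $\{\nabla v_n\}$ is bounded in $(L^\sigma(Q))^N$ for $\sigma<m_c$, and for each fixed $k$ the sequence $\{\nabla T_k(v_n)\}_n$ is bounded in $(L^{p}(Q))^N$.

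The heart of the proof, and the step I expect to be the main obstacle, is the a.e.\ convergence. For $j\in\mathbb{N}$ I would fix $S_j\in W^{2,\infty}(\mathbb{R})$ with $S_j(0)=0$, $S_j'$ supported in $[-M_j,M_j]$ and $S_j(r)=r$ for $|r|\leqq j$. Since $\nabla S_j(v_n)=S_j'(v_n)\nabla v_n$ is supported in $\{|v_n|\leqq M_j\}$, where $\int|\nabla v_n|^{p}\leqq cM_jM_\ast$, the sequence $\{S_j(v_n)\}_n$ is bounded in $X$; and writing equation (\ref{dpri}) for $S_j(v_n)$ and using the estimate (\ref{11051}) together with $\int_{\{|v_n|\leqq M_j\}}|\nabla u_n|^{p}\leqq cM_jM_\ast$, one checks that $\{(S_j(v_n))_t\}_n$ is bounded in $X'+L^1(Q)$, uniformly in $n$ for each fixed $j$. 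The delicate point here is to split the right-hand side of (\ref{dpri}) and bound each term on the support of $S_j'$ solely by the truncated gradient bounds. By the classical compactness result of Aubin--Simon type used in this context (see \cite{BDGO97}, \cite{DrPoPr}), $\{S_j(v_n)\}_n$ is then relatively compact in $L^1(Q)$; after a diagonal extraction over $j$ we may assume $S_j(v_n)\to\sigma_j$ in $L^1(Q)$ and a.e.\ in $Q$ for all $j$. Since $S_j(v_n)=v_n$ on $\{|v_n|\leqq j\}$ and $\operatorname{meas}\{|v_n|>j\}\leqq Cj^{-p_c}$ uniformly in $n$, it follows that $\{v_n\}$ is Cauchy in measure: for $\lambda>0$,
\[
\{|v_n-v_m|>\lambda\}\subset\{|v_n|>j\}\cup\{|v_m|>j\}\cup\{|S_j(v_n)-S_j(v_m)|>\lambda\},
\]
so one chooses $j$ large first, then $n,m$ large. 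Thus $\{v_n\}$ converges in measure and, along a subsequence, a.e.\ in $Q$ to some $v$, with $\sigma_j=S_j(v)$.

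Finally I would identify the limits. Vitali's theorem and the Marcinkiewicz bounds give $v_n\to v$ strongly in $L^\sigma(Q)$ for every $\sigma<p_c$, in particular for $\sigma<m_c$; with the $L^\infty L^1$ bound and Fatou this proves (i) and $v\in L^\infty((0,T);L^1(\Omega))$. Weak compactness of $\{\nabla v_n\}$ in $(L^\sigma(Q))^N$ together with the distributional identification (using $v_n\to v$ in $L^1(Q)$) shows $\nabla v_n\rightharpoonup\nabla v$ and $v\in L^\sigma(0,T;W_0^{1,\sigma}(\Omega))$ for $\sigma<m_c$. For fixed $k$, $T_k(v_n)\to T_k(v)$ a.e.\ and $\{T_k(v_n)\}$ is bounded in $X$, hence $T_k(v_n)\rightharpoonup T_k(v)$ weakly in $X$ and $T_k(v)\in X$, which is (iii); moreover $\nabla T_k(v_n)=\chi_{\{|v_n|\leqq k\}}\nabla v_n\rightharpoonup\chi_{\{|v|\leqq k\}}\nabla v=\nabla T_k(v)$ for all but countably many $k$. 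Lastly $\nabla(T_k(v_n)+h_n)=\nabla T_k(v_n)+\nabla h_n$ is bounded in $(L^{p}(Q))^N$ for fixed $k$, so by the growth condition (\ref{condi1}) the sequence $\{A(x,t,\nabla(T_k(v_n)+h_n))\}_n$ is bounded in $(L^{p'}(Q))^N$; a diagonal extraction over $k$ yields (iv).
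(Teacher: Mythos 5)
Your proposal is correct and follows essentially the same route as the paper: uniform bounds from Proposition \ref{estsup}, boundedness of $S(v_n)$ in $X$ and of $(S(v_n))_t$ in $X'+L^1(Q)$ via (\ref{dpri}) and (\ref{11051}), compactness in $L^1(Q)$, a Cauchy-in-measure argument, and then weak compactness to identify the limits and obtain (i)--(iv). The only differences (an explicit family $S_j$ with diagonal extraction instead of a generic $S$ plus $S_k$, and the Marcinkiewicz bound rather than the $L^\infty L^1$ bound in the Cauchy-in-measure step) are cosmetic.
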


\begin{proof}
Take $S\in W^{2,\infty}(\mathbb{R})$ such that $S^{\prime}$ has compact
support on $\mathbb{R}$ and $S(0)=0$. We combine (\ref{11051}) with
(\ref{alp}), and deduce that $\{S(v_{n})_{t}\}$ is bounded in ${X}^{\prime
}+{L^{1}}(${$Q$}$)$ and $\{S(v_{n})\}$ bounded in $X$. Hence, $\{S(v_{n})\}$
is relatively compact in $L^{1}(Q)$. On the other hand, we choose $S=S_{k}$
such that $S_{k}(z)=z,$ if $\left\vert z\right\vert <k$ and $S(z)=2k\;$%
sign$z,$ if $|z|>2k.$ Thanks to (\ref{gam}), we obtain%
\begin{align}
\mathrm{meas}\left\{  {\left\vert {{v_{n}}-{v_{m}}}\right\vert >\sigma
}\right\}   &  \leqq\mathrm{meas}\left\{  {\left\vert {{v_{n}}}\right\vert
>k}\right\}  +\mathrm{meas}\left\{  {\left\vert {{v_{m}}}\right\vert
>k}\right\}  +\mathrm{meas}\left\{  {\left\vert {{S_{k}}({v_{n}})-{S_{k}%
}({v_{m}})}\right\vert >\sigma}\right\} \nonumber\\
&  \leqq\frac{1}{k}({{{\left\Vert {{v_{n}}}\right\Vert }_{1,Q}}+{{\left\Vert
{{v_{m}}}\right\Vert }_{1,Q})}}+\mathrm{meas}\left\{  {\left\vert {{S_{k}%
}({v_{n}})-{S_{k}}({v_{m}})}\right\vert >\sigma}\right\} \nonumber\\
&  \leqq\frac{C}{k}+\mathrm{meas}\left\{  {\left\vert {{S_{k}}({v_{n}}%
)-{S_{k}}({v_{m}})}\right\vert >\sigma}\right\}  . \label{april244}%
\end{align}
Thus, up to a subsequence $\{u_{n}\}$ is a Cauchy sequence in measure, and
converges $a.e.$ in $Q$ to a function $u$. Thus, $\left\{  T_{k}%
(v_{n})\right\}  $ converges to $T_{k}(v)$ weakly in $X$, since $\sup
_{n}{\left\Vert {{T_{k}}({v_{n}})}\right\Vert _{X}}<\infty$ for any $k>0$. And
$\left\{  |\nabla\left(  T_{k}(v_{n})+h_{n}\right)  |^{p-2}\nabla\left(
T_{k}(v_{n})+h_{n}\right)  \right\}  $ converges to some $F_{k}$ weakly in
$(L^{p^{\prime}}(Q))^{N}$. Furthermore, from (\ref{mess}), $\left\{
v_{n}\right\}  $ converges to $v$ strongly in $L^{\sigma}(Q),$ for any
$\sigma<p_{c}.$
\end{proof}

\section{The convergence theorem\label{cv}}

We first recall some properties of the measures, see \cite[Lemma 5]{Pe08},
\cite{DMOP}.

\begin{proposition}
\label{04041} Let $\mu_{s}=\mu_{s}^{+}-\mu_{s}^{-}\in\mathcal{M}_{b}(Q),$
where $\mu_{s}^{+}$ and $\mu_{s}^{-}$ are concentrated, respectively, on two
disjoint sets $E^{+}$ and $E^{-}$ of zero $c_{p}^{Q}$-capacity. Then, for any
$\delta>0$, there exist two compact sets $K_{\delta}^{+}\subseteq E^{+}$ and
$K_{\delta}^{-}\subseteq E^{-}$ such that
\[
\mu_{s}^{+}(E^{+}\backslash K_{\delta}^{+})\leqq\delta,\text{\qquad}\mu
_{s}^{-}(E^{-}\backslash K_{\delta}^{-})\leqq\delta,
\]
and there exist $\psi_{\delta}^{+},\psi_{\delta}^{-}\in C_{c}^{1}(Q)$ with
values in $\left[  0,1\right]  ,$ such that $\psi_{\delta}^{+},\psi_{\delta
}^{-}=1$ respectively on $K_{\delta}^{+},K_{\delta}^{-},$ and $\text{supp}%
(\psi_{\delta}^{+})\cap\text{supp}(\psi_{\delta}^{-})=\emptyset$, and
\[
||\psi_{\delta}^{+}||_{X}+||(\psi_{\delta}^{+})_{t}||_{X^{\prime}+L^{1}%
(Q)}\leqq\delta,\qquad||\psi_{\delta}^{-}||_{X}+||(\psi_{\delta}^{-}%
)_{t}||_{X^{\prime}+L^{1}(Q)}\leqq\delta.
\]
There exist decompositions $(\psi_{\delta}^{+})_{t}={\left(  {\psi_{\delta
}^{+}}\right)  _{t}^{1}+\left(  {\psi_{\delta}^{+}}\right)  _{t}^{2}}$ and
$(\psi_{\delta}^{-})_{t}={\left(  {\psi_{\delta}^{-}}\right)  _{t}^{1}+\left(
{\psi_{\delta}^{-}}\right)  _{t}^{2}}$ in $X^{\prime}+L^{1}(Q),$ such that
\begin{equation}
{\left\Vert {\left(  {\psi_{\delta}^{+}}\right)  _{t}^{1}}\right\Vert
_{{X}^{\prime}}}\leqq\frac{\delta}{3},\qquad{\left\Vert {\left(  {\psi
_{\delta}^{+}}\right)  _{t}^{2}}\right\Vert _{1,Q}}\leqq\frac{\delta}%
{3},\qquad{\left\Vert {\left(  {\psi_{\delta}^{-}}\right)  _{t}^{1}%
}\right\Vert _{{X}^{\prime}}}\leqq\frac{\delta}{3},\qquad{\left\Vert {\left(
{\psi_{\delta}^{-}}\right)  _{t}^{2}}\right\Vert _{1,Q}}\leqq\frac{\delta}{3}.
\label{41}%
\end{equation}
Both $\left\{  \psi_{\delta}^{+}\right\}  $ and $\left\{  \psi_{\delta}%
^{-}\right\}  $ converge to $0$, $^{\ast}$-weakly in $L^{\infty}(Q)$, and
strongly in $L^{1}(Q)$ and up to subsequences, $a.e.$ in $Q,$ as $\delta$
tends to $0$.

Moreover if $\rho_{n}$ and $\eta_{n}$ are as in Theorem \ref{sta}, we have,
for any $\delta,\delta_{1},\delta_{2}>0,$
\begin{equation}
\int_{Q}{\psi_{\delta}^{-}}d\rho_{n}+\int_{Q}{\psi_{\delta}^{+}}d\eta
_{n}=\omega(n,\delta),\qquad\int_{Q}{\psi_{\delta}^{-}}d\mu_{s}^{+}\leqq
\delta,\qquad\int_{Q}{\psi_{\delta}^{+}}d\mu_{s}^{-}\leqq\delta, \label{12054}%
\end{equation}%
\begin{equation}
\int_{Q}{(1-\psi_{\delta_{1}}^{+}\psi_{\delta_{2}}^{+})}d\rho_{n}%
=\omega(n,\delta_{1},\delta_{2}),\qquad\int_{Q}{(1-\psi_{\delta_{1}}^{+}%
\psi_{\delta_{2}}^{+})}d\mu_{s}^{+}\leqq\delta_{1}+\delta_{2}, \label{12056}%
\end{equation}%
\begin{equation}
\int_{Q}{(1-\psi_{\delta_{1}}^{-}\psi_{\delta_{2}}^{-})}d\eta_{n}%
=\omega(n,\delta_{1},\delta_{2}),\qquad\int_{Q}{(1-\psi_{\delta_{1}}^{-}%
\psi_{\delta_{2}}^{-})}d\mu_{s}^{-}\leqq\delta_{1}+\delta_{2}. \label{12057}%
\end{equation}

\end{proposition}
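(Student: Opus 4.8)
This statement collects facts that are either classical measure theory or borrowed, up to the parabolic adaptation, from \cite{DMOP} and \cite[Lemma 5]{Pe08}; I would split the proof into four ingredients. First, since $\mu_s^{+}$ and $\mu_s^{-}$ are bounded Radon measures concentrated on the Borel sets $E^{+}$ and $E^{-}$, inner regularity produces, for each $\delta>0$, compact sets $K_\delta^{+}\subset E^{+}$ and $K_\delta^{-}\subset E^{-}$ with $\mu_s^{+}(E^{+}\setminus K_\delta^{+})\leqq\delta$ and $\mu_s^{-}(E^{-}\setminus K_\delta^{-})\leqq\delta$; each $K_\delta^{\pm}$ still has zero $c_p^{Q}$-capacity, and $K_\delta^{+}\cap K_\delta^{-}=\emptyset$ since $E^{+}\cap E^{-}=\emptyset$.

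Secondly, and this is the technical core, I would use the definition of $c_p^{Q}$ to build the cut-offs. As $K_\delta^{+},K_\delta^{-}$ are disjoint compacts, fix disjoint open sets $U^{+},U^{-}$ with $K_\delta^{\pm}\subset U^{\pm}$ and $\overline{U^{\pm}}\subset Q$. Because $c_p^{Q}(K_\delta^{+})=0$, for any prescribed smallness there are an open $V^{+}$ with $K_\delta^{+}\subset V^{+}\subset\overline{V^{+}}\subset U^{+}$ and $w\in W$ with $w\geqq\chi_{V^{+}}$ a.e.\ and $\|w\|_{W}$ as small as we wish; truncating $w$ between $0$ and $1$, regularising by a space-time mollification, and localising the result inside $U^{+}$ yields $\psi_\delta^{+}\in C_c^{1}(Q)$ with values in $[0,1]$, equal to $1$ on $K_\delta^{+}$, supported in $U^{+}$, with $\|\psi_\delta^{+}\|_{W}$ still small; the same construction in $U^{-}$ gives $\psi_\delta^{-}$, and $\operatorname{supp}(\psi_\delta^{+})\cap\operatorname{supp}(\psi_\delta^{-})=\emptyset$ because $U^{+}\cap U^{-}=\emptyset$. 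Choosing the budget so that $\|\psi_\delta^{\pm}\|_{X}\leqq\delta/3$ and $\|(\psi_\delta^{\pm})_{t}\|_{X'+L^{1}(Q)}\leqq\|(\psi_\delta^{\pm})_{t}\|_{X'}\leqq\delta/3$ gives all the norm bounds, and the splitting $(\psi_\delta^{\pm})_{t}=(\psi_\delta^{\pm})_{t}^{1}+(\psi_\delta^{\pm})_{t}^{2}$ in (\ref{41}) is then just the definition of the norm on $X'+L^{1}(Q)$ (take $(\psi_\delta^{\pm})_{t}^{2}=0$, or a near-optimal decomposition to display a genuine $L^{1}$ component). Finally $\|\psi_\delta^{\pm}\|_{X}\leqq\delta\to0$ forces $\psi_\delta^{\pm}\to0$ in $X$, hence in $L^{p}(Q)$ and, along a subsequence, a.e.\ in $Q$; since $0\leqq\psi_\delta^{\pm}\leqq1$, dominated convergence upgrades this to strong $L^{1}(Q)$ convergence and $\ast$-weak $L^{\infty}(Q)$ convergence.

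Thirdly, the estimates (\ref{12054})--(\ref{12057}). The bounds against $\mu_s^{\pm}$ follow from concentration and disjointness: $\mu_s^{+}$ lives on $E^{+}$ and $\psi_\delta^{-}\equiv0$ on $K_\delta^{+}$ (as $K_\delta^{+}\subset\{\psi_\delta^{+}=1\}$ is disjoint from $\operatorname{supp}\psi_\delta^{-}$), hence $\int_{Q}\psi_\delta^{-}d\mu_s^{+}=\int_{E^{+}\setminus K_\delta^{+}}\psi_\delta^{-}d\mu_s^{+}\leqq\mu_s^{+}(E^{+}\setminus K_\delta^{+})\leqq\delta$, and symmetrically $\int_{Q}\psi_\delta^{+}d\mu_s^{-}\leqq\delta$; similarly $1-\psi_{\delta_1}^{+}\psi_{\delta_2}^{+}$ vanishes on $K_{\delta_1}^{+}\cap K_{\delta_2}^{+}$, so $\int_{Q}(1-\psi_{\delta_1}^{+}\psi_{\delta_2}^{+})d\mu_s^{+}\leqq\mu_s^{+}(E^{+}\setminus(K_{\delta_1}^{+}\cap K_{\delta_2}^{+}))\leqq\delta_1+\delta_2$, and likewise for $\mu_s^{-}$. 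For the terms against $\rho_n,\eta_n$, I would invoke the hypothesis of Theorem \ref{sta} that $\rho_n\to\mu_s^{+}$ and $\eta_n\to\mu_s^{-}$ narrowly: for fixed parameters, $\psi_\delta^{-}$ and $1-\psi_{\delta_1}^{+}\psi_{\delta_2}^{+}$ are bounded continuous on $Q$, so narrow convergence gives $\int_{Q}\psi_\delta^{-}d\rho_n\to\int_{Q}\psi_\delta^{-}d\mu_s^{+}$ and $\int_{Q}(1-\psi_{\delta_1}^{+}\psi_{\delta_2}^{+})d\rho_n\to\int_{Q}(1-\psi_{\delta_1}^{+}\psi_{\delta_2}^{+})d\mu_s^{+}$ as $n\to\infty$, and similarly for $\eta_n$; combining with the bounds just obtained and using that all integrands are nonnegative yields the $\omega(n,\delta)$ and $\omega(n,\delta_1,\delta_2)$ statements.

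The hard part will be the second ingredient: turning the $W$-level competitors of the definition of $c_p^{Q}$ into honest $C_c^{1}$ cut-offs that are identically $1$ on the compact set while keeping the $W$-norm small, the delicate points being that the truncation and the space-time mollification must not spoil the smallness of the $X$- and $X'$-norms (one mollifies $w$ itself, controlling $\|(\varphi_n\ast w)_t\|_{X'}$ by $\|w_t\|_{X'}$), and that the two cut-offs must be produced simultaneously with disjoint supports, each equal to $1$ on its own compact. This is precisely the content imported from \cite[Lemma 5]{Pe08}, the parabolic analogue of the argument in \cite{DMOP}; the other three ingredients are routine.
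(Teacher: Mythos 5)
Your proposal is correct and follows essentially the same route as the paper, which does not prove this proposition but recalls it from \cite{DMOP} and \cite[Lemma 5]{Pe08}: the capacity-based construction of the cut-offs $\psi_{\delta}^{\pm}$ (the genuinely delicate step, including the splitting (\ref{41}) of the time derivative) is exactly what is imported from those references, and your remaining ingredients (inner regularity for $K_{\delta}^{\pm}$, disjointness of supports to bound the integrals against $\mu_{s}^{\pm}$, and narrow convergence of $\rho_{n},\eta_{n}$ to get the $\omega(n,\delta)$ and $\omega(n,\delta_{1},\delta_{2})$ statements) are the intended routine completions. Be aware only that your parenthetical shortcut for the cut-offs (mollify $w$ and control $\|(\varphi_{n}\ast w)_{t}\|_{X'}$) understates the difficulty — the chain-rule term produced by truncation/composition is precisely why the decomposition into an $X'$ part and an $L^{1}$ part is needed — but since you defer that step to \cite[Lemma 5]{Pe08}, as the paper does, this is not a gap.
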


Hereafter, if $n,\varepsilon,...,\nu$ are real numbers, and a function $\phi$
depends on $n,\varepsilon,...,\nu$ and eventual other parameters $\alpha
,\beta,..,\gamma$, and $n\rightarrow n_{0},\varepsilon\rightarrow
\varepsilon_{0},..,$ $\nu\rightarrow\nu_{0}$, we write $\phi=\omega
(n,\varepsilon,..,\nu)$, then this means\textbf{ }$\overline{\lim}%
_{\nu\rightarrow{\nu_{0}}}..\overline{\lim}_{\varepsilon\rightarrow
{\varepsilon_{0}}}\overline{\lim}_{n\rightarrow{n_{0}}}\left\vert
\phi\right\vert =0,$\textbf{ }when the parameters\textbf{ }$\alpha
,\beta,..,\gamma$\textbf{ }are fixed\textbf{.} In the same way, $\phi
\leqq\omega(n,\varepsilon,\delta,...,\nu)$ means $\overline{\lim}%
_{\nu\rightarrow{\nu_{0}}}..\overline{\lim}_{\varepsilon\rightarrow
{\varepsilon_{0}}}\overline{\lim}_{n\rightarrow{n_{0}}}\phi\leqq0$, and $\phi$
$\geqq\omega(n,\varepsilon,..,\nu)$ means $-\phi\leqq\omega(n,\varepsilon
,..,\nu).$

\begin{remark}
\label{05041}In the sequel we use a convergence property, consequence of the
Dunford-Pettis theorem, still used in \cite{DMOP}: If $\left\{  a_{n}\right\}
$ is a sequence in $L^{1}(Q)$ converging to $a$ weakly in $L^{1}(Q)$ and
$\left\{  b_{n}\right\}  $ a bounded sequence in $L^{\infty}(Q)$ converging to
$b,$ $a.e.$ in $Q,$ then $\lim_{n\rightarrow\infty}\int_{Q}{{a_{n}}{b_{n}}%
}=\int_{Q}{ab.}$
\end{remark}

Next we prove Thorem \ref{sta}.\bigskip

\begin{proof}
[Scheme of the proof]Let $\{\mu_{n}\},\left\{  u_{0,n}\right\}  $ and
$\left\{  u_{n}\right\}  $ satisfying the assumptions of Theorem \ref{sta}.
Then we can apply Proposition \ref{mun}. Setting $v_{n}=u_{n}-h_{n},$ up to
subsequences, $\left\{  u_{n}\right\}  $ converges $a.e.$ in $Q$ to some
function $u,$ and $\left\{  v_{n}\right\}  $ converges $a.e.$ to $v=u-h,$ such
that $T_{k}(v)\in X$ and $v\in L^{\sigma}((0,T);W_{0}^{1,\sigma}(\Omega
))\cap{L^{\infty}}((0,T);{L^{1}}(\Omega))$ for every $\sigma\in\left[
1,m_{c}\right)  $. And $\{v_{n}\}$ satisfies the conclusions (i) to (iv) of
Proposition \ref{mun}. We have
\begin{align*}
\mu_{n}  &  =(f_{n}-\operatorname{div}g_{n}+(h_{n})_{t})+(\rho_{n}%
^{1}-\operatorname{div}\rho_{n}^{2})-(\eta_{n}^{1}-\operatorname{div}\eta
_{n}^{2})+\rho_{n,s}-\eta_{n,s}\\
&  =\mu_{n,0}+(\rho_{n,s}-\eta_{n,s})^{+}-(\rho_{n,s}-\eta_{n,s})^{-},
\end{align*}
where
\begin{equation}
\mu_{n,0}=\lambda_{n,0}+\rho_{n,0}-\eta_{n,0},\text{ \quad with }\lambda
_{n,0}=f_{n}-\operatorname{div}g_{n}+(h_{n})_{t},\quad\rho_{n,0}=\rho_{n}%
^{1}-\operatorname{div}\rho_{n}^{2},\quad\eta_{n,0}=\eta_{n}^{1}%
-\operatorname{div}\eta_{n}^{2}. \label{muni}%
\end{equation}
Hence
\begin{equation}
\rho_{n,0},\eta_{n,0}\in\mathcal{M}_{b}^{+}(Q)\cap\mathcal{M}_{0}%
(Q),\text{\quad and\quad}\rho_{n}\geqq\rho_{n,0},\quad\eta_{n}\geqq\eta_{n,0}.
\label{muno}%
\end{equation}

\noindent Let $E^{+},E^{-}$ be the sets where, respectively, $\mu_{s}^{+}$ and
$\mu_{s}^{-}$ are concentrated. For any $\delta_{1},\delta_{2}>0$, let
$\psi_{\delta_{1}}^{+},\psi_{\delta_{2}}^{+}$ and $\psi_{\delta_{1}}^{-}%
,\psi_{\delta_{2}}^{-}$ as in Proposition \ref{04041} and set
\[
\Phi_{\delta_{1},\delta_{2}}=\psi_{\delta_{1}}^{+}\psi_{\delta_{2}}^{+}%
+\psi_{\delta_{1}}^{-}\psi_{\delta_{2}}^{-}.
\]
\textit{Suppose that we can prove the two estimates, near }$E$
\begin{equation}
I_{1}:=\int\limits_{\left\{  |v_{n}|\leqq k\right\}  }{\Phi_{\delta_{1}%
,\delta_{2}}A(x,t,\nabla u_{n}).\nabla\left(  {{v_{n}}-}\langle T_{k}%
(v)\rangle_{\nu}\right)  }\leqq\omega(n,\nu,\delta_{1},\delta_{2}),
\label{12059}%
\end{equation}
\textbf{ }\textit{and far from} $E,$%
\begin{equation}
I_{2}:=\int\limits_{\left\{  |v_{n}|\leqq k\right\}  }{(1-\Phi_{\delta
_{1},\delta_{2}})A(x,t,\nabla u_{n}).\nabla({{v_{n}}-}\langle T_{k}%
(v)\rangle_{\nu})}\leqq\omega(n,\nu,\delta_{1},\delta_{2}). \label{120510}%
\end{equation}
Then it follows that
\begin{equation}
\overline{\lim}_{n,\nu}\int\limits_{\left\{  |v_{n}|\leqq k\right\}
}{A(x,t,\nabla u_{n}).\nabla\left(  {{v_{n}}-}\langle{{{{{T_{k}}(v)\rangle}%
}_{\nu}}}\right)  }\leqq0, \label{12052}%
\end{equation}
which implies%
\begin{equation}
\overline{\lim}_{n\rightarrow\infty}\int\limits_{\left\{  |v_{n}|\leqq
k\right\}  }{A(x,t,\nabla u_{n}).\nabla\left(  {{v_{n}}-{T_{k}}(v)}\right)
}\leqq0, \label{12061}%
\end{equation}
since $\left\{  \langle{{{{{T_{k}}(v)\rangle}}_{\nu}}}\right\}  $ converges to
$T_{k}(v)$ in $X.$ On the other hand, from the weak convergence of $\left\{
T_{k}(v_{n})\right\}  $ to $T_{k}(v)$ in $X,$ we verify that%
\[
\int\limits_{\left\{  |v_{n}|\leqq k\right\}  }{A(x,t,\nabla(T_{k}%
(v)+h_{n})).\nabla\left(  {{T_{k}}({v_{n}})-{T_{k}}(v)}\right)  }=\omega(n).
\]
Thus we get
\[
\int\limits_{\left\{  |v_{n}|\leqq k\right\}  }{\left(  {A(x,t,\nabla
u_{n})-A(x,t,\nabla(T_{k}(v)+h_{n}))}\right)  .\nabla\left(  {{u_{n}}-\left(
{{T_{k}}(v)+{h_{n}}}\right)  }\right)  }=\omega(n).
\]
Then, it is easy to show that, up to a subsequence,
\begin{equation}
\left\{  \nabla u_{n}\right\}  \text{ converges to }\nabla u,\qquad\text{
}a.e.\text{ in }Q. \label{pp}%
\end{equation}
Therefore, $\left\{  A(x,t,\nabla u_{n})\right\}  $ converges to $A(x,t,\nabla
u)$ weakly in $(L^{p^{\prime}}(Q))^{N}$ ; and from (\ref{12061}) we find
\[
\overline{\lim}_{n\rightarrow\infty}\int_{Q}A(x,t,\nabla u_{n}).\nabla{T_{k}%
}({v_{n}})\leqq\int_{Q}A(x,t,\nabla u)\nabla T_{k}(v).
\]
Otherwise, $\left\{  {A(x,t,\nabla\left(  {{T_{k}}(v_{n})+{h_{n}}}\right)
)}\right\}  $ converges weakly in $(L^{p^{\prime}}(Q))^{N}$to some $F_{k},$
from Proposition \ref{mun}, and we obtain that $F_{k}={A(x,t,\nabla\left(
{{T_{k}}(v)+{h}}\right)  ).}$ Hence
\begin{align*}
\overline{\lim}_{n\rightarrow\infty}\int_{Q}A(x,t,\nabla(T_{k}(v_{n}%
)+h_{n})).\nabla(T_{k}(v_{n})+h_{n})  &  \leqq\overline{\lim}_{n\rightarrow
\infty}\int_{Q}A(x,t,\nabla u_{n}).\nabla T_{k}(v_{n})\\
&  +\overline{\lim}_{n\rightarrow\infty}\int_{Q}A(x,t,\nabla(T_{k}%
(v_{n})+h_{n})).\nabla h_{n}\\
&  \leqq\int_{Q}A(x,t,\nabla(T_{k}(v)+h)).\nabla(T_{k}(v)+h).
\end{align*}
As a consequence
\begin{equation}
\left\{  T_{k}(v_{n})\right\}  \text{ converges to }T_{k}(v),\text{ strongly
in }X,\qquad\forall k>0. \label{02041}%
\end{equation}
Then \textit{to finish the proof we have to check that }$u$\textit{ is a
solution of} (\ref{pmu}).\medskip\medskip
\end{proof}

In order to prove (\ref{12059}) we need a first Lemma, inspired of \cite[Lemma
6.1]{DMOP}, extending \cite[Lemma 6 and Lemma 7]{Pe08}:

\begin{lemma}
\label{april261}Let $\psi_{1,\delta},\psi_{2,\delta}\in C^{1}(Q)$ be uniformly
bounded in $W^{1,\infty}(Q)$ with values in $[0,1],$ such that$\int_{Q}%
{\psi_{1,\delta}}d\mu_{s}^{-}\leqq\delta$ and $\int_{Q}{\psi_{2,\delta}}%
d\mu_{s}^{+}\leqq\delta$. Then, under the assumptions of Theorem \ref{sta},
\begin{equation}
\frac{1}{m}\int\limits_{\left\{  m\leqq{v_{n}}<2m\right\}  }{{{\left\vert
{\nabla{u_{n}}}\right\vert }^{p}}{\psi_{2,\delta}}}=\omega(n,m,\delta
),\quad\quad\frac{1}{m}\int\limits_{\left\{  m\leqq{v_{n}}<2m\right\}
}{{{\left\vert {\nabla{v_{n}}}\right\vert }^{p}}{\psi_{2,\delta}}}%
=\omega(n,m,\delta), \label{13051}%
\end{equation}%
\begin{equation}
\frac{1}{m}\int\limits_{-2m<{v_{n}}\leqq-m}{{{\left\vert {\nabla{u_{n}}%
}\right\vert }^{p}}{\psi_{1,\delta}}}=\omega(n,m,\delta),\qquad\frac{1}{m}%
\int\limits_{-2m<{v_{n}}\leqq-m}{{{\left\vert {\nabla{v_{n}}}\right\vert }%
^{p}}{\psi_{1,\delta}}}=\omega(n,m,\delta), \label{13052}%
\end{equation}
and for any $k>0,$%
\begin{equation}
\int\limits_{\left\{  m\leqq{v_{n}}<m+k\right\}  }{{{\left\vert {\nabla{u_{n}%
}}\right\vert }^{p}}{\psi_{2,\delta}}}=\omega(n,m,\delta),\qquad
\int\limits_{\left\{  m\leqq{v_{n}}<m+k\right\}  }{{{\left\vert {\nabla{v_{n}%
}}\right\vert }^{p}}{\psi_{2,\delta}}}=\omega(n,m,\delta), \label{13053}%
\end{equation}%
\begin{equation}
\int\limits_{\left\{  -m-k<{v_{n}}\leqq-m\right\}  }{{{\left\vert
{\nabla{u_{n}}}\right\vert }^{p}}{\psi_{1,\delta}}}=\omega(n,m,\delta
),\qquad\int\limits_{\left\{  -m-k<{v_{n}}\leqq-m\right\}  }{{{\left\vert
{\nabla{v_{n}}}\right\vert }^{p}}{\psi_{1,\delta}}}=\omega(n,m,\delta).
\label{13054}%
\end{equation}

\end{lemma}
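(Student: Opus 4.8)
The strategy is to test the renormalized equation for $u_n$ with a carefully chosen pair $(S,\varphi)$ in the extended formulation \eqref{parti}, where $S'$ localizes on the region $\{v_n \approx m\}$ (or $\{v_n \approx -m\}$) and $\varphi$ carries the cutoff $\psi_{2,\delta}$ (resp. $\psi_{1,\delta}$). Concretely, for the first family \eqref{13051} I would take $J$ to be (an approximation of) $H_m$ restricted to positive arguments — e.g. $J(r)=\tfrac1m(r-m)^+ \wedge 1$ so that $J'$ is supported in $[m,2m]$ — together with $S = \overline{H_{2m}}$ (so $S'(v_n)=1$ on the relevant set and $S$ has the required compact-support-of-$S'$ property), and $\xi = \psi_{2,\delta}$. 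Plugging into \eqref{parti} and using the coercivity $A(x,t,\nabla u_n)\cdot\nabla u_n \geq c_1|\nabla u_n|^p$ produces, on the left, a positive multiple of $\tfrac1m\int_{\{m\leq v_n<2m\}}|\nabla u_n|^p\psi_{2,\delta}$, and on the right, three kinds of terms: the initial-data term, the time-derivative term $\int_Q \xi_t \overline{J}(S(v_n))$, and the measure term $\int_Q S'(v_n)\xi J(S(v_n))\, d\widehat{\mu_{n,0}}$.

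The key point is to show all three right-hand terms are $\omega(n,m,\delta)$. The initial term vanishes for $n$ large because $u_{0,n}\to u_0$ in $L^1$ and $\{u_{0,n}\}$ is equi-integrable, so $\operatorname{meas}\{u_{0,n}\geq m\}\to 0$ uniformly. For the measure term, I split $\widehat{\mu_{n,0}}$ according to \eqref{muni}–\eqref{muno}: the $(f_n,g_n,h_n)$-part and the $\rho_n^2,\eta_n^2$-parts are controlled using the gradient estimate \eqref{alp} of Proposition \ref{estsup} (with $\ell=m$, $k=m$) together with \eqref{lim3}-type bounds and the uniform $L^1$/$L^{p'}$ bounds, giving $\omega(n,m)$; the singular pieces $\rho_{n,s},\eta_{n,s}$ are handled via \eqref{12054} and $\int_Q \psi_{2,\delta}\, d\mu_s^+ \leq \delta$, which is exactly the hypothesis on $\psi_{2,\delta}$. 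For the time-derivative term, since $\overline{J}(S(v_n))$ is bounded (by $1$) with support in $\{v_n \geq m\}$, I bound it by $\|\xi_t\|_{\infty}\operatorname{meas}\{v_n\geq m\}$; here I use \eqref{mess} of Proposition \ref{estsup} to get $\operatorname{meas}\{v_n>m\}\leq C M_1 m^{-p_c}$, uniform in $n$, which $\to 0$ as $m\to\infty$. Assembling these gives the first estimate of \eqref{13051}; the companion estimate for $\nabla v_n$ follows since $|\nabla v_n|^p \leq C(|\nabla u_n|^p + |\nabla h_n|^p)$ and $\tfrac1m\int_{\{m\leq v_n<2m\}}|\nabla h_n|^p\psi_{2,\delta}\to 0$ because $\nabla h_n$ is bounded in $L^p$ (indeed strongly convergent) and the measure of the domain goes to zero. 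The estimates \eqref{13052} are identical after reflecting $v_n\mapsto -v_n$ and swapping $\psi_{2,\delta}\leftrightarrow\psi_{1,\delta}$, $\mu_s^+\leftrightarrow\mu_s^-$.

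For the finite-level estimates \eqref{13053}–\eqref{13054}, I would run essentially the same computation but with $J$ replaced by $T_{k,m}$ (the truncation shifted to level $m$, as in the proof of Proposition \ref{estsup}) and $S=\overline{H_{2(m+k)}}$, so that the coercive term on the left becomes $\int_{\{m\leq v_n<m+k\}}|\nabla u_n|^p\psi_{2,\delta}$ with no $1/m$ factor; on the right the bound $\overline{T_{k,m}}(r)\leq k\chi_{\{|r|\geq m\}}$ keeps the time-derivative term $\leq Ck\operatorname{meas}\{v_n\geq m\}=\omega(n,m)$, and the measure and initial terms are dealt with as before — the singular part is again absorbed by the $\delta$-smallness hypothesis on $\psi_{i,\delta}$ against $\mu_s^{\pm}$, and the diffuse part by \eqref{alp} applied on the strip $\{m\leq|v_n|\leq m+k\}$.

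\textbf{Main obstacle.} The delicate point is the treatment of the measure term, specifically keeping track of which part of the decomposition of $\mu_{n,0}$ contributes and making sure the diffuse contributions genuinely vanish as $n\to\infty$ and then $m\to\infty$ — this requires using Remark \ref{05041} (the Dunford–Pettis convergence) to pass to the limit in products of weakly-$L^1$-convergent sequences (like $f_n$) against bounded sequences converging a.e. (like $S'(v_n)\xi J(S(v_n))$), and it requires the precise form of \eqref{lim3} and \eqref{alp} to control the $g_n\cdot\nabla v_n$ and $\rho_n^2\cdot\nabla v_n$ cross terms on the thin strips. Equally, one must verify that the chosen $(S,J,\xi)$ are genuinely admissible in \eqref{parti} — $S'$ compactly supported, $J$ nondecreasing Lipschitz with $J(0)=0$, $\xi = \psi_{i,\delta}\in C^1\cap W^{1,\infty}$, $\xi\geq 0$ — which is why the cutoffs from Proposition \ref{04041} (smooth, compactly supported, with controlled $W^{1,\infty}$ and time-derivative norms) are exactly what is needed.
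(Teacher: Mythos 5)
Your overall strategy (test \eqref{parti} with a renormalization $S$ localizing near the level $m$, a nondecreasing $J$ carrying the $1/m$ factor, and $\xi=\psi_{2,\delta}$, then control the initial, time-derivative and measure terms) is the same as the paper's, but your specific choice $S=\overline{H_{2m}}$ hides the one term that actually carries the singular measures, and this is where the argument breaks. With $S=\overline{H_{2m}}$ the second term of \eqref{parti}, $\int_Q S''(v_n)A(x,t,\nabla u_n)\cdot\nabla v_n\,\xi J(S(v_n))$, produces a defect of the form $-\frac{1}{2m}\int_{\{2m<v_n<4m\}}\psi_{2,\delta}\,A(x,t,\nabla u_n)\cdot\nabla v_n$, which you never mention. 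This term is of order $O(1)$ (by \eqref{alp} it is bounded, not small), so it cannot be absorbed; it must be identified, via the renormalization condition \eqref{renor2} for $u_n$, with $-\int_Q\psi_{2,\delta}\,d(\rho_{n,s}-\eta_{n,s})^{+}$, which after the narrow convergence of $\rho_n$ is $\leqq\delta$. But \eqref{renor2} is a limit in the truncation level for \emph{fixed} $n$, whereas the lemma's limit order is $n\to\infty$ first, then $m\to\infty$. Because your upper truncation level ($2m$) is tied to $m$, the two limits cannot be interchanged, and the defect term is out of control. This is exactly why the paper introduces the functions $S_{m,\ell}$ (and $S_{k,m,\ell}$ for \eqref{13053}--\eqref{13054}) with a second, independent level $2m+\ell$, sends $\ell\to\infty$ \emph{before} $n$, uses \eqref{renor2} at that stage, and only then lets $n$, $m$, $\delta$ go to their limits in the order $\omega(\ell,n,m,\delta)$.

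A second, related misconception: you write that the singular pieces $\rho_{n,s},\eta_{n,s}$ appear in the measure term $\int_Q S'(v_n)\xi J(S(v_n))\,d\widehat{\mu_{n,0}}$ and that the remaining diffuse contributions ``genuinely vanish'' as $n\to\infty$ then $m\to\infty$. Neither is accurate. By \eqref{xxx}, $\widehat{\mu_{n,0}}$ contains only $f_n-\operatorname{div}g_n+\rho_{n,0}-\eta_{n,0}$; the singular parts enter solely through the defect term discussed above. Moreover the diffuse pieces $\rho_{n,0}=\rho_n^1-\operatorname{div}\rho_n^2$ and $\eta_{n,0}$ do \emph{not} vanish: $\rho_n^1,\eta_n^1$ are only bounded in $L^1$ (not equi-integrable), and $\rho_{n,0}$ converges narrowly towards a singular limit, so Dunford--Pettis does not apply to them. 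The only available control is the sign argument of the paper: since $0\leqq S'(v_n)T_1(S(v_n))\leqq1$ and $\rho_{n,0}\leqq\rho_n$ by \eqref{muno}, this contribution is bounded by $\int_Q\psi_{2,\delta}\,d\rho_n\to\int_Q\psi_{2,\delta}\,d\mu_s^{+}\leqq\delta$, i.e.\ it is only $\delta$-small, not $\omega(n,m)$. Your treatment of the $(f_n,g_n,h_n)$ part, of the initial data, of the time-derivative term via \eqref{mess}, and the passage from $\nabla u_n$ to $\nabla v_n$ are fine, but without the extra parameter $\ell$ and the correct bookkeeping of where the singular and the $\rho_{n,0},\eta_{n,0}$ contributions enter, the proof as proposed does not close.
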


\begin{proof}
(i) Proof of (\ref{13051}), (\ref{13052}). Set for any $r\in\mathbb{R}$ and
any $m,\ell\geqq1$%
\[
{S_{m,\ell}}(r)=\int_{0}^{r}{\left(  {\frac{{-m+\tau}}{m}{\chi_{\lbrack
m,2m]}}(\tau)+{\chi_{(2m,2m+\ell]}}(\tau)+\frac{{4m+2h-\tau}}{{2m+\ell}}%
{\chi_{(2m+\ell,4m+2h]}}(\tau)}\right)  d\tau,}%
\]%
\[
{S_{m}}(r){=}\int_{0}^{r}{\left(  {\frac{{-m+\tau}}{m}{\chi_{\lbrack m,2m]}%
}(\tau)+{\chi_{(2m,\infty)}}(\tau)}\right)  d\tau}.
\]
Note that ${S}_{m,\ell}^{\prime\prime}{=\chi}_{\left[  m,2m\right]  }/m-{\chi
}_{\left[  2m+\ell,2(2m+\ell)\right]  }/(2m+\ell).$ We choose $(\xi
,J,S)=(\psi_{2,\delta},T_{1},S_{m,\ell})$ as test functions in (\ref{parti})
for $u_{n},$ and observe that, from (\ref{muni}),
\begin{equation}
\widehat{\mu_{n,0}}=\mu_{n,0}-(h_{n})_{t}=\widehat{\lambda_{n,0}}+\rho
_{n,0}-\eta_{n,0}=f_{n}-\operatorname{div}g_{n}+\rho_{n,0}-\eta_{n,0}.
\label{xxx}%
\end{equation}
Thus we can write $%
{\textstyle\sum_{i=1}^{6}}
A_{i}\leqq%
{\textstyle\sum_{i=7}^{12}}
A_{i},$ where
\begin{align*}
A_{1}  &  =-\int\limits_{\Omega}{{\psi_{2,\delta}}(0){T_{1}}({S_{m,\ell}%
}({u_{0,n}})){S_{m,\ell}}({u_{0,n}}),\quad}A_{2}=-\int_{Q}{{{\left(
{{\psi_{2,\delta}}}\right)  }_{t}}\overline{T_{1}}({S_{m,\ell}}({v_{n}})),}\\
A_{3}  &  =\int_{Q}{{{S}_{m,\ell}^{^{\prime}}}({v_{n}}){T_{1}}({S_{m,\ell}%
}({v_{n}}))A(x,t,\nabla u_{n})\nabla{\psi_{2,\delta}},}\\
A_{4}  &  =\int_{Q}{{{{{{S}_{m,\ell}^{^{\prime}}}({v_{n}})}}^{2}\psi
_{2,\delta}}{T_{1}^{^{\prime}}}({S_{m,\ell}}({v_{n}}))A(x,t,\nabla
u_{n})\nabla{v_{n},}}\\
A_{5}  &  =\frac{1}{m}\int\limits_{\left\{  m\leqq{v_{n}}\leqq2m\right\}
}{{\psi_{2,\delta}}{T_{1}}({S_{m,\ell}}({v_{n}}))A(x,t,\nabla u_{n}%
)\nabla{v_{n}},}%
\end{align*}%
\begin{align*}
A_{6}  &  =-\frac{1}{{2m+\ell}}\int\limits_{\left\{  2m+\ell\leqq{v_{n}%
}<2(2m+\ell)\right\}  }{\psi_{2,\delta}A(x,t,\nabla u_{n})\nabla{v_{n},}}\\
A_{7}  &  =\int_{Q}{{{S^{\prime}}_{m,\ell}}({v_{n}}){T_{1}}({S_{m,\ell}%
}({v_{n}})){\psi_{2,\delta}}{f_{n},\quad\quad}}A_{8}=\int_{Q}{{{S}_{m,\ell
}^{^{\prime}}}({v_{n}}){T_{1}}({S_{m,\ell}}({v_{n}})){g_{n}.}\nabla
{\psi_{2,\delta},}}\\
A_{9}  &  =\int_{Q}{{{\left(  {{{S}_{m,\ell}^{^{\prime}}}({v_{n}})}\right)
}^{2}}{T_{1}^{^{\prime}}}({S_{m,\ell}}({v_{n}})){\psi_{2,\delta}g_{n}.}%
\nabla{v_{n},\quad\quad}}A_{10}=\frac{1}{m}\int\limits_{m\leqq{v_{n}}\leqq
2m}{{T_{1}}({S_{m,\ell}}({v_{n}})){\psi_{2,\delta}g_{n}.}\nabla{v_{n},}}\\
A_{11}  &  =-\frac{1}{{2m+\ell}}\int\limits_{\left\{  2m+\ell\leqq{v_{n}%
}<2(2m+\ell)\right\}  }{{\psi_{2,\delta}g_{n}.}\nabla{v_{n}},\quad}A_{12}%
=\int_{Q}{{{S}_{m,\ell}^{^{\prime}}}({v_{n}}){T_{1}}({S_{m,\ell}}({v_{n}%
})){\psi_{2,\delta}}d\left(  {{\rho_{n,0}}-{\eta_{n,0}}}\right)  .}%
\end{align*}
Since $||S_{m,\ell}(u_{0,n})||_{1,\Omega}\leqq\int\limits_{\left\{  m\leqq
u_{0,n}\right\}  }u_{0,n}dx$, we find $A_{1}=\omega(\ell,n,m)$. Otherwise
\[
|A_{2}|\leqq{\left\Vert {{\psi_{2,\delta}}}\right\Vert _{{W^{1,\infty}}({Q})}%
}\int\limits_{\left\{  m\leqq v_{n}\right\}  }{{v_{n}}},\qquad|A_{3}%
|\leqq{\left\Vert {{\psi_{2,\delta}}}\right\Vert _{{W^{1,\infty}}({Q})}}%
\int\limits_{\left\{  m\leqq v_{n}\right\}  }\left(  |a|+c_{2}{\left\vert
{\nabla{u_{n}}}\right\vert }^{p-1}\right)  ,
\]
which implies $A_{2}=\omega(\ell,n,m)$ and $A_{3}=\omega(\ell,n,m).$ Using
(\ref{renor2}) for $u_{n}$, we have
\[
A_{6}=-\int_{Q}{{\psi_{2,\delta}}d{{\left(  {{\rho_{n,s}}-{\eta_{n,s}}%
}\right)  }^{+}}}+\omega(\ell)=\omega(\ell,n,m,\delta).
\]
Hence $A_{6}=\omega(\ell,n,m,\delta),$ since ${{{\left(  {{\rho_{n,s}}%
-{\eta_{n,s}}}\right)  }^{+}}}$ converges to $\mu_{s}^{+}$ as $n\rightarrow
\infty$ in the narrow topology, and $\int_{Q}{\psi_{2,\delta}}d\mu_{s}%
^{+}\leqq\delta.$ We also obtain $A_{11}=\omega(\ell)$ from (\ref{lim3}).

\noindent Now $\left\{  S_{m,\ell}^{^{\prime}}(v_{n})T_{1}(S_{m,\ell}%
(v_{n}))\right\}  _{\ell}$ converges to $S_{m}^{^{\prime}}(v_{n})T_{1}%
(S_{m}(v_{n}))$, $\left\{  S_{m}^{^{\prime}}(v_{n})T_{1}(S_{m}(v_{n}%
))\right\}  _{n}$ converges to $S_{m}^{^{\prime}}(v)$ $T_{1}(S_{m}(v))$,
$\left\{  S_{m}^{^{\prime}}(v)T_{1}(S_{m}(v))\right\}  _{m}$ converges to $0$,
$\ast$-weakly in $L^{\infty}(Q),$ and $\left\{  f_{n}\right\}  $ converges to
$f$ weakly in $L^{1}(Q)$, $\left\{  g_{n}\right\}  $ converges to $g$ strongly
in $(L^{p^{\prime}}(Q))^{N}$. From Remark \ref{05041}, we obtain%
\begin{align*}
A_{7}  &  =\int_{Q}{{{S^{\prime}}_{m}}({v_{n}}){T_{1}}({S_{m}}({v_{n}}%
)){\psi_{2,\delta}}{f_{n}}}+\omega(\ell)=\int_{Q}{{{S^{\prime}}_{m}}(v){T_{1}%
}({S_{m}}(v)){\psi_{2,\delta}}f}+\omega(\ell,n)=\omega(\ell,n,m),\\
A_{8}  &  =\int_{Q}{{{S^{\prime}}_{m}}({v_{n}}){T_{1}}({S_{m}}({v_{n}}%
)){g_{n}.}\nabla{\psi_{2,\delta}}}+\omega(\ell)=\int_{Q}{{{S^{\prime}}_{m}%
}(v){T_{1}}({S_{m}}(v))g\nabla{\psi_{2,\delta}}}+\omega(\ell,n)=\omega
(\ell,n,m).
\end{align*}
\newline Otherwise, $A_{12}\leqq\int_{Q}{{\psi_{2,\delta}}d{\rho_{n}}}$, and
$\left\{  \int_{Q}{{\psi_{2,\delta}}d{\rho_{n}}}\right\}  $ converges to
$\int_{Q}{\psi_{2,\delta}}d\mu_{s}^{+},$ thus $A_{12}\leqq\omega
(\ell,n,m,\delta)$.

\noindent Using Holder inequality and the condition \eqref{condi1} we have
\[
g_{n}.\nabla v_{n}-A(x,t,\nabla u_{n})\nabla{v_{n}}\leq C_{1}\left(
|g_{n}|^{p^{\prime}}+|\nabla h_{n}|^{p}+|a|^{p^{\prime}}\right)
\]
with $C_{1}=C_{1}(p,c_{2}),$ which implies
\[
A_{9}-A_{4}\leqq C_{1}\int_{Q}{{{\left(  {{{S^{\prime}}_{m,\ell}}({v_{n}}%
)}\right)  }^{2}T}}_{1}^{\prime}{({S_{m,\ell}}({v_{n}})){\psi_{2,\delta}%
}\left(  {|{g_{n}}{|^{p^{\prime}}}+|{h_{n}}{|^{p}}}+|a|^{p^{\prime}}\right)
=\;}\omega(\ell,n,m).
\]
Similarly we also show that $A_{10}-A_{5}/2\leqq\omega(\ell,n,m)$. Combining
the estimates, we get $A_{5}/2\leqq\omega(\ell,n,m,\delta)$. Using Holder
inequality we have
\[
A(x,t,\nabla u_{n})\nabla v_{n}\geq\frac{c_{1}}{2}|\nabla u_{n}|^{p}%
-C_{2}(|a|^{p^{\prime}}+|\nabla h_{n}|^{p}).
\]
with $C_{2}=C_{2}(p,c_{1},c_{2}),$ which implies
\[
\frac{1}{m}\int\limits_{\left\{  m\leqq{v_{n}}<2m\right\}  }{{{\left\vert
{\nabla{u_{n}}}\right\vert }^{p}}{\psi_{2,\delta}}{T_{1}}({S_{m,\ell}}({v_{n}%
}))=\;}\omega(\ell,n,m,\delta).
\]
Note that for all $m$ $>4$, $S_{m,\ell}(r)\geqq1$ for any $r\in\lbrack\frac
{3}{2}m,2m];$ hence $T_{1}(S_{m,\ell}(r)=1.$ So,
\[
\frac{1}{m}\int\limits_{\left\{  \frac{3}{2}m\leqq{v_{n}}<2m\right\}
}{{{\left\vert {\nabla{u_{n}}}\right\vert }^{p}}{\psi_{2,\delta}}}=\omega
(\ell,n,m,\delta).
\]
Since ${\left\vert {\nabla{v_{n}}}\right\vert ^{p}}\leqq{2^{p-1}}{\left\vert
{\nabla{u_{n}}}\right\vert ^{p}}+{2^{p-1}}{\left\vert {\nabla{h_{n}}%
}\right\vert ^{p}}$, there also holds
\[
\frac{1}{m}\int\limits_{\left\{  \frac{3}{2}m\leqq{v_{n}}<2m\right\}
}{{{\left\vert {\nabla{v_{n}}}\right\vert }^{p}}{\psi_{2,\delta}}}=\omega
(\ell,n,m,\delta).
\]
We deduce (\ref{13051}) by summing on each set $\left\{  (\frac{4}{3})^{\nu
}m\leqq{v_{n}}\leqq(\frac{4}{3})^{\nu+1}m\right\}  $ for $\nu=0,1,2.$
Similarly, we can choose $(\xi,\psi,S)=(\psi_{1,\delta},T_{1},\tilde
{S}_{m,\ell})$ as test functions in (\ref{parti}) for $u_{n},$ where
$\tilde{S}_{m,\ell}(r)=$ ${S_{m,\ell}}(-r),$ and we obtain (\ref{13052}%
).\medskip\ 

(ii) Proof of (\ref{13053}), (\ref{13054}). We set, for any $k,m,\ell\geqq1,$%
\[
{S_{k,m,\ell}}(r)=\int_{0}^{r}{\left(  {{T_{k}}(\tau-{T_{m}}(\tau
)){\chi_{\lbrack m,k+m+\ell]}}+k\frac{{2(k+\ell+m)-\tau}}{{k+m+\ell}}%
{\chi_{(k+m+\ell,2(k+m+\ell)]}}}\right)  d\tau}%
\]%
\[
{S_{k,m}}(r)=\int\limits_{0}^{s}{{T_{k}}(\tau-{T_{m}}(\tau)){\chi_{\lbrack
m,\infty)}}d\tau.}%
\]
We choose $(\xi,\psi,S)=(\psi_{2,\delta},T_{1},S_{k,m,\ell})$ as test
functions in (\ref{parti}) for $u_{n}$. In the same way we also obtain
\[
\int\limits_{\left\{  m\leqq{v_{n}}<m+k\right\}  }{{{\left\vert {\nabla{u_{n}%
}}\right\vert }^{p}}{\psi_{2,\delta}}{T_{1}}({S_{k,m,\ell}}({v_{n}}))}%
=\omega(\ell,n,m,\delta).
\]
Note that $T_{1}(S_{k,m,\ell}(r))$ $=1$ for any $r$ $\geqq m+1$, thus
$\int\limits_{\left\{  m+1\leqq{v_{n}}<m+k\right\}  }{{{\left\vert
{\nabla{u_{n}}}\right\vert }^{p}}{\psi_{2,\delta}}}=\omega(n,m,\delta),$ which
implies (\ref{13053}) by changing $m$ into $m-1$. Similarly, we obtain
(\ref{13054}).\medskip
\end{proof}

Next we look at the behaviour near $E.$

\begin{lemma}
\label{near} Estimate (\ref{12059}) holds.
\end{lemma}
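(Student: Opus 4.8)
The plan is to estimate $I_1=\int_{\{|v_n|\le k\}}\Phi_{\delta_1,\delta_2}A(x,t,\nabla u_n).\nabla(v_n-\langle T_k(v)\rangle_\nu)$ by testing the renormalized formulation for $u_n$ with a test pair concentrated on $\mathrm{supp}\,\Phi_{\delta_1,\delta_2}$, i.e.\ near the sets $E^+,E^-$ carrying the singular parts. Since $\Phi_{\delta_1,\delta_2}=\psi_{\delta_1}^+\psi_{\delta_2}^+ +\psi_{\delta_1}^-\psi_{\delta_2}^-$ with disjoint supports, I would treat the two summands separately; by symmetry focus on $\Psi:=\psi_{\delta_1}^+\psi_{\delta_2}^+$, which is $1$ near $K^+\subset E^+$ and has small $X$- and $(X'+L^1)$-norms by Proposition \ref{04041}. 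First I would replace $\langle T_k(v)\rangle_\nu$ by its Landes approximation properties: it lies in $X\cap L^\infty(Q)$, $(\langle T_k(v)\rangle_\nu)_t\in X$, $\|\langle T_k(v)\rangle_\nu\|_\infty\le k$, and it converges to $T_k(v)$ strongly in $X$ and a.e.; this is what lets one pass the $\nu$-limit at the end.

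The key step is the choice of admissible test functions $(S,\varphi)$ in \eqref{parti} (or in \eqref{renor4}). I would take $S=S_m$ a truncation-type function with $S_m'$ supported in $[-2m,2m]$, $S_m(r)=r$ on $[-m,m]$, and as test function something like $\varphi=\Psi\,(T_k(v_n)-\langle T_k(v)\rangle_\nu)^{\pm}$ or, following \cite[Lemma 6.1]{DMOP}, $\varphi=\Psi\,H_m(v_n)(v_n-\langle T_k(v)\rangle_\nu)$ restricted appropriately — the point being that on $\{|v_n|\le k\}$ one has $S_m'(v_n)=1$ for $m\ge k$, so the leading term of \eqref{renor4} reproduces exactly $I_1$ up to the factor $\Psi$. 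All other terms must then be shown to be $\omega(n,\nu,\delta_1,\delta_2)$: (a) the time-derivative term, handled by the decomposition $(\Psi)_t=(\Psi)_t^1+(\Psi)_t^2$ with $\|(\Psi)_t^1\|_{X'}, \|(\Psi)_t^2\|_{1,Q}$ small (here one pairs $(\Psi)_t^1$ with $T_k(v_n)-\langle T_k(v)\rangle_\nu\in X$, bounded uniformly, and $(\Psi)_t^2$ with the same quantity in $L^\infty$); (b) the terms $\int S_m'(v_n)A(x,t,\nabla u_n).\nabla\Psi\,(\cdots)$ and the analogous $g_n$-term, small because $\|\nabla\Psi\|$ can be absorbed into the $X$-smallness of $\Psi$ together with the gradient estimates of Proposition \ref{estsup} and Proposition \ref{mun}; (c) the $S_m''$-terms $\frac1m\int_{\{m\le |v_n|<2m\}}|\nabla u_n|^p\Psi$, which vanish as $\omega(n,m,\delta)$ by Lemma \ref{april261} — here is where the preceding lemma is used crucially; (d) the measure term $\int S_m'(v_n)\Psi\,(v_n-\langle T_k(v)\rangle_\nu)\,d(\rho_{n,0}-\eta_{n,0})$, controlled by $\int\Psi\,d\rho_n\to\int\Psi\,d\mu_s^+\le\delta_2$ (using \eqref{12056}) and the uniform bound on $v_n-\langle T_k(v)\rangle_\nu$; (e) the initial-data term, which vanishes since $\Psi$ has compact support in $Q$ so $\Psi(0)=0$ — this kills the $u_{0,n}$ contribution entirely.

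After collecting these, one is left with $I_1\le \int_Q \Psi\,d\mu_s^+ \cdot (2k) + \omega(n,\nu,\delta_1,\delta_2)$ type bounds, and since $\int_Q\psi_{\delta_1}^+\psi_{\delta_2}^+\,d\mu_s^+\le \delta_1+\delta_2$ by \eqref{12056}, taking the iterated upper limits $\overline{\lim}_{\delta_2}\,\overline{\lim}_{\delta_1}\,\overline{\lim}_{\nu}\,\overline{\lim}_n$ gives $I_1\le\omega(n,\nu,\delta_1,\delta_2)$. The main obstacle I anticipate is verifying that the proposed $\varphi$ is genuinely an admissible test function in the sense required by Definition \ref{defin} — namely $\varphi\in X\cap L^\infty(Q)$ with $\varphi_t\in X'+L^1(Q)$ and $\varphi(T,\cdot)=0$ — since $v_n$ itself is not in $X$, only $T_k(v_n)$ is; this forces one to work with $T_k(v_n)$ (legitimate on $\{|v_n|\le k\}$ where $v_n=T_k(v_n)$) and to justify the product rule for $\Psi\cdot(T_k(v_n)-\langle T_k(v)\rangle_\nu)$ via the Steklov time-averaging of Definition \ref{ste}, exactly as in the Appendix-style argument behind Lemma \ref{integ}. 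A secondary subtlety is keeping the order of limits correct: $\ell$ (if an auxiliary truncation $S_{m,\ell}$ is used) first, then $n$, then $m$, then $\nu$, then $\delta_1,\delta_2$, so that each application of Lemma \ref{april261} and of the narrow convergence $\rho_n\to\mu_s^+$ is invoked at the right stage.
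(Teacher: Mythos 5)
Your scheme breaks down at the point that is precisely the reason for the double cut, namely your item (d), the singular--measure term near $E^{+}$. You claim it is controlled because ``$\int_Q\Psi\,d\rho_n\to\int_Q\Psi\,d\mu_s^{+}\le\delta_2$ using (\ref{12056})'', with $\Psi=\psi_{\delta_1}^{+}\psi_{\delta_2}^{+}$; but (\ref{12056}) says the opposite: it is $\int_Q(1-\psi_{\delta_1}^{+}\psi_{\delta_2}^{+})\,d\mu_s^{+}\le\delta_1+\delta_2$ that is small, so $\int_Q\Psi\,d\mu_s^{+}\ge\mu_s^{+}(Q)-\delta_1-\delta_2$ is essentially the \emph{whole} mass of $\mu_s^{+}$, and likewise $\int_Q\Psi\,d\rho_n$ is not small (only the crossed quantities $\int_Q\psi_\delta^{+}d\eta_n$ and $\int_Q\psi_\delta^{-}d\rho_n$ are, by (\ref{12054})). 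Consequently, with your choice $S=S_m$ (a plain truncation, $S_m'\equiv 1$ on $[-m,m]$) and $\varphi=\Psi\,(T_k(v_n)-\langle T_k(v)\rangle_\nu)$, the right-hand side term $\int_Q S_m'(v_n)\varphi\,d\rho_{n,0}$ coming from $\widehat{\mu_{n,0}}=f_n-\operatorname{div}g_n+\rho_{n,0}-\eta_{n,0}$ has neither a sign nor any smallness: the best generic bound is $2k\int_Q\Psi\,d\rho_n$, of order $2k\,\mu_s^{+}(Q)$, which does not tend to $0$ in any of the limits $n,\nu,\delta_1,\delta_2$. This is a genuine gap, not a technicality, and it is independent of the admissibility issue you flag (which is itself real: your $\varphi$ depends on $v_n$, so $\partial_t T_k(v_n)$ enters the time term and must be handled by primitives, a second source of trouble near $E$).

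The paper's proof avoids both problems by a different allocation of roles. First $I_1$ is split as $\int_Q\Phi_{\delta_1,\delta_2}A(x,t,\nabla u_n)\cdot\nabla T_k(v_n)$ minus $\int_{\{|v_n|\le k\}}\Phi_{\delta_1,\delta_2}A(x,t,\nabla u_n)\cdot\nabla\langle T_k(v)\rangle_\nu$, and the second integral is shown to be $\omega(n,\nu,\delta_1)$ purely by the weak convergence of Proposition \ref{mun} (iv), Remark \ref{05041} and the fact that $\Phi_{\delta_1,\delta_2}\to 0$ a.e.; no test function containing the Landes approximation is used near $E$. The remaining core estimate (\ref{120511}) is obtained by taking in (\ref{renor}) the \emph{fixed, $n$-independent} test function $\varphi=\psi_{\delta_1}^{+}\psi_{\delta_2}^{+}$ together with $S=\hat S_{k,m}$, $\hat S_{k,m}(r)=\int_0^r(k-T_k(\tau))H_m(\tau)\,d\tau$, so that $\hat S_{k,m}''=-\chi_{[-k,k]}+\frac{2k}{m}\chi_{[-2m,-m]}$: the $S''$-term reproduces exactly $-\int_Q\psi_{\delta_1}^{+}\psi_{\delta_2}^{+}A\cdot\nabla T_k(v_n)$ up to a level-set remainder on $\{-2m<v_n\le -m\}$ that Lemma \ref{april261} controls, while the decisive feature $\hat S_{k,m}'=(k-T_k)H_m\ge 0$, vanishing on $[k,\infty)$, makes the $\rho_{n,0}$-contribution have a favorable sign (it is simply discarded) and reduces the $\eta_{n,0}$-contribution to $2k\int_Q\psi_{\delta_1}^{+}\psi_{\delta_2}^{+}d\eta_n=\omega(n,\delta_1)$ by (\ref{12054}); the negative-side analogue uses $(\psi_{\delta_1}^{-}\psi_{\delta_2}^{-},\check S_{k,m})$. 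Unless you redesign $S$ so that $S'$ vanishes (or produces a sign) on the region where $\rho_n$ concentrates, a test pair of the form you propose cannot yield (\ref{12059}).
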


\begin{proof}
There holds%
\[
I_{1}=\int_{Q}{\Phi_{\delta_{1},\delta_{2}}A(x,t,\nabla u_{n}).\nabla{T_{k}%
}({v_{n}})-}\int\limits_{\left\{  |v_{n}|\leqq k\right\}  }{\Phi_{\delta
_{1},\delta_{2}}A(x,t,\nabla u_{n}).\nabla{\langle T_{k}(v)\rangle}_{\nu}{.}}%
\]
From Proposition \ref{mun}, (iv), $\left\{  A(x,t,\nabla\left(  T_{k}%
(v_{n})+h_{n}\right)  ).\nabla\langle T_{k}(v)\rangle_{\nu}\right\}  $
converges weakly in $L^{1}(Q)$ to $F_{k}\nabla\langle T_{k}(v)\rangle_{\nu}$
$.$ And $\left\{  \chi_{\left\{  |v_{n}|\leqq k\right\}  }\right\}  $
converges to $\chi_{|v|\leqq k},$ $a.e.$ in $Q$ , and $\Phi_{\delta_{1}%
,\delta_{2}}$ converges to $0$ $a.e.$ in $Q$ as $\delta_{1}\rightarrow0,$ and
$\Phi_{\delta_{1},\delta_{2}}$ takes its values in $\left[  0,1\right]  $.
Thanks to Remark \ref{05041}, we have
\begin{align*}
&  \int\limits_{\left\{  |v_{n}|\leqq k\right\}  }{{\Phi_{{\delta_{1}}%
,{\delta_{2}}}}A(x,t,\nabla u_{n}).\nabla{\langle T_{k}(v)\rangle}_{\nu}}\\
&  =\int_{Q}{\chi_{\left\{  |v_{n}|\leqq k\right\}  }{\Phi_{{\delta_{1}%
},{\delta_{2}}}}A(x,t,\nabla\left(  T_{k}(v_{n})+h_{n}\right)  ).\nabla\langle
T_{k}(v)\rangle}_{\nu}\\
&  =\int_{Q}{\chi_{|v|\leqq k}{\Phi_{{\delta_{1}},{\delta_{2}}}}F_{k}%
.\nabla{\langle T_{k}(v)\rangle}_{\nu}}+\omega(n)=\omega(n,\nu,{\delta_{1}}).
\end{align*}
Therefore, if we prove that
\begin{equation}
\int_{Q}{\Phi_{\delta_{1},\delta_{2}}A(x,t,\nabla u_{n}).\nabla{T_{k}}({v_{n}%
})}\leqq\omega(n,\delta_{1},\delta_{2}), \label{120511}%
\end{equation}
then we deduce (\ref{12059}). As noticed in \cite{DMOP}, \cite{Pe08}, it is
precisely for this estimate that we need the double cut ${\psi_{{\delta_{1}}%
}^{+}\psi_{{\delta_{2}}}^{+}.}$ To do this, we set, for any $m>k>0,$ and any
$r\in\mathbb{R},$%
\[
{\hat{S}_{k,m}}(r)=\int_{0}^{r}{\left(  {k-{T_{k}}(\tau)}\right)  H{_{m}}%
(\tau)d\tau,}%
\]
where $H{_{m}}$ is defined at (\ref{Hm}). Hence supp${\hat{S}_{k,m}\subset
}\left[  -2m,k\right]  ;$ and ${\hat{S}}^{\prime\prime}{_{k,m}=-\chi}_{\left[
-k,k\right]  }+\frac{2k}{m}{\chi}_{\left[  -2m,-m\right]  }.$ We choose
$(\varphi,S)=({\psi_{{\delta_{1}}}^{+}\psi_{{\delta_{2}}}^{+}},{\hat{S}_{k,m}%
})$ as test functions in (\ref{renor}). From (\ref{xxx}), we can write
\[
{A}_{1}+{A}_{2}-{A}_{3}+{A}_{4}+{A}_{5}+{A}_{6}=0,
\]
where
\begin{align*}
&  {A}_{1}=-\int_{Q}({{{{\psi_{{\delta_{1}}}^{+}\psi_{{\delta_{2}}}^{+}}})}%
}_{t}{\hat{S}_{k,m}({v_{n}}),\quad A}_{2}=\int_{Q}{(k-{T_{k}}({v_{n}}))H{_{m}%
}({v_{n}})A(x,t,\nabla u_{n}).\nabla({\psi_{{\delta_{1}}}^{+}\psi_{{\delta
_{2}}}^{+})},}\\
{A}_{3}  &  =\int_{Q}{\psi_{{\delta_{1}}}^{+}\psi_{{\delta_{2}}}%
^{+}A(x,t,\nabla u_{n}).\nabla{T_{k}}({v_{n}}),\quad A}_{4}=\frac{{2k}}{m}%
\int\limits_{\left\{  -2m<{v_{n}}\leqq-m\right\}  }{\psi_{{\delta_{1}}}%
^{+}\psi_{{\delta_{2}}}^{+}A(x,t,\nabla u_{n}).\nabla{v_{n}},}\\
{A}_{5}  &  =-\int_{Q}{(k-{T_{k}}({v_{n}}))H{_{m}}({v_{n}})\psi_{{\delta_{1}}%
}^{+}\psi_{{\delta_{2}}}^{+}}d\widehat{\lambda_{n,0}},\quad{A}_{6}=\int%
_{Q}{(k-{T_{k}}({v_{n}})){H_{m}}({v_{n}})\psi_{{\delta_{1}}}^{+}\psi
_{{\delta_{2}}}^{+}d\left(  {{\eta_{n,0}-\rho_{n,0}}}\right)  ;}%
\end{align*}
and we estimate ${A}_{3}.$ As in \cite[p.585]{Pe08}, since $\left\{  {{\hat
{S}_{k,m}}({v_{n}})}\right\}  $ converges to {{$\hat{S}_{k,m}$}}${({v})}$
weakly in $X,$ and {{$\hat{S}_{k,m}$}}${({v})\in L}^{\infty}(Q),$ and from
(\ref{41}), there holds
\[
{A}_{1}=-\int_{Q}({{{{\psi_{{\delta_{1}}}^{+})}}}}_{t}{{{{\psi_{{\delta_{2}}%
}^{+}}}\hat{S}_{k,m}}({v})-}\int_{Q}{{{{\psi_{{\delta_{1}}}^{+}}}}}%
({{{{\psi_{{\delta_{2}}}^{+})}}}}_{t}{{\hat{S}_{k,m}}({v})+\omega(n)=\omega
}(n,\delta_{1}).
\]

Next consider ${A}_{2}.$ Notice that ${{v_{n}=\;}}T_{2m}(v_{n})$ on
supp$({H{_{m}}({v_{n}})})$. From Proposition \ref{mun}, (iv), the sequence
$\left\{  A(x,t,\nabla\left(  T_{2m}(v_{n})+h_{n}\right)  ).\nabla
(\psi_{\delta_{1}}^{+}\psi_{\delta_{2}}^{+})\right\}  $ converges to
$F_{2m}.\nabla(\psi_{\delta_{1}}^{+}\psi_{\delta_{2}}^{+})$ weakly in
$L^{1}(Q)$. Thanks to Remark \ref{05041} and the convergence of $\psi
_{\delta_{1}}^{+}\psi_{\delta_{2}}^{+}$ in $X$ to $0$ as $\delta_{1}$ tends to
$0$, we find
\[
{A}_{2}=\int_{Q}{(k-{T_{k}}(v)){H_{m}}(v)F_{2m}.\nabla({\psi_{{\delta_{1}}%
}^{+}\psi_{{\delta_{2}}}^{+})}}+\omega(n)=\omega(n,{\delta_{1}}).
\]

Then consider ${A}_{4}.$ Then for some $C=C(p,c_{2}),$
\[
\left\vert {A}_{4}\right\vert \leqq C\frac{{2k}}{m}\int\limits_{\left\{
-2m<{v_{n}}\leqq-m\right\}  }\left(  |\nabla{u_{n}}|^{p}+|\nabla{v_{n}}%
|^{p}+|a|^{p^{\prime}}\right)  \psi_{{\delta_{1}}}^{+}\psi_{{\delta_{2}}}%
^{+}.
\]
Since ${\psi_{{\delta_{1}}}^{+}}$ takes its values in $\left[  0,1\right]  ,$
from Lemma \ref{april261}, we get in particular ${A}_{4}=\omega(n,\delta
_{1},m,\delta_{2})$.

Now estimate $A_{5}.$ The sequence $\left\{  (k-T_{k}(v_{n})){H{_{m}}({v_{n}%
})\psi_{{\delta_{1}}}^{+}\psi_{{\delta_{2}}}^{+}}\right\}  $ converges weakly
in $X$ to $(k-T_{k}(v)){H{_{m}}({v})\psi_{{\delta_{1}}}^{+}\psi_{{\delta_{2}}%
}^{+},}$ and $\left\{  (k-T_{k}(v_{n}))H_{m}(v_{n})\right\}  $ converges
$^{\ast}$-weakly in $L^{\infty}(Q)$ and $a.e.$ in $Q$ to $(k-T_{k}%
(v))H_{m}(v).$ Otherwise $\left\{  f_{n}\right\}  $ converges to $f$ weakly in
$L^{1}\left(  Q\right)  $ and $\left\{  g_{n}\right\}  $ converges to $g$
strongly in $(L^{p^{\prime}}\left(  Q\right)  )^{N}.$ Thanks to Remark
\ref{05041} and the convergence of ${\psi_{{\delta_{1}}}^{+}\psi_{{\delta_{2}%
}}^{+}}$ to $0$ in $X$ and $a.e.$ in $Q$ as $\delta_{1}\rightarrow0$, we
deduce that
\[
{A}_{5}=-\int_{Q}{(k-{T_{k}}({v_{n}}))H{_{m}}({v})\psi_{{\delta_{1}}}^{+}%
\psi_{{\delta_{2}}}^{+}}d\widehat{\nu_{0}}+\omega(n)=\omega(n,\delta_{1}),
\]
where $\widehat{\nu_{0}}=f-\operatorname{div}g.$

Finally ${A}_{6}\leqq2k\int_{Q}{\psi_{{\delta_{1}}}^{+}\psi_{{\delta_{2}}}%
^{+}d{\eta_{n}}}$; using (\ref{12054}) we also find ${A}_{6}$ $\leqq
\omega(n,\delta_{1},m,\delta_{2}).$ By addition, since ${A}_{3}$ does not
depend on $m,$ we obtain
\[
{A}_{3}=\int_{Q}{\psi_{{\delta_{1}}}^{+}\psi_{{\delta_{2}}}^{+}A(x,t,\nabla
u_{n})\nabla{T_{k}}({v_{n}})}\leqq\omega(n,{\delta_{1}},{\delta_{2}}).
\]
Reasoning as before with $({\psi_{{\delta_{1}}}^{-}\psi_{{\delta_{2}}}^{-}%
},{\check{S}_{k,m}})$ as test function in (\ref{renor}), where ${\check
{S}_{k,m}(r)=-}${$\hat{S}_{k,m}$}$(-r),$ we get in the same way
\[
\int_{Q}{\psi_{{\delta_{1}}}^{-}\psi_{{\delta_{2}}}^{-}A(x,t,\nabla
u_{n})\nabla{T_{k}}({v_{n}})}\leqq\omega(n,{\delta_{1}},{\delta_{2}}).
\]
Then, (\ref{120511}) holds.\medskip
\end{proof}

Next we look at the behaviour far from $E.$

\begin{lemma}
\label{far}. Estimate (\ref{120510}) holds.
\end{lemma}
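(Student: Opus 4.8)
The plan is to run, \emph{far from} the singular set $E$, the classical strong-convergence-of-truncations argument, testing the renormalized formulation~(\ref{renor4}) for $u_n$ against a function that vanishes near $E$; then the singular data $\mu_s^{\pm},\rho_{n,s},\eta_{n,s}$ contribute only $\omega$-terms and one is reduced, up to such terms, to the diffuse situation. This parallels the proof of Lemma~\ref{near}.

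Concretely, I would use in~(\ref{renor4}) the truncation $S=S_m$ (for $m\ge k$, with $S_m(r)=r$ on $[-m,m]$, $S_m'$ affine from $1$ to $0$ on $[m,2m]$ and $S_m'\equiv0$ outside $[-2m,2m]$, so that $S_m'(v_n)\equiv1$ on $\{|v_n|\le k\}$ and $S_m''$ is supported in $\{m\le|v_n|\le2m\}$) together with a test function obtained by Steklov time-regularization of $\varphi=(1-\Phi_{\delta_1,\delta_2})\big(T_k(v_n)-\langle T_k(v)\rangle_\nu\big)$ (Definition~\ref{ste} and the properties recalled after it), passing to the limit in the regularization parameter. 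Solving~(\ref{renor4}) for the diffusion term, one then checks that: the diffusion term $\int_QS_m'(v_n)A(x,t,\nabla u_n)\cdot\nabla\varphi$ reconstructs $I_2$ up to terms carrying $\nabla(1-\Phi_{\delta_1,\delta_2})$ or the difference $S_m'(v_n)-\chi_{\{|v_n|\le k\}}$, all of which are $\omega(n,\nu)$ by Proposition~\ref{mun}(iv) (writing $S_m'(v_n)A(x,t,\nabla u_n)=S_m'(v_n)A(x,t,\nabla(T_{2m}(v_n)+h_n))$), Remark~\ref{05041}, and the fact that $T_k(v_n)-\langle T_k(v)\rangle_\nu$ is bounded by $2k$, converges a.e. to $T_k(v)-\langle T_k(v)\rangle_\nu$ as $n\to\infty$, and to $0$ as $\nu\to\infty$; the term $\int_QS_m''(v_n)A(x,t,\nabla u_n)\cdot\nabla v_n\,\varphi$ is controlled in modulus by $\tfrac{Ck}{m}\int_{\{m\le|v_n|\le2m\}}(|\nabla u_n|^p+|\nabla v_n|^p+|a|^{p'})$, which via~(\ref{renor2})--(\ref{renor3}) for $u_n$, the inequalities $(\rho_{n,s}-\eta_{n,s})^{+}\le\rho_{n,s}\le\rho_n$ and $(\rho_{n,s}-\eta_{n,s})^{-}\le\eta_{n,s}\le\eta_n$, and the narrow convergences $\rho_n\to\mu_s^+$, $\eta_n\to\mu_s^-$ tends (as $m\to\infty$, then $n\to\infty$) to a multiple of $\int_Q(1-\Phi_{\delta_1,\delta_2})\,d(\mu_s^++\mu_s^-)\le C(\delta_1+\delta_2)$, since $\mu_s^{\pm}$ is concentrated on $E^{\pm}$ where $\psi_{\delta_1}^{\pm}\psi_{\delta_2}^{\pm}\equiv1$ outside a set of $\mu_s^{\pm}$-mass $\le\delta_1+\delta_2$, and hence this term is $\omega(n,m,\delta_1,\delta_2)$; and the right-hand side $\int_QS_m'(v_n)\varphi\,d\widehat{\mu_{n,0}}$, split via~(\ref{xxx}) into its $\widehat{\lambda_{n,0}}=f_n-\operatorname{div}g_n$ part, which is $\omega(n,\nu)$ by the weak-$L^1$/strong-$L^{p'}$ convergences of $\{f_n\},\{g_n\}$ and the fact that $S_m'(v_n)\varphi\to0$ in $X$ and a.e. after regularization and $n\to\infty$, $\nu\to\infty$ (Remark~\ref{05041}), and its $\rho_{n,0}-\eta_{n,0}$ part, bounded by $\int_Q(1-\Phi_{\delta_1,\delta_2})\,d\rho_n\le\int_Q(1-\psi_{\delta_1}^+\psi_{\delta_2}^+)\,d\rho_n+\int_Q\psi_{\delta_1}^-\,d\rho_n=\omega(n,\delta_1,\delta_2)$ by~(\ref{12054}) and~(\ref{12056}) (symmetrically for $\eta_{n,0}$, using~(\ref{12057})).

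The genuinely delicate point — the one I expect to be the main obstacle — is the parabolic contribution, which after the rearrangement equals $-\langle(S_m(v_n))_t,\varphi\rangle$. Since $(T_k(v_n))_t$ need not belong to $X'+L^1(Q)$, one cannot pass to the limit in the regularization parameter naively, and the Steklov and Landes regularizations must be combined exactly as in~\cite{Pe08},~\cite{DMOP}: the ``diagonal'' part, pairing $(S_m(v_n))_t$ with $(1-\Phi_{\delta_1,\delta_2})$ times the Steklov average of $T_k(v_n)$, is turned into a boundary term at $t=T$ plus the initial contribution (here $(1-\Phi_{\delta_1,\delta_2})(0)=1$, $\Phi_{\delta_1,\delta_2}$ being compactly supported in $Q$) plus a term $\int_Q(\Phi_{\delta_1,\delta_2})_t(\cdots)$ that is $\omega(n,\nu,m,\delta_1,\delta_2)$ thanks to the smallness of $(\psi_{\delta_i}^{\pm})_t$ in $X'+L^1(Q)$ (Proposition~\ref{04041}); while the ``cross'' part, pairing $(S_m(v_n))_t$ with $(1-\Phi_{\delta_1,\delta_2})\langle T_k(v)\rangle_\nu$, is recast by moving the time derivative onto $\langle T_k(v)\rangle_\nu$ and using its defining equation $(\langle T_k(v)\rangle_\nu)_t=\nu\big(T_k(v)-\langle T_k(v)\rangle_\nu\big)$, its strong $X$-convergence and a.e. convergence to $T_k(v)$, and $\langle T_k(v)\rangle_\nu(0)=z_\nu\to T_k(u_0)$, so that, together with the $T$-boundary term, the whole parabolic contribution is $\le\omega(n,\nu,m)$. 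Substituting all of this into~(\ref{renor4}) gives $I_2\le\omega(n,\nu,m,\delta_1,\delta_2)$, and since $I_2$ does not involve $m$ this yields~(\ref{120510}).
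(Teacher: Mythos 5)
Your treatment of the measure terms, of the $S''$-term and of the reconstruction of $I_2$ from the diffusion term follows the paper's lines (one caveat: you invoke (\ref{renor2})--(\ref{renor3}) for $u_n$, which requires letting $m\to\infty$ at \emph{fixed} $n$, while your final claim $I_2\leqq\omega(n,\nu,m,\delta_1,\delta_2)$ takes $n\to\infty$ first; the paper avoids this clash for the analogous term $B_1$ by using Lemma \ref{april261}, i.e. (\ref{04043}), whose limits come in the order $n$, then $m$, then $\delta_1,\delta_2$). The genuine gap is exactly at the point you yourself single out: the time-derivative term cannot be closed with the test function $(1-\Phi_{\delta_1,\delta_2})\bigl(T_k(v_n)-\langle T_k(v)\rangle_\nu\bigr)$. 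After Steklov regularization and the convexity regrouping (the analogue of (\ref{222})), the parabolic contribution produces a boundary term at $t=T$ of the form $\int_\Omega\bigl[\Theta_{m,k}(v_n(T))-S_m(v_n(T))\,\langle T_k(v)\rangle_\nu(T)\bigr]$ with $\Theta_{m,k}(r)=\int_0^rS_m'T_k$, and a Landes term $\nu\int_Q(1-\Phi_{\delta_1,\delta_2})\,S_m(v)\bigl(T_k(v)-\langle T_k(v)\rangle_\nu\bigr)$. Neither has a sign: for $0<r<2k$ and $|s|\leqq k$ the first integrand can be as negative as $-k^2/2$, and $S_m(v)$ changes sign at $v=0$ while $T_k(v)-\langle T_k(v)\rangle_\nu$ changes sign elsewhere. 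These defects are of order $k^2$, involve $v_n(T)$ (on which you have no convergence), and do not vanish in any of your limits $n,\nu,m,\delta_1,\delta_2$; so one only gets $I_2\leqq Ck^2+\omega(\cdots)$, which is not (\ref{120510}). Deferring to \cite{Pe08}, \cite{DMOP} does not repair this, because their scheme cannot be run with your $\varphi$.

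This is precisely why the paper tests with the Porretta-type function $R_{n,\nu,\ell}=T_{\ell+k}\bigl(v_n-\langle T_k(v)\rangle_\nu\bigr)-T_{\ell-k}\bigl(v_n-T_k(v_n)\bigr)$, regularized by \emph{two} Steklov averages (forward and backward) with time cutoffs $\xi_{1,n_1},\xi_{2,n_2}$, together with $S=\overline{H_m}$ and the extra parameter $\ell\to\infty$. The subtracted truncation $T_{\ell-k}(v_n-T_k(v_n))$ generates the compensating functions $\overline{J},\mathcal{J}$, so that after combination the boundary terms take the form $\overline{T}_{\ell+k}(r-s)-\overline{J}(r)\geqq0$ for $|s|\leqq k$, the Landes term becomes $\nu\int(1-\Phi_{\delta_1,\delta_2})T_{\ell+k}\bigl(v-\langle T_k(v)\rangle_\nu\bigr)\bigl(T_k(v)-\langle T_k(v)\rangle_\nu\bigr)\geqq0$ (both factors vanish at the same point since $|\langle T_k(v)\rangle_\nu|\leqq k$), and the residue is bounded by $2k|r|\chi_{\{|r|\geqq\ell\}}$, hence killed as $\ell\to\infty$. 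Without this double-truncation structure and the limit in $\ell$, the key cancellations in the estimate of $A_1+A_2+A_3$ are unavailable, so your argument, as it stands, does not prove the lemma.
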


\begin{proof}
Here we estimate $I_{2};$ we can write
\[
I_{2}=\int\limits_{\left\{  |v_{n}|\leqq k\right\}  }{(1-\Phi_{\delta
_{1},\delta_{2}})A(x,t,\nabla u_{n})\nabla\left(  {{T_{k}}({v_{n}})-}\langle
T_{k}(v)\rangle_{\nu}\right)  .}%
\]

\noindent Following the ideas of \cite{Por99}, used also in \cite{Pe08}, we
define, for any $r\in\mathbb{R}$ and $\ell>2k>0$,
\[
{R_{n,\nu,\ell}}={T_{\ell+k}}\left(  {{v_{n}}-}\langle T_{k}(v)\rangle_{\nu
}\right)  -{T_{\ell-k}}\left(  {{v_{n}}-{T_{k}}\left(  {{v_{n}}}\right)
}\right)  .
\]
Recall that $\left\Vert \langle T_{k}(v)\rangle_{\nu}\right\Vert _{\infty
,Q}\leqq k,$ and observe that
\begin{equation}
{R_{n,\nu,\ell}}=2k\;\mathrm{sign}({v_{n}})\quad\text{in}\;\left\{
{\left\vert {{v_{n}}}\right\vert }\geqq{\ell+2k}\right\}  ,\quad\text{
}|R_{n,\nu,\ell}|\leqq4k,\quad R_{n,\nu,\ell}=\omega(n,\nu,\ell)\text{
}a.e.\text{ in }Q, \label{13057}%
\end{equation}%
\begin{equation}
\lim_{n\rightarrow\infty}R_{n,\nu,\ell}={T_{\ell+k}}\left(  {{v}-\langle
T_{k}(v)\rangle}_{\nu}\right)  -{T_{\ell-k}}\left(  {{v}-{T_{k}}\left(  {{v}%
}\right)  }\right)  ,\qquad a.e.\;\text{in}\;{Q},\text{ and weakly in }X.
\label{13058}%
\end{equation}
Next consider $\xi_{1,n_{1}}\in C_{c}^{\infty}([0,T)),\xi_{2,n_{2}}\in
C_{c}^{\infty}((0,T])$ with values in $[0,1],$ such that $(\xi_{1,n_{1}}%
)_{t}\leqq0$ and $(\xi_{2,n_{2}})_{t}$ $\geqq0$; and $\left\{  \xi_{1,n_{1}%
}(t)\right\}  $ (resp. $\left\{  \xi_{1,n_{2}}(t)\right\}  )$ converges to
$1,\,$for any $t\in\lbrack0,T)$ (resp. $t\in(0,T]$ ); and moreover, for any
$a\in C([0,T];L^{1}(\Omega))$, $\left\{  \int_{Q}a{{{\left(  \xi_{1,n_{1}%
}\right)  }_{t}}}\right\}  $ and $\int_{Q}a{{{\left(  \xi_{2,n_{2}}\right)
}_{t}}}$ converge respectively to $-\int\limits_{\Omega}{a(T,.)}$ and
$\int\limits_{\Omega}{a(0,.).}$ We set
\[
\varphi={\varphi_{n,n_{1},n_{2},{l_{1}},{l_{2},\ell}}}=\xi_{1,n_{1}}%
(1-{\Phi_{\delta_{1},\delta_{2}}}){\left[  {{T_{\ell+k}}\left(  {{v_{n}}%
-}\langle T_{k}(v)\rangle_{\nu}\right)  }\right]  _{{l_{1}}}}-\xi_{2,n_{2}%
}(1-{\Phi_{\delta_{1},\delta_{2}}}){\left[  {{T_{\ell-k}}\left(
{v_{n}-{{{T_{k}}(v_{n})}}}\right)  }\right]  _{{-l_{2}}}.}%
\]
We can see that
\begin{equation}
{\varphi-(1-{\Phi_{\delta_{1},\delta_{2}}}){R_{n,\nu,\ell}}}=\omega
(l_{1},l_{2},n_{1},n_{2})\;\quad\text{ in norm in }X\text{ and }%
a.e.\;\text{in}\;{Q}. \label{13056}%
\end{equation}
We can choose $(\varphi,S)=({\varphi_{n,n_{1},n_{2},{l_{1}},{l_{2},\ell}}%
},\overline{H_{m}})$ as test functions in (\ref{renor4}) for $u_{n}$, where
$\overline{H_{m}}$ is defined at (\ref{Hm}), with $m>\ell+2k.$ We obtain
\[
A_{1}+A_{2}+A_{3}+A_{4}+A_{5}=A_{6}+A_{7},
\]
with
\begin{align*}
A_{1}  &  =\int\limits_{\Omega}{\varphi(T){\overline{H_{m}}}({v_{n}%
(T)})dx,\quad\quad}A_{2}=-\int\limits_{\Omega}{\varphi(0){\overline{H_{m}}%
}({u_{0,n}})dx,}\\
A_{3}  &  =-\int_{Q}{\varphi{_{t}\overline{H_{m}}}({v_{n}}),\quad\quad}%
A_{4}=\int_{Q}H_{m}{({v_{n}})A(x,t,\nabla u_{n}).\nabla\varphi},\\
A_{5}  &  =\int_{Q}{{\varphi}}H^{\prime}{{_{m}}({v_{n}})A(x,t,\nabla
u_{n}).\nabla v_{n}{,\quad\quad}}A_{6}=\int_{Q}H_{m}{({v_{n}}){\varphi}%
d}\widehat{\lambda_{n,0}}{{,}}\\
A_{7}  &  =\int_{Q}H_{m}{({v_{n}}){\varphi}d\left(  {{\rho_{n,0}}-{\eta_{n,0}%
}}\right)  .}%
\end{align*}
\newline\textbf{Estimate} of $A_{4}$. This term allows to study $I_{2}.$
Indeed, $\left\{  H_{m}(v_{n})\right\}  $ converges to $1,$ $a.e.$ in $Q$;
thanks to (\ref{13056}), (\ref{13057}) (\ref{13058}), we have
\begin{align*}
A_{4}  &  =\int_{Q}{(1-{\Phi_{\delta_{1},\delta_{2}}})A(x,t,\nabla
u_{n}).\nabla{R_{n,\nu,\ell}}}-\int_{Q}{{R_{n,\nu,\ell}}A(x,t,\nabla
u_{n}).\nabla{\Phi_{\delta_{1},\delta_{2}}+}}\omega(l_{1},l_{2},n_{1}%
,n_{2},m)\\
&  =\int_{Q}{(1-{\Phi_{\delta_{1},\delta_{2}}})A(x,t,\nabla u_{n}%
).\nabla{R_{n,\nu,\ell}+}}\omega(l_{1},l_{2},n_{1},n_{2},m,n,\nu,\ell)\\
&  =I_{2}+\int\limits_{\left\{  \left\vert {{v_{n}}}\right\vert >k\right\}
}{(1-{\Phi_{\delta_{1},\delta_{2}}})A(x,t,\nabla u_{n}).\nabla{R_{n,\nu,\ell
}+}}\omega(l_{1},l_{2},n_{1},n_{2},m,n,\nu,\ell)\\
&  =I_{2}+B_{1}+B_{2}+\omega(l_{1},l_{2},n_{1},n_{2},m,n,\nu,\ell),
\end{align*}
where
\begin{align*}
B_{1}  &  =\int\limits_{\left\{  \left\vert {{v_{n}}}\right\vert >k\right\}
}{(1-{\Phi_{\delta,\eta}})({{\chi_{\left\vert {{v_{n}}-\langle T_{k}%
(v)\rangle}_{\nu}\right\vert \leqq\ell+k}}-{\chi_{\left\vert \left\vert
{{v_{n}}}\right\vert {-k}\right\vert \leqq\ell-k})}}A(x,t,\nabla u_{n}).\nabla
v_{n},}\\
B_{2}  &  =-\int\limits_{\left\{  \left\vert {{v_{n}}}\right\vert >k\right\}
}(1-{\Phi_{\delta_{1},\delta_{2}}}){\chi_{\left\vert {{v_{n}}-\langle
T_{k}(v)\rangle}_{\nu}\right\vert \leqq\ell+k}}A(x,t,\nabla u_{n}%
).\nabla\langle{{{{{T_{k}}(v)\rangle}}_{\nu}.}}%
\end{align*}
Now $\left\{  A(x,t,\nabla\left(  {{T_{\ell+2k}}({v_{n}})+{h_{n}}}\right)
).\nabla{\langle T_{k}(v)\rangle}_{\nu}\right\}  $ converges to $F_{\ell
+2k}\nabla{\langle T_{k}(v)\rangle}_{\nu},$ weakly in $L^{1}(Q).$ Otherwise
$\left\{  \chi_{|v_{n}|>k}{\chi_{\left\vert {{v_{n}}-\langle T_{k}(v)\rangle
}_{\nu}\right\vert \leqq\ell+k}}\right\}  $ converges to $\chi_{|v|>k}%
{\chi_{\left\vert {{v}-\langle T_{k}(v)\rangle}_{\nu}\right\vert \leqq\ell
+k},}$ $a.e.$ in $Q$. And $\left\{  \langle T_{k}(v)\rangle_{\nu}\right\}  $
converges to $T_{k}(v)$ strongly in $X$. Thanks to Remark \ref{05041} we get%
\begin{align*}
B_{2}  &  =-\int_{Q}{(1-{\Phi_{{\delta_{1}},{\delta_{2}}}})\;{\chi_{|v|>k}%
\;}{\chi_{\left\vert {v-\langle T_{k}(v)\rangle}_{\nu}\right\vert \leqq\ell
+k}}F_{\ell+2k}.\nabla{\langle T_{k}(v)\rangle}_{\nu}}+\omega(n)\\
&  =-\int_{Q}{(1-{\Phi_{{\delta_{1}},{\delta_{2}}}}){{{\;}}}{\chi_{|v|>k}%
\;}{\chi_{\left\vert {v-{{{{T_{k}}(v)}}}}\right\vert \leqq\ell+k}}F_{\ell
+2k}.\nabla{{{{T_{k}}(v)}}}}+\omega(n,\nu)=\omega(n,\nu),
\end{align*}
since ${\nabla{{{{T_{k}}(v)\;}}}{\chi_{|v|>k}=0.}}$ Besides, we see that, for
some $C=C(p,c_{2}),$%
\[
\left\vert B_{1}\right\vert \leqq C\int\limits_{\left\{  \ell-2k\leqq
\left\vert {{v_{n}}}\right\vert <\ell+2k\right\}  }{(1-{\Phi_{\delta
_{1},\delta_{2}}})\left(  |\nabla u_{n}|^{p}+|\nabla v_{n}|^{p}+|a|^{p^{\prime
}}\right)  }.
\]
Using (\ref{12056}) and (\ref{12057}) and applying (\ref{13053}) and
(\ref{13054}) to ${1-{\Phi_{\delta_{1},\delta_{2}}}}$, we obtain, for $k>0$
\begin{equation}
\int\limits_{\left\{  m\leqq|{v_{n}}|<m+4k\right\}  }({{{\left\vert
{\nabla{u_{n}}}\right\vert }^{p}+{\left\vert {\nabla{v_{n}}}\right\vert }%
^{p})}(1-{\Phi_{\delta_{1},\delta_{2}}})}=\omega(n,m,\delta_{1},\delta_{2}).
\label{04043}%
\end{equation}
Thus, $B_{1}=\omega(n,\nu,\ell,\delta_{1},\delta_{2}),$ hence $B_{1}%
+B_{2}=\omega(n,\nu,\ell,\delta_{1},\delta_{2}).$ Then
\begin{equation}
A_{4}=I_{2}+\omega(l_{1},l_{2},n_{1},n_{2},m,n,\nu,\ell,\delta_{1},\delta
_{2}). \label{a4}%
\end{equation}
\textbf{Estimate} of $A_{5}$. For $m>\ell+2k$, since $|\varphi|\leqq2\ell,$
and (\ref{13056}) holds, we get, from the dominated convergence Theorem,
\begin{align*}
A_{5}  &  =\int_{Q}(1-{\Phi_{\delta_{1},\delta_{2}}})R_{n,\nu,\ell}%
{{H_{m}^{^{\prime}}}}(v_{n})A(x,t,\nabla u_{n}).\nabla v_{n}+\omega
(l_{1},l_{2},n_{1},n_{2})\\
&  =-\frac{{2k}}{m}\int\limits_{\left\{  m\leqq\left\vert {{v_{n}}}\right\vert
<2m\right\}  }{(1-{\Phi_{\delta_{1},\delta_{2}}})A(x,t,\nabla u_{n}).\nabla
v_{n}+}\omega(l_{1},l_{2},n_{1},n_{2});
\end{align*}
here, the final equality followed from the relation, since $m>\ell+2k,$
\begin{equation}
R_{n,\nu,\ell}{{H_{m}^{^{\prime}}}}(v_{n})=-\frac{2k}{m}\chi_{m\leqq
|v_{n}|\leqq2m},\quad a.e.\text{ in }Q. \label{relt}%
\end{equation}
Next we go to the limit in $m,$ by using (\ref{renor2}), (\ref{renor3}) for
$u_{n}$, with ${\phi=(1-{\Phi_{\delta_{1},\delta_{2}}})}$. There holds
\[
A_{5}=-2k\int_{Q}{(1-{\Phi_{\delta_{1},\delta_{2}}})d\left(  (\rho_{n,s}%
-\eta_{n,s})^{+}+(\rho_{n,s}-\eta_{n,s})^{-}\right)  +}\omega(l_{1}%
,l_{2},n_{1},n_{2},m).
\]
Then, from (\ref{12056}) and (\ref{12057}), we get $A_{5}=\omega(l_{1}%
,l_{2},n_{1},n_{2},m,n,\nu,\ell,\delta_{1},\delta_{2}).$ \medskip

\noindent\textbf{Estimate} of $A_{6}$. Again, from (\ref{13056}),
\begin{align*}
A_{6}  &  =\int_{Q}H_{m}{({v_{n}}){\varphi f}}_{n}+\int_{Q}g_{n}.\nabla
(H_{m}{({v_{n}}){\varphi)}}\\
&  =\int_{Q}H_{m}{({v_{n}})(1-{\Phi_{\delta_{1},\delta_{2}}}){R_{n,\nu,\ell}%
f}}_{n}+\int_{Q}g_{n}.\nabla(H_{m}{({v_{n}})(1-{\Phi_{\delta_{1},\delta_{2}}%
}){R_{n,\nu,\ell})+}}\omega(l_{1},l_{2},n_{1},n_{2}).
\end{align*}
Thus we can write ${A_{6}}={D}_{1}+{D}_{2}+{D}_{3}+{D}_{4}+\omega(l_{1}%
,l_{2},n_{1},n_{2}),$ where
\begin{align*}
{D}_{1}  &  =\int_{Q}H{{_{m}}({v_{n}})(1-{\Phi_{\delta_{1},\delta_{2}}%
}){R_{n,\nu,\ell}}{f_{n},\qquad}D}_{2}=\int_{Q}{(1-{\Phi_{\delta_{1}%
,\delta_{2}}}){R_{n,\nu,\ell}H}}_{m}^{\prime}{({v_{n}}){g_{n}.}\nabla{v_{n},}%
}\\
&  {D}_{3}=\int_{Q}H{{_{m}}({v_{n}})(1-{\Phi_{\delta_{1},\delta_{2}}}){g_{n}%
}.\nabla{R_{n,\nu,\ell},\qquad}D}_{4}=-\int_{Q}H{{_{m}}({v_{n}}){R_{n,\nu
,\ell}g_{n}}.\nabla}{\Phi_{\delta_{1},\delta_{2}}.}%
\end{align*}
Since $\left\{  f_{n}\right\}  $ converges to $f$ weakly in $L^{1}(Q)$, and
(\ref{13057})-(\ref{13058}) hold, we get from Remark \ref{05041},
\[
{D}_{1}=\int_{Q}{(1-{\Phi_{\delta_{1},\delta_{2}}})\left(  {{T_{\ell+k}%
}\left(  {v-}\langle{{{{{T_{k}}(v)\rangle}}_{\nu}}}\right)  -{T_{\ell-k}%
}\left(  {v-{T_{k}}\left(  v\right)  }\right)  }\right)  f+}\omega
(m,n)=\omega(m,n,\nu,\ell).
\]
We deduce from (\ref{lim3}) that ${D}_{2}=\omega(m)$. Next consider $D_{3}.$
Note that $H_{m}{({v_{n}})=1+\omega(m),}$ and (\ref{13058}) holds, and
$\left\{  g_{n}\right\}  $ converges to $g$ strongly in ($L^{p^{\prime}%
}(Q))^{N},$ and $\langle T_{k}(v)\rangle_{\nu}$ converges to $T_{k}(v)$
strongly in $X.$ Then we obtain successively that
\begin{align*}
{D}_{3}  &  =\int_{Q}{(1-{\Phi_{\delta_{1},\delta_{2}}})g.\nabla\left(
{{T_{\ell+k}}\left(  {v-\langle T_{k}(v)\rangle}_{\nu}\right)  -{T_{\ell-k}%
}\left(  {v-{T_{k}}\left(  v\right)  }\right)  }\right)  +}\omega(m,n)\\
&  =\int_{Q}{(1-{\Phi_{\delta_{1},\delta_{2}}})g.\nabla\left(  {{T_{\ell+k}%
}\left(  {v-{T_{k}}(v)}\right)  -{T_{\ell-k}}\left(  {v-{T_{k}}\left(
v\right)  }\right)  }\right)  +}\omega(m,n,\nu)\\
&  =\omega(m,n,\nu,\ell).
\end{align*}
Similarly we also get $D_{4}=\omega(m,n,\nu,\ell)$. Thus ${A_{6}}=\omega
(l_{1},l_{2},n_{1},n_{2},m,n,\nu,\ell,\delta_{1},\delta_{2}).\medskip$

\noindent\textbf{Estimate} of $A_{7}$. We have
\begin{align*}
\left\vert A_{7}\right\vert  &  =\left\vert \int_{Q}{{{{S^{\prime}}_{m}%
}({v_{n}})\left(  {1-{\Phi_{{\delta_{1}},{\delta_{2}}}}}\right)
{R_{n,\nu,\ell}}d\left(  {{\rho_{n,0}}-{\eta_{n,0}}}\right)  }}\right\vert
+\omega({l_{1}},{l_{2}},{n_{1}},{n_{2}})\\
&  \leqq4k\int_{Q}{\left(  {1-{\Phi_{{\delta_{1}},{\delta_{2}}}}}\right)
d\left(  \rho_{n}+\eta_{n}\right)  }+\omega({l_{1}},{l_{2}},{n_{1}},{n_{2}}).
\end{align*}
From (\ref{12056}) and (\ref{12057}) we get $A_{7}=\omega(l_{1},l_{2}%
,n_{1},n_{2},m,n,\nu,\ell,\delta_{1},\delta_{2}).\medskip$

\noindent\textbf{Estimate} of $A_{1}+A_{2}+A_{3}$. We set
\[
J(r)={T_{\ell-k}}\left(  r{-{T_{k}}\left(  r\right)  }\right)  ,\qquad\forall
r\in\mathbb{R},
\]
and use the notations $\overline{J}{\ }${and}$\mathcal{J}$ of (\ref{lam}).
From the definitions of $\xi_{1,n_{1}},\xi_{1,n_{2}},$ we can see that
\begin{align}
A_{1}+A_{2}  &  =-\int_{\Omega}J(v_{n}(T)){{\overline{H_{m}}}({v_{n}(T)}%
)}-\int_{\Omega}T_{\ell+k}(u_{0,n}-z_{\nu}){{\overline{H_{m}}}(}%
u_{0,n})+\omega(l_{1},l_{2},n_{1},n_{2})\nonumber\\
&  =-\int_{\Omega}J(v_{n}(T))v_{n}(T)-\int_{\Omega}T_{\ell+k}(u_{0,n}-z_{\nu
})u_{0,n}+\omega(l_{1},l_{2},n_{1},n_{2},m), \label{a1a2}%
\end{align}
where $z_{\nu}=\langle T_{k}(v)\rangle_{\nu}(0).$ We can write $A_{3}%
=F_{1}+F_{2},$ where
\begin{align*}
\text{ }F_{1}  &  =-\int_{Q}{{{\left(  {{\xi_{n_{1}}}(1-{\Phi_{\delta
_{1},\delta_{2}}}){{\left[  {{T_{\ell+k}}\left(  {v_{n}-\langle T_{k}%
(v)\rangle}_{\nu}\right)  }\right]  }_{{l_{1}}}}}\right)  }_{t}\overline
{H_{m}}}({v_{n}}),}\\
F_{2}  &  =\int_{Q}{{{\left(  {{\xi_{n_{2}}}(1-{\Phi_{\delta_{1},\delta_{2}}%
}){{\left[  {{T_{\ell-k}}\left(  {v_{n}-{T_{k}}\left(  {v_{n})}\right)
}\right)  }\right]  }_{{-l_{2}}}}}\right)  }_{t}\overline{H_{m}}}({v_{n}}).}%
\end{align*}
\textbf{Estimate} of $F_{2}$. We write $F_{2}=G_{1}+G_{2}+G_{3},$ with
\begin{align*}
G_{1}  &  =-\int_{Q}{{{\left(  {{\Phi_{\delta_{1},\delta_{2}}}}\right)  }_{t}%
}{\xi_{n_{2}}}{{\left[  {{T_{\ell-k}}\left(  {v_{n}-{T_{k}}\left(
v_{n}\right)  }\right)  }\right]  }_{{-l_{2}}}\overline{H_{m}}}({v_{n}}),}\\
G_{2}  &  =\int_{Q}{(1-{\Phi_{\delta_{1},\delta_{2}}}){{\left(  {{\xi_{n_{2}}%
}}\right)  }_{t}}{{\left[  {{T_{\ell-k}}\left(  {v_{n}-{T_{k}}\left(
v_{n}\right)  }\right)  }\right]  }_{{-l_{2}}}}\overline{H_{m}}(v_{n}),}\\
G_{3}  &  =\int_{Q}{{\xi_{n_{2}}}(1-{\Phi_{\delta_{1},\delta_{2}}}){{\left(
{{{\left[  {{T_{\ell-k}}\left(  {v_{n}-{T_{k}}\left(  {v_{n}}\right)
}\right)  }\right]  }_{{-l_{2}}}}}\right)  }_{t}}\overline{H_{m}}(v_{n}).}%
\end{align*}
We find easily
\[
{G}_{1}=-\int_{Q}{{{\left(  {{\Phi_{\delta_{1},\delta_{2}}}}\right)  }_{t}%
J}(v_{n})v_{n}+}\omega(l_{1},l_{2},n_{1},n_{2},m),
\]%
\[
{G}_{2}=\int_{Q}{(1-{\Phi_{\delta_{1},\delta_{2}}}){{\left(  {{\xi_{n_{2}}}%
}\right)  }_{t}}}J(v_{n}){{\overline{H_{m}}}({v_{n}})+}\omega({l_{1},l_{2}%
})=\int\limits_{\Omega}J{(u_{0,n})u_{0,n}+}\omega(l_{1},l_{2},n_{1},n_{2},m).
\]
Next consider $G_{3}.$ Setting $b={{\overline{H_{m}}}({v_{n}}){,}}$ there
holds from (\ref{tkp}) and (\ref{222}),
\[
((\left[  J{{(b)}}\right]  _{-{l_{2}}})_{t}b)(.,t)=\frac{{{b(.,t)}}}{l_{2}%
}(J{{(b)(.,t)-}}J{{{{(b)(.,t-l}}_{2}{{)}}).}}%
\]
Hence
\[
{\left(  {{{\left[  {{T_{\ell-k}}\left(  {{v_{n}}-{T_{k}}\left(  {{v_{n}}%
}\right)  }\right)  }\right]  }_{-{l_{2}}}}}\right)  _{t}\overline{H_{m}}%
}({v_{n}})\geqq{\left(  {{{\left[  \mathcal{J}{({\overline{H_{m}}}({v_{n}}%
))}\right]  }_{-{l_{2}}}}}\right)  _{t}=\left(  {{{\left[  \mathcal{J}%
{({v_{n}})}\right]  }_{-{l_{2}}}}}\right)  _{t},}%
\]
since $\mathcal{J}$ is constant in $\left\{  \left\vert r\right\vert \geqq
m+\ell+2k\right\}  .$ Integrating by parts in $G_{3},$ we find
\begin{align*}
G_{3}  &  \geqq\int_{Q}{{\xi_{2,n_{2}}}(1-{\Phi_{\delta_{1},\delta_{2}}%
}){{\left(  {{{\left[  \mathcal{J}{({v_{n}})}\right]  }_{{-l_{2}}}}}\right)
}_{t}}}\\
&  =-\int_{Q}{{{\left(  {{\xi_{2,n_{2}}}(1-{\Phi_{\delta_{1},\delta_{2}}}%
)}\right)  }_{t}}{{\left[  \mathcal{J}{({v_{n}})}\right]  }_{{-l_{2}}}}}%
+\int\limits_{\Omega}{{\xi_{2,n_{2}}}}(T){{{\left[  \mathcal{J}{({v_{n}}%
)}\right]  }_{{-l_{2}}}}(T)}\\
&  =-\int_{Q}{{{\left(  {{\xi_{2,n_{2}}}}\right)  }_{t}}(1-{\Phi_{\delta
_{1},\delta_{2}}})}\mathcal{J}{({v_{n}})}\\
&  +\int_{Q}{{\xi_{2,n_{2}}{\left(  {{\Phi_{\delta_{1},\delta_{2}}}}\right)
}_{t}}}\mathcal{J}{({v_{n}})}+\int\limits_{\Omega}{{\xi_{2,n_{2}}}%
}(T)\mathcal{J}{({v_{n}}(T))+}\omega({l_{1},l_{2}})\\
&  =-\int\limits_{\Omega}\mathcal{J}{({u_{0,n}})+\int_{Q}{{{\left(
{{\Phi_{\delta_{1},\delta_{2}}}}\right)  }_{t}}\mathcal{J}{({v_{n}})}+}%
\int\limits_{\Omega}\mathcal{J}{({v_{n}}(T))}+}\omega(l_{1},l_{2},n_{1}%
,n_{2}).
\end{align*}
Therefore, since $\mathcal{J}({{v_{n}}})-J({{v_{n}}}){{v_{n}}}=-{\overline
{J}({v_{n}})}$ and ${\overline{J}(u_{0,n})=}J{(u_{0,n})u_{0,n}-}%
\mathcal{J}{(u_{0,n}),}$ we obtain
\begin{equation}
{F}_{2}\geqq\int\limits_{\Omega}{\overline{J}(u_{0,n})}\text{ }-\int%
_{Q}{{{\left(  {{\Phi_{\delta_{1},\delta_{2}}}}\right)  }_{t}}{\overline{J}%
}({v_{n}})}+\int\limits_{\Omega}\mathcal{J}{(v_{n}(T))+}\omega(l_{1}%
,l_{2},n_{1},n_{2},m). \label{f2}%
\end{equation}
\textbf{Estimate }of $F_{1}.$ Since $m>\ell+2k,$ there holds ${{T_{\ell+k}%
}\left(  {{v_{n}}-}\langle{{{{{T_{k}}(v)\rangle}}_{\nu}}}\right)  ={T_{\ell
+k}}\left(  {{\overline{H_{m}}}({v_{n}})-}\langle{{{{{T_{k}}({\overline{H_{m}%
}}(v))\rangle}}_{\nu}}}\right)  }$ on supp${{\overline{H_{m}}}({v_{n}}).}$
Hence we can write $F_{1}=L_{1}+L_{2},$ with
\begin{align*}
L_{1}  &  =-\int_{Q}{{{\left(  {{\xi_{1,n_{1}}}(1-{\Phi_{\delta_{1},\delta
_{2}}}){{\left[  {{T_{\ell+k}}\left(  {{\overline{H_{m}}}({v_{n}})-}%
\langle{{{{{T_{k}}({\overline{H_{m}}}(v))\rangle}}_{\nu}}}\right)  }\right]
}_{{l_{1}}}}}\right)  }_{t}}\left(  {{\overline{H_{m}}}({v_{n}})-}%
\langle{{{{{T_{k}}({\overline{H_{m}}}(v)\rangle}}_{\nu}}}\right)  }\\
L_{2}  &  =-\int_{Q}{{{\left(  {{\xi_{1,n_{1}}}(1-{\Phi_{\delta_{1},\delta
_{2}}}){{\left[  {{T_{\ell+k}}\left(  {{\overline{H_{m}}}({v_{n}})-}%
\langle{{{{{T_{k}}({\overline{H_{m}}}(v))\rangle}}_{\nu}}}\right)  }\right]
}_{{l_{1}}}}}\right)  }_{t}}}\langle{{{{{T_{k}}({\overline{H_{m}}}(v))\rangle
}}_{\nu}.}}%
\end{align*}
Integrating by parts we have, by definition of the Landes-time approximation,
\begin{align}
L_{2}  &  =\int_{Q}{{\xi_{1,n_{1}}}(1-{\Phi_{\delta_{1},\delta_{2}}}){{\left[
{{T_{\ell+k}}\left(  {{\overline{H_{m}}}({v_{n}})-}\langle{{{{{T_{k}%
}({\overline{H_{m}}}(v))\rangle}}_{\nu}}}\right)  }\right]  }_{{l_{1}}}%
}{{\left(  \langle{{{{{T_{k}}({\overline{H_{m}}}(v))\rangle}}_{\nu}}}\right)
}_{t}}}\nonumber\\
&  +\int_{\Omega}{{\xi_{1,n_{1}}}}(0){{{\left[  {{T_{\ell+k}}\left(
{{\overline{H_{m}}}({v_{n}})-}\langle{{{{{T_{k}}({\overline{H_{m}}}%
(v))\rangle}}_{\nu}}}\right)  }\right]  }_{{l_{1}}}}(0)\langle{{{{{T_{k}%
}({\overline{H_{m}}}(v))\rangle}}_{\nu}}}(0)}\nonumber\\
&  =\nu\int_{Q}{(1-{\Phi_{\delta_{1},\delta_{2}}}){T_{\ell+k}}\left(  {v_{n}%
-}\langle{{{{{T_{k}}(v)\rangle}}_{\nu}}}\right)  \left(  {{T_{k}}(v)-}%
\langle{{{{{T_{k}}(v)\rangle}}_{\nu}}}\right)  }+\int\limits_{\Omega}%
{{T_{\ell+k}}\left(  {{u_{0,n}}-{z_{\nu}}}\right)  {z_{\nu}+}}\omega
(l_{1},l_{2},n_{1},n_{2}). \label{a2}%
\end{align}
We decompose $L_{1}$ into $L_{1}=K_{1}+K_{2}+K_{3},$ where
\begin{align*}
K_{1}  &  =-\int_{Q}{{{\left(  {{\xi_{1,n_{1}}}}\right)  }_{t}}(1-{\Phi
_{\delta_{1},\delta_{2}}}){{\left[  {{T_{\ell+k}}\left(  {{\overline{H_{m}}%
}({v_{n}})-}\langle{{{{{T_{k}}({\overline{H_{m}}}(v))\rangle}}_{\nu}}}\right)
}\right]  }_{{l_{1}}}}\left(  {{\overline{H_{m}}}({v_{n}})-}\langle{{{{{T_{k}%
}({\overline{H_{m}}}(v))\rangle}}_{\nu}}}\right)  }\\
K_{2}  &  =\int_{Q}{{\xi_{1,n_{1}}{\left(  {{\Phi_{\delta_{1},\delta_{2}}}%
}\right)  }_{t}}{{\left[  {{T_{\ell+k}}\left(  {{\overline{H_{m}}}({v_{n}}%
)-}\langle{{{{{T_{k}}({\overline{H_{m}}}(v))\rangle}}_{\nu}}}\right)
}\right]  }_{{l_{1}}}}\left(  {{\overline{H_{m}}}({v_{n}})-}\langle{{{{{T_{k}%
}({\overline{H_{m}}}(v))\rangle}}_{\nu}}}\right)  }\\
K_{3}  &  =-\int_{Q}{{\xi_{1,n_{1}}}(1-{\Phi_{\delta_{1},\delta_{2}}%
}){{\left(  {{{\left[  {{T_{\ell+k}}\left(  {{\overline{H_{m}}}({v_{n}}%
)-}\langle{{{{{T_{k}}({\overline{H_{m}}}(v))\rangle}}_{\nu}}}\right)
}\right]  }_{{l_{1}}}}}\right)  }_{t}}\left(  {{\overline{H_{m}}}({v_{n}}%
)-}\langle{{{{{T_{k}}({\overline{H_{m}}}(v)\rangle}}_{\nu}}}\right)  .}%
\end{align*}
Then we check easily that
\[
K_{1}=\int\limits_{\Omega}{{T_{\ell+k}}\left(  {{v_{n}}-}\langle{{{{{T_{k}%
}(v)\rangle}}_{\nu}}}\right)  (T)\left(  {{v_{n}}-}\langle{{{{{T_{k}%
}(v)\rangle}}_{\nu}}}\right)  (T)dx+}\omega(l_{1},l_{2},n_{1},n_{2},m),
\]%
\[
K_{2}=\int_{Q}{{{\left(  {{\Phi_{\delta_{1},\delta_{2}}}}\right)  }_{t}%
}{T_{\ell+k}}\left(  {{v_{n}}-}\langle{{{{{T_{k}}(v)\rangle}}_{\nu}}}\right)
\left(  {{v_{n}}-}\langle{{{{{T_{k}}(v)\rangle}}_{\nu}}}\right)  +}%
\omega(l_{1},l_{2},n_{1},n_{2},m).
\]
Next consider $K_{3}.$ Here we use the function $\mathcal{T}_{k}$ defined at
(\ref{tkp}). We set $b={{\overline{H_{m}}}({v_{n}})-}\langle{{{{{T_{k}%
}({\overline{H_{m}}}(v))\rangle}}_{\nu}.}}$ Hence from (\ref{222}),
\begin{align*}
((\left[  {{T_{\ell+k}(b)}}\right]  _{{l_{1}}})_{t}b)(.,t)  &  =\frac
{{{b(.,t)}}}{l_{1}}({{T_{\ell+k}(b)(.,t+l}}_{1}{{)-{{T_{\ell+k}(b)(.,t)}})}}\\
{}  &  {{\leqq}}\frac{1}{l_{1}}(\mathcal{T}_{\ell+k}(b)({{(.,t+l}}_{1}{{))}%
}-\mathcal{T}_{\ell+k}(b)(.,t))=(\left[  \mathcal{T}_{\ell+k}(b)\right]
_{l_{1}})_{t}.
\end{align*}
Thus
\begin{align*}
{\left(  {{{\left[  {{T_{\ell+k}}\left(  {{\overline{H_{m}}}({v_{n}})-}%
\langle{{{{{T_{k}}({\overline{H_{m}}}(v))\rangle}}_{\nu}}}\right)  }\right]
}_{{l_{1}}}}}\right)  _{t}}\left(  {{\overline{H_{m}}}({v_{n}})-}%
\langle{{{{{T_{k}}({\overline{H_{m}}}(v))\rangle}}_{\nu}}}\right)   &
\leqq{\left(  {{{\left[  \mathcal{T}_{\ell+k}{\left(  {{\overline{H_{m}}%
}({v_{n}})-}\langle{{{{{T_{k}}({\overline{H_{m}}}(v))\rangle}}_{\nu}}}\right)
}\right]  }_{{l_{1}}}}}\right)  _{t}}\\
&  ={\left(  {{{\left[  \mathcal{T}_{\ell+k}{{({v_{n}}-}\langle{{{{{T_{k}%
}(v)\rangle}}_{\nu}}}}\right]  }_{{l_{1}}}}}\right)  _{t}.}%
\end{align*}
Then
\begin{align*}
{K}_{3}  &  \geqq-\int_{Q}{{\xi_{1,n_{1}}}(1-{\Phi_{\delta_{1},\delta_{2}}%
}){{\left(  {{{\left[  \mathcal{T}{_{\ell+k}\left(  {{v_{n}}-}\langle
{{{{{T_{k}}(v)\rangle}}_{\nu}}}\right)  }\right]  }_{{l_{1}}}}}\right)  }_{t}%
}}\\
&  =\int_{Q}{{{\left(  {{\xi_{1,n_{1}}}}\right)  }_{t}}(1-{\Phi_{\delta
_{1},\delta_{2}}}){{\left[  \mathcal{T}{_{\ell+k}\left(  {{v_{n}}-}%
\langle{{{{{T_{k}}(v)\rangle}}_{\nu}}}\right)  }\right]  }_{{l_{1}}}}}%
-\int_{Q}{{\xi_{1,n_{1}}{\left(  {{\Phi_{\delta_{1},\delta_{2}}}}\right)
}_{t}}{{\left[  \mathcal{T}{_{\ell+k}\left(  {{v_{n}}-}\langle{{{{{T_{k}%
}(v)\rangle}}_{\nu}}}\right)  }\right]  }_{{l_{1}}}}}\\
&  +\int\limits_{\Omega}{{\xi_{1,n_{1}}}}(0){{{\left[  \mathcal{T}{_{\ell
+k}\left(  {{v_{n}}-}\langle{{{{{T_{k}}(v)\rangle}}_{\nu}}}\right)  }\right]
}_{{l_{1}}}}(0)}\\
&  =-\int\limits_{\Omega}\mathcal{T}{_{\ell+k}\left(  {{v_{n}(T)}%
-\langle{{{{{T_{k}}(v)\rangle}}_{\nu}{{(T)}}}}}\right)  }-\int_{Q}{{{\left(
{{\Phi_{\delta_{1},\delta_{2}}}}\right)  }_{t}}}\mathcal{T}{_{\ell+k}\left(
{{v_{n}}-}\langle{{{{{T_{k}}(v)\rangle}}_{\nu}}}\right)  }\\
&  +\int\limits_{\Omega}\mathcal{T}{{_{\ell+k}}\left(  {{u_{0,n}}-{z_{\nu}}%
}\right)  +}\omega({l_{1}},{l_{2}},{n_{1}},{n_{2}}).
\end{align*}
We find by addition, since $T_{\ell+k}(r)-\mathcal{T}{{_{\ell+k}%
(r)={\overline{T}}_{\ell+k}(r)}}$ for any $r\in\mathbb{R},$
\begin{align}
{L}_{1}  &  \geqq\int\limits_{\Omega}\mathcal{T}{{_{\ell+k}}\left(  {{u_{0,n}%
}-{z_{\nu}}}\right)  }+\int\limits_{\Omega}{{{\overline{T}}_{\ell+k}}\left(
{{v_{n}}(T)-\langle{{{{{T_{k}}(v)\rangle}}_{\nu}}}(T)}\right)  }\nonumber\\
&  +\int_{Q}{{{\left(  {{\Phi_{{\delta_{1}},{\delta_{2}}}}}\right)  }_{t}%
}{{\overline{T}}_{\ell+k}}\left(  {{v_{n}}-}\langle{{{{{T_{k}}(v)\rangle}%
}_{\nu}}}\right)  +}\omega({l_{1}},{l_{2}},{n_{1}},{n_{2}},m). \label{a1}%
\end{align}
We deduce from (\ref{a1}), (\ref{a2}), (\ref{f2}),
\begin{align}
{A}_{3}  &  \geqq\int\limits_{\Omega}\overline{J}{({u_{0,n}})}+\int%
\limits_{\Omega}\mathcal{T}{{_{\ell+k}}\left(  {{u_{0,n}}-{z_{\nu}}}\right)
}+\int\limits_{\Omega}{{T_{\ell+k}}\left(  {{u_{0,n}}-{z_{\nu}}}\right)
{z_{\nu}}}\label{a3}\\
&  +\int\limits_{\Omega}{{{\overline{T}}_{\ell+k}}\left(  {{v_{n}}(T)-}%
\langle{{{{{T_{k}}(v)\rangle}}_{\nu}}(T)}\right)  }+\int\limits_{\Omega
}\mathcal{J}{({v_{n}}(T))}+\int_{Q}{{{\left(  {{\Phi_{{\delta_{1}},{\delta
_{2}}}}}\right)  }_{t}}\left(  {{{\overline{T}}_{\ell+k}}\left(  {{v_{n}}%
-}\langle{{{{{T_{k}}(v)\rangle}}_{\nu}}}\right)  -\overline{J}({v_{n}}%
)}\right)  }\nonumber\\
&  +\nu\int_{Q}{(1-{\Phi_{{\delta_{1}},{\delta_{2}}}}){T_{\ell+k}}\left(
{{v_{n}}-}\langle{{{{{T_{k}}(v)\rangle}}_{\nu}}}\right)  \left(  {{T_{k}}%
(v)-}\langle{{{{{T_{k}}(v)\rangle}}_{\nu}}}\right)  +}\omega({l_{1}},{l_{2}%
},{n_{1}},{n_{2}},m).\nonumber
\end{align}
Next we add (\ref{a1a2}) and (\ref{a3}). Note that $\mathcal{J}{({v_{n}%
}(T))-J({v_{n}}(T)){v_{n}}(T)=-\overline{J}({v_{n}}(T)),}$ and also
$\mathcal{T}{{_{\ell+k}}\left(  {{u_{0,n}}-{z_{\nu}}}\right)  -{T_{\ell+k}%
}\left(  {{u_{0,n}}-{z_{\nu}}}\right)  ({z_{\nu}-{u_{0,n}})=-{\overline{T}%
}_{\ell+k}}\left(  {{u_{0,n}}-{z_{\nu}}}\right)  .}$ Then we find
\begin{align*}
A_{1}+A_{2}+A_{3}  &  \geqq\int\limits_{\Omega}{\left(  \overline{J}%
{({u_{0,n}})-{{\overline{T}}_{\ell+k}}\left(  {{u_{0,n}}-{z_{\nu}}}\right)
}\right)  }+\int\limits_{\Omega}{\left(  {{{\overline{T}}_{\ell+k}}\left(
{{v_{n}}(T)-\langle{{{{{T_{k}}(v)\rangle}}_{\nu}}}(T)}\right)  -\overline
{J}({v_{n}}(T))}\right)  }\\
&  +\int_{Q}{{{\left(  {{\Phi_{{\delta_{1}},{\delta_{2}}}}}\right)  }_{t}%
}\left(  {{{\overline{T}}_{\ell+k}}\left(  {{v_{n}}-}\langle{{{{{T_{k}%
}(v)\rangle}}_{\nu}}}\right)  -\overline{J}({v_{n}})}\right)  }\\
&  +\nu\int_{Q}{(1-{\Phi_{{\delta_{1}},{\delta_{2}}}}){T_{\ell+k}}\left(
{{v_{n}}-}\langle{{{{{T_{k}}(v)\rangle}}_{\nu}}}\right)  \left(  {{T_{k}}%
(v)-}\langle{{{{{T_{k}}(v)\rangle}}_{\nu}}}\right)  +}\omega({l_{1}},{l_{2}%
},{n_{1}},{n_{2}},m).
\end{align*}
Notice that ${{{{\overline{T}}_{\ell+k}}\left(  r{-}s\right)  -\overline
{J}(r)}\geqq0}$ for any $r,s\in\mathbb{R}$ such that $\left\vert s\right\vert
\leqq k;$ thus
\[
\int\limits_{\Omega}{\left(  {{{\overline{T}}_{\ell+k}}\left(  {{v_{n}}%
(T)-}\langle{{{{{T_{k}}(v)\rangle}}_{\nu}}(T)}\right)  -\overline{J}({v_{n}%
}(T))}\right)  \geqq0.}%
\]
And $\left\{  {{u_{0,n}}}\right\}  $ converges to $u_{0}$ in $L^{1}(\Omega)$
and $\left\{  v_{n}\right\}  $ converges to $v$ in $L^{1}(Q)$ from Proposition
\ref{mun}. Thus we obtain%
\[%
\begin{array}
[c]{c}%
A_{1}+A_{2}+A_{3}\geqq\int_{\Omega}{\left(  \overline{J}{({u_{0}}%
)-{{\overline{T}}_{\ell+k}}\left(  {{u_{0}}-{z_{\nu}}}\right)  }\right)
}+\int_{Q}{{{\left(  {{\Phi_{{\delta_{1}},{\delta_{2}}}}}\right)  }_{t}%
}\left(  {{{\overline{T}}_{\ell+k}}\left(  {v-}\langle{{{{{T_{k}}(v)\rangle}%
}_{\nu}}}\right)  -\overline{J}(v)}\right)  }\\
\\
+\nu\int_{Q}{(1-{\Phi_{{\delta_{1}},{\delta_{2}}}}){T_{\ell+k}}\left(
{v-}\langle{{{{{T_{k}}(v)\rangle}}_{\nu}}}\right)  \left(  {{T_{k}}%
(v)-}\langle{{{{{T_{k}}(v)\rangle}}_{\nu}}}\right)  +}\omega({l_{1}},{l_{2}%
},{n_{1}},{n_{2}},m,n).
\end{array}
\]
Moreover ${{T_{\ell+k}}\left(  r{-s}\right)  \left(  {{T_{k}}(r)-s}\right)
\geqq0}$ for any $r,s\in\mathbb{R}$ such that $\left\vert s\right\vert \leqq
k,$ hence
\begin{align*}
A_{1}+A_{2}+A_{3}  &  \geqq\int_{\Omega}{\left(  \overline{J}{({u_{0}%
})-{{\overline{T}}_{\ell+k}}\left(  {{u_{0}}-{z_{\nu}}}\right)  }\right)
}+\int_{Q}{{{\left(  {{\Phi_{{\delta_{1}},{\delta_{2}}}}}\right)  }_{t}%
}\left(  {{{\overline{T}}_{\ell+k}}\left(  {v-}\langle{{{{{T_{k}}(v)\rangle}%
}_{\nu}}}\right)  -\overline{J}(v)}\right)  }\\
& \\
&  {+}\omega({l_{1}},{l_{2}},{n_{1}},{n_{2}},m,n).
\end{align*}
As $\nu\rightarrow\infty,$ $\left\{  z_{\nu}\right\}  $ converges to
$T_{k}(u_{0}),$ $a.e.$ in $\Omega$, thus we get%
\begin{align*}
A_{1}+A_{2}+A_{3}  &  \geqq\int\limits_{\Omega}{\left(  \overline{J}{({u_{0}%
})-{{\overline{T}}_{\ell+k}}\left(  {{u_{0}}-{T_{k}}({u_{0}})}\right)
}\right)  }+\int_{Q}{{{\left(  {{\Phi_{{\delta_{1}},{\delta_{2}}}}}\right)
}_{t}}\left(  {{{\overline{T}}_{\ell+k}}\left(  {v-{T_{k}}(v)}\right)
-\overline{J}(v)}\right)  }\\
&  +\omega({l_{1}},{l_{2}},{n_{1}},{n_{2}},m,n,\nu).
\end{align*}
Finally $\left\vert {{{\overline{T}}_{\ell+k}}\left(  r{-{T_{k}}(r)}\right)
-\overline{J}(r)}\right\vert \leqq2k|r|{\chi_{\left\{  \left\vert r\right\vert
\geqq\ell\right\}  }}$ for any $r\in\mathbb{R},$ thus
\[
A_{1}+A_{2}+A_{3}\geqq\omega({l_{1}},{l_{2}},{n_{1}},{n_{2}},m,n,\nu,\ell).
\]
Combining all the estimates, we obtain $I_{2}\leqq\omega(l_{1},l_{2}%
,n_{1},n_{2},m,n,\nu,\ell,\delta_{1},\delta_{2})$ which implies (\ref{120510}%
), since $I_{2}$ does not depend on $l_{1},l_{2},n_{1},n_{2},m,\ell.$\medskip
\end{proof}

Next we conclude the proof of Theorem \ref{sta}:

\begin{lemma}
\label{concl} The function $u$ is a R-solution of (\ref{pmu}).
\end{lemma}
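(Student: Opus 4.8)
The plan is to verify that the limit function $u$ fulfils Definition \ref{defin}, exploiting the two facts already proved: the a.e. convergence $\nabla u_n\to\nabla u$ in (\ref{pp}) and the strong convergence $T_k(v_n)\to T_k(v)$ in $X$ in (\ref{02041}). The structural requirement (\ref{defv}) for $v=u-h$ is granted by Proposition \ref{mun}, so it remains to pass to the limit in the renormalized equation (\ref{renor}) and in the singular-part relations (\ref{renor2})--(\ref{renor3}) written for $u_n$ relative to its decomposition $(f_n+\rho_n^1-\eta_n^1,\,g_n+\rho_n^2-\eta_n^2,\,h_n)$. Since $\widehat{\mu_{n,0}}=(f_n-\operatorname{div}g_n)+\rho_{n,0}-\eta_{n,0}$ with $\rho_{n,0},\eta_{n,0}\in\mathcal{M}_0^+(Q)\cap\mathcal{M}_b(Q)$ and $0\le\rho_{n,0}\le\rho_n$, $0\le\eta_{n,0}\le\eta_n$ (cf. (\ref{muno})), the target equation for $u$ should keep only $\widehat{\mu_0}=f-\operatorname{div}g$ on the right, so the $\rho,\eta$-contributions have to be shown to vanish.

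Fix an admissible pair $(S,\varphi)$ with $\operatorname{supp}S'\subset[-M,M]$. In (\ref{renor}) for $u_n$, the term $-\int_\Omega S(u_{0,n})\varphi(0)$ converges to $-\int_\Omega S(u_0)\varphi(0)$ because $u_{0,n}\to u_0$ in $L^1(\Omega)$ and $S$ is Lipschitz with $S(0)=0$, and $-\int_Q\varphi_t S(v_n)$ converges to $-\int_Q\varphi_t S(v)$ since $\{S(v_n)\}$ is bounded in $L^\infty(Q)$ and converges to $S(v)$ a.e. Using that $S'(v_n)A(x,t,\nabla u_n)=S'(v_n)A(x,t,\nabla(T_{M}(v_n)+h_n))$ and $S''(v_n)\nabla v_n=S''(v_n)\nabla T_{M}(v_n)$ a.e., one combines (\ref{pp}) with the equi-integrability of $\{|\nabla T_M(v_n)|^p\}$ and $\{|\nabla h_n|^p\}$ coming from the strong convergences in $X$ to deduce, by Vitali's theorem, that $A(x,t,\nabla(T_M(v_n)+h_n))\to A(x,t,\nabla(T_M(v)+h))$ strongly in $(L^{p'}(Q))^N$; together with the strong $L^p$-convergence of $\nabla\varphi,\nabla T_M(v_n)$ and the bounded a.e. convergence of $S'(v_n),S''(v_n)$, the two $A$-terms pass to the limit, giving $\int_Q S'(v)A(x,t,\nabla u)\cdot\nabla\varphi+\int_Q S''(v)\varphi A(x,t,\nabla u)\cdot\nabla v$. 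On the right, $\int_Q S'(v_n)\varphi f_n\to\int_Q S'(v)\varphi f$ by Dunford--Pettis (Remark \ref{05041}) and $\int_Q g_n\cdot\nabla(S'(v_n)\varphi)\to\int_Q g\cdot\nabla(S'(v)\varphi)$ since $g_n\to g$ strongly in $(L^{p'}(Q))^N$ and $\nabla(S'(v_n)\varphi)\to\nabla(S'(v)\varphi)$ strongly in $(L^p(Q))^N$, which add up to $\int_Q S'(v)\varphi\,d\widehat{\mu_0}$.

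The remaining right-hand piece is $\langle\rho_{n,0}-\eta_{n,0},\,S'(v_n)\varphi\rangle$, and the heart of the matter is to prove it is $\omega(n,\delta_1,\delta_2)$. As $S'(v_n)\varphi$ is a bounded quasi-continuous function supported, in the $v_n$-variable, on $\{|v_n|\le M\}$, and $0\le\rho_{n,0}\le\rho_n\to\mu_s^+$, $0\le\eta_{n,0}\le\eta_n\to\mu_s^-$ narrowly, one splits the integral against the cut-offs $\psi^{\pm}_{\delta_1}\psi^{\pm}_{\delta_2}$ of Proposition \ref{04041}: on the set where these equal $1$ --- a shrinking neighbourhood of the zero-capacity sets $E^\pm$ carrying $\mu_s^\pm$ --- the factor $S'(v_n)$ vanishes in the limit, because $v_n$ blows up there, which is quantified by the level-set gradient estimates of Lemma \ref{april261} and Proposition \ref{estsup} applied to $\rho_{n,0},\eta_{n,0}$; away from that neighbourhood the integral is bounded by $\|S'\|_\infty\|\varphi\|_\infty\int_Q(1-\psi^+_{\delta_1}\psi^+_{\delta_2})\,d\rho_n$ (resp. with $\psi^-$ and $\eta_n$), which is $\omega(n,\delta_1,\delta_2)$ by (\ref{12056})--(\ref{12057}). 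This settles condition (i).

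For condition (ii) one argues in the same spirit: taking $S=\overline{H_m}$ (so $S'=H_m$ of (\ref{Hm}) is compactly supported) and $\varphi=\phi\,\psi^{+}_{\delta}$, resp. $\varphi=\phi\,\psi^{-}_{\delta}$, in (\ref{renor}) for $u_n$, one lets $n\to\infty$ using the convergences just proved and (\ref{04043}) to control the boundary level-set contributions, then $\delta\to0$ and $m\to\infty$, recovering the equivalent relations (\ref{lim1})--(\ref{lim2}); the right-hand sides $\int_Q\phi\,d\mu_s^{\pm}$ emerge from the narrow convergence of $(\rho_{n,s}-\eta_{n,s})^{\pm}$ to $\mu_s^{\pm}$. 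Finally, choosing $S=\overline{H_m}$ in (\ref{renor}) also shows $S(v)\in C([0,T];L^1(\Omega))$ with $S(v)(0)=S(u_0)$, so $u$ is an R-solution. The main obstacle is precisely the interaction between the compactly supported truncation $S'$ and the singular measures --- the parabolic counterpart of the stability argument of \cite{DMOP} and \cite{Pe08} --- the delicate feature being that the auxiliary pieces $\rho_{n,0},\eta_{n,0}$, although diffuse for each fixed $n$, concentrate in the limit on the zero-capacity support of $\mu_s^{\pm}$, so that only the careful splitting above extracts the right limiting equation.
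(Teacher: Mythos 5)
Your passage to the limit in the principal terms (Vitali plus the a.e. convergence (\ref{pp}) and the strong convergence (\ref{02041})) and your ``far from $E$'' bound $\|S'\|_{\infty}\|\varphi\|_{\infty}\int_Q(1-\psi_{\delta_1}^{+}\psi_{\delta_2}^{+})\,d\rho_n=\omega(n,\delta_1,\delta_2)$ are fine and agree with the paper. The gap is in the ``near $E$'' part of the key term $\int_Q S'(v_n)\varphi\,d(\rho_{n,0}-\eta_{n,0})$. You claim that where $\psi_{\delta_1}^{+}\psi_{\delta_2}^{+}\approx 1$ the factor $S'(v_n)$ vanishes in the limit ``because $v_n$ blows up there'', quantified by Lemma \ref{april261} and Proposition \ref{estsup}. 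Neither result gives this. Lemma \ref{april261} bounds weighted gradient integrals over the level sets $\{m\leqq v_n<2m\}$, and the admissible weights $\psi_{2,\delta}$ there are exactly those with $\int_Q\psi_{2,\delta}\,d\mu_s^{+}\leqq\delta$, i.e. cut-offs that are small near $E^{+}$, not the cut-offs concentrated on it; Proposition \ref{estsup} only controls the Lebesgue measure of level sets of $v_n$ and $\nabla v_n$. Neither says anything about $\rho_{n,0}\bigl(\{|v_n|\leqq M\}\cap\{\psi_{\delta_1}^{+}\psi_{\delta_2}^{+}\approx1\}\bigr)$, and no pointwise or quasi-everywhere blow-up of $v_n$ near $E^{+}$ is available at finite $n$ (the data $\rho_n$ may be smooth, and $\rho_{n,0}$ is only bounded in mass). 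So the step that would make the near-$E$ contribution of $\rho_{n,0}$ vanish is missing, and it is precisely the delicate point of the lemma.

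The paper closes this by using the equation itself rather than a blow-up argument: it tests (\ref{renor}) for $u_n$ a second time with $(\varphi\psi_\delta^{+},S)$ and $(\varphi\psi_\delta^{-},S)$, shows that every term except the $\rho_{n,0}$- and $\eta_{n,0}$-terms is $\omega(n,\delta)$ because $\psi_\delta^{\pm}\to0$ in $X$ while the truncates converge strongly in $X$, kills $\int_Q S'(v_n)\varphi\psi_\delta^{+}\,d\eta_{n,0}$ via $\eta_{n,0}\leqq\eta_n$ and (\ref{12054}), and thereby deduces $\int_Q S'(v_n)\varphi\psi_\delta^{+}\,d\rho_{n,0}=\omega(n,\delta)$; combined with your ``far'' estimate this gives $A_6=\omega(n)$ and, symmetrically, $A_7=\omega(n)$. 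You would need to replace your blow-up claim by this (or an equivalent) equation-based argument. A similar remark applies to your sketch of (\ref{renor2})--(\ref{renor3}): the cut-off to use is $(1-\psi_\delta^{-})\varphi$ (which removes the $\eta_n$-contribution while keeping $\mu_s^{+}$ in full), not $\varphi\psi_\delta^{+}$, and the identification of the limit requires comparing the limit equation (\ref{renor5}) with the equations for $u_n$ after letting $m\to\infty$ in the latter, as in the paper.
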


\begin{proof}
(i) First show that $u$ satisfies (\ref{renor}). Here we proceed as in
\cite{Pe08}. Let $\varphi\in X\cap L^{\infty}(Q)$ such $\varphi_{t}\in
X^{\prime}+L^{1}(Q),$ $\varphi(.,T)=0,$ and $S\in W^{2,\infty}(\mathbb{R})$,
such that $S^{\prime}$ has compact support on $\mathbb{R}$, $S(0)=0$. Let
$M>0$ such that supp$S^{\prime}\subset\lbrack-M,M]$. Taking successively
$(\varphi,S)$ and $(\varphi\psi_{\delta}^{\pm},S)$ as test functions in
(\ref{renor}) applied to $u_{n}$, we can write
\[
A_{1}+A_{2}+A_{3}+A_{4}=A_{5}+A_{6}+A_{7},\qquad A_{2,\delta,\pm}%
+A_{3,\delta,\pm}+A_{4,\delta,\pm}=A_{5,\delta,\pm}+A_{6,\delta,\pm
}+A_{7,\delta,\pm},
\]
where
\begin{align*}
A_{1}  &  =-\int_{\Omega}{\varphi(0)S({u_{0,n}}),\quad}A_{2}=-\int%
_{Q}{{\varphi_{t}}S({v_{n}}),\quad}A_{2,\delta,\pm}=-\int_{Q}(\varphi
\psi_{\delta}^{\pm}){{_{t}}S({v_{n}}),}\\
A_{3}  &  =\int_{Q}{S^{\prime}({v_{n}})A(x,t,\nabla u_{n}).\nabla\varphi
,\quad}A_{3,\delta,\pm}=\int_{Q}{S^{\prime}({v_{n}})A(x,t,\nabla u_{n}%
).\nabla(\varphi\psi_{\delta}^{\pm}),}%
\end{align*}%
\begin{align*}
A_{4}  &  =\int_{Q}{S^{\prime\prime}({v_{n}})\varphi A(x,t,\nabla
u_{n}).\nabla v_{n},\quad}A_{4,\delta,\pm}=\int_{Q}{S^{\prime\prime}({v_{n}%
})\varphi\psi_{\delta}^{\pm}A(x,t,\nabla u_{n}).\nabla v_{n},}\\
A_{5}  &  =\int_{Q}{S^{\prime}(}v{{_{n}})\varphi d{\widehat{\lambda_{n,0}%
},\quad A}}_{6}=\int_{Q}{S^{\prime}({v_{n}})\varphi d{\rho_{n,0},}\quad}%
A_{7}=-\int_{Q}{S^{\prime}(}v{{_{n}})\varphi d{\eta_{n,0},}}\\
A_{5,\delta,\pm}  &  =\int_{Q}{S^{\prime}(}v{{_{n}})\varphi\psi_{\delta}^{\pm
}d{\widehat{\lambda_{n,0}},{\quad}A}}_{6,\delta,\pm}=\int_{Q}{S^{\prime
}({v_{n}})\varphi\psi_{\delta}^{\pm}d{\rho_{n,0},}\quad}A_{7,\delta,\pm}%
=-\int_{Q}{S^{\prime}(}v{{_{n}})\varphi\psi_{\delta}^{\pm}d{\eta_{n,0}.}}%
\end{align*}

\noindent\ Since $\left\{  u_{0,n}\right\}  $ converges to $u_{0}$ in
$L^{1}(\Omega),$ and $\left\{  S({v_{n}})\right\}  $ converges to $S(v)$
strongly in $X$ and weak$^{\ast}$ in $L^{\infty}(Q),$ there holds, from
(\ref{12054}),
\[
A_{1}=-\int\limits_{\Omega}{\varphi(0)S({u_{0}})}+\omega(n),\quad A_{2}%
=-\int_{Q}{{\varphi_{t}}S(v)}+\omega(n),\quad A_{2,\delta,\psi_{\delta}^{\pm}%
}=\omega(n,\delta).
\]
\newline Moreover $T_{M}({v_{n}})$ converges to $T_{M}(v)$, then $T_{M}%
({v_{n}})+h_{n}$ converges to $T_{k}(v)+h$ strongly in $X$, thus%
\begin{align*}
A_{3}  &  =\int_{Q}{S^{\prime}({v_{n}})A(x,t,\nabla\left(  {{T_{M}}\left(
{{v_{n}}}\right)  +{h_{n}}}\right)  ).\nabla\varphi}\\
&  =\int_{Q}{S^{\prime}(v)A(x,t,\nabla\left(  {{T_{M}}\left(  {{v}}\right)
+{h}}\right)  ).\nabla\varphi}+\omega(n)\\
&  =\int_{Q}{S^{\prime}(v)A(x,t,\nabla u).\nabla\varphi}+\omega(n);
\end{align*}
and
\begin{align*}
A_{4}  &  =\int_{Q}{S^{\prime\prime}({v_{n}})\varphi A(x,t,\nabla\left(
{{T_{M}}\left(  {{v_{n}}}\right)  +{h_{n}}}\right)  ).\nabla{T_{M}}\left(
{{v_{n}}}\right)  }\\
&  =\int_{Q}{S^{\prime\prime}(v)\varphi A(x,t,\nabla\left(  {{T_{M}}\left(
{{v}}\right)  +{h}}\right)  ).\nabla{T_{M}}\left(  v\right)  }+\omega(n)\\
&  =\int_{Q}{S^{\prime\prime}(v)\varphi A(x,t,\nabla u).\nabla v}+\omega(n).
\end{align*}
In the same way, since ${\psi_{\delta}^{\pm}}$ converges to $0$ in $X,$
\begin{align*}
A_{3,\delta,\pm}  &  =\int_{Q}{S^{\prime}(v)A(x,t,\nabla u).\nabla(\varphi
\psi_{\delta}^{\pm})}+\omega(n)=\omega(n,\delta),\\
A_{4,\delta,\pm}  &  =\int_{Q}{S^{\prime\prime}(v)\varphi\psi_{\delta}^{\pm
}A(x,t,\nabla u).\nabla v}+\omega(n)=\omega(n,\delta).
\end{align*}
And $\left\{  {{{g_{n}}}}\right\}  $ converges strongly in $(L^{p^{\prime}%
}(\Omega))^{N},$ thus
\begin{align*}
A_{5}  &  =\int_{Q}{{S^{\prime}({v_{n}})\varphi{f_{n}}}+}\int_{Q}{{S^{\prime
}({v_{n}}){g_{n}.}\nabla\varphi}+}\int_{Q}{{S}}^{\prime\prime}{{({v_{n}%
})\varphi{g_{n}.}\nabla{T_{M}}({v_{n}})}}\\
&  =\int_{Q}{{S^{\prime}(v)\varphi f}+}\int_{Q}{{S^{\prime}(v)g.\nabla\varphi
}+}\int_{Q}{{S}}^{\prime\prime}{{(v)\varphi g.\nabla{T_{M}}(v)}+\omega(n)}\\
&  {=}\int_{Q}{{S^{\prime}(v)\varphi}d{\widehat{\mu_{0}}}+\omega(n).}%
\end{align*}
and $A_{5,\delta,\pm}{=}\int_{Q}{{S^{\prime}(v)\varphi}\psi_{\delta}^{\pm
}d{\widehat{\lambda_{n,0}}}+\omega(n)=}\omega(n,\delta).$ Then $A_{6,\delta
,\pm}+A_{7,\delta,\pm}=\omega(n,\delta)$. From (\ref{12054}) we verify that
$A_{7,\delta,+}=\omega(n,\delta)$ and $A_{6,\delta,-}=\omega(n,\delta)$.
Moreover, from (\ref{muno}) and (\ref{12054}), we find
\[
\left\vert A_{6}-A_{6,\delta,+}\right\vert \leqq\int_{Q}\left\vert {S^{\prime
}({v_{n}})\varphi}\right\vert {(1-\psi_{\delta}^{+})d{\rho_{n,0}}}%
\leqq{\left\Vert S\right\Vert _{{W^{2,\infty}}(\mathbb{R})}}{\left\Vert
\varphi\right\Vert _{{L^{\infty}}({Q})}}\int_{Q}{(1-\psi_{\delta}^{+}%
)d{\rho_{n}}}=\omega(n,\delta).
\]
Similarly we also have $\left\vert A_{7}-A_{7,\delta,-}\right\vert \leqq
\omega(n,\delta)$. Hence $A_{6}=\omega(n)$ and $A_{7}=\omega(n).$ Therefore,
we finally obtain (\ref{renor}):
\begin{equation}
-\int\limits_{\Omega}{\varphi(0)S({u_{0}})}-\int_{Q}{{\varphi_{t}}S(v)+}%
\int_{Q}{S^{\prime}(v)A(x,t,\nabla u).\nabla\varphi+}\int_{Q}{S^{\prime\prime
}(v)\varphi A(x,t,\nabla u).\nabla v=}\int_{Q}{{S^{\prime}(v)\varphi
}d{\widehat{\mu_{0}}.}} \label{renor5}%
\end{equation}
\medskip

(ii) Next, we prove (\ref{renor2}) and (\ref{renor3}). We take $\varphi\in
C_{c}^{\infty}(Q)$ and take $({(1-\psi_{\delta}^{-})\varphi,}\overline{H_{m}%
})$ as test functions in (\ref{renor5}), with $\overline{H_{m}}$ as in
(\ref{Hm}). We can write $D_{1,m}+D_{2,m}=D_{3,m}+D_{4,m}+D_{5,m},$ where
\begin{equation}%
\begin{array}
[c]{l}%
D_{1,m}=-\int\limits_{Q}{{{\left(  {(1-\psi_{\delta}^{-})\varphi}\right)
}_{t}}\overline{H_{m}}(v)},\quad\quad D_{2,m}=\int\limits_{Q}H_{m}%
{(}v{)A(x,t,\nabla u).\nabla\left(  {(1-\psi_{\delta}^{-})\varphi}\right)
},\\
\\
D_{3,m}=\int\limits_{Q}H_{m}{(}v{)(1-\psi_{\delta}^{-})\varphi d{\widehat{\mu
_{0}},}}\quad\quad D_{4,m}=\frac{1}{m}\int\limits_{m\leqq v\leqq2m}%
{(1-\psi_{\delta}^{-})\varphi{A(x,t,\nabla u).\nabla{v},}}\\
\\
D_{5,m}=-\frac{1}{m}\int\limits_{-2m\leqq v\leqq-m}{(1-\psi_{\delta}%
^{-})\varphi{A(x,t,\nabla u)\nabla v.}}%
\end{array}
\label{april281}%
\end{equation}
Taking the same test functions in (\ref{renor}) applied to $u_{n},$ there
holds $D_{1,m}^{n}+D_{2,m}^{n}=D_{3,m}^{n}+D_{4,m}^{n}+D_{5,m}^{n}$, where%
\begin{equation}%
\begin{array}
[c]{l}%
D_{1,m}^{n}=-\int\limits_{Q}{{{\left(  {(1-\psi_{\delta}^{-})\varphi}\right)
}_{t}}\overline{H_{m}}(v}_{n}{)},\quad\quad\quad D_{2,m}^{n}=\int%
\limits_{Q}H_{m}({v}_{n})A(x,t,\nabla u_{n}).\nabla\left(  {(1-\psi_{\delta
}^{-})\varphi}\right)  ,\\
\\
D_{3,m}^{n}=\int\limits_{Q}H_{m}({v}_{n})(1-\psi_{\delta}^{-})\varphi
d(\widehat{\lambda_{n,0}}+\rho_{n,0}-\eta_{n,0}){{,}}\quad D_{4,m}^{n}%
=\frac{1}{m}\int\limits_{m\leqq v\leqq2m}{(1-\psi_{\delta}^{-})\varphi
A(x,t,\nabla u_{n}).{\nabla{v}}}_{n}{{,}}\\
\\
D_{5,m}^{n}=-\frac{1}{m}\int\limits_{-2m\leqq{v}_{n}\leqq-m}(1-\psi_{\delta
}^{-})\varphi A(x,t,\nabla u_{n}).{{\nabla{v}_{n}}}%
\end{array}
\label{281n}%
\end{equation}
In (\ref{281n}), we go to the limit as $m\rightarrow\infty$. Since $\left\{
\overline{H}_{m}(v_{n})\right\}  $ converges to $v_{n}$ and $\left\{
H_{m}(v_{n})\right\}  $ converges to $1,$ $a.e.$ in $Q,$ and $\left\{  \nabla
H_{m}(v_{n})\right\}  $ converges to $0,$ weakly in $(L^{p}(Q))^{N}$ , we
obtain the relation $D_{1}^{n}+D_{2}^{n}=D_{3}^{n}+D^{n},$ where%
\begin{align*}
D_{1}^{n} &  =-\int_{Q}{{\left(  {(1-\psi_{\delta}^{-})\varphi}\right)  }_{t}%
}{v}_{n},\quad D_{2}^{n}=\int_{Q}A(x,t,\nabla u_{n})\nabla\left(
{(1-\psi_{\delta}^{-})\varphi}\right)  ,\quad D_{3}^{n}=\int_{Q}%
{(1-\psi_{\delta}^{-})\varphi d\widehat{\lambda_{n,0}}}\\
D^{n} &  =\int_{Q}{(1-\psi_{\delta}^{-})\varphi d(\rho_{n,0}-\eta_{n,0})+}%
\int_{Q}{(1-\psi_{\delta}^{-})\varphi d(({{\rho}}_{n,s}-\eta_{n,s})}%
^{+}-{({{\rho}}_{n,s}-\eta_{n,s})}^{-})\\
&  =\int_{Q}{(1-\psi_{\delta}^{-})\varphi d({{\rho}}_{n}-\eta_{n}).}%
\end{align*}
Clearly, $D{_{i,m}}-D{_{i}^{n}}=\omega(n,m)$ for $i=1,2,3.$ From  Lemma
(\ref{april261}) and (\ref{12054})-(\ref{12057}), we obtain $D_{5,m}%
=\omega(n,m,\delta),$ and
\[
{\frac{1}{m}\int\limits_{\left\{  m\leqq v<2m\right\}  }\psi_{\delta}%
^{-}\varphi A(x,t,\nabla u).\nabla{v}}=\omega(n,m,\delta),
\]
thus,
\[
D_{4,m}=\frac{1}{m}{\int\limits_{\left\{  m\leqq v<2m\right\}  }\varphi
A(x,t,\nabla u).\nabla v}+\omega(n,m,\delta).
\]
Since $\left\vert \int_{Q}{(1-\psi_{\delta}^{-})\varphi d\eta_{n}}\right\vert
\leqq{\left\Vert \varphi\right\Vert _{{L^{\infty}}}}\int_{Q}{(1-\psi_{\delta
}^{-})d\eta_{n},}$ it follows that $\int_{Q}{(1-\psi_{\delta}^{-})\varphi
d\eta_{n}}=\omega(n,m,\delta)$ from (\ref{12057})$.$ And $\left\vert \int%
_{Q}{{\psi_{\delta}^{-}\varphi d\rho_{n}}}\right\vert \leqq{\left\Vert
\varphi\right\Vert _{{L^{\infty}}}}\int_{Q}{\psi_{\delta}^{-}d\rho_{n},}$
thus, from (\ref{12054}), $\int_{Q}{(1-\psi_{\delta}^{-})\varphi d{{\rho}}%
_{n}}=\int_{Q}{\varphi d\mu_{s}^{+}}+\omega(n,m,\delta).$ Then $D^{n}=\int%
_{Q}{\varphi d\mu_{s}^{+}}+\omega(n,m,\delta).$ Therefore by substraction, we
get
\[
\frac{1}{m}{\int\limits_{\left\{  m\leqq v<2m\right\}  }\varphi A(x,t,\nabla
u).\nabla v}=\int_{Q}{\varphi d\mu_{s}^{+}}+\omega(n,m,\delta),
\]
hence
\begin{equation}
\lim_{m\rightarrow\infty}\frac{1}{m}\int\limits_{\left\{  m\leqq v<2m\right\}
}\varphi A(x,t,\nabla u).\nabla v=\int_{Q}{\varphi d\mu_{s}^{+},}\label{en}%
\end{equation}
which proves (\ref{renor2}) when $\varphi\in C_{c}^{\infty}(Q).$ Next assume
only $\varphi\in C^{\infty}(\overline{Q})$. Then
\[%
\begin{array}
[c]{l}%
\lim_{m\rightarrow\infty}\frac{1}{m}{\int\limits_{\left\{  m\leqq
v<2m\right\}  }\varphi A(x,t,\nabla u).\nabla v}\\
\\
=\lim_{m\rightarrow\infty}\frac{1}{m}{\int\limits_{\left\{  m\leqq
v<2m\right\}  }\varphi\psi_{\delta}^{+}A(x,t,\nabla u)\nabla v}+\lim
_{m\rightarrow\infty}\frac{1}{m}{\int\limits_{\left\{  m\leqq v<2m\right\}
}\varphi(1-\psi_{\delta}^{+})A(x,t,\nabla u).\nabla v}\\
\\
=\int_{Q}{\varphi\psi_{\delta}^{+}d\mu_{s}^{+}}+\lim_{m\rightarrow\infty}%
\frac{1}{m}{\int\limits_{\left\{  m\leqq v<2m\right\}  }\varphi(1-\psi
_{\delta}^{+})A(x,t,\nabla u).\nabla v}=\int_{Q}{\varphi d\mu_{s}^{+}}+D,
\end{array}
\]
where,
\[
D=\int_{Q}{\varphi(1-\psi_{\delta}^{+})d\mu_{s}^{+}}+\lim_{n\rightarrow\infty
}\frac{1}{m}{\int\limits_{\left\{  m\leqq v<2m\right\}  }\varphi
(1-\psi_{\delta}^{+})A(x,t,\nabla u).\nabla v=\;}\omega(\delta).
\]
Therefore, (\ref{en}) still holds for $\varphi\in C^{\infty}(\overline{Q}),$
and we deduce (\ref{renor2}) by density, and similarly, (\ref{renor3}). This
completes the proof of Theorem \ref{sta}.
\end{proof}

As a consequence of Theorem \ref{sta}, we get the following:

\begin{corollary}
\label{051120131} Let $u_{0}\in L^{1}(\Omega)$ and $\mu\in\mathcal{M}_{b}(Q)$.
Then there exists a R-solution $u$ to the problem \ref{pmu} with data
$(\mu,u_{0}).$ Furthermore, if $v_{0}\in L^{1}(\Omega)$ and $\omega
\in\mathcal{M}_{b}(Q)$ such that $u_{0}\leqq v_{0}$ and $\mu\leqq\omega,$ then
one can find R-solution $v$ to the problem \ref{pmu} with data $(\omega
,v_{0})$ such that $u\leqq v$.

In particular, if $a\equiv0$ in (\ref{condi1}), then $u$ satisfies
(\ref{1810131}) and ${\left\Vert v\right\Vert _{{L^{\infty}}((0,T);{L^{1}%
}(\Omega))}}\leqq M$ with $M=||u_{0}||_{1,\Omega}+|\mu|(Q)$.
\end{corollary}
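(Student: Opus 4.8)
The plan is to obtain both solutions as a.e. limits of solutions of regularized problems and to close the argument with the stability Theorem~\ref{sta}.

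For existence, I would write $\mu=\mu_{0}+\mu_{s}^{+}-\mu_{s}^{-}$ as in (\ref{deo}), fix a decomposition $\mu_{0}=(f,g,h)$ as in (\ref{dec}), apply Lemma~\ref{att} (or Proposition~\ref{P5}) to the diffuse parts $\mu_{0}^{\pm}\in\mathcal{M}_{0}^{+}(Q)\cap\mathcal{M}_{b}(Q)$, and mollify the singular parts $\mu_{s}^{\pm}$ and the datum $u_{0}$. This produces $u_{0,n}\in C_{c}^{\infty}(\Omega)$, $f_{n},g_{n},h_{n}\in C_{c}^{\infty}(Q)$ and nonnegative $\rho_{n},\eta_{n}\in C_{c}^{\infty}(Q)$ (taken with $\rho_{n}^{1}=\rho_{n}$, $\eta_{n}^{1}=\eta_{n}$, $\rho_{n}^{2}=\eta_{n}^{2}=0$, $\rho_{n,s}=\eta_{n,s}=0$) satisfying all convergence and boundedness hypotheses of Theorem~\ref{sta}, with $\sup_{n}|\mu_{n}|(Q)<\infty$. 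For such smooth data, problem (\ref{pmun}) has a variational solution $u_{n}\in X$ with $u_{n,t}\in X^{\prime}$ (see \cite{Li}), which is an R-solution relative to the decomposition $(f_{n}+\rho_{n}-\eta_{n},g_{n},h_{n})$ of $\mu_{n,0}$ (see \cite{DrPoPr,Pe08}). Theorem~\ref{sta} then gives, along a subsequence, an R-solution $u$ of (\ref{pmu}) with data $(\mu,u_{0})$, equal to the a.e. limit of $\{u_{n}\}$.

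For the comparison statement, given $u_{0}\leq v_{0}$ and $\mu\leq\omega$, I would set $\theta:=\omega-\mu\in\mathcal{M}_{b}^{+}(Q)$, decompose $\theta=\theta_{0}+\theta_{s}$ with $\theta_{0}\in\mathcal{M}_{0}^{+}(Q)$, $\theta_{s}\in\mathcal{M}_{s}^{+}(Q)$, and build nonnegative smooth approximations $\theta_{n}^{(0)}=(f_{n}^{\theta},g_{n}^{\theta},h_{n}^{\theta})$ of $\theta_{0}$ (Lemma~\ref{att}) and $\theta_{n}^{(s)}\geq0$ of $\theta_{s}$, plus $w_{0,n}\geq0$ approximating $v_{0}-u_{0}$. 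Putting $\omega_{n}:=\mu_{n}+\theta_{n}^{(0)}+\theta_{n}^{(s)}$ and $v_{0,n}:=u_{0,n}+w_{0,n}$ one has $\omega_{n}\geq\mu_{n}$ and $v_{0,n}\geq u_{0,n}$, while the decomposition $(f_{n}+f_{n}^{\theta},\,g_{n}+g_{n}^{\theta},\,h_{n}+h_{n}^{\theta})$, $\rho_{n}+\theta_{n}^{(s)}$, $\eta_{n}$ shows $\{\omega_{n}\}$ again meets the hypotheses of Theorem~\ref{sta}, with limit a decomposition of $\omega$ (diffuse part $\mu_{0}+\theta_{0}$, singular parts $\mu_{s}^{+}+\theta_{s}$ and $\mu_{s}^{-}$). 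At the regularized level $\mu_{n}\leq\omega_{n}$ and $u_{0,n}\leq v_{0,n}$ force $u_{n}\leq v_{n}$: testing the equation solved by $u_{n}-v_{n}$ with (a truncation of) $(u_{n}-v_{n})^{+}$ and using the monotonicity (\ref{condi2}) to drop the elliptic contribution leaves $\tfrac{d}{dt}\|(u_{n}-v_{n})^{+}(t)\|_{2}^{2}\leq0$ with vanishing initial value, hence $(u_{n}-v_{n})^{+}\equiv0$. Passing to a common subsequence and applying Theorem~\ref{sta} to $\{v_{n}\}$ yields an R-solution $v$ of (\ref{pmu}) with data $(\omega,v_{0})$ and $u\leq v$ a.e. Finally, when $a\equiv0$ each $\mu_{n}\in L^{1}(Q)$ with $\|\mu_{n}\|_{1,Q}\leq|\mu|(Q)+o(1)$ and $\|u_{0,n}\|_{1,\Omega}\leq\|u_{0}\|_{1,\Omega}$, so Remark~\ref{h2} and Proposition~\ref{estsup} give (\ref{1810131}) and the $L^{\infty}((0,T);L^{1}(\Omega))$ bound for $u_{n}$ with constant $\leq M+o(1)$; since $u_{n}\to u$ and $\nabla u_{n}\to\nabla u$ a.e. by Theorem~\ref{sta}, Fatou's lemma transfers these bounds to the limit.

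The hard part is not any single estimate but the bookkeeping in the comparison step: the regularizations must be \emph{simultaneously} admissible for Theorem~\ref{sta} \emph{and} monotone at the approximate level. One cannot simply mollify $\omega$, because the canonical decomposition (\ref{deo})--(\ref{dec}) of $\omega$ need not dominate that of $\mu$; instead one must regularize the nonnegative excess $\omega-\mu$ separately, split it into its diffuse and singular parts, and then verify term by term that the augmented sequence still satisfies every convergence and boundedness hypothesis of Theorem~\ref{sta}. Once that matching is in place, the comparison principle for the smooth approximate problems and the passage to the limit are routine.
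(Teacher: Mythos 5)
Your overall strategy is the intended one: the paper offers no separate proof of this corollary, deducing it from Theorem \ref{sta} exactly as you do, by solving regularized problems with smooth, ordered data (for which variational solutions exist, are unique, and are R-solutions relative to the natural decomposition), using the comparison principle at the smooth level via (\ref{condi2}), and passing to the limit along a common subsequence; this is also how the paper itself proceeds in Lemma \ref{T26} and with Proposition \ref{P5}. Your device of regularizing the excess $\theta=\omega-\mu$ is a legitimate variant, but the obstruction you describe is weaker than you suggest: the smooth comparison argument only needs $\mu_n\leqq\omega_n$ as measures, not any domination between decompositions, so mollifying each piece of the canonical decompositions of $\mu$ and of $\omega$ with the same cut-offs and kernels (as in Lemma \ref{att} and Lemma \ref{T26}) already preserves the order; your excess construction is correct, just not forced.

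The one point where your argument does not deliver the statement as written is the final bound $\left\Vert v\right\Vert_{L^{\infty}((0,T);L^{1}(\Omega))}\leqq M$ with $M=||u_{0}||_{1,\Omega}+|\mu|(Q)$: Proposition \ref{estsup} combined with Remark \ref{h2} only yields a bound of the form $c(p)M$, not with constant $1$. To obtain the stated inequality you should derive the $L^{1}$ estimate directly at the approximate level, testing the smooth problem with $\varepsilon^{-1}T_{\varepsilon}(u_{n})$ and letting $\varepsilon\rightarrow0$ (exactly as in the proof of Theorem \ref{new}, cf. (\ref{esg})), which gives $||u_{n}(t)||_{1,\Omega}\leqq||u_{0,n}||_{1,\Omega}+|\mu_{n}|(Q)$ for a.e. $t$; since $\mu_{n}\in L^{1}(Q)$ you may work with the decomposition having $h_{n}=0$ (R-solutions are insensitive to changing $h$ by an $L^{\infty}$ function), and then Fatou's lemma transfers this bound, together with (\ref{1810131}), to the limit, as you indicate.
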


\section{Equations with perturbation terms\label{appli}}

Let $A:Q\times\mathbb{R}^{N}$ $\rightarrow\mathbb{R}^{N}$ satisfying
(\ref{condi1}), (\ref{condi2}) with $a\equiv0$. Let $\mathcal{G}:\Omega
\times(0,T)\times\mathbb{R}\mapsto\mathbb{R}$ be a Caratheodory function. If
$U$ is a function defined in $Q$ we define the function $\mathcal{G}(U)$ in
$Q$ by
\[
\mathcal{G}(U)(x,t)=\mathcal{G}(x,t,U(x,t))\qquad\text{for }a.e.\text{
}(x,t)\in Q.
\]
We consider the problem (\ref{pga}):
\[
\left\{
\begin{array}
[c]{l}%
{u_{t}}-\text{div}(A(x,t,\nabla u))+\mathcal{G}(u)=\mu\qquad\text{in }Q,\\
{u}=0\qquad\text{in }\partial\Omega\times(0,T),\\
u(0)=u_{0}\qquad\text{in }\Omega,
\end{array}
\right.
\]
where $\mu\in\mathcal{M}_{b}(Q)$, $u_{0}\in L^{1}(\Omega)$. We say that $u$ is
a R-solution of problem (\ref{pga}) if $\mathcal{G}(u)\in L^{1}(Q)$ and $u$ is
a R-solution of (\ref{pmu}) with data $(\mu-\mathcal{G}(u),u_{0})$.

\subsection{Subcritical type results}

For proving Theorem \ref{new}, we begin by an integration Lemma:

\begin{lemma}
Let $G$ satisfying (\ref{asg}). If a measurable function $V$ in $Q$ satisfies
\[
\mathrm{meas}\left\{  {|V|}\geqq{t}\right\}  \leqq M{t}^{-{p}_{c}}%
,\qquad\forall t\geqq1,
\]
for some $M>0,$ then for any $L>1$,
\begin{equation}
\int\limits_{\left\{  |V|\geqq L\right\}  }G{(|V|)}\leqq{p}_{c}M\int%
_{L}^{\infty}G{\left(  s\right)  {s^{-1-p_{c}}}ds.} \label{23051}%
\end{equation}

\end{lemma}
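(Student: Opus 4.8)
The plan is to proceed by the layer-cake (distribution function) representation, which yields the sharp constant $p_{c}$. First I would introduce $\lambda(t)=\mathrm{meas}\{|V|\geq t\}$, which by hypothesis satisfies $\lambda(t)\leq Mt^{-p_{c}}$ for $t\geq 1$. Since $G\in C(\mathbb{R}^{+})$ is nondecreasing, it induces a nonnegative Stieltjes measure $dG$ on $(0,\infty)$, and for a.e.\ $x$ with $|V(x)|\geq L$ one has $G(|V(x)|)=G(L)+\int_{(L,|V(x)|]}dG(s)$. Integrating over $\{|V|\geq L\}$ and applying Tonelli's theorem to interchange the $x$-integration with the $dG$-integration gives
\[
\int_{\{|V|\geq L\}}G(|V|)=G(L)\lambda(L)+\int_{(L,\infty)}\lambda(s)\,dG(s)\leq ML^{-p_{c}}G(L)+M\int_{(L,\infty)}s^{-p_{c}}\,dG(s),
\]
where we used $L>1$ and the bound on $\lambda$.

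Next I would compute $\int_{(L,\infty)}s^{-p_{c}}\,dG(s)$ by Stieltjes integration by parts on $(L,R]$ (legitimate since $s\mapsto s^{-p_{c}}$ is $C^{1}$ and $G$ is continuous and nondecreasing):
\[
\int_{L}^{R}s^{-p_{c}}\,dG(s)=R^{-p_{c}}G(R)-L^{-p_{c}}G(L)+p_{c}\int_{L}^{R}G(s)\,s^{-1-p_{c}}\,ds,
\]
and then let $R\to\infty$. The only point needing care --- and the main, rather modest, obstacle --- is to show that the boundary term $R^{-p_{c}}G(R)$ vanishes as $R\to\infty$. This follows from $G$ being nondecreasing together with \eqref{asg}: for large $R$,
\[
\int_{R}^{2R}G(s)\,s^{-1-p_{c}}\,ds\geq G(R)\int_{R}^{2R}s^{-1-p_{c}}\,ds=\frac{1-2^{-p_{c}}}{p_{c}}\,G(R)\,R^{-p_{c}},
\]
and the left-hand side tends to $0$ as the tail of a convergent integral, whence $R^{-p_{c}}G(R)\to 0$. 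Since also $\int_{L}^{\infty}G(s)s^{-1-p_{c}}\,ds<\infty$ by \eqref{asg}, passing to the limit yields $\int_{(L,\infty)}s^{-p_{c}}\,dG(s)=-L^{-p_{c}}G(L)+p_{c}\int_{L}^{\infty}G(s)s^{-1-p_{c}}\,ds$.

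Finally I would substitute this back into the first display: the terms $\pm ML^{-p_{c}}G(L)$ cancel, leaving exactly $\int_{\{|V|\geq L\}}G(|V|)\leq p_{c}M\int_{L}^{\infty}G(s)s^{-1-p_{c}}\,ds$, which is \eqref{23051}. I expect no genuine difficulty beyond the boundary-term estimate. An alternative route, avoiding Stieltjes integrals entirely, is to split $\{|V|\geq L\}$ into the dyadic shells $\{2^{j}L\leq|V|<2^{j+1}L\}$, bound $G(|V|)$ by $G(2^{j+1}L)$ on the $j$-th shell using monotonicity, estimate $\mathrm{meas}\{|V|\geq 2^{j}L\}\leq M(2^{j}L)^{-p_{c}}$, and sum the resulting series against \eqref{asg}; this works but produces a larger constant than the sharp $p_{c}$, so I would present the Fubini/integration-by-parts argument as the main one.
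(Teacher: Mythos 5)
Your argument is correct, and it reaches the sharp constant $p_{c}M$, but it follows a different route from the paper. The paper works with the decreasing rearrangement: writing $G_{L}=\chi_{[L,\infty)}G$, it bounds $\int_{\{|V|\geq L\}}G(|V|)\leq\int_{0}^{\infty}G_{L}(|V|^{\ast}(s))\,ds$, uses the hypothesis to get the pointwise bound $|V|^{\ast}(s)\leq\sup\bigl((Ms^{-1})^{1/p_{c}},1\bigr)$, and then evaluates the resulting one-dimensional integral by the direct substitution $s=Mt^{-p_{c}}$, which produces $p_{c}M\int_{L}^{\infty}G(t)t^{-1-p_{c}}dt$ with no boundary terms and no Stieltjes calculus. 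You instead fiber with respect to the values of $G$: the layer-cake identity $\int_{\{|V|\geq L\}}G(|V|)=G(L)\lambda(L)+\int_{(L,\infty)}\lambda(s)\,dG(s)$ (legitimate by Tonelli, using $G$ continuous and nondecreasing as in Theorem \ref{new}), followed by Riemann--Stieltjes integration by parts, where you must additionally check that $R^{-p_{c}}G(R)\to0$; your monotonicity argument comparing $G(R)R^{-p_{c}}$ with the tail $\int_{R}^{2R}G(s)s^{-1-p_{c}}ds$ does this correctly, since $p_{c}>1$ under the standing assumption $p>p_{1}$. The exact cancellation of the $\pm ML^{-p_{c}}G(L)$ terms then gives \eqref{23051}. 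In short, your proof trades the paper's rearrangement bound and change of variables for a Fubini-plus-integration-by-parts argument; the paper's version is slightly shorter and avoids the boundary-term estimate, while yours makes explicit that \eqref{asg} is used twice (finiteness of the tail integral and decay of $R^{-p_{c}}G(R)$) and stays entirely within distribution-function language. Your dyadic alternative would also work but, as you note, only with a worse constant, which is harmless here since the lemma is only used qualitatively (equi-integrability and smallness), but the main argument you present is the better one.
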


\begin{proof}
Indeed, setting $G_{L}(s)={\chi_{\lbrack L,\infty)}}(s)G(s),$ we have
\[
\int\limits_{\left\{  |V|\geqq L\right\}  }G(|V|)dxdt=\int_{Q}{G_{L}%
(|V|)dxdt}\leqq\int_{0}^{\infty}{{G_{L}}(|V|^{\ast}(s))ds}%
\]
where $|V|^{\ast}$ is and the rearrangement of $|V|,$ defined by
\[
|V|^{\ast}(s)=\inf\{a>0:\mathrm{meas}\left\{  |V|>a\right\}  )\leqq
s\},\qquad\forall s\geqq0.
\]
From the assumption, we get $|V|^{\ast}(s)\leqq\sup\left(  (Ms^{-1}%
)^{p_{c}^{-1}},1\right)  $. Thus, for any $L>1,$
\[
\int\limits_{\left\{  |V|\geqq L\right\}  }{G(|V|)dxdt}\leqq\int_{0}^{\infty
}{{G_{L}}\left(  \sup\left(  (Ms^{-1})^{p_{c}^{-1}},1\right)  \right)
ds}=p_{c}M\int_{L}^{\infty}{G\left(  s\right)  {s^{-1-p_{c}}}ds},
\]
which implies (\ref{23051}).
\end{proof}

\medskip

\begin{proof}
[Proof of Theorem \ref{new}]\noindent\textbf{Proof of (i) } Let $\mu=\mu
_{0}+\mu_{s}\in\mathcal{M}_{b}(Q)$, with $\mu_{0}\in\mathcal{M}_{0}(Q),\mu
_{s}\in\mathcal{M}_{s}(Q),$ and $u_{0}\in L^{1}(\Omega).$ Then $\mu_{0}%
^{+},\mu_{0}^{-}$ can be decomposed as $\mu_{0}^{+}=(f_{1},g_{1},h_{1}),$
$\mu_{0}^{-}=(f_{2},g_{2},h_{2})$. Let $\mu_{n,s,i}\in C_{c}^{\infty}(Q),$
$\mu_{n,s,i}\geqq0,$ converging respectively to $\mu_{s}^{+},\mu_{s}^{-}$ in
the narrow topology. By Lemma \ref{att}, we can find $f_{n,i},g_{n,i}%
,h_{n,i}\in C_{c}^{\infty}(Q)$ which strongly converge to $f_{i},g_{i},h_{i}$
in $L^{1}(Q),\left(  L^{p^{\prime}}(Q)\right)  ^{N}$ and $X$ respectively,
$i=1,2,$ such that $\mu_{0}^{+}=(f_{1},g_{1},h_{1}),$ $\mu_{0}^{-}%
=(f_{2},g_{2},h_{2}),$ and $\mu_{n,0,i}=(f_{n,i},g_{n,i},h_{n,i})$, converging
respectively to $\mu_{0}^{+},$ $\mu_{0}^{-}$ in the narrow topology.
Furthermore, if we set
\[
\mu_{n}=\mu_{n,0,1}-\mu_{n,0,2}+\mu_{n,s,1}-\mu_{n,s,2},
\]
then $|\mu_{n}|(Q)\leq|\mu|(Q)$. Consider a sequence $\left\{  u_{0,n}%
\right\}  \subset C_{c}^{\infty}(\Omega)$ which strongly converges to $u_{0}$
in $L^{1}(\Omega)$ and satisfies $||u_{0,n}||_{1,\Omega}\leqq||u_{0}%
||_{1,\Omega}$. \medskip\newline Let $u_{n}$ be a solution of
\[
\left\{
\begin{array}
[c]{l}%
(u_{n})_{t}-\text{div}(A(x,t,\nabla u_{n}))+\mathcal{G}(u_{n})=\mu_{n}%
\qquad\text{in }Q,\\
u_{n}=0\qquad\text{on }\partial\Omega\times(0,T),\\
u_{n}(0)=u_{0,n}\qquad\text{in }\Omega.
\end{array}
\right.
\]
We can choose $\varphi=\varepsilon^{-1}T_{\varepsilon}(u_{n})$ as test
function of above problem. Then we find
\[
\int_{Q}{{{\left(  {{\varepsilon^{-1}\overline{T_{\varepsilon}}}({u_{n}}%
)}\right)  }_{t}}}+\int_{Q}{{\varepsilon^{-1}}A(x,t,\nabla T_{\varepsilon
}(u_{n})).\nabla T_{\varepsilon}(u_{n})}+\int_{Q}\mathcal{G}{(x,t,{u_{n}%
}){\varepsilon^{-1}}{T_{\varepsilon}}({u_{n}})}=\int_{Q}{{\varepsilon^{-1}%
}{T_{\varepsilon}}({u_{n}})d{\mu_{n}}.}%
\]
Since
\[
\int_{Q}{{{\left(  {{\varepsilon^{-1}\overline{T_{\varepsilon}}}({u_{n}}%
)}\right)  }_{t}}}=\int_{\Omega}{{\varepsilon^{-1}\overline{T_{\varepsilon}}%
}({u_{n}}(T))dx}-\int_{\Omega}{{\varepsilon^{-1}\overline{T_{\varepsilon}}%
}({u_{0,n}})dx}\geqq-||{u_{0,n}}|{|_{{L^{1}}(\Omega)}},
\]
there holds
\[
\int_{Q}{\mathcal{G}(x,t,{u_{n}}){\varepsilon^{-1}}{T_{\varepsilon}}({u_{n}}%
)}\leqq|{\mu_{n}}|(Q)+||{u_{0,n}}|{|_{{L^{1}}(\Omega)}}\leqq|{\mu
}|(Q)+||{u_{0}}||_{1,\Omega}.
\]
Letting $\varepsilon\rightarrow0$, we obtain
\begin{equation}
\int_{Q}{\left\vert {\mathcal{G}(x,t,{u_{n}})}\right\vert }\leqq|{\mu
}|(Q)+||{u_{0}}||_{1,\Omega}. \label{esg}%
\end{equation}
Next apply Proposition \ref{estsup} and Remark \ref{h2} to $u_{n}$ with
initial data $u_{0,n}$ and measure data $\mu_{n}-\mathcal{G}(u_{n})\in
L^{1}(Q)$, we get%
\[
\mathrm{meas}\left\{  {|u_{n}|\geqq s}\right\}  \leqq C(|{\mu}|(Q)+||{u_{0}%
}||_{{L^{1}}(\Omega)})^{\frac{p+N}{N}}{s^{-p_{c}}},\qquad\forall s>0,\forall
n\in\mathbb{N},
\]
for some $C=C(N,p,c_{1},c_{2}).$ Since $|\mathcal{G}(x,t,u_{n})|\leqq
G(|u_{n}|)$, we deduce from (\ref{23051}) that $\{|\mathcal{G}(u_{n})|\}$ is
equi-integrable. Then, thanks to Proposition \ref{mun}, up to a subsequence,
$\{u_{n}\}$ converges to some function $u,$ $a.e.$ in $Q,$ and $\left\{
\mathcal{G}(u_{n})\right\}  $ converges to $\mathcal{G}(u)$ in $L^{1}(Q)$.
Therefore, by Theorem \ref{sta}, $u$ is a R-solution of (\ref{pro0}).\medskip

\noindent\textbf{Proof of (ii).} Let $\{u_{n}\}_{n\geqq1}$ be defined by
induction as nonnegative R-solutions of
\[
\left\{
\begin{array}
[c]{l}%
(u_{1})_{t}-\text{div}(A(x,t,\nabla u_{1}))=\mu\qquad\text{in }Q,\\
u_{1}=0\qquad\text{on }\partial\Omega\times(0,T),\\
u_{1}(0)=u_{0}\qquad\text{in }\Omega,
\end{array}
\right.  \qquad\left\{
\begin{array}
[c]{l}%
(u_{n+1})_{t}-\text{div}(A(x,t,\nabla u_{n+1}))=\mu-\lambda\mathcal{G}%
(u_{n})\qquad\text{in }Q,\\
u_{n+1}=0\qquad\text{on }\partial\Omega\times(0,T),\\
u_{n+1}(0)=u_{0}\qquad\text{in }\Omega,
\end{array}
\right.
\]
Thanks to Corollary \ref{051120131} we can assume that $\{u_{n}\}$ is
nondecreasing and satisfies for any $s>0$ and $n\in\mathbb{N}$
\begin{equation}
\mathrm{meas}\left\{  {|u_{n}|\geqq s}\right\}  \leqq C_{1}K_{n}{s^{-p_{c}}},
\label{mnn}%
\end{equation}
where $C_{1}$ does not depend on $s,n,$ and
\begin{align*}
&  K_{1}=\left(  ||u_{0}||_{1,\Omega}+|\mu|(Q)\right)  ^{\frac{p+N}{N}},\\
K_{n+1}  &  =\left(  ||u_{0}||_{1,\Omega}+|\mu|(Q)+\lambda||\mathcal{G}%
(u_{n})||_{1,Q}\right)  ^{\frac{p+N}{N}},
\end{align*}
for any $n\geqq1.$Take $\varepsilon=\lambda+|\mu|(Q)+||u_{0}||_{L^{1}(\Omega
)}\leqq1$. Denoting by $C_{i}$ some constants independent on $n,\varepsilon,$
there holds $K_{1}\leqq C_{2}\varepsilon,$ and for $n\geqq1,$
\[
K{_{n+1}}\leqq C_{3}\varepsilon(||\mathcal{G}({u_{n}})||_{1,Q}^{1+\frac{p}{N}%
}+{1)}.
\]
From (\ref{23051}) and (\ref{mnn}), we find
\[
{\left\Vert {\mathcal{G}({u_{n}})}\right\Vert _{{L^{1}}({Q})}}\leqq\left\vert
{{Q}}\right\vert G(2)+\int\limits_{\left\{  {u_{n}}|\geqq2\right\}
|}{G(|{u_{n}}|)dxdt}\leqq\left\vert {{Q}}\right\vert G(2)+ C_{4}K{_{n}\int%
_{2}^{\infty}G\left(  s\right)  {s^{-1-p_{c}}}ds.}%
\]
Thus, ${K_{n+1}}\leqq{C}_{5}\varepsilon(K_{n}^{1+\frac{p}{N}}+{1)}$.
Therefore, if $\varepsilon$ is small enough, $\left\{  K_{n}\right\}  $ is
bounded. Then, again from (\ref{23051}) and the relation $|\mathcal{G}%
(x,t,u_{n})|\leqq G(|u_{n}|)$ we verify that $\{\mathcal{G}(u_{n})\}$
converges. Then by Theorem \ref{sta}, up to a subsequence, $\{u_{n}\}$
converges to a R-solution $u$ of (\ref{pro1}).\medskip
\end{proof}

\subsection{General case with absorption terms}

In the sequel we assume that $A:\Omega\times\mathbb{R}^{N}\longmapsto
\mathbb{R}^{N}$ does not depend on $t$. We recall a result obtained in
\cite{PhVe1},\cite{BiNQVe} in the elliptic case:

\begin{theorem}
\label{elli} Let $\Omega$ be a bounded domain of $\mathbb{R}^{N}.$
Let$A:\Omega\times\mathbb{R}^{N}\longmapsto\mathbb{R}^{N}$ satisfying
(\ref{condi3}),(\ref{condi4}).Then there exists a constant $\kappa$ depending
on $N,p,c_{3},c_{4}$ such that, if $\omega\in\mathcal{M}_{b}(\Omega)$ and $u$
is a R-solution of problem%
\[
\left\{
\begin{array}
[c]{l}%
-\text{div}(A(x,\nabla u))=\omega\qquad\text{in }\Omega,\\
{u}=0\qquad\text{on }\partial\Omega,
\end{array}
\right.
\]
there holds
\begin{equation}
-\kappa\mathbf{W}_{1,p}^{2\mathrm{diam}\Omega}[\omega^{-}]\leqq u\leqq
\kappa\mathbf{W}_{1,p}^{2\mathrm{diam}\Omega}[\omega^{+}].\label{enca}%
\end{equation}

\end{theorem}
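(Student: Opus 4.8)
This is a recalled result, so I will only sketch the argument underlying \cite{PhVe1,BiNQVe}. The engine is the pointwise upper estimate of Kilpel\"{a}inen and Mal\'{y}: for a nonnegative $\mathcal{A}$-superharmonic function $z$ in $B(x_{0},3\rho)$ with Riesz measure $\nu$, one has $z(x_{0})\leqq C\big(\inf_{B(x_{0},\rho)}z+\mathbf{W}_{1,p}^{2\rho}[\nu](x_{0})\big)$ with $C=C(N,p,c_{3},c_{4})$; for R-solutions this local estimate is reobtained by Caccioppoli inequalities on truncations together with a De Giorgi / reverse-H\"{o}lder iteration. I would first reduce the signed problem to a nonnegative one: the operator $\tilde A(x,\xi):=-A(x,-\xi)$ again satisfies (\ref{condi3})--(\ref{condi4}) with the same constants $c_{3},c_{4}$, and $v:=-u$ is an R-solution of $-\operatorname{div}(\tilde A(x,\nabla v))=-\omega$. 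Hence the lower bound for $u$ is exactly the upper bound for $v$ with datum $-\omega\leqq\omega^{-}$, and it suffices to prove: if $\sigma\in\mathcal{M}_{b}^{+}(\Omega)$ and $w\geqq0$ is an R-solution of $-\operatorname{div}(A(x,\nabla w))=\sigma$ in $\Omega$, $w=0$ on $\partial\Omega$, then $w\leqq\kappa\,\mathbf{W}_{1,p}^{2\mathrm{diam}\Omega}[\sigma]$ a.e.\ in $\Omega$, and then to combine this with the comparisons $u\leqq w^{+}$ and $v\leqq w^{-}$, where $w^{\pm}\geqq0$ solve the Dirichlet problem with datum $\omega^{\pm}$, for the operator $A$ and for $\tilde A$ respectively.

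\textbf{The nonnegative Dirichlet estimate.} For $\sigma\geqq0$ the solution $w$ is $\mathcal{A}$-superharmonic in $\Omega$ with Riesz measure $\sigma$, so the Kilpel\"{a}inen--Mal\'{y} estimate applies at every $x_{0}\in\Omega$ with any $\rho$ such that $B(x_{0},3\rho)\subset\Omega$. The role of the boundary condition $w|_{\partial\Omega}=0$ is to upgrade this interior estimate to a global one: I would choose a finite chain of overlapping balls leaving $x_{0}$ and reaching $\partial\Omega$, with radii comparable to the distance to $\partial\Omega$, apply the local estimate on each ball while at each step replacing the value of $w$ by its infimum over the current ball (attained at a point closer to $\partial\Omega$), and close the chain at the boundary where $w=0$; the resulting geometric series of Wolff-potential increments is then dominated by $\kappa\,\mathbf{W}_{1,p}^{2\mathrm{diam}\Omega}[\sigma](x_{0})$, with $\kappa$ depending only on $N,p,c_{3},c_{4}$. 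Making this chaining quantitative and uniform in $\Omega$ and $\sigma$ is the main difficulty, and it is exactly the technical content of \cite{PhVe1,BiNQVe}; the rest is bookkeeping.

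\textbf{Conclusion.} It remains to justify $u\leqq w^{+}$ and $v\leqq w^{-}$. For data in $L^{\infty}(\Omega)$ these are the classical weak comparison principle; for the measure data at hand one approximates $\omega^{\pm}$ by smooth nonnegative $\omega_{n}^{\pm}$ and sets $\omega_{n}=\omega_{n}^{+}-\omega_{n}^{-}$, so that the weak solutions satisfy $-w_{n}^{-}\leqq u_{n}\leqq w_{n}^{+}$, and then passes to the limit via the elliptic stability theorem of \cite{DMOP}: $u_{n}\to u$ and $w_{n}^{\pm}\to w^{\pm}$ a.e.\ in $\Omega$, so the ordering, and with it the pointwise bounds, survive, and the same stability shows the conclusion holds for the R-solution of the statement. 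Applying the nonnegative estimate to $w^{\pm}$ and using that the Wolff potential is monotone in the measure, we get $u\leqq w^{+}\leqq\kappa\,\mathbf{W}_{1,p}^{2\mathrm{diam}\Omega}[\omega^{+}]$ and $-u=v\leqq w^{-}\leqq\kappa\,\mathbf{W}_{1,p}^{2\mathrm{diam}\Omega}[\omega^{-}]$, which is (\ref{enca}).
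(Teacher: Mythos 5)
You are reviewing a statement that the paper itself does not prove: Theorem \ref{elli} is explicitly recalled from \cite{PhVe1} and \cite{BiNQVe}, so there is no internal proof to compare against, and your sketch correctly identifies the engine of those references (the Kilpel\"{a}inen--Mal\'{y} pointwise upper bound by Wolff potentials, a reduction of the signed case to nonnegative data via $\tilde A(x,\xi)=-A(x,-\xi)$, and a boundary step to make the interior estimate global). However, your concluding step contains a genuine gap. The theorem asserts the bound for \emph{every} R-solution $u$ with datum $\omega$, whereas your argument only yields it for solutions obtained as a.e.\ limits of approximations: the stability theorem of \cite{DMOP} guarantees that the approximate solutions $u_n$ converge, up to a subsequence, to \emph{some} R-solution of the limit problem, and since uniqueness for general measure data is open (the paper itself stresses this, and its own comparison statements, e.g.\ Lemma \ref{T26}, are carefully phrased as ``one can find solutions such that\dots'' for exactly this reason), you cannot identify that limit with the prescribed $u$. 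Hence the inequalities $-w^{-}\leqq u\leqq w^{+}$ are not justified for the given solution, and the proof as written establishes a strictly weaker statement than Theorem \ref{elli}. The proofs in \cite{PhVe1,BiNQVe} avoid this trap by working directly with the given renormalized solution: by the equivalent formulation of renormalized solutions in \cite{DMOP}, each truncation $T_{k}(u)$ belongs to $W_{0}^{1,p}(\Omega)\cap L^{\infty}(\Omega)$ and solves the equation with a measure $\mu_{k}$ whose positive part is controlled by $\mu_{0}^{+}$ plus a nonnegative measure $\lambda_{k}^{+}$ concentrated on $\{u=k\}$ converging narrowly to $\mu_{s}^{+}$; comparison is then legitimate at the level of these bounded (variational) solutions, and one passes to the limit in $k$, controlling the Wolff potentials of $\mu_{k}^{+}$ by that of $\omega^{+}$.

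A secondary weak point is the boundary step. Chaining Kilpel\"{a}inen--Mal\'{y} estimates along a string of balls reaching $\partial\Omega$ produces Wolff-potential increments evaluated at the successive centers, and these are not comparable to $\mathbf{W}_{1,p}^{2\mathrm{diam}\Omega}[\sigma](x_{0})$ in general (Wolff potentials have no Harnack-type property in the measure argument's center), so the ``geometric series'' you invoke does not telescope as claimed; the actual global argument of \cite{PhVe1} is different. Since you explicitly defer this to the references, it is an admissible incompleteness in a sketch of a recalled result, but the stability/comparison issue above is a step that, as written, is incorrect rather than merely incomplete.
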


Next we give a general result in case of absorption terms:

\begin{theorem}
\label{main} Let $p<N$ , $A:\Omega\times\mathbb{R}^{N}\longmapsto
\mathbb{R}^{N}$ satisfying (\ref{condi3}),(\ref{condi4}), and $\mathcal{G}%
:Q\times\mathbb{R}\longmapsto\mathbb{R}$ be a Caratheodory function such that
the map $s\mapsto\mathcal{G}(x,t,s)$ is nondecreasing and odd, for $a.e.$
$(x,t)$ in $Q$.

\medskip Let $\mu_{1},\mu_{2}\in\mathcal{M}_{b}^{+}(Q)$ such that there exist
$\omega_{n}\in\mathcal{M}_{b}^{+}(\Omega)$ and nondecreasing sequences
$\left\{  \mu_{1,n}\right\}  ,\left\{  \mu_{2,n}\right\}  $ in $\mathcal{M}%
_{b}^{+}(Q)$ with compact support in $Q$, converging to $\mu_{1},\mu_{2}$,
respectively in the narrow topology, and
\[
\mu_{1,n},\mu_{2,n}\leqq\omega_{n}\otimes\chi_{(0,T)},\qquad\mathcal{G}%
((n+\kappa\mathbf{W}_{1,p}^{2\mathrm{diam}(\Omega)}\left[  {\omega_{n}%
}\right]  ))\in L^{1}(Q),
\]
where the constant $c$ is given at Theorem \ref{elli}. Let $u_{0}\in
L^{1}(\Omega)$, and $\mu=$ $\mu_{1}-\mu_{2}.$ Then there exists a R-solution
$u$ of problem (\ref{pga}).

\medskip Moreover if $u_{0}\in L^{\infty}(\Omega),$ and $\omega_{n}\leq\gamma$
for any $n\in\mathbb{N}$, for some $\gamma\in\mathcal{M}_{b}^{+}(\Omega),$
then $a.e.$ in $Q$,
\begin{equation}
\left\vert u(x,t)\right\vert \leqq\kappa\mathbf{W}_{1,p}^{2\mathrm{diam}%
\Omega}{\gamma}(x)+||u_{0}||_{\infty,\Omega}. \label{abs}%
\end{equation}

\end{theorem}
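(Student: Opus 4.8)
The plan is to construct the solution $u$ as a limit of approximate problems, using Theorem \ref{sta} as the passage-to-the-limit engine, and to control the zero-order term $\mathcal{G}(u_n)$ by the elliptic comparison estimate (\ref{enca}). First I would set up the approximating problems: for each $n$, let $u_n$ be a R-solution of
\[
(u_n)_t-\text{div}(A(x,\nabla u_n))+\mathcal{G}(u_n)=\mu_{1,n}-\mu_{2,n}\qquad\text{in }Q,
\]
with $u_n(0)=u_0$ (truncated appropriately in $L^1$), zero boundary data. Existence of each $u_n$ itself requires an argument; since $\mu_{1,n},\mu_{2,n}$ have compact support in $Q$ and are dominated by $\omega_n\otimes\chi_{(0,T)}$ with $\mathcal{G}((n+\kappa\mathbf{W}_{1,p}^{2\mathrm{diam}\Omega}[\omega_n]))\in L^1(Q)$, one can obtain $u_n$ either by Theorem \ref{new}(i) applied with a truncated $\mathcal{G}$, or by a further inner approximation (smooth data) plus monotonicity in the absorption term. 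The monotonicity and oddness of $s\mapsto\mathcal{G}(x,t,s)$ is what makes the scheme well-behaved: it lets me use the sign trick $\varphi=\varepsilon^{-1}T_\varepsilon(u_n)$ exactly as in the proof of Theorem \ref{new} to get the a priori bound $\|\mathcal{G}(u_n)\|_{L^1(Q)}\leq|\mu_{1,n}|(Q)+|\mu_{2,n}|(Q)+\|u_0\|_{1,\Omega}$, uniformly in $n$.

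The crucial new ingredient is a pointwise upper and lower bound on $u_n$. The idea is that $u_n\leq U_n$, where $U_n$ solves the \emph{purely elliptic, stationary-in-time} comparison problem driven by $\omega_n$. More precisely, since $\mathcal{G}(u_n)\geq0$ on $\{u_n\geq0\}$, the function $u_n$ is a subsolution of $(u_n)_t-\text{div}(A(x,\nabla u_n))=\mu_{1,n}\leq\omega_n\otimes\chi_{(0,T)}$; comparing with the solution of $-\text{div}(A(x,\nabla W_n))=\omega_n$ in $\Omega$ (time-independent), and using (\ref{enca}) from Theorem \ref{elli}, one gets $u_n(x,t)\leq \kappa\mathbf{W}_{1,p}^{2\mathrm{diam}\Omega}[\omega_n](x)$ up to adding the contribution of $u_0$. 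The precise form of the comparison — a parabolic problem with a time-independent supersolution — should be justified by choosing $n+\kappa\mathbf{W}_{1,p}^{2\mathrm{diam}\Omega}[\omega_n]$ itself as a supersolution (this is why that quantity appears in the hypothesis, to guarantee $\mathcal{G}$ of it is integrable), and by a contraction/monotonicity argument in the absorption term. Symmetrically, $-u_n\leq \kappa\mathbf{W}_{1,p}^{2\mathrm{diam}\Omega}[\omega_n]$ using $\mu_{2,n}\leq\omega_n\otimes\chi_{(0,T)}$ and oddness of $\mathcal{G}$. This gives the uniform equi-integrability of $\{\mathcal{G}(u_n)\}$: on $\{|u_n|\leq M\}$ it is bounded, and the tail is controlled because $|u_n|$ is bounded by an $n$-dependent but, under the extra hypothesis $\omega_n\leq\gamma$, eventually uniform Wolff potential.

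With these bounds in hand, the convergence step is routine: $\{\mathcal{G}(u_n)\}$ is bounded and equi-integrable in $L^1(Q)$, hence (after extracting) converges weakly, and by the a.e.\ convergence $u_n\to u$ (which comes from Proposition \ref{mun} applied to the data $\mu_{1,n}-\mu_{2,n}-\mathcal{G}(u_n)$, bounded in $\mathcal{M}_b(Q)$) together with Vitali's theorem, $\mathcal{G}(u_n)\to\mathcal{G}(u)$ strongly in $L^1(Q)$. Then I write $\mu_n:=\mu_{1,n}-\mu_{2,n}-\mathcal{G}(u_n)$, decompose it in the form required by Theorem \ref{sta} (the $L^1$ part absorbs $\mathcal{G}(u_n)$ and $f_n$; the singular parts $\rho_n,\eta_n$ come from approximations of $\mu_1,\mu_2$ as in Lemma \ref{att}/Proposition \ref{P5}; $g_n,h_n$ from the diffuse parts), check the narrow/strong/weak convergences of all pieces, and conclude that $u$ is a R-solution of $u_t-\text{div}(A(x,\nabla u))=\mu-\mathcal{G}(u)$, i.e.\ of (\ref{pga}). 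For the final estimate (\ref{abs}) with $u_0\in L^\infty$ and $\omega_n\leq\gamma$, I pass to the limit in $|u_n(x,t)|\leq \kappa\mathbf{W}_{1,p}^{2\mathrm{diam}\Omega}[\gamma](x)+\|u_0\|_{\infty,\Omega}$ using the a.e.\ convergence.

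The main obstacle I expect is the rigorous justification of the pointwise comparison $|u_n|\leq\kappa\mathbf{W}_{1,p}^{2\mathrm{diam}\Omega}[\omega_n]+(\text{data})$ at the level of renormalized solutions of the parabolic problem: comparison principles for R-solutions with measure data are delicate, and one must genuinely use the absorption sign, the monotonicity/oddness of $\mathcal{G}$, and the fact that a solution of the elliptic problem (extended constantly in $t$) is an admissible supersolution of the parabolic one — together with Theorem \ref{elli} to convert that elliptic supersolution into the Wolff-potential bound. A secondary technical point is organizing the decomposition of $\mu_{1,n}-\mu_{2,n}-\mathcal{G}(u_n)$ so that it literally fits the hypotheses of Theorem \ref{sta} (in particular keeping $\{\rho_n^1\},\{\eta_n^1\}$ bounded in $L^1$ and $\{\rho_n^2\},\{\eta_n^2\}$ bounded in $(L^{p'}(Q))^N$), which is bookkeeping but must be done carefully.
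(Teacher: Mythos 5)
Your overall architecture (approximate, bound $\mathcal{G}(u_n)$ in $L^1$ by the sign trick, get pointwise Wolff-potential bounds, pass to the limit via Theorem \ref{sta}) matches the paper, but there is a genuine gap at the central analytic step: the strong $L^1(Q)$ convergence (equi-integrability) of $\{\mathcal{G}(u_n)\}$ in the \emph{general} existence statement. Your tail control rests on the pointwise bound $|u_n|\leqq \kappa\mathbf{W}_{1,p}^{2\mathrm{diam}\Omega}[\omega_n]+\dots$, which is $n$-dependent; the hypothesis only gives $\mathcal{G}\bigl(n+\kappa\mathbf{W}_{1,p}^{2\mathrm{diam}\Omega}[\omega_n]\bigr)\in L^1(Q)$ for each fixed $n$, with no uniformity, and the majorant $\omega_n\leqq\gamma$ is assumed only for the final estimate (\ref{abs}), not for existence. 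So your Vitali argument does not close in the general case. The paper's missing idea is to exploit the \emph{monotonicity} of $\{\mu_{1,n}\},\{\mu_{2,n}\}$: it constructs (Lemmas \ref{T26}, \ref{T27}, Corollary \ref{051120131}) auxiliary nonnegative, nondecreasing R-solutions $u_{1,n},u_{2,n}$ of the problems with single-signed data $\mu_{1,n},\mu_{2,n}$ and initial data $T_n(u_0^{\pm})$, ordered so that $-u_{2,n}\leqq u_n\leqq u_{1,n}$. Then $\{\mathcal{G}(u_{i,n})\}$ is nonnegative, nondecreasing and bounded in $L^1(Q)$ by (\ref{18063}), so monotone convergence gives $\mathcal{G}(u_{i,n})\to\mathcal{G}(u_i)$ in $L^1(Q)$, and $|\mathcal{G}(u_n)|\leqq\mathcal{G}(u_{1,n})+\mathcal{G}(u_{2,n})$ yields $\mathcal{G}(u_n)\to\mathcal{G}(u)$ in $L^1(Q)$ by dominated convergence — no uniform Wolff bound is needed. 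Your proposal never uses the monotonicity of the approximating measures, which is precisely why it cannot recover this.

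The second issue, which you flag but do not resolve, is the comparison step $|u_n|\leqq\kappa\mathbf{W}_{1,p}^{2\mathrm{diam}\Omega}[\omega_n]+\|u_0\|_{\infty}$: there is no comparison principle for R-solutions with general measure data (uniqueness is open), so you cannot compare a given R-solution $u_n$ with a time-independent elliptic supersolution directly. The paper circumvents this constructively: in Lemma \ref{T26} the data are mollified and $\mathcal{G}$ is bounded, so the approximate problems have unique variational solutions for which classical comparison with the elliptic solution $w_n$ of $-\operatorname{div}(A(x,\nabla w_n))=\gamma_n$ is legitimate; the bound $w\leqq\kappa\mathbf{W}_{1,p}^{2\mathrm{diam}\Omega}[\gamma]$ comes from the elliptic theory (\cite{NQVe}), and the pointwise estimates and orderings are then transported to \emph{particular} R-solutions obtained as a.e.\ limits via Theorem \ref{sta}/Proposition \ref{mun} (Lemma \ref{T27} removes the boundedness of $\mathcal{G}$ by truncation $T_n(\mathcal{G})$ and dominated convergence using (\ref{hypg})). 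In short: comparison only at the regularized level, bounds inherited by construction, and monotone-in-$n$ auxiliary solutions to get $L^1$ compactness of the absorption term — these are the two ingredients your sketch is missing.
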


For proving this result, we need two Lemmas:

\begin{lemma}
\label{T26} Let $\mathcal{G}$ satisfy the assumptions of Theorem \ref{main}
and $\mathcal{G}\in L^{\infty}(Q\times\mathbb{R})$. For $i=1,2,$ let
$u_{0,i}\in L^{\infty}(\Omega)$ be nonnegative, and $\lambda_{i}=\lambda
_{i,0}+\lambda_{i,s}\in\mathcal{M}_{b}^{+}(Q)$ with compact support in $Q$,
$\gamma\in\mathcal{M}_{b}^{+}(\Omega)$ with compact support in $\Omega$ such
that $\lambda_{i}\leqq\gamma\otimes\chi_{(0,T)}$. Let $\lambda_{i,0}%
=(f_{i},g_{i},h_{i})$ be a decomposition of $\lambda_{i,0}$ into functions
with compact support in $Q$. Then, there exist R-solutions $u,u_{1},u_{2},$ to
problems
\begin{equation}
u_{t}-\text{div}(A(x,\nabla u))+\mathcal{G}(u)=\lambda_{1}-\lambda_{2}%
\quad\text{in }Q,\qquad u=0\quad\text{on }\partial\Omega\times(0,T),\qquad
u(0)=u_{0,1}-u_{0,2}, \label{proba}%
\end{equation}%
\begin{equation}
(u_{i})_{t}-\text{div}(A(x,\nabla u_{i}))+\mathcal{G}(u_{i})=\lambda_{i}%
\quad\text{in }Q,\qquad u_{i}=0\quad\text{on }\partial\Omega\times
(0,T),~\qquad u_{i}(0)=u_{0,i}, \label{probb}%
\end{equation}
relative to decompositions $(f_{1,n}-f_{2,n}-\mathcal{G}(u_{n}),g_{1,n}%
-g_{2,n},h_{1,n}-h_{2,n})$, $(f_{i,n}-\mathcal{G}(u_{i,n}),g_{i,n},h_{i,n}),$
such that $a.e.$ in $Q,$
\begin{equation}
-||u_{0,2}||_{\infty,\Omega}-\kappa\mathbf{W}_{1,p}^{2\mathrm{diam}\Omega
}{\gamma}(x)\leqq-u_{2}(x,t)\leqq u(x,t)\leqq u_{1}(x,t)\leqq\kappa
\mathbf{W}_{1,p}^{2\mathrm{diam}\Omega}{\gamma}(x)+||u_{0,1}||_{\infty,\Omega
}, \label{17062}%
\end{equation}
and
\begin{equation}
\int_{Q}\left\vert \mathcal{G}(u)\right\vert \leqq\sum_{i=1,2}\left(
\lambda_{i}(Q)+||u_{0,i}||_{L^{1}(\Omega)}\right)  ,\quad\text{and\quad}%
\int_{Q}\mathcal{G}(u_{i})\leqq\lambda_{i}(Q)+||u_{0,i}||_{1,\Omega},\quad
i=1,2. \label{18061}%
\end{equation}
\medskip Furthermore, assume that $\mathcal{H},\mathcal{K}$ have the same
properties as $\mathcal{G},$ and $\mathcal{H}(x,t,s)\leqq\mathcal{G}%
(x,t,s)\leqq\mathcal{K}(x,t,s)$ for any $s\in(0,+\infty)$ and $a.e.$ in $Q.$
Then, one can find solutions $u_{i}(\mathcal{H}),u_{i}(\mathcal{K})$,
corresponding to $\mathcal{H},\mathcal{K}$ with data $\lambda_{i}$, such that
$u_{i}(\mathcal{H})\geqq u_{i}\geqq u_{i}(\mathcal{K})$, $i=1,2$.

Assume that $\omega_{i},\theta_{i}$ have the same properties as $\lambda_{i}$
and $\omega_{i}\leqq\lambda_{i}\leqq\theta_{i}$, $u_{0,i,1},u_{0,i,2}\in
L^{\infty+}(\Omega)$, $u_{0,i,2}\leqq u_{0,i}\leqq u_{0,i,1}.$ Then one can
find solutions $u_{i}(\omega_{i}),u_{i}(\theta_{i})$, corresponding to
$(\omega_{i},u_{0,i,2}),(\theta_{i},u_{0,i,1})$, such that $u_{i}(\omega
_{i},u_{0,i,2})\leqq u_{i}\leqq u_{i}(\theta_{i},u_{0,i,1}).$
\end{lemma}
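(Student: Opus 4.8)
The strategy is to reduce everything to the stability theorem (Theorem \ref{sta}) and the elliptic pointwise estimate (Theorem \ref{elli}), using a monotone iteration scheme. First I would construct the solutions $u_1,u_2$ to the decoupled problems \eqref{probb}. Because $\mathcal{G}\in L^\infty(Q\times\mathbb{R})$, the measure $\lambda_i-\mathcal{G}(u_i)$ is always a bounded measure, so for each frozen right-hand side Corollary \ref{051120131} provides an R-solution; I would set up an iteration $w^{0}=$ (solution with data $\lambda_i,u_{0,i}$), and $w^{k+1}$ solving $w^{k+1}_t-\mathrm{div}(A(x,\nabla w^{k+1}))=\lambda_i-\mathcal{G}(w^{k})$, $w^{k+1}(0)=u_{0,i}$. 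Using the comparison statement in Corollary \ref{051120131} together with the monotonicity of $s\mapsto\mathcal{G}(x,t,s)$, the sequence $\{w^k\}$ is monotone (alternating bracketing, or monotone after passing to even/odd indices); the $L^1$ estimate \eqref{esg}-type bound gives $\int_Q|\mathcal{G}(w^k)|\leq \lambda_i(Q)+\|u_{0,i}\|_{1,\Omega}$, and since $\mathcal{G}$ is bounded, $\{\mathcal{G}(w^k)\}$ is trivially equi-integrable, so it converges in $L^1(Q)$. Theorem \ref{sta} then passes to the limit and yields the R-solution $u_i$ of \eqref{probb} relative to the stated decomposition, with \eqref{18061}.

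Next comes the pointwise bound \eqref{17062}. Here the key observation is that $\lambda_i\leq\gamma\otimes\chi_{(0,T)}$ and that $\mathcal{G}(u_i)\geq 0$ on the set where $u_i\geq 0$ (and $u_i\geq 0$ by comparison with $0$, since $\lambda_i\geq 0$, $u_{0,i}\geq 0$ and $\mathcal{G}(x,t,0)=0$). Dropping the favorable absorption term, $u_i$ is a subsolution of $z_t-\mathrm{div}(A(x,\nabla z))=\gamma\otimes\chi_{(0,T)}$, $z(0)=\|u_{0,i}\|_{\infty,\Omega}$. The point is that the \emph{stationary} function $Z(x)=\kappa\mathbf{W}_{1,p}^{2\mathrm{diam}\Omega}[\gamma](x)+\|u_{0,i}\|_{\infty,\Omega}$ is a supersolution: by Theorem \ref{elli}, the elliptic R-solution $\zeta$ of $-\mathrm{div}(A(x,\nabla\zeta))=\gamma$ in $\Omega$, $\zeta=0$ on $\partial\Omega$, satisfies $0\leq\zeta\leq\kappa\mathbf{W}_{1,p}^{2\mathrm{diam}\Omega}[\gamma]$; viewing $\zeta$ (plus the constant) as a $t$-independent function on $Q$, it solves $Z_t-\mathrm{div}(A(x,\nabla Z))=\gamma\geq\gamma\chi_{(0,T)}\geq\lambda_i$ with $Z(0)\geq u_{0,i}$. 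A comparison principle between the R-subsolution $u_i$ and this supersolution (applied at the level of the approximating problems, where solutions are weak-energy solutions and the standard parabolic comparison via testing with $(u_{i,n}-Z_n)^+$ applies, then passing to the limit) gives $u_i\leq Z$. The same argument applied to $-u_i$, using that $\mathcal{G}$ is odd so $-u_i$ satisfies an analogous equation with data $-\lambda_i\leq 0$, is not needed for \eqref{17062} directly; instead, the chain $-u_2\leq u\leq u_1$ in \eqref{17062} follows from the comparison statement of Corollary \ref{051120131} applied to the coupled problem \eqref{proba} together with $-\lambda_2\leq\lambda_1-\lambda_2\leq\lambda_1$, $-u_{0,2}\leq u_{0,1}-u_{0,2}\leq u_{0,1}$, and again the monotonicity of $\mathcal{G}$ (which makes the iteration for \eqref{proba} monotone and squeezed between the iterations for \eqref{probb}). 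Finally $\|u_{0,2}\|_{\infty,\Omega}+\kappa\mathbf{W}_{1,p}^{2\mathrm{diam}\Omega}[\gamma](x)$ bounds $u_2$ from above by the argument just given, proving the outer inequalities in \eqref{17062}.

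For the comparison statements in the last two paragraphs of the Lemma, the monotone iteration is designed so the comparisons are inherited at each step: if $\mathcal{H}\leq\mathcal{G}\leq\mathcal{K}$ on $(0,\infty)$, then starting all three iterations from the same $w^0$ and using the comparison in Corollary \ref{051120131} at each step (the data $\lambda_i-\mathcal{H}(w^k)\geq\lambda_i-\mathcal{G}(w^k)\geq\lambda_i-\mathcal{K}(w^k)$ when $w^k\geq 0$), one propagates $u_i(\mathcal{H})\geq u_i\geq u_i(\mathcal{K})$ to the limit; the monotonicity in the data $(\omega_i,u_{0,i,2})\leq(\lambda_i,u_{0,i})\leq(\theta_i,u_{0,i,1})$ is handled identically.

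The main obstacle I expect is the rigorous justification of the parabolic comparison principle between an R-subsolution and a time-independent R-supersolution in the measure-data setting: R-solutions are only in $L^\sigma(0,T;W_0^{1,\sigma})$ with $\sigma<m_c$, so $(u_i-Z)^+$ is not an admissible test function directly. The clean route is to do the comparison on the approximate problems $u_{i,n}$ (which are genuine weak solutions with $L^{p'}$ or smooth data, for which comparison is classical via truncated test functions $T_k((u_{i,n}-Z_n)^+)$ and Gronwall in $t$), obtain $u_{i,n}\leq Z_n$, and then pass to the a.e. limit using Theorem \ref{sta}; one must check that the approximations $\omega_n$ of $\gamma$ can be chosen monotone and with $Z_n\to Z$ a.e., which is exactly the content of Proposition \ref{P5} and the stability of the elliptic Wolff potential bound under Theorem \ref{elli}.
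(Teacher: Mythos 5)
Your treatment of the pointwise bound \eqref{17062} is essentially the paper's own route: compare with the time-independent supersolution built from the elliptic solution with datum $\gamma$ at the level of approximating problems with regular data, then pass to the limit using Theorem \ref{sta} together with the elliptic stability/Wolff estimate (the paper mollifies $\gamma$, solves $-\mathrm{div}(A(x,\nabla w_n))=\gamma_n$, gets $-\|u_{0,2}\|_\infty-w_n\leqq -u_{2,n}\leqq u_n\leqq u_{1,n}\leqq w_n+\|u_{0,1}\|_\infty$ by the classical comparison principle for the regularized problems, and then uses \cite{NQVe} and Theorem \ref{elli} to bound the limit of $w_n$ by $\kappa\mathbf{W}_{1,p}^{2\mathrm{diam}\Omega}[\gamma]$).

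The genuine gap is in your construction of $u,u_1,u_2$ themselves. You build $u_i$ by the frozen-absorption Picard scheme $w^{k+1}_t-\mathrm{div}(A(x,\nabla w^{k+1}))=\lambda_i-\mathcal{G}(w^{k})$ and claim the scheme is monotone and that $\{\mathcal{G}(w^k)\}$ converges in $L^1(Q)$. Because the solution map is order-\emph{reversing} in the frozen argument ($w\mapsto$ solution with datum $\lambda_i-\mathcal{G}(w)$), the iteration is alternating: the odd and even subsequences are monotone and bracket each other, but they converge (a.e., after extraction via Proposition \ref{mun}) to two possibly different functions $\underline u\leqq\overline u$ solving the \emph{coupled} system $\underline u_t-\mathrm{div}(A(x,\nabla\underline u))=\lambda_i-\mathcal{G}(\overline u)$, $\overline u_t-\mathrm{div}(A(x,\nabla\overline u))=\lambda_i-\mathcal{G}(\underline u)$. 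Since uniqueness of R-solutions with general measure data is open (and Corollary \ref{051120131} only asserts that \emph{some} ordered solution can be found, not a comparison of given ones), you cannot identify $\underline u=\overline u$, so the full sequence need not converge, $\mathcal{G}(w^k)$ has no identified $L^1$ limit of the form $\mathcal{G}(u)$, and Theorem \ref{sta} cannot be invoked to produce a solution of \eqref{probb}; the same objection applies to your ``squeezed'' iteration for \eqref{proba} and to the propagation of the $\mathcal{H}\leqq\mathcal{G}\leqq\mathcal{K}$ and $\omega_i\leqq\lambda_i\leqq\theta_i$ comparisons. The paper avoids the fixed-point issue entirely: it mollifies \emph{all} the data ($f_i,g_i,h_i,\lambda_{i,s},\gamma,u_{0,i}$), solves the absorption problems \eqref{proba}--\eqref{probb} directly at the smooth level, where $\mathcal{G}$ bounded makes existence, uniqueness and the comparison principle classical (this is also where all the orderings in the last two paragraphs of the Lemma are obtained, along a common subsequence), and only then passes to the limit with Proposition \ref{mun} and Theorem \ref{sta}, the boundedness of $\mathcal{G}$ giving $\mathcal{G}(u_n)\to\mathcal{G}(u)$ in $L^1(Q)$. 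If you want to keep an iteration-free-of-mollification argument you would need a genuine fixed-point device (e.g.\ Schauder on the frozen map) rather than monotone iteration, together with a compactness argument identifying $\mathcal{G}$ of the limit, which is precisely what the regularization step replaces.
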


\begin{proof}
Let $\left\{  \varphi_{1,n}\right\}  ,\left\{  \varphi_{2,n}\right\}  $ be
sequences of mollifiers in $\mathbb{R}$ and $\mathbb{R}^{N}$, and $\varphi
_{n}=\varphi_{1,n}\varphi_{2,n}$. Set $\gamma_{n}=\varphi_{2,n}\ast\gamma,$
and for $i=1,2,$ $u_{0,i,n}=\varphi_{2,n}\ast u_{0,i},$
\[
\lambda_{i,n}=\varphi_{n}\ast\lambda_{i}=f_{i,n}-\text{div}(g_{i,n}%
)+(h_{i,n})_{t}+\lambda_{i,s,n},
\]
where $f_{i,n}=\varphi_{n}\ast f_{i},\;g_{i,n}=\varphi_{n}\ast g_{i}%
,\;h_{i,n}=\varphi_{n}\ast h_{i},\;\lambda_{i,s,n}=\varphi_{n}\ast
\lambda_{i,s},$ and
\[
\lambda_{n}=\lambda_{1,n}-\lambda_{2,n}=f_{n}-\text{div}(g_{n})+(h_{n}%
)_{t}+\lambda_{s,n},
\]
where $f_{n}=f_{1,n}-f_{2,n},\;g_{n}=g_{1,n}-g_{2,n},\;h_{n}=h_{1,n}%
-h_{2,n},\;\lambda_{s,n}=\lambda_{1,s,n}-\lambda_{2,s,n}$. Then for $n$ large
enough, $\lambda_{1,n},\lambda_{2,n},\lambda_{n}\in C_{c}^{\infty}(Q)$,
$\gamma_{n}\in C_{c}^{\infty}(\Omega).$ Thus there exist unique solutions
$u_{n},u_{i,n},v_{i,n},$ $i=1,2,$ of problems
\[
(u_{n})_{t}-\text{div}(A(x,\nabla u_{n}))+\mathcal{G}(u_{n})=\lambda
_{1,n}-\lambda_{2,n}\quad\text{in }Q,\quad u_{n}=0\quad\text{on }%
\partial\Omega\times(0,T),\quad u_{n}(0)=u_{0,1,n}-u_{0,2,n}\quad\text{in
}\Omega,
\]%
\[
(u_{i,n})_{t}-\text{div}(A(x,\nabla u_{i,n}))+\mathcal{G}(u_{i,n}%
)=\lambda_{i,n}\quad\text{in }Q,\qquad u_{i,n}=0\quad\text{on }\partial
\Omega\times(0,T),\qquad u_{i,n}(0)=u_{0,i,n}\quad\text{in }\Omega,
\]%
\[
-\text{div}(A(x,\nabla w_{n}))=\gamma_{n}\quad\text{in }\Omega,\qquad
w_{n}=0\quad\text{on }\partial\Omega,
\]
such that
\[
-||u_{0,2}||_{\infty,\Omega}-w_{n}(x)\leqq-u_{2,n}(x,t)\leqq u_{n}(x,t)\leqq
u_{1,n}(x,t)\leqq w_{n}(x)+||u_{0,1}||_{\infty,\Omega},\quad a.e.\text{ in
}Q.
\]
Moreover, as in the Proof of Theorem \ref{new}, (i), there holds
\[
\int_{Q}|\mathcal{G}(u_{n})|\;\leqq\sum_{i=1,2}\left(  \lambda_{i}%
(Q)+||u_{0,i,n}||_{1,\Omega}\right)  ,\quad\text{and\quad}\int_{Q}%
\mathcal{G}(u_{i,n})\leqq\lambda_{i}(Q)+||u_{0,i,n}||_{1,\Omega},\quad i=1,2.
\]
By Proposition \ref{mun}, up to a common subsequence, $\left\{  u_{n}%
,u_{1,n},u_{2,n}\right\}  $ converge to some $(u,u_{1},u_{2}),$ $a.e.$ in $Q$.
Since $\mathcal{G}$ is bounded, in particular, $\left\{  \mathcal{G}%
(u_{n})\right\}  $ converges to $\mathcal{G}(u)$ and $\left\{  \mathcal{G}%
(u_{i,n})\right\}  $ converges to $\mathcal{G}(u_{i})$ in $L^{1}(Q)$. Thus,
(\ref{18061}) is satisfied. Morover $\left\{  \lambda_{i,n}-\mathcal{G}%
(u_{i,n}),f_{i,n}-\mathcal{G}(u_{i,n}),g_{i,n},h_{i,n},\lambda_{i,s,n}%
,u_{0,i,n}\right\}  $ and $\left\{  \lambda_{n}-\mathcal{G}(u_{n}%
),f_{n}-\mathcal{G}(u_{n}),g_{n},h_{n},\lambda_{s,n},u_{0,1,n}-u_{0,2,n}%
\right\}  $ are approximations of $(\lambda_{i}-\mathcal{G}(u_{i}%
),f_{i}-\mathcal{G}(u_{i}),g_{i},h_{i},\lambda_{i,s},u_{0,i})$ and
$(\lambda-\mathcal{G}(u),f-\mathcal{G}(u),g,h,\lambda_{s},u_{0,1}-u_{0,2})$,
in the sense of Theorem \ref{sta}. Thus, we can find (different) subsequences
converging $a.e.$ to $u,u_{1},u_{2},$ R-solutions of (\ref{proba}) and
(\ref{probb}). Furthermore, from \cite[Corollary 3.4]{NQVe}, up to a
subsequence, $\left\{  w_{n}\right\}  $ converges $a.e.$ in $Q$ to a
R-solution
\[
-\text{div}(A(x,\nabla w))=\gamma\quad\text{in }\Omega,\qquad w=0\quad\text{on
}\partial\Omega,
\]
such that $w\leqq c\mathbf{W}_{1,p}^{2\mathrm{diam}\Omega}{\gamma}$ $a.e.$ in
$\Omega$. Hence, we get the inequality (\ref{17062}). The other conclusions
follow in the same way.\medskip
\end{proof}

\begin{lemma}
\label{T27} Let $\mathcal{G}$ satisfy the assumptions of Theorem \ref{main}.
For $i=1,2,$ let $u_{0,i}\in L^{\infty}(\Omega)$ be nonnegative, $\lambda
_{i}\in\mathcal{M}_{b}^{+}(Q)$ with compact support in $Q$, and $\gamma
\in\mathcal{M}_{b}^{+}(\Omega)$ with compact support in $\Omega$, such that
\begin{equation}
\lambda_{i}\leqq\gamma\otimes\chi_{(0,T)},\qquad\mathcal{G}((||u_{0,i}%
||_{\infty,\Omega}+\kappa\mathbf{W}_{1,p}^{2\mathrm{diam}(\Omega)}{\gamma
}))\in L^{1}(Q). \label{hypg}%
\end{equation}
Then, there exist R-solutions $u,u_{1},u_{2}$ of the problems (\ref{proba})
and (\ref{probb}), respectively relative to the decompositions $(f_{1}%
-f_{2}-\mathcal{G}(u),g_{1}-g_{2},h_{1}-h_{2})$, $(f_{i}-\mathcal{G}%
(u_{i}),g_{i},h_{i}),$ satifying (\ref{17062}) and (\ref{18061}).\medskip

Moreover, assume that $\omega_{i},\theta_{i}$ have the same properties as
$\lambda_{i}$ and $\omega_{i}\leqq\lambda_{i}\leqq\theta_{i}$, $u_{0,i,1}%
,u_{0,i,2}\in L^{\infty+}(\Omega)$, $u_{0,i,2}\leqq u_{0,i}\leqq u_{0,i,1}.$
Then, one can find solutions $u_{i}(\omega_{i},u_{0,i,2}),$ $u_{i}(\theta
_{i},u_{0,i,1})$, corresponding with $(\omega_{i},u_{0,i,2}),$ $(\theta
_{i},u_{0,i,1})$, such that $u_{i}(\omega_{i},u_{0,i,2})\leqq u_{i}\leqq
u_{i}(\theta_{i},u_{0,i,1}).\medskip$
\end{lemma}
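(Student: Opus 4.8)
The plan is to reduce Lemma \ref{T27} to Lemma \ref{T26} by truncating the absorption term $\mathcal G$ and passing to the limit. First I would fix, for each $k>0$, the truncated nonlinearity $\mathcal G_k(x,t,s) = T_k(\mathcal G(x,t,s))$, which is still a Caratheodory function, odd and nondecreasing in $s$, and now bounded: $\mathcal G_k \in L^\infty(Q\times\mathbb R)$. Since $\lambda_i$ has compact support in $Q$ and $\lambda_i\le\gamma\otimes\chi_{(0,T)}$ with $\gamma$ of compact support in $\Omega$, and since $u_{0,i}\in L^\infty(\Omega)$ is nonnegative, Lemma \ref{T26} applies to $\mathcal G_k$ and produces R-solutions $u^k,u_1^k,u_2^k$ of (\ref{proba}), (\ref{probb}) with $\mathcal G$ replaced by $\mathcal G_k$, relative to the decompositions $(f_1-f_2-\mathcal G_k(u^k),g_1-g_2,h_1-h_2)$ and $(f_i-\mathcal G_k(u_i^k),g_i,h_i)$, and satisfying the two-sided bound (\ref{17062}) (with the same $\gamma$, since $\|u_{0,i}\|_{\infty,\Omega}$ and the Wolff potential bound do not depend on $k$) together with the $L^1$ estimates (\ref{18061}) (again uniform in $k$, since $|\mathcal G_k|\le|\mathcal G|$ and the right-hand sides of (\ref{18061}) only involve $\lambda_i(Q)$ and $\|u_{0,i}\|_{1,\Omega}$).

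The next step is to let $k\to\infty$. The monotonicity part of Lemma \ref{T26} (the comparison $u_i(\mathcal H)\ge u_i\ge u_i(\mathcal K)$ when $\mathcal H\le\mathcal G\le\mathcal K$) is what I would use to arrange that $\{u_i^k\}$ is monotone in $k$: since $s\mapsto\mathcal G_k(x,t,s)$ increases with $k$ for $s>0$ and decreases for $s<0$, after choosing the solutions consistently the sequences $\{u_i^k\}$ can be taken monotone, hence convergent $a.e.$ in $Q$ to limits $u_i$; similarly $\{u^k\}$ converges $a.e.$ to some $u$. The uniform pointwise bound (\ref{17062}) is preserved in the limit. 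The crucial point is the strong $L^1$ convergence of the absorption terms: the key estimate (\ref{17062}) gives $|u^k(x,t)|\le\|u_{0,1}\|_{\infty,\Omega}+\|u_{0,2}\|_{\infty,\Omega}+\kappa\mathbf W_{1,p}^{2\mathrm{diam}\,\Omega}\gamma(x)$ pointwise, uniformly in $k$; since $\mathcal G$ is nondecreasing and odd, $|\mathcal G(x,t,u^k(x,t))|\le \mathcal G(x,t,\|u_{0,1}\|_{\infty,\Omega}+\|u_{0,2}\|_{\infty,\Omega}+\kappa\mathbf W_{1,p}^{2\mathrm{diam}\,\Omega}\gamma(x))$, and by hypothesis (\ref{hypg}) this dominating function lies in $L^1(Q)$ (after enlarging the constant, using $\lambda_i\le\gamma\otimes\chi_{(0,T)}$ and odd monotonicity to absorb the sum of the two $L^\infty$ norms — this is where the assumption $\mathcal G((\|u_{0,i}\|_{\infty,\Omega}+\kappa\mathbf W_{1,p}^{2\mathrm{diam}\Omega}\gamma))\in L^1(Q)$ is used). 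By dominated convergence, $\mathcal G_k(u^k)\to\mathcal G(u)$ and $\mathcal G_k(u_i^k)\to\mathcal G(u_i)$ strongly in $L^1(Q)$, and in particular (\ref{18061}) passes to the limit.

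Finally I would invoke the stability Theorem \ref{sta}. The data $\bigl(\lambda_n-\mathcal G_k(u^k),\,f_1-f_2-\mathcal G_k(u^k),\,g_1-g_2,\,h_1-h_2,\,\lambda_s,\,u_{0,1}-u_{0,2}\bigr)$ (and the analogous data for the $u_i^k$) form, as $k\to\infty$, an admissible approximating sequence in the sense of Theorem \ref{sta}: the measure parts $\lambda_{i,s}$, the $g$-parts and $h$-parts are fixed, $f_1-f_2-\mathcal G_k(u^k)$ converges strongly in $L^1(Q)$ to $f_1-f_2-\mathcal G(u)$, the $L^1$-bounds on the $f$-parts hold uniformly by (\ref{18061}), and $\sup_k|\lambda_n-\mathcal G_k(u^k)|(Q)<\infty$. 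Hence up to a subsequence $u^k\to u$ and $u_i^k\to u_i$ $a.e.$, and $u,u_1,u_2$ are R-solutions of (\ref{proba}), (\ref{probb}) relative to the stated decompositions, still satisfying (\ref{17062}) and (\ref{18061}). The last assertion, the monotone dependence $u_i(\omega_i,u_{0,i,2})\le u_i\le u_i(\theta_i,u_{0,i,1})$, follows by running the same limiting procedure on top of the corresponding comparison already furnished by Lemma \ref{T26} at each truncation level $k$, since the comparison is stable under the $a.e.$ limit. The main obstacle is arranging the monotonicity in $k$ (equivalently, applying the comparison clauses of Lemma \ref{T26} coherently) so that one genuinely gets a dominated, convergent sequence of absorption terms rather than merely a bounded one; once strong $L^1$ convergence of $\mathcal G_k(u^k)$ is secured, Theorem \ref{sta} does the rest.
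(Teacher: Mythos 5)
Your overall strategy is the paper's: the paper also replaces $\mathcal G$ by the truncation $T_n(\mathcal G)$ (exactly your $\mathcal G_n$), applies Lemma \ref{T26} to get R-solutions satisfying the uniform bound (\ref{17062}) and the estimates (\ref{18061}), uses (\ref{hypg}) together with (\ref{17062}) to dominate the absorption terms by an $L^1(Q)$ function, passes to the limit by dominated convergence, and concludes with Theorem \ref{sta}; the final comparison statement is likewise inherited from the corresponding clause of Lemma \ref{T26}.

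The one step where your justification would fail is the mechanism you propose for the a.e.\ convergence of the approximations. You want to make $\{u_i^k\}$, and ``similarly'' $\{u^k\}$, monotone in $k$ via the comparison clause of Lemma \ref{T26}. For the sign-changing solution $u^k$ of (\ref{proba}) this cannot work: increasing the odd nondecreasing absorption pushes the solution towards zero from both sides, so there is no monotone ordering of $u^k$ in $k$, and indeed the clause of Lemma \ref{T26} comparing solutions for $\mathcal H\leqq\mathcal G\leqq\mathcal K$ is only stated for the one-signed solutions $u_i$; even for those, chaining it into a globally monotone sequence is delicate because of nonuniqueness. None of this is needed: since by (\ref{18061}) the data $\lambda_1-\lambda_2-T_k(\mathcal G(u^k))$ and $\lambda_i-T_k(\mathcal G(u_i^k))$ are bounded in $\mathcal M_b(Q)$ uniformly in $k$, Proposition \ref{mun} yields a.e.\ convergence of a common subsequence, exactly as in the proof of Lemma \ref{T26}; this is what the paper does, and a.e.\ convergence plus the $L^1$ domination is all your dominated-convergence step requires before invoking Theorem \ref{sta}. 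A small cosmetic point: (\ref{17062}) gives $|u^k|\leqq\kappa\mathbf{W}_{1,p}^{2\mathrm{diam}\Omega}[\gamma]+\max_{i}\|u_{0,i}\|_{\infty,\Omega}$, so the dominating function $\mathcal G\bigl(\kappa\mathbf{W}_{1,p}^{2\mathrm{diam}\Omega}[\gamma]+\max_i\|u_{0,i}\|_{\infty,\Omega}\bigr)$ is covered by (\ref{hypg}) directly; your bound involving the sum of the two $L^\infty$ norms is not what (\ref{hypg}) provides, and is not needed.
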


\begin{proof}
From Lemma \ref{T26} there exist R-solutions $u_{n}$, $u_{i,n}$ to problems
\[
(u_{n})_{t}-\text{div}(A(x,\nabla u_{n}))+T_{n}(\mathcal{G}(u_{n}%
))=\lambda_{1}-\lambda_{2}\quad\text{in }Q,\qquad u_{n}=0\quad\text{on
}\partial\Omega\times(0,T),\qquad u_{n}(0)=u_{0,1}-u_{0,2}%
\]%
\[
(u_{i,n})_{t}-\text{div}(A(x,\nabla u_{i,n}))+T_{n}(\mathcal{G}(u_{i,n}%
))=\lambda_{i}\quad\text{in }Q,\qquad u_{i,n}=0\quad\text{on }\partial
\Omega\times(0,T),\qquad u_{i,n}(0)=u_{0,i},
\]
relative to the decompositions $(f_{1}-f_{2}-T_{n}(\mathcal{G}(u_{n}%
),g_{1}-g_{2},h_{1}-h_{2})$, $(f_{i}-T_{n}(\mathcal{G}(u_{i,n}),g_{i},h_{i});$
and they satisfy
\begin{align}
-||u_{0,2}||_{\infty,\Omega}-\kappa\mathbf{W}_{1,p}^{2\mathrm{diam}\Omega
}{\gamma}(x)  &  \leqq-u_{2,n}(x,t)\leqq u_{n}(x,t)\nonumber\\
&  \leqq u_{1,n}(x,t)\leqq\kappa\mathbf{W}_{1,p}^{2\mathrm{diam}\Omega}%
{\gamma}(x)+||u_{0,1}||_{\infty,\Omega}, \label{18062}%
\end{align}%
\[
\int_{Q}|T_{n}\left(  \mathcal{G}(u_{n})\right)  |\leqq\sum_{i=1,2}%
(\lambda_{i}(Q)+||u_{0,i}||_{1,\Omega}),\quad\text{and\quad}\int_{Q}%
T_{n}\left(  \mathcal{G}(u_{i,n})\right)  \leqq\lambda_{i}(Q)+||u_{0,i}%
||_{1,\Omega}.
\]
As in Lemma \ref{T26}, up to a common subsequence, $\{u_{n},u_{1,n},u_{2,n}\}$
converges $a.e.$ in $Q$ to $\{u,u_{1},u_{2}\}$ for which (\ref{17062}) is
satisfied $a.e.$ in $Q$. From (\ref{hypg}), (\ref{18062}) and the dominated
convergence Theorem, we deduce that $\left\{  T_{n}(\mathcal{G}(u_{n}%
))\right\}  $ converges to $\mathcal{G}(u)$ and $\left\{  T_{n}(\mathcal{G}%
(u_{i,n}))\right\}  $ converges to $\mathcal{G}(u_{i})$ in $L^{1}(Q)$. Thus,
from Theorem \ref{sta}, $u$ and $u_{i}$ are respective R-solutions of
(\ref{proba}) and (\ref{probb}) relative to the decompositions $(f_{1}%
-f_{2}-\mathcal{G}(u),g_{1}-g_{2},h_{1}-h_{2})$, $(f_{i}-\mathcal{G}%
(u_{i}),g_{i},h_{i}),$ and (\ref{17062}) and (\ref{18061} hold. The last
statement follows from the same assertion in Lemma \ref{T26}.\medskip
\end{proof}

\begin{proof}
[Proof of Theorem \ref{main}]By Proposition \ref{P5}, for $i=1,2,$ there exist
$f_{i,n},f_{i}\in L^{1}(Q)$, $g_{i,n},g_{i}\in(L^{p^{\prime}}(Q))^{N}$ and
$h_{i,n},h_{i}\in X,$ $\mu_{i,n,s},\mu_{i,s}\in\mathcal{M}_{s}^{+}(Q)$ such
that
\[
\mu_{i}=f_{i}-\operatorname{div}g_{i}+(h_{i})_{t}+\mu_{i,s},\qquad\mu
_{i,n}=f_{i,n}-\operatorname{div}g_{i,n}+(h_{i,n})_{t}+\mu_{i,n,s},
\]
and $\left\{  f_{i,n}\right\}  ,\left\{  g_{i,n}\right\}  ,\left\{
h_{i,n}\right\}  $ strongly converge to $f_{i},g_{i},h_{i}$ in $L^{1}%
(Q),\;(L^{p^{\prime}}(Q))^{N}$ and $X$ respectively, and $\left\{  \mu
_{i,n}\right\}  ,\left\{  \mu_{i,n,s}\right\}  $ converge to $\mu_{i}%
,\mu_{i,s}$ (strongly) in $\mathcal{M}_{b}(Q),$ and
\[
||f_{i,n}||_{1,Q}+||g_{i,n}||_{p^{\prime},Q}+||h_{i,n}||_{X}+\mu
_{i,n,s}(\Omega)\leqq2\mu(Q).
\]
By Lemma \ref{T27}, there exist R-solutions $u_{n}$, $u_{i,n}$ to problems
\begin{equation}
(u_{n})_{t}-\text{div}(A(x,\nabla u_{n}))+\mathcal{G}(u_{n})=\mu_{1,n}%
-\mu_{2,n}\quad\text{in }Q,\qquad u_{n}=0\quad\text{on }\partial\Omega
\times(0,T),~~u_{n}(0)=T_{n}(u_{0}) \label{110411}%
\end{equation}%
\begin{equation}
(u_{i,n})_{t}-\text{div}(A(x,\nabla u_{i,n}))+\mathcal{G}(u_{i,n})=\mu
_{i,n}\quad\text{in }\Omega,\qquad u_{i}=0\quad\text{on }\partial\Omega
\times(0,T),\quad u_{i,n}(0)=T_{n}(u_{0}^{\pm}), \label{110412}%
\end{equation}
for $i=1,2,$ relative to the decompositions $(f_{1,n}-f_{2,n}-\mathcal{G}%
(u_{n}),g_{1,n}-g_{2,n},h_{1,n}-h_{2,n})$, $(f_{i,n}-\mathcal{G}%
(u_{i,n}),g_{i,n},h_{i,n}),$ such that $\{u_{i,n}\}$ is nonnegative and
nondecreasing, and $-u_{2,n}\leqq u_{n}\leqq u_{1,n}$; and
\begin{equation}
\int_{Q}|\mathcal{G}(u_{n})|\leqq\mu_{1}(Q)+\mu_{2}(Q)+||u_{0}||_{1,\Omega
,}\quad\text{and\quad}\int_{Q}\mathcal{G}(u_{i,n})\leqq\mu_{i}(Q)+||u_{0}%
||_{1,\Omega},\quad i=1,2. \label{18063}%
\end{equation}
\newline As in the proof of Lemma \ref{T27}, up to a common subsequence
$\{u_{n},u_{1,n},u_{2,n}\}$ converge $a.e.$ in $Q$ to $\{u,u_{1},u_{2}\}$.
Since $\left\{  \mathcal{G}(u_{i,n})\right\}  $ is nondecreasing, and
nonnegative, from the monotone convergence Theorem and (\ref{18063}), we
obtain that $\left\{  \mathcal{G}(u_{i,n})\right\}  $ converges to
$\mathcal{G}(u_{i})$ in $L^{1}(Q)$, $i=1,2$. Finally, $\left\{  \mathcal{G}%
(u_{n})\right\}  $ converges to $\mathcal{G}(u)$ in $L^{1}(Q),$ since
$|\mathcal{G}(u_{n})|\leqq\mathcal{G}(u_{1,n})+\mathcal{G}(u_{2,n})$. Thus, we
can see that
\[
\left\{  \mu_{1,n}-\mu_{2,n}-\mathcal{G}(u_{n}),f_{1,n}-f_{2,n}-\mathcal{G}%
(u_{n}),g_{1,n}-g_{2,n},h_{1,n}-h_{2,n},\mu_{1,s,n}-\mu_{2,s,n},T_{n}%
(u_{0}^{+})-T_{n}(u_{0}^{-})\right\}
\]
is an approximation of $(\mu_{1}-\mu_{2}-\mathcal{G}(u),f_{1}-f_{2}%
-\mathcal{G}(u),g_{1}-g_{2},h_{1}-h_{2},\mu_{1,s}-\mu_{2,s},u_{0})$, in the
sense of Theorem \ref{sta}; and
\[
\left\{  \mu_{i,n}-\mathcal{G}(u_{i,n}),f_{i,n}-\mathcal{G}(u_{i,n}%
),g_{i,n},h_{i,n},\mu_{i,s,n},T_{n}(u_{0}^{\pm})\right\}
\]
is an approximation of $(\mu_{i}-\mathcal{G}(u_{i}),f_{i}-\mathcal{G}%
(u_{i}),g_{i},h_{i},\mu_{i,s},u_{0}^{\pm})$. Therefore, $u$ is a R-solution of
(\ref{pga}), and (\ref{abs}) holds if $u_{0}\in L^{\infty}(\Omega)$ and
$\omega_{n}\leq\gamma$ for any $n\in\mathbb{N}$ and some $\gamma\in
\mathcal{M}_{b}^{+}(\Omega).\medskip$
\end{proof}

As a consequence we prove Theorem \ref{main1}. We use the following result of
\cite{BiNQVe}:

\begin{proposition}
[ see \cite{BiNQVe}]\label{110413} Let $q>p-1,$ $\alpha\in\left(
0,\frac{N(q+1-p)}{pq}\right)  $, $r>0$ and $\nu\in\mathcal{M}_{b}^{+}(\Omega
)$. If $\nu$ does not charge the sets of $C_{\alpha p,\frac{q}{q+1-p}}%
$-capacity zero, there exists a nondecreasing sequence $\{\nu_{n}%
\}\subset\mathcal{M}_{b}^{+}(\Omega)$ with compact support in $\Omega$ which
converges to $\nu$ strongly in $\mathcal{M}_{b}(\Omega)$ and such that
$\mathbf{W}_{\alpha,p}^{r}[\nu_{n}]\in L^{q}(\mathbb{R}^{N})$, for any
$n\in\mathbb{N}$.
\end{proposition}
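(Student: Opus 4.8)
The plan is to deduce the statement from two substantial results of nonlinear potential theory, glued together by a Baras--Pierre type decomposition of $\nu$. Throughout, write $s=\tfrac{q}{q+1-p}$; from $q>p-1$ and $p>1$ one has $s>1$, and the hypothesis $\alpha<\tfrac{N(q+1-p)}{pq}$ is exactly $\alpha p\,s<N$, so $C:=C_{\alpha p,s}$ is a genuine Bessel capacity that does not charge single points. (Heuristically this is sharp: for a Dirac mass $\delta_{x_0}$ one computes $\mathbf{W}_{\alpha,p}^r[\delta_{x_0}](x)\sim|x-x_0|^{(\alpha p-N)/(p-1)}$ near $x_0$, which lies in $L^q$ precisely when $\alpha p\,s\geq N$, i.e. precisely when $C(\{x_0\})>0$; so the capacitary hypothesis is the right one.)

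First I would reduce to measures satisfying the \emph{strong} capacitary bound. Since $\nu\in\mathcal{M}_b^+(\Omega)$ (extended by $0$ to $\mathbb{R}^N$) does not charge sets of zero $C$-capacity, a decomposition lemma of Baras--Pierre type (in its Bessel-capacity form) provides a nondecreasing sequence of Borel sets $A_k\uparrow$ with $\nu\big(\Omega\setminus\bigcup_kA_k\big)=0$ and constants $c_k$ such that $(\chi_{A_k}\nu)(E)\leq c_k\,C(E)$ for every Borel $E\subset\mathbb{R}^N$; concretely one writes $\nu=f\,d\lambda$ with $0<f\leq1$ $\nu$-a.e. and $\lambda$ dominated by a capacitary measure, and takes $A_k=\{f>\tfrac1k\}$. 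Intersecting further with $\{|x|<n\}$ and with $\{x\in\Omega:\operatorname{dist}(x,\partial\Omega)>\tfrac1n\}$, set $E_n=A_n\cap\{|x|<n\}\cap\{\operatorname{dist}(\cdot,\partial\Omega)>\tfrac1n\}$ and $\nu_n=\chi_{E_n}\nu$. Then $\{\nu_n\}$ is nondecreasing, each $\nu_n$ has support a compact subset of $\Omega$, and since $\nu$ is bounded and carried by $\{f>0\}$, $\|\nu-\nu_n\|_{\mathcal{M}_b(\Omega)}=\nu(\Omega\setminus E_n)\to0$, i.e. $\nu_n\to\nu$ strongly in $\mathcal{M}_b(\Omega)$. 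Moreover $\nu_n\leq n\,\chi_{A_n}\nu\leq n\,(f\,d\lambda)$ on $A_n$, so each $\nu_n$ obeys a strong capacitary estimate $\nu_n(E)\leq c'_n\,C_{\alpha p,s}(E)$ for all Borel $E$.

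Second, for a compactly supported $\omega\in\mathcal{M}_b^+(\mathbb{R}^N)$ satisfying $\omega(E)\leq c\,C_{\alpha p,\frac{q}{q+1-p}}(E)$ for all Borel $E$ one has $\mathbf{W}_{\alpha,p}^r[\omega]\in L^q(\mathbb{R}^N)$ for every $r>0$. This is the content of the nonlinear trace theory of Cascante--Ortega--Verbitsky and Phuc--Verbitsky: the strong capacitary inequality for $C_{\alpha p,q/(q+1-p)}$ is equivalent to the Wolff trace inequality $\int\mathbf{W}_{\alpha,p}^r[\omega]^q\,d\omega\leq C\,\omega(\mathbb{R}^N)$, which by the self-improvement of the testing condition gives $\mathbf{W}_{\alpha,p}^r[\omega]\in L^q_{loc}(dx)$; since $\omega$ has compact support the tail $\mathbf{W}_{\alpha,p}^r[\omega](x)$ decays as $|x|\to\infty$, so in fact $\mathbf{W}_{\alpha,p}^r[\omega]\in L^q(dx)$. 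Applying this with $\omega=\nu_n$ yields $\mathbf{W}_{\alpha,p}^r[\nu_n]\in L^q(\mathbb{R}^N)$ for each $n$, and $\{\nu_n\}$ from the first step is the sequence asserted by the Proposition.

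The hard part is entirely concentrated in the two imported ingredients — the Baras--Pierre decomposition (absolute continuity with respect to a capacity of a bounded measure forces, after peeling, domination by capacitary measures) and the equivalence between the strong capacitary bound and $L^q$-integrability of the truncated Wolff potential. Once these are granted, everything else is bookkeeping: checking that $E_n$ is nondecreasing, that $\overline{E_n}$ is a compact subset of $\Omega$ (this is the role of the extra cut $\operatorname{dist}(\cdot,\partial\Omega)>\tfrac1n$), and the monotone-convergence estimate for $\|\nu-\nu_n\|_{\mathcal{M}_b}$. One minor point to record is $r$-independence: since all $\nu_n$ are supported in the fixed bounded set $\Omega$, changing $r$ alters $\mathbf{W}_{\alpha,p}^r[\nu_n]$ only by a bounded factor on bounded sets plus a fast-decaying tail, so $L^q$-membership does not depend on the truncation radius.
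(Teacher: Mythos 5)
The paper itself does not prove this proposition; it is imported verbatim from \cite{BiNQVe}, so there is no in-paper argument to compare with. Your two-ingredient strategy (an increasing approximation of $\nu$ by compactly supported measures enjoying a capacitary domination, followed by the Wolff-potential trace theorem of Phuc--Verbitsky/Cascante--Ortega--Verbitsky type, which for $\alpha=1$ is exactly Theorem \ref{12042} quoted in the paper) is the expected route and, in structure, the one followed in \cite{BiNQVe}. Your second step and the bookkeeping (restriction to $E_n$ preserves the capacitary bound and monotonicity, strong convergence by continuity from above, vanishing of $\mathbf{W}_{\alpha,p}^{r}[\nu_n]$ at distance $>r$ from the support, $r$-independence) are fine.

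The weak point is the first imported ingredient, as you state it. The classical results of Baras--Pierre, Feyel--de la Pradelle and Fukushima give, for a measure not charging $C_{\alpha p,s}$-null sets ($s=\frac{q}{q+1-p}$), an increasing approximation by measures of \emph{finite energy}, i.e.\ belonging to the dual Bessel space; your sketch ``$\nu=f\,d\lambda$ with $\lambda$ dominated by a capacitary measure, take $A_k=\{f>1/k\}$'' is not a quotable theorem in that form, and in particular the approximants in the correct lemma need not be restrictions $\chi_{A_k}\nu$ of $\nu$ itself. This matters because finite-energy membership does not substitute for the hypothesis of the trace theorem: for $\omega\geq 0$ with $G_{\alpha p}\ast\omega\in L^{s'}$, H\"older only yields $\omega(E)\leq \|G_{\alpha p}\ast\omega\|_{s'}\,\inf\{\|\varphi\|_{L^{s}}:G_{\alpha p}\ast\varphi\geq\chi_E\}$, i.e.\ domination by the \emph{norm-normalized} capacity, whereas the Wolff estimate of \cite{PhVe1} and \cite{BiNQVe} requires $\omega(E)\leq c\,\mathrm{Cap}_{\alpha p,s}(E)$ with the $s$-th power normalization, a strictly stronger condition on small sets. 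So you must invoke the genuine capacitary-domination approximation lemma (a Dal Maso-type result: if $\nu$ vanishes on $C_{\alpha p,s}$-null sets, there is a nondecreasing sequence $\nu_k\uparrow\nu$ with $\nu_k(E)\leq c_k\,\mathrm{Cap}_{\alpha p,s}(E)$ for all Borel $E$, e.g.\ obtained by the inf-convolution $\nu_k(E)=\inf\{k\,\mathrm{Cap}_{\alpha p,s}(A)+\nu(E\setminus A):A\subset E\}$), which is what \cite{BiNQVe} rely on; once that lemma is correctly cited, truncating by your sets $\{|x|<n\}\cap\{d(x,\partial\Omega)>1/n\}$ and applying the trace theorem completes the proof. (Minor point: your heuristic for the Dirac mass should read $\alpha p s>N$ strictly, since at $\alpha p s=N$ neither the $L^{q}$ membership nor the positivity of the capacity of a point holds; this does not affect the argument.)
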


\begin{proof}
[Proof of Theorem \ref{main1}]Let $f\in L^{1}(Q)$, $u_{0}\in L^{1}(\Omega),$
and $\mu\in\mathcal{M}_{b}(Q)$ such that $\left\vert \mu\right\vert
\leqq\omega\otimes F,$ where $F\in L^{1}((0,T))$ and $\omega$ does not charge
the sets of $C_{p,\frac{q}{q+1-p}}$-capacity zero. From Proposition
\ref{110413}, there exists a nondecreasing sequence $\{\omega_{n}%
\}\subset\mathcal{M}_{b}^{+}(\Omega)$ with compact support in $\Omega$ which
converges to $\omega,$ strongly in $\mathcal{M}_{b}(\Omega),$ such that
$\mathbf{W}_{1,p}^{2diam\Omega}[\omega_{n}]\in L^{q}(\mathbb{R}^{N})$. We can
write
\begin{equation}
f+\mu=\mu_{1}-\mu_{2},\qquad\mu_{1}=f^{+}+\mu^{+},\qquad\mu_{2}=f^{-}+\mu^{-},
\label{fmu}%
\end{equation}
and $\mu^{+},\mu^{-}\leqq\omega\otimes F.$ We set
\begin{equation}
Q_{n}=\{(x,t)\in\Omega\times(\frac{1}{n},T-\frac{1}{n}):d(x,\partial
\Omega)>\frac{1}{n}\},\qquad F_{n}=T_{n}(\chi_{(\frac{1}{n}T-\frac{1}{n})}F),
\label{qn}%
\end{equation}%
\begin{equation}
\mu_{1,n}=T_{n}(\chi_{Q_{n}}f^{+})+\inf\{\mu^{+},\omega_{n}\otimes
F_{n}\},\qquad\mu_{2,n}=T_{n}(\chi_{Q_{n}}f^{-})+\inf\{\mu^{-},\omega
_{n}\otimes F_{n}\}. \label{fntn}%
\end{equation}
Then $\left\{  \mu_{1,n}\right\}  ,\left\{  \mu_{2,n}\right\}  $ are
nondecreasing sequences with compact support in $Q,$ and $\mu_{1,n},\mu
_{2,n}\leqq\tilde{\omega}_{n}\otimes\chi_{(0,T)},$ with $\tilde{\omega}%
_{n}=n(\chi_{\Omega}+\omega_{n})$ and $(n+\kappa\mathbf{W}_{1,p}^{2diam\Omega
}[\omega_{n}])^{q}\in L^{1}(Q)$. Besides, $\omega_{n}\otimes F_{n}$ converges
to $\omega\otimes F$ strongly in $\mathcal{M}_{b}(Q):$ indeed we easily check
that
\[
||\omega_{n}\otimes F_{n}-\omega\otimes F||_{\mathcal{M}_{b}(Q)}\leqq
||F_{n}||_{L^{1}((0,T))}||\omega_{n}-\omega||_{\mathcal{M}_{b}(\Omega
)}+||\omega||_{\mathcal{M}_{b}(\Omega)}||F_{n}-F||_{L^{1}((0,T))}%
\]
Observe that for any measures $\nu,\theta,\eta\in\mathcal{M}_{b}(Q),$ there
holds
\[
\left\vert \inf\{\nu,\theta\}-\inf\{\nu,\eta\}\right\vert \leqq\left\vert
\theta-\eta\right\vert ,
\]
hence $\left\{  \mu_{1,n}\right\}  ,\left\{  \mu_{2,n}\right\}  $ converge to
$\mu_{1},\mu_{2}$ respectively in $\mathcal{M}_{b}(Q)$. Therefore, the result
follows from Theorem \ref{main}.
\end{proof}

\begin{remark}
\label{pari} Our result improves the existence results of \cite{PePoPor},
where $\mu\in\mathcal{M}_{0}(Q).$ Indeed, let $p_{e}=N(p-1)/(N-p)$ be the
critical exponent for the elliptic problem%
\[
-\Delta_{p}w+\left\vert w\right\vert ^{q-1}w=\omega\quad\text{in }%
\Omega,\qquad w=0\quad\text{on }\partial\Omega.
\]
Notice that $p_{c}<p_{e},$ since $p>p_{1}.$ If $q\geqq p_{e},$ there exist
measures $\omega\in\mathcal{M}_{b}^{+}(\Omega)$ which do not charge the sets
of $C_{p,\frac{q}{q+1-p}}$-capacity zero, such that $\omega\not \in
\mathcal{M}_{0,e}(\Omega).$ Then for any $F\in L^{1}((0,T)),$ $F\geqq
0,F\not \equiv 0,$ we have $\omega\otimes F\not \in \mathcal{M}_{0}(Q).$
\end{remark}

\begin{remark}
Let $A:\Omega\times\mathbb{R}^{N}\longmapsto\mathbb{R}^{N}$ satisfying
(\ref{condi3}),(\ref{condi4}). Let $\mathcal{G}:Q\times\mathbb{R}%
\rightarrow\mathbb{R}$ be a Caratheodory function such that the map
$s\mapsto\mathcal{G}(x,t,s)$ is nondecreasing and odd, for $a.e.$ $(x,t)$ in
$Q$. Assume that $\omega\in\mathcal{M}_{0,e}(\Omega)$. Thus, we have
$\omega(\{x:W_{1,p}^{2diam(\Omega)}[\omega](x)=\infty\})=0$. As in the proof
of Theorem \ref{main1} with $\omega_{n}=\chi_{W_{1,p}^{2diam\Omega}%
[\omega]\leqq n}\omega$, we get that (\ref{pga}) has a R-solution.
\end{remark}

\begin{remark}
\label{exten}As in \cite{BiNQVe}, from Theorem \ref{main}, we can extend
Theorem \ref{main1} given for $\mathcal{G}(u)=\left\vert u\right\vert ^{q-1}%
u$, to the case of a function $\mathcal{G}(x,t,.),$ odd for $a.e.$ $(x,t)\in
Q,$ such that
\[
|\mathcal{G}(x,t,u)|\leqq G(|u|),\qquad\int_{1}^{\infty}G(s)s^{-q-1}ds<\infty,
\]
where $G$ is a nondecreasing continuous, under the condition that $\omega$
does not charge the sets of zero $C_{_{p},\frac{q}{q-p+1},1}$-capacity, where
for any Borel set $E\subset\mathbb{R}^{N},$
\[
C_{_{p},\frac{q}{q-p+1},1}(E)=\inf\{||\varphi||_{L^{\frac{q}{q-p+1}%
,1}(\mathbb{R}^{N})}:\varphi\in L^{\frac{q}{q-p+1},1}(\mathbb{R}^{N}),\quad
G_{p}\ast\varphi\geqq\chi_{E}\}
\]
where $L^{\frac{q}{q-p+1},1}(\mathbb{R}^{N})$ is the Lorentz space of order
$(q/(q-p+1),1)$.
\end{remark}

In case $\mathcal{G}$ is of exponential type, we introduce the notion of
maximal fractional operator, defined for any $\eta\geqq0,$ $R>0,$ $x_{0}%
\in\mathbb{R}^{N}$ by
\[
\mathbf{M}_{p,R}^{\eta}[\omega](x_{0})=\sup_{t\in\left(  0,R\right)  }%
\frac{\omega(B(x_{0},t))}{t^{N-p}h_{\eta}(t)},\qquad\text{where }h_{\eta
}(t)=\inf((-\ln t)^{-\eta},(\ln2)^{-\eta})).
\]
We obtain the following:

\begin{theorem}
\label{expo} Let $A:\Omega\times\mathbb{R}^{N}\longmapsto\mathbb{R}^{N}$
satisfying (\ref{condi3}),(\ref{condi4}). Let $p<N$ and $\tau>0,\beta>1,\mu
\in\mathcal{M}_{b}(Q)$ and $u_{0}\in L^{1}(\Omega)$. Assume that $\left\vert
\mu\right\vert \leqq\omega\otimes F,$ with $\omega\in\mathcal{M}_{b}%
^{+}(\Omega)$, $F\in L^{1}((0,T))$ be nonnegative. Assume that one of the
following assumptions is satisfied:

\noindent(i) $||F||_{L^{\infty}((0,T))}\leqq1$ and for some $M_{0}%
=M_{0}(N,p,\beta,\tau,c_{3},c_{4},\mathrm{diam}\Omega),$
\begin{equation}
||\mathbf{M}_{p,2\mathrm{diam}_{\Omega}}^{\frac{{p-1}}{\beta^{\prime}}}%
[\omega]|{|_{{L^{\infty}}({\mathbb{R}^{N}})}}<M_{0},\label{plou}%
\end{equation}
(ii) there exists $\beta_{0}>\beta$ such that $\mathbf{M}_{p,2diam\Omega
}^{\frac{{p-1}}{\beta_{0}^{\prime}}}[\omega]\in L^{\infty}(\mathbb{R}^{N}).$

\medskip Then there exists a R-solution to the problem
\[
\left\{
\begin{array}
[c]{l}%
u_{t}-\text{div}(A(x,\nabla u))+(e^{\tau|u|^{\beta}}-1)\mathrm{sign}%
u=F+\mu\qquad\text{in }Q,\\
u=0\;\qquad\text{on }\partial\Omega\times(0,T),\\
u(0)=u_{0}\qquad\text{in }\Omega.
\end{array}
\right.
\]

\end{theorem}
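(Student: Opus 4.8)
The plan is to deduce Theorem \ref{expo} from Theorem \ref{main}, exactly as Theorem \ref{main1} was deduced from it, but now choosing the approximating sequence $\{\omega_n\}$ so that the nonlinearity $\mathcal{G}(s)=(e^{\tau|s|^\beta}-1)\,\mathrm{sign}\,s$ is integrable along the Wolff-potential barriers. First I would recall from \cite{BiNQVe} the exponential analogue of Proposition \ref{110413}: under either assumption (i) or (ii) on $\omega$, one can produce a nondecreasing sequence $\{\omega_n\}\subset\mathcal{M}_b^+(\Omega)$ with compact support in $\Omega$, converging to $\omega$ strongly in $\mathcal{M}_b(\Omega)$, and such that $e^{\tau\kappa^\beta(\mathbf{W}_{1,p}^{2\mathrm{diam}\Omega}[\omega_n])^\beta}\in L^1(\Omega)$ for every $n$. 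The point of the condition $\mathbf{M}_{p,2\mathrm{diam}_\Omega}^{(p-1)/\beta'}[\omega]\in L^\infty$, resp. the smallness \eqref{plou}, is precisely that it controls the growth of $\mathbf{W}_{1,p}^{R}[\omega]$ so that $\exp$ of its $\beta$-th power is integrable: in case (ii) a genuine gain $\beta_0>\beta$ gives integrability for every finite level, while in case (i) the smallness threshold $M_0$ guarantees it for the one exponent $\beta$ in question.

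Next I would verify that $\mathcal{G}$ satisfies the structural hypotheses of Theorem \ref{main}: it is a Carathéodory function (here independent of $(x,t)$), and $s\mapsto e^{\tau|s|^\beta}-1$ times $\mathrm{sign}\,s$ is nondecreasing and odd on $\mathbb{R}$. Writing $f+\mu=\mu_1-\mu_2$ with $\mu_1=(f+\mu)^+\le f^++\omega\otimes F$, $\mu_2=(f+\mu)^-\le f^-+\omega\otimes F$ is not quite the right splitting here since the statement has $F+\mu$ with $F\in L^1((0,T))$ already absorbed; I would set $\mu_1=(F+\mu)^+$, $\mu_2=(F+\mu)^-$ and note $\mu^\pm\le\omega\otimes F$, so $\mu_i\le (\text{const})\,\chi_\Omega\otimes F+\omega\otimes F\le \tilde\omega\otimes F$ with $\tilde\omega\in\mathcal{M}_b^+(\Omega)$. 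Then, mimicking \eqref{qn}--\eqref{fntn}, I would truncate in time and space: set $Q_n$ as in \eqref{qn}, $F_n=T_n(\chi_{(1/n,T-1/n)}F)$, and
\[
\mu_{i,n}=T_n(\chi_{Q_n}\,(\text{the }L^1\text{ part of }\mu_i))+\inf\{\mu_i^{\mathrm{sing/diffuse part}},\ \omega_n\otimes F_n\},
\]
obtaining nondecreasing sequences with compact support in $Q$, with $\mu_{i,n}\le(n\chi_\Omega+\omega_n)\otimes\chi_{(0,T)}$, converging to $\mu_i$ in the narrow topology, and such that $\mathcal{G}\big((n+\kappa\mathbf{W}_{1,p}^{2\mathrm{diam}\Omega}[\omega_n])\big)\in L^1(Q)$ — the latter being exactly the integrability secured in the previous paragraph, since adding the constant $n$ inside $\mathcal{G}$ only multiplies the exponential by the finite factor $e^{\tau n^\beta(\cdots)}$ up to binomial cross terms, which stays in $L^1$. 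The strong convergence $\omega_n\otimes F_n\to\omega\otimes F$ in $\mathcal{M}_b(Q)$ follows from the triangle-inequality estimate used in the proof of Theorem \ref{main1}, and the contraction property $|\inf\{\nu,\theta\}-\inf\{\nu,\eta\}|\le|\theta-\eta|$ then gives $\mu_{i,n}\to\mu_i$ in $\mathcal{M}_b(Q)$.

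With these ingredients, Theorem \ref{main} applies verbatim with $\mu_1,\mu_2$ as above and $u_0\in L^1(\Omega)$, yielding a R-solution $u$ of the stated problem. The main obstacle is the first step: producing the sequence $\{\omega_n\}$ with the exponential-integrability property from the maximal-operator hypotheses. This requires the fractional-maximal/Wolff-potential machinery of \cite{BiNQVe} — in particular pointwise bounds of the form $\mathbf{W}_{1,p}^{R}[\omega](x)\lesssim (\text{const})\,(-\ln(\cdot))^{(p-1)/\beta'\cdot 1/(p-1)}$ type estimates translating a control on $\mathbf{M}_{p,R}^{(p-1)/\beta'}[\omega]$ into a logarithmic modulus for $\mathbf{W}_{1,p}^{R}[\omega]$, hence $e^{\tau(\mathbf{W}_{1,p}^{R}[\omega])^\beta}\in L^1$ when the maximal norm is below the critical threshold $M_0$ (case (i)) or when a strictly larger exponent $\beta_0$ is available (case (ii)). Once that lemma is in hand, everything else is a routine transcription of the proof of Theorem \ref{main1}, with $|u|^{q-1}u$ replaced by $(e^{\tau|u|^\beta}-1)\mathrm{sign}\,u$ and the capacity condition replaced by the maximal-operator condition; the $L^1(Q)$ convergence of $\{\mathcal{G}(u_n)\}$, $\{\mathcal{G}(u_{i,n})\}$ then comes, as before, from monotone convergence together with the a priori bounds \eqref{18063} and the pointwise envelope \eqref{17062}.
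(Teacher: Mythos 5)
Your strategy coincides with the paper's: reduce to Theorem \ref{main} via the truncation scheme (\ref{qn})--(\ref{fntn}) already used for Theorem \ref{main1}, the only new input being the exponential integrability of Wolff potentials under a bound on $\mathbf{M}_{p,2\mathrm{diam}\Omega}^{(p-1)/\beta'}$, taken from \cite{BiNQVe}. Note, however, that no ``exponential analogue of Proposition \ref{110413}'' is needed or invoked in the paper: since $\mathbf{W}_{1,p}^{2\mathrm{diam}\Omega}[\omega_n]\leqq\mathbf{W}_{1,p}^{2\mathrm{diam}\Omega}[\omega]$ for $\omega_n=\omega\chi_{\Omega_n}$, $\Omega_n=\{x:d(x,\partial\Omega)>1/n\}$, it suffices to apply the quoted integrability estimate (Proposition \ref{majex}) to $\omega$ itself, so the construction of the approximating sequence is trivial.

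The one genuinely loose step in your sketch is the absorption of the truncation parameter $n$, and this is exactly where hypotheses (i) and (ii) do their work. First, in case (ii) the time factor $F$ is only in $L^{1}$, so $F_n=T_n(\chi F)$ is bounded by $n$ and the correct majorization is $\mu_{i,n}\leqq n(\chi_\Omega+\omega)\otimes\chi_{(0,T)}$, not $(n\chi_\Omega+\omega_n)\otimes\chi_{(0,T)}$ as you wrote; hence the potential to control is $\mathbf{W}_{1,p}^{2\mathrm{diam}\Omega}[n\omega]=n^{1/(p-1)}\mathbf{W}_{1,p}^{2\mathrm{diam}\Omega}[\omega]$, a \emph{multiplicative} perturbation which no ``binomial cross terms'' argument handles; it is precisely the strict gain $\beta_0>\beta$ that saves the situation, via $Cs^{\beta}\leqq\varepsilon s^{\beta_0}+c_{\varepsilon,n}$ and Proposition \ref{majex} applied with exponent $\beta_0$ and $\varepsilon$ small. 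Second, even in case (i) (where $F\leqq1$ and the perturbation is additive) one cannot simply ``multiply the exponential by a finite factor'': the paper uses $(n+\kappa\mathbf{W}_{1,p}^{2\mathrm{diam}\Omega}[n\chi_\Omega+\omega])^{\beta}\leqq c_{\varepsilon}n^{\beta p/(p-1)}+(1+\varepsilon)\kappa^{\beta}(\mathbf{W}_{1,p}^{2\mathrm{diam}\Omega}[\omega])^{\beta}$, so the exponent in the exponential is inflated by $(1+\varepsilon)$, and this is admissible only because (\ref{plou}) is a strict inequality with the explicit choice $M_0=(\delta_0/\tau\kappa^{\beta})^{(p-1)/\beta}$, which leaves room to pick $\varepsilon$ with $\tau(1+\varepsilon)\kappa^{\beta}\,||\mathbf{M}_{p,2\mathrm{diam}\Omega}^{(p-1)/\beta'}[\omega]||_{L^{\infty}(\mathbb{R}^N)}^{\beta/(p-1)}<\delta_0$. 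With these two corrections (and keeping the rest of your argument: the structural hypotheses on $\mathcal{G}$, the narrow convergence of $\mu_{i,n}$, and the passage to the limit through Theorem \ref{main}), your proof becomes the paper's.
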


For the proof we use the following result of \cite{BiNQVe}:

\begin{proposition}
[see \cite{BiNQVe}, Theorem 2.4]\label{majex}Suppose $1<p<N.$ Let $\nu
\in\mathcal{M}_{b}^{+}(\Omega),$ $\beta>1,$ and $\delta_{0}=((12\beta
)^{-1})^{\beta}p\ln2.$ There exists $C=C(N,p,\beta,\mathrm{diam}\Omega)$ such
that, for any $\delta\in\left(  0,\delta_{0}\right)  $,%
\[
\int_{\Omega}\exp\left(  \delta\frac{(\mathbf{W}_{1,p}^{2\mathrm{diam}\Omega
}[\nu])^{\beta}}{||\mathbf{M}_{p,2\mathrm{diam}_{\Omega}}^{\frac{{p-1}}%
{\beta^{\prime}}}[\nu]|{|_{{L^{\infty}}({\mathbb{R}^{N}})}^{\frac{\beta}{p-1}%
}}}\right)  \leqq\frac{C}{\delta_{0}-\delta}.
\]

\end{proposition}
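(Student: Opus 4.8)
The plan is to reduce, by homogeneity and the layer-cake formula, to a single distribution-function estimate for the truncated Wolff potential, and then to prove that estimate by a dyadic decomposition together with a scale-by-scale weak-type bound. Write $R=2\operatorname{diam}\Omega$, $\eta=(p-1)/\beta'$ and $W:=\mathbf{W}_{1,p}^{R}[\nu]$. First I would normalize: since $\nu\mapsto\mathbf{M}_{p,R}^{\eta}[\nu]$ is positively $1$-homogeneous while $\nu\mapsto\mathbf{W}_{1,p}^{R}[\nu]$ is positively $\tfrac1{p-1}$-homogeneous, replacing $\nu$ by $\nu/M$ with $M:=\|\mathbf{M}_{p,R}^{\eta}[\nu]\|_{L^{\infty}}$ (the case $M=0$ being trivial) turns the quotient inside the exponential into $W^{\beta}$ and makes the hypothesis read $\nu(B(x,t))\leq t^{N-p}h_{\eta}(t)$ for all $x\in\mathbb{R}^{N}$, $0<t<R$; taking $x\in\Omega$ and $t=\operatorname{diam}\Omega$ then also gives $\nu(\mathbb{R}^{N})\leq C(N,p,\beta,\operatorname{diam}\Omega)$. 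By the layer-cake formula $\int_{\Omega}e^{\delta W^{\beta}}\leq|\Omega|+\delta\beta\int_{0}^{\infty}\lambda^{\beta-1}e^{\delta\lambda^{\beta}}\,|\{x\in\Omega:W(x)>\lambda\}|\,d\lambda$, so after the substitution $u=\lambda^{\beta}$ the Proposition would follow once one proves
\[
|\{x\in\Omega:W(x)>\lambda\}|\leq C\,e^{-\delta_{0}\lambda^{\beta}}\qquad(\lambda\geq\lambda_{0}),
\]
with $C,\lambda_{0}$ depending only on $N,p,\beta,\operatorname{diam}\Omega$.

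To get this, I would split $W=W_{0}^{\rho}[\nu]+W_{\rho}^{R}[\nu]$, with $W_{a}^{b}[\nu](x)=\int_{a}^{b}\big(t^{p-N}\nu(B(x,t))\big)^{1/(p-1)}\tfrac{dt}t$. Using the density bound and the elementary identities $\eta/(p-1)=1/\beta'$, $1-1/\beta'=1/\beta$, $\beta'/(\beta'-1)=\beta$, the computation $\int_{\rho}^{1/2}(-\ln t)^{-1/\beta'}\tfrac{dt}t=\beta\big[(\ln\tfrac1\rho)^{1/\beta}-(\ln2)^{1/\beta}\big]$, plus the bounded contribution of $t\in[1/2,R]$, yields the uniform tail bound $W_{\rho}^{R}[\nu](x)\leq\beta(\ln\tfrac1\rho)^{1/\beta}+C_{0}$ for all $x$ and all $0<\rho\leq\rho_{*}$. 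Hence the choice $\rho=\rho(\lambda):=\exp\big(-((\tfrac\lambda2-C_{0})/\beta)^{\beta}\big)$ forces $\{W>\lambda\}\subset\{W_{0}^{\rho}[\nu]>\lambda/2\}$ for $\lambda$ large. For the small part I would decompose $(0,\rho]$ into the dyadic rings $(2^{-k-1}\rho,2^{-k}\rho]$; since $p<N$ the map $t\mapsto t^{p-N}$ is decreasing, so with $r_{k}=2^{-k}\rho$ one gets $W_{0}^{\rho}[\nu](x)\leq C_{1}\sum_{k\geq0}a_{k}(x)$, $a_{k}(x):=(r_{k}^{p-N}\nu(B(x,r_{k})))^{1/(p-1)}$, and $a_{k}(x)\leq h_{\eta}(r_{k})^{1/(p-1)}=(k\ln2+\ln\tfrac1\rho)^{-1/\beta'}$. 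Covering $\Omega$ by a bounded-overlap family of balls of radius $\rho$ and using $\int_{\mathbb{R}^{N}}\nu(B(x,r_{k}))\,dx\leq c_{N}r_{k}^{N}\nu(\mathbb{R}^{N})$, so that $|\{x:\nu(B(x,r_{k}))\geq\tau r_{k}^{N-p}h_{\eta}(r_{k})\}|\leq C r_{k}^{p}/(\tau h_{\eta}(r_{k}))$, one counts how many scales $k$ can carry a non-negligible $a_{k}(x)$ while $\sum_{k}a_{k}(x)>\lambda/(2C_{1})$, and sums the corresponding weak bounds. Since the mass at scale $r_{k}$ decays like $r_{k}^{p}$ and the dyadic rings have log-width $\ln2$, iterating this scale by scale produces a bound $|\{W_{0}^{\rho}[\nu]>\lambda/2\}|\leq C\exp(-\delta_{0}\lambda^{\beta})$ with $\delta_{0}=((12\beta)^{-1})^{\beta}p\ln2$, which closes the argument.

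\textbf{Main obstacle.} The delicate step is this last one: controlling, uniformly in $\rho$ (hence in $\lambda$), how the $(p-1)$-th root nonlinearity of the Wolff potential interacts with the scale-by-scale density bound. A crude Chebyshev estimate on $W_{0}^{\rho}[\nu]$ already gives exponential decay $e^{-c\lambda^{\beta}}$ of the right shape, but to reach the stated value of $\delta_{0}$ one must run the dyadic iteration carefully --- a stopping-time selection of the ``concentration'' scales $r_{k}$, balancing their number against the measure on which concentration occurs --- and it is there that the constant $12\beta$ and the factor $p\ln2$ are pinned down. The normalization, the layer-cake reduction and the tail computation are all routine.
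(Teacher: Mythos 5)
The paper offers no proof of this proposition at all: it is quoted from \cite{BiNQVe}, so there is no internal argument to compare yours with, and I can only judge the proposal on its own terms. Your skeleton is the right one and is essentially standard: the homogeneity normalization, the layer-cake reduction to a distribution-function estimate, the splitting of $\mathbf{W}_{1,p}^{R}[\nu]$ at a $\lambda$-dependent scale $\rho(\lambda)$, and the uniform tail bound $\mathbf{W}_{\rho}^{R}[\nu]\leq\beta(\ln\frac1\rho)^{1/\beta}+C_{0}$ are all correct. The genuine gap is exactly the step you flag yourself: the estimate $|\{\mathbf{W}_{0}^{\rho}[\nu]>\lambda/2\}|\leq Ce^{-\delta_{0}\lambda^{\beta}}$ with the stated $\delta_{0}=((12\beta)^{-1})^{\beta}p\ln2$ is asserted, not proved. ``Counting how many scales can carry a non-negligible $a_{k}$'' and ``summing the corresponding weak bounds'' is a programme, not an argument: you choose no per-scale thresholds, do not show the union bound over scales closes, and do not derive the constants $12\beta$ and $p\ln2$; as written, the proof stops where the proposition starts.

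The gap is, however, easier to close than you suggest, because the stated $\delta_{0}$ is far from optimal and needs no stopping-time iteration. On $(0,\rho)$ use $\nu(B(x,t))\leq c_{N}t^{N}M\nu(x)$, with $M\nu$ the Hardy--Littlewood maximal function, to get the pointwise bound $\mathbf{W}_{0}^{\rho}[\nu](x)\leq c\,(M\nu(x))^{1/(p-1)}\rho^{p/(p-1)}$; combined with your choice $\rho=\exp\bigl(-((\lambda/2-C_{0})/\beta)^{\beta}\bigr)$, the weak $(1,1)$ inequality for $M$, and the bound $\nu(\mathbb{R}^{N})\leq C(N,p,\beta,\mathrm{diam}\,\Omega)$ available after normalization, this gives $|\{\mathbf{W}_{1,p}^{R}[\nu]>\lambda\}|\leq C\lambda^{1-p}\exp\bigl(-p((\lambda/2-C_{0})/\beta)^{\beta}\bigr)$. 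Since $(\ln2)^{1/\beta}\leq1$, the inequality $\lambda/2-C_{0}\geq(\ln2)^{1/\beta}\lambda/12$ holds as soon as $\lambda\geq12C_{0}/5$, so for $\lambda$ larger than a threshold depending only on $N,p,\beta,\mathrm{diam}\,\Omega$ (which also guarantees $\rho<1/2$) the exponent dominates $\delta_{0}\lambda^{\beta}$; your layer-cake computation then yields the bound $C/(\delta_{0}-\delta)$. In other words, the ``crude'' route you dismiss already reaches the constant in the statement; the delicate dyadic selection would only be needed to improve $\delta_{0}$, not to prove the proposition.
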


\begin{proof}
[Proof of Theorem \ref{expo}]Let $Q_{n}$ be defined at (\ref{qn}), and
$\omega_{n}=\omega\chi_{\Omega_{n}},$ where $\Omega_{n}=\{x\in\Omega
:d(x,\partial\Omega)>1/n\}.$ We still consider $\mu_{1},\mu_{2},F_{n}%
,\mu_{1,n},\mu_{2,n}$ as in (\ref{fmu}), (\ref{fntn}).\newline Case 1: Assume
that $||F||_{L^{\infty}((0,T))}\leqq1$ and (\ref{plou}) holds. We have
$\mu_{1,n},\mu_{2,n}\leqq n\chi_{\Omega}+\omega$. For any $\varepsilon>0,$
there exists $c_{\varepsilon}=c_{\varepsilon}(\varepsilon,N,p,\beta
,\kappa,\mathrm{diam}\Omega)$ $>0$ such that
\[
(n+\kappa\mathbf{W}_{1,p}^{2\mathrm{diam}\Omega}[n\chi_{\Omega}+\omega
])^{\beta}\leqq c_{\varepsilon}n^{\frac{\beta p}{p-1}}+(1+\varepsilon
)\kappa^{\beta}(\mathbf{W}_{1,p}^{2\mathrm{diam}\Omega}[\omega])^{\beta}%
\]
$a.e.$ in $\Omega$. Thus,
\[
\exp\left(  \tau(n+\kappa\mathbf{W}_{1,p}^{2\mathrm{diam}\Omega}[n\chi
_{\Omega}+\omega])^{\beta}\right)  \leqq\exp\left(  \tau c_{\varepsilon
}n^{\frac{\beta p}{p-1}}\right)  \exp\left(  \tau(1+\varepsilon)\kappa^{\beta
}(\mathbf{W}_{1,p}^{2\mathrm{diam}\Omega}[\omega])^{\beta}\right)
\]
If (\ref{plou}) holds with $M_{0}=\left(  \delta_{0}/\tau\kappa^{\beta
}\right)  ^{(p-1)/\beta}$ then we can chose $\varepsilon$ such that
\[
\tau(1+\varepsilon)\kappa^{\beta}||\mathbf{M}_{p,2\mathrm{diam}_{\Omega}%
}^{\frac{{p-1}}{\beta^{\prime}}}[\nu]|{|_{{L^{\infty}}({\mathbb{R}^{N}}%
)}^{\frac{\beta}{p-1}}<}\delta_{0}.
\]
From Proposition \ref{majex}, we get $\exp(\tau(1+\varepsilon)\kappa^{\beta
}\mathbf{W}_{1,p}^{2\mathrm{diam}\Omega}[\omega])^{\beta})\in L^{1}(\Omega),$
which implies $\exp(\tau(n+\kappa^{\beta}\mathbf{W}_{1,p}^{2\mathrm{diam}%
\Omega}[n\chi_{\Omega}+\omega])^{\beta})\in L^{1}(\Omega)$ for all $n$. We
conclude from Theorem \ref{main}.

\noindent Case 2: Assume that there exists $\varepsilon>0$ such that
$\mathbf{M}_{p,2diam\Omega}^{(p-1)/(\beta+\varepsilon)^{\prime}}[\omega]\in
L^{\infty}(\mathbb{R}^{N})$. Now we use the inequality $\mu_{1,n},\mu
_{2,n}\leqq n(\chi_{\Omega}+\omega)$. For any $\varepsilon>0$ and
$n\in\mathbb{N}$ there exists $c_{\varepsilon,n}>0$ such that
\[
(n+\kappa^{\beta}\mathbf{W}_{1,p}^{2\mathrm{diam}\Omega}[n(\chi_{\Omega
}+\omega)])^{\beta}\leqq c_{\varepsilon,n}+\varepsilon(\mathbf{W}%
_{1,p}^{2\mathrm{diam}\Omega}[\omega])^{\beta_{0}}%
\]
Thus, from Proposition \ref{majex} we get $\exp(\tau(n+\kappa^{\beta
}\mathbf{W}_{1,p}^{2\mathrm{diam}\Omega}[n(\chi_{\Omega}+\omega)])^{\beta})\in
L^{1}(\Omega)$ for all $n$. We conclude from Theorem \ref{main}.
\end{proof}

\subsection{Equations with source term}

As a consequence of Theorem \ref{main}, we get a first result for problem
(\ref{pmu}):

\begin{corollary}
\label{TH5} Let $A:\Omega\times\mathbb{R}^{N}\longmapsto\mathbb{R}^{N}$
satisfying (\ref{condi3})(\ref{condi4}). Let $u_{0}\in L^{\infty}(\Omega),$
and $\mu\in\mathcal{M}_{b}(${$Q$}$)$ such that $\left\vert \mu\right\vert
\leqq\omega\otimes\chi_{(0,T)}$ for some $\omega\in\mathcal{M}_{b}^{+}%
(\Omega)$. Then there exist a R-solution u of (\ref{pmu}), such that
\begin{equation}
\left\vert u(x,t)\right\vert \leqq\kappa\mathbf{W}_{1,p}^{2\mathrm{diam}%
(\Omega)}[\omega](x)+||u_{0}||_{\infty,\Omega},\qquad\text{for }a.e.~(x,t)\in
Q,\label{09041}%
\end{equation}
where $\kappa$ is defined at Theorem \ref{elli}.
\end{corollary}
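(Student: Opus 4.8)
The plan is to derive Corollary \ref{TH5} as a direct specialization of Theorem \ref{main}, taking $\mathcal{G}\equiv 0$. First I would observe that the trivial nonlinearity $\mathcal{G}(x,t,s)\equiv 0$ is a Caratheodory function which is odd and nondecreasing in $s$ (it is constant, hence weakly nondecreasing), so all structural hypotheses on $\mathcal{G}$ in Theorem \ref{main} are vacuously satisfied, and in particular the integrability condition $\mathcal{G}((n+\kappa\mathbf{W}_{1,p}^{2\mathrm{diam}(\Omega)}[\omega_n]))\in L^1(Q)$ holds automatically since the left side is $0$. With $\mathcal{G}=0$, a R-solution of (\ref{pga}) in the sense defined at the start of Section \ref{appli} is exactly a R-solution of (\ref{pmu}), because the data $(\mu-\mathcal{G}(u),u_0)$ reduces to $(\mu,u_0)$.

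Next I would set up the measure decomposition required by Theorem \ref{main}. Write $\mu=\mu^+-\mu^-$ with $\mu^+,\mu^-\in\mathcal{M}_b^+(Q)$, and put $\mu_1=\mu^+$, $\mu_2=\mu^-$. By hypothesis $|\mu|\leq\omega\otimes\chi_{(0,T)}$, hence $\mu_1,\mu_2\leq\omega\otimes\chi_{(0,T)}$. To produce the approximating sequences, I would take $\omega_n=\omega$ for all $n$ (a constant sequence, so automatically $\omega_n\leq\gamma$ with $\gamma=\omega$, which is what is needed for the final $L^\infty$-bound (\ref{abs})), and construct nondecreasing sequences $\{\mu_{1,n}\},\{\mu_{2,n}\}$ in $\mathcal{M}_b^+(Q)$ with compact support in $Q$, converging to $\mu_1,\mu_2$ in the narrow topology and satisfying $\mu_{i,n}\leq\omega\otimes\chi_{(0,T)}$. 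This is done exactly as in the proof of Theorem \ref{main1}: set $Q_n=\{(x,t)\in\Omega\times(\tfrac1n,T-\tfrac1n):d(x,\partial\Omega)>\tfrac1n\}$ and let $\mu_{i,n}=\inf\{\mu_i,\,(\chi_{Q_n}\,\omega)\otimes\chi_{(0,T)}\}$ — actually it is cleaner to take $\mu_{i,n}=\chi_{Q_n}\mu_i$ directly, since $\mu_i$ is already dominated by $\omega\otimes\chi_{(0,T)}$ and $\{\chi_{Q_n}\mu_i\}$ is nondecreasing with compact support in $Q$ and converges to $\mu_i$ in total variation, hence narrowly; the domination $\chi_{Q_n}\mu_i\leq\mu_i\leq\omega\otimes\chi_{(0,T)}$ is immediate.

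Then I would simply invoke Theorem \ref{main} with these data and $\mathcal{G}\equiv0$, $u_0\in L^1(\Omega)$ (indeed $u_0\in L^\infty(\Omega)$ by hypothesis), obtaining a R-solution $u$ of (\ref{pga}) with $\mathcal{G}=0$, i.e.\ of (\ref{pmu}), with data $(\mu,u_0)=(\mu_1-\mu_2,u_0)$. Since $u_0\in L^\infty(\Omega)$ and $\omega_n=\omega=\gamma$ for all $n$, the final assertion of Theorem \ref{main} yields $|u(x,t)|\leq\kappa\mathbf{W}_{1,p}^{2\mathrm{diam}\Omega}[\omega](x)+\|u_0\|_{\infty,\Omega}$ a.e.\ in $Q$, which is precisely (\ref{09041}). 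This is the complete proof; there is essentially no obstacle, the only point requiring a sentence of care being the verification that the degenerate choice $\mathcal{G}\equiv0$ meets the hypotheses of Theorem \ref{main} and that "R-solution of (\ref{pga}) with $\mathcal{G}=0$" is the same object as "R-solution of (\ref{pmu})". One could alternatively remark that Corollary \ref{TH5} also follows from Corollary \ref{051120131} combined with Theorem \ref{elli}-type potential estimates, but routing it through Theorem \ref{main} is the shortest path given what is already proved.
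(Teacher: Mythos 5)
Your argument is correct and is essentially the paper's own proof: the paper also deduces Corollary \ref{TH5} from Theorem \ref{main} with $\mathcal{G}\equiv 0$, $\mu_1=\mu^+$, $\mu_2=\mu^-$, constant $\omega_n=\omega$, and nondecreasing compactly supported approximations of $\mu^\pm$ (it takes $\phi_n\mu^\pm$ with $\phi_n\in C_c^\infty(Q)$ increasing to $1$, while you restrict to $\chi_{Q_n}\mu^\pm$ — an immaterial difference, since both are nondecreasing, dominated by $\omega\otimes\chi_{(0,T)}$, and converge narrowly). Your extra remarks that $\mathcal{G}\equiv0$ fulfills the hypotheses of Theorem \ref{main} vacuously and that an R-solution of (\ref{pga}) with $\mathcal{G}=0$ is exactly an R-solution of (\ref{pmu}) are the right (and only) points needing verification.
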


\begin{proof}
Let $\left\{  \phi_{n}\right\}  $ be a nonnegative, nondecreasing sequence in
$C_{c}^{\infty}(Q)$ which converges to $1,$ $a.e.$ in $Q.$ Since $\{\phi
_{n}\mu^{+}\},\{\phi_{n}\mu^{-}\}$ are nondecreasing sequences, the result
follows from Theorem \ref{main}.
\end{proof}

Our proof of Theorem \ref{120410} is based on a property of W\"{o}lf potentials:

\begin{theorem}
[see \cite{PhVe1}]\label{12042}Let $q>p-1$, $0<p<N$, $\omega\in\mathcal{M}%
_{b}^{+}(\Omega)$. If for some $\lambda>0,$
\begin{equation}
\omega(E)\leqq\lambda C_{_{p},\frac{q}{p-q+1}}(E)\quad\quad\text{for any
compact set }E\subset\mathbb{R}^{N}, \label{cont}%
\end{equation}
then $(\mathbf{W}_{1,p}^{2\mathrm{diam}\Omega}[\omega])^{q}\in L^{1}(\Omega),$
and there exists $M=M(N,p,q,\mathrm{diam}(\Omega))$ such that, $a.e.$ in
$\Omega,$
\begin{equation}
\mathbf{W}_{1,p}^{2\mathrm{diam}\Omega}\left[  \mathbf{W}_{1,p}%
^{2\mathrm{diam}\Omega}[\omega]\right]  ^{q}\leqq M\lambda^{\frac
{q-p+1}{(p-1)^{2}}}\mathbf{W}_{1,p}^{2\mathrm{diam}\Omega}[\omega]<\infty.
\label{12044}%
\end{equation}

\end{theorem}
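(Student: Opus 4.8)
The plan is to reduce both assertions to a testing (Carleson-type) inequality for the nonlinear Wolff potential that is equivalent to the hypothesis (\ref{cont}), following the circle of ideas of Maz'ya--Verbitsky and \cite{PhVe1},\cite{BiNQVe}. Set $R=2\mathrm{diam}\Omega$ and, for a ball $B$, $\omega_{B}=\chi_{B}\omega$; since $q>p-1$ we have $s:=\tfrac{q}{q-p+1}>1$, so $C_{p,s}$ is meaningful. The key reduction is that, with quantitative control of constants, (\ref{cont}) is equivalent to
\[
\int_{B}\big(\mathbf{W}_{1,p}^{R}[\omega_{B}]\big)^{q}\,dx\leqq A\,\omega(B)\qquad\text{for every ball }B\subset\mathbb{R}^{N},
\]
with $A\simeq\lambda^{\frac{q-p+1}{p-1}}$. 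The exponent of $\lambda$ is forced by the homogeneity $\mathbf{W}_{1,p}^{R}[c\,\omega]=c^{1/(p-1)}\mathbf{W}_{1,p}^{R}[\omega]$: applying the normalised equivalence (constant $\simeq1$) to $\omega/\lambda$ and scaling back multiplies the admissible constant by $\lambda^{\frac{q}{p-1}-1}=\lambda^{\frac{q-p+1}{p-1}}$. Truncation at the fixed radius $R$ is harmless because $\mathrm{supp}\,\omega\subset\overline{\Omega}$ has diameter $\leqq R/2$.

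Granted this, the first conclusion is immediate: taking $B=B(x_{0},R)\supset\Omega$ with $x_{0}\in\Omega$ one has $\omega_{B}=\omega$, hence $\int_{\Omega}(\mathbf{W}_{1,p}^{R}[\omega])^{q}\leqq A\,\omega(\Omega)<\infty$, i.e. $(\mathbf{W}_{1,p}^{R}[\omega])^{q}\in L^{1}(\Omega)$. For the iterated bound (\ref{12044}), put $g=(\mathbf{W}_{1,p}^{R}[\omega])^{q}$; the target is the pointwise estimate $\mathbf{W}_{1,p}^{R}[g\,dx]\leqq C\,A^{1/(p-1)}\,\mathbf{W}_{1,p}^{R}[\omega]$ a.e., which, since $A^{1/(p-1)}\simeq\lambda^{(q-p+1)/(p-1)^{2}}$, is exactly (\ref{12044}) (and gives finiteness a.e. for free). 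I would prove it on a dyadic model of $\mathbf{W}_{1,p}^{R}$: for each dyadic cube $Q\ni x$ split $\omega=\omega_{3Q}+\omega_{(3Q)^{c}}$. The near part is controlled by the testing inequality, $\int_{Q}(\mathbf{W}_{1,p}^{R}[\omega_{3Q}])^{q}\leqq A\,\omega(3Q)$, and summing over $Q\ni x$ contributes $\lesssim A^{1/(p-1)}\mathbf{W}_{1,p}^{R}[\omega](x)$. The far part, evaluated at a point of $Q$, is comparable to the tail of $\mathbf{W}_{1,p}^{R}[\omega](x)$ above the scale $\ell(Q)$, and the whole difficulty is to re-sum these tails over the dyadic tree so that they, too, are dominated by a fixed multiple of $\mathbf{W}_{1,p}^{R}[\omega](x)$.

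The main obstacle is precisely this re-summation. A naive bound of the tail contributions produces a genuinely superlinear term $\mathbf{W}_{1,p}^{R}[\omega](x)^{q/(p-1)}$ (recall $q/(p-1)>1$), which cannot be absorbed into $\mathbf{W}_{1,p}^{R}[\omega](x)$ by a pointwise comparison, that potential being unbounded. Overcoming this is the analytic heart of \cite{PhVe1}: one uses Wolff's inequality (the equivalence of Wolff-potential energies with Riesz/capacitary energies) together with a Muckenhoupt--Wheeden / Maz'ya--Verbitsky duality and a good-$\lambda$ argument on the dyadic tree, which together show the tail terms may be reabsorbed with a constant depending only on $A$ (hence only on $\lambda$ through the stated power), and not on $\omega$ itself. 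The remaining steps are routine bookkeeping of constants; the restrictions $p<N$ and $q>p-1$ enter through the convergence of the scale integrals $\int_{t}^{R}s^{(p-N)/(p-1)}\,\tfrac{ds}{s}$ and through $s>1$, respectively.
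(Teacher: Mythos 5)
There is nothing in the paper to compare against here: Theorem \ref{12042} is not proved in this paper at all, it is quoted verbatim from Phuc--Verbitsky \cite{PhVe1}, so the only legitimate ``proof'' at the level of this paper is the citation. Measured against the actual proof in \cite{PhVe1}, your outline does capture the right architecture (capacity condition $\Rightarrow$ ball-testing inequality $\int_{B}(\mathbf{W}_{1,p}^{R}[\omega_{B}])^{q}\,dx\leqq A\,\omega(B)$ $\Rightarrow$ pointwise iterated bound, via a dyadic near/far splitting), and your homogeneity bookkeeping is correct: $A\simeq\lambda^{\frac{q-p+1}{p-1}}$ and hence the exponent $\lambda^{\frac{q-p+1}{(p-1)^{2}}}$ in (\ref{12044}); the membership $(\mathbf{W}_{1,p}^{R}[\omega])^{q}\in L^{1}(\Omega)$ does follow from the testing inequality applied to one ball containing $\Omega$.

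As a proof, however, the proposal has two genuine gaps, both of which you leave to the literature. First, the ``key reduction'' -- that (\ref{cont}) is equivalent, with controlled constants, to the ball-testing inequality -- is itself a substantial theorem (it rests on the Havin--Maz'ya--Wolff inequality and capacitary strong-type estimates), not a normalisation remark; nothing in your text proves even the direction you use. Second, and more seriously, the far-part re-summation, which you yourself call ``the analytic heart,'' is not carried out: as you note, a naive estimate produces the superlinear quantity $\mathbf{W}_{1,p}^{R}[\omega](x)^{q/(p-1)}$, and the claim that Wolff's inequality plus Muckenhoupt--Wheeden/Maz'ya--Verbitsky duality plus a good-$\lambda$ argument ``reabsorbs'' it is exactly the content of the cited result, not an argument. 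In effect the proposal is a roadmap of the Phuc--Verbitsky proof with its two pivotal steps replaced by pointers to \cite{PhVe1}; since the statement to be proved is precisely the theorem of \cite{PhVe1}, this is circular as a standalone proof, though acceptable (and essentially what the authors do) if one is allowed to quote that paper. Minor points: the capacity in (\ref{cont}) should read $C_{p,\frac{q}{q-p+1}}$ (as you correctly use), and the scaling argument tacitly needs $p>1$ for the exponent $\frac{1}{p-1}$, consistent with the standing assumption (\ref{rangep}).
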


We deduce the following:

\begin{lemma}
\label{12047}Let $\omega\in$ $\mathcal{M}_{b}^{+}(\Omega)$, and $b\geqq0$ and
$K>0$. Suppose that $\{u_{m}\}_{m\geqq1}$ is a sequence of nonnegative
functions in $\Omega$ that satisfies
\[
{u_{1}}\leqq K\mathbf{W}_{1,p}^{2\mathrm{diam}\Omega}[\omega]+b,\qquad
{u_{m+1}}\leqq K\mathbf{W}_{1,p}^{2\mathrm{diam}\Omega}[u_{m}^{q}%
+\omega]+b\qquad\forall m\geqq1.
\]
Assume that $\omega$ satisfies (\ref{cont}) for some $\lambda>0.$ Then there
exist $\lambda_{0}$ and $b_{0},$ depending on $N,p,q,K,$ and ${\mathrm{diam}%
\Omega,}$ such that, if $\lambda\leqq\lambda_{0}$ and $b\leqq b_{0}$, then
$\mathbf{W}_{1,p}^{2\mathrm{diam}\Omega}[\mu]\in L^{q}(\Omega)$ and for any
$m\geqq1,$
\begin{equation}
{u_{m}}\leqq2\beta_{p}K\mathbf{W}_{1,p}^{2\mathrm{diam}\Omega}[\omega
]+2b,\qquad\beta_{p}=\max({1,{3^{\frac{{2-p}}{{p-1}}})}}. \label{12043}%
\end{equation}

\end{lemma}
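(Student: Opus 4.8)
The plan is to prove the bound \eqref{12043} by induction on $m$, with the right choice of $\lambda_0$ and $b_0$ forced by the induction step. The base case $m=1$ is immediate since $u_1\le K\mathbf{W}_{1,p}^{2\mathrm{diam}\Omega}[\omega]+b\le 2\beta_p K\mathbf{W}_{1,p}^{2\mathrm{diam}\Omega}[\omega]+2b$ because $\beta_p\ge 1$. For the inductive step, assume \eqref{12043} holds at level $m$. I would first exploit the elementary subadditivity/homogeneity properties of the truncated Wolff potential: for nonnegative measures $\nu_1,\nu_2$ one has $\mathbf{W}_{1,p}^{R}[\nu_1+\nu_2]\le \beta_p\big(\mathbf{W}_{1,p}^{R}[\nu_1]+\mathbf{W}_{1,p}^{R}[\nu_2]\big)$ with $\beta_p=\max(1,3^{(2-p)/(p-1)})$ (this is the standard constant coming from the inequality $(a+b)^{1/(p-1)}\le \beta_p(a^{1/(p-1)}+b^{1/(p-1)})$ after splitting $\nu(B)=\nu_1(B)+\nu_2(B)$ inside the integral), and $\mathbf{W}_{1,p}^{R}[c\,\nu]=c^{1/(p-1)}\mathbf{W}_{1,p}^{R}[\nu]$ for $c>0$. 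Applying this with $\nu_1=u_m^q$ and $\nu_2=\omega$ and then using the induction hypothesis $u_m\le 2\beta_p K\mathbf{W}_{1,p}^{2\mathrm{diam}\Omega}[\omega]+2b$ together with $(a+b)^q\le 2^{q-1}(a^q+b^q)$, I would bound
\[
u_{m+1}\le K\beta_p\Big(\mathbf{W}_{1,p}^{2\mathrm{diam}\Omega}\big[(2\beta_p K\mathbf{W}_{1,p}^{2\mathrm{diam}\Omega}[\omega]+2b)^q\big]+\mathbf{W}_{1,p}^{2\mathrm{diam}\Omega}[\omega]\Big)+b.
\]
Splitting the $q$-th power, the first term produces a contribution controlled by $\mathbf{W}_{1,p}^{2\mathrm{diam}\Omega}\big[\big(\mathbf{W}_{1,p}^{2\mathrm{diam}\Omega}[\omega]\big)^q\big]$, which is exactly the object estimated in Theorem \ref{12042}, plus a constant-in-$x$ term coming from $b^q$ (note $\mathbf{W}_{1,p}^{R}[c]$ with $c$ a constant is a bounded function of $x$, of size comparable to $c^{1/(p-1)}R^{p/(p-1)}$).

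The heart of the argument is then to invoke \eqref{12044}: under the capacity condition \eqref{cont}, $\big(\mathbf{W}_{1,p}^{2\mathrm{diam}\Omega}[\omega]\big)^q\in L^1(\Omega)$ and
\[
\mathbf{W}_{1,p}^{2\mathrm{diam}\Omega}\big[\big(\mathbf{W}_{1,p}^{2\mathrm{diam}\Omega}[\omega]\big)^q\big]\le M\lambda^{\frac{q-p+1}{(p-1)^2}}\,\mathbf{W}_{1,p}^{2\mathrm{diam}\Omega}[\omega]\quad a.e.\ \text{in }\Omega.
\]
Feeding this back, the bound for $u_{m+1}$ takes the schematic form
\[
u_{m+1}\le \big(C_1\lambda^{\frac{q-p+1}{(p-1)^2}}+1\big)\,\beta_p K\,\mathbf{W}_{1,p}^{2\mathrm{diam}\Omega}[\omega]+C_2 b^{\frac{q}{p-1}}+b,
\]
where $C_1,C_2$ depend only on $N,p,q,K,\mathrm{diam}\Omega$. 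Now I choose $\lambda_0$ so small that $C_1\lambda_0^{\frac{q-p+1}{(p-1)^2}}\le 1$, which makes the coefficient of $\mathbf{W}_{1,p}^{2\mathrm{diam}\Omega}[\omega]$ at most $2\beta_p K$; and I choose $b_0$ so small that $C_2 b_0^{\frac{q}{p-1}}+b_0\le 2b_0$, i.e. $C_2 b_0^{\frac{q}{p-1}-1}\le 1$ (using $q>p-1$ so the exponent $\frac{q}{p-1}-1>0$ — this is where that hypothesis is used, and if $q/(p-1)\le 1$ one absorbs into the linear term differently, but $q>p-1$ is exactly assumed). With these choices $u_{m+1}\le 2\beta_p K\mathbf{W}_{1,p}^{2\mathrm{diam}\Omega}[\omega]+2b$, closing the induction. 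Finally $\mathbf{W}_{1,p}^{2\mathrm{diam}\Omega}[\omega]\in L^q(\Omega)$ is precisely the first conclusion of Theorem \ref{12042}, which also covers the claim that the potential is finite a.e.

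The main obstacle is bookkeeping the constants so that the two smallness conditions on $\lambda$ and $b$ can be met \emph{uniformly in $m$}: one must be careful that the constant $C_1$ multiplying $\lambda^{\frac{q-p+1}{(p-1)^2}}$ does not itself depend on $m$ (it does not, since it comes only from $\beta_p$, $2^{q-1}$, $K$, and the $M$ of Theorem \ref{12042}), and similarly that the $b$-dependent error is a fixed power of $b$ with a fixed constant. Once the recursion is written in the clean form $u_{m+1}\le (C_1\lambda^{\alpha}+1)(\text{target linear part})+(C_2 b^{\gamma}+b)$ with $\alpha=\frac{q-p+1}{(p-1)^2}>0$ and $\gamma=\frac{q}{p-1}>1$, the choice of $\lambda_0,b_0$ is routine. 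The only genuinely nontrivial input is the iterated-potential estimate \eqref{12044}, which we are allowed to assume from Theorem \ref{12042}.
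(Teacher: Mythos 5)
Your proposal is correct and follows essentially the same route as the paper: induction on $m$, subadditivity and homogeneity of the truncated Wolff potential with the constant $\beta_p$, the iterated-potential estimate (\ref{12044}) of Theorem \ref{12042}, a bound for the potential of the constant $(2b)^q$, and then the choice of $\lambda_0,b_0$ exactly as dictated by the conditions $C_1\lambda^{\frac{q-p+1}{(p-1)^2}}\leqq1$ and $C_2b^{\frac{q}{p-1}}\leqq b$, which is the paper's argument with $C_1=A_1M$, $C_2=A_2$.
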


\begin{proof}
Clearly, (\ref{12043}) holds for $m=1$. Now, assume that it holds at the order
$m$. Then
\[
u_{m}^{q}\leqq2^{q-1}{\left(  {2}\beta_{p}\right)  ^{q}}{K^{q}(}%
{\mathbf{W}{_{1,p}^{2\mathrm{diam}\Omega}[\omega]})^{q}}+2^{q-1}{\left(
{2b}\right)  ^{q}}%
\]
Using (\ref{12044}) we get
\begin{align*}
u_{m+1}  &  \leq K\mathbf{W}_{1,p}^{2\mathrm{diam}\Omega}\left[
{2^{q-1}{{\left(  {2}\beta_{p}\right)  }^{q}}K^{q}({{{W_{1,p}^{2\mathrm{diam}%
\Omega}[\omega])}}^{q}}+2^{q-1}{{\left(  {2b}\right)  }^{q}}+\omega}\right]
+b\\
&  \leq\beta_{p}K\left(  {{A_{1}}\mathbf{W}_{1,p}^{2\mathrm{diam}\Omega
}\left[  ({{{{W_{1,p}^{2\mathrm{diam}\Omega}[\omega])}}^{q}}}\right]
+\mathbf{W}_{1,p}^{2\mathrm{diam}\Omega}\left[  {{{\left(  {2b}\right)  }^{q}%
}}\right]  +W_{1,p}^{2\mathrm{diam}\Omega}[\omega]}\right)  +b\\
&  \leq\beta_{p}K(A_{1}M\lambda^{\frac{{q-p+1}}{{{{(p-1)}^{2}}}}}%
+1)\mathbf{W}_{1,p}^{2\mathrm{diam}\Omega}[\omega]+\beta_{p}K\mathbf{W}%
_{1,p}^{2\mathrm{diam}\Omega}\left[  {{{\left(  {2b}\right)  }^{q}}}\right]
+b\\
&  =\beta_{p}K(A_{1}M\lambda^{\frac{{q-p+1}}{{{{(p-1)}^{2}}}}}+1)\mathbf{W}%
_{1,p}^{2\mathrm{diam}\Omega}[\omega]+A_{2}b^{\frac{q}{p-1}}+b,
\end{align*}
where $M$ is as in (\ref{12044}) and $A_{1}=\left(  2^{q-1}{{\left(  {2}%
\beta_{p}\right)  }^{q}}K^{q}\right)  ^{1/(p-1)}$, $A_{2}=\beta_{p}%
K2^{q/(p-1)}|{B_{1}}{|}^{1/(p-1)}{(}p^{\prime})^{-1}{\left(  {2\mathrm{diam}%
\Omega}\right)  ^{p^{\prime}}}$.\newline Thus, (\ref{12043}) holds for $m=n+1$
if we prove that
\[
A_{1}M\lambda^{\frac{{q-p+1}}{{{{(p-1)}^{2}}}}}\leq1\text{ and }A_{2}%
b^{\frac{q}{p-1}}\leq b,
\]
which is equivalent to
\[
\lambda\leq(A_{1}M)^{-\frac{(p-1)^{2}}{q-p+1}}\text{ and }b\leq A_{2}%
^{-\frac{p-1}{q-p+1}}.
\]
Therefore, we obtain the result with $\lambda_{0}=(A_{1}M)^{-(p-1)^{2}%
/(q-p+1)}$ and $b_{0}=A_{2}^{-(p-1)/(q-p+1)}.$
\end{proof}

\medskip

\begin{proof}
[Proof of Theorem \ref{120410}]From Corollary \ref{051120131} and \ref{TH5},
we can construct a sequence of nonnegative nondecreasing R-solutions
$\{u_{m}\}_{m\geqq1}$ defined in the following way: $u_{1}$ is a R-solution of
(\ref{pmu}), and $u_{m+1}$ is a nonnegative R-solution of
\[
\left\{
\begin{array}
[c]{l}%
(u_{m+1})_{t}-\text{div}(A(x,\nabla u_{m+1}))=u_{m}^{q}+\mu\qquad\text{in
}Q,\\
u_{m+1}=0\qquad\text{on }\partial\Omega\times(0,T),\\
u_{m+1}(0)=u_{0}\qquad\text{in }\Omega.
\end{array}
\right.
\]
Setting $\overline{u}_{m}=\sup_{t\in(0,T)}u_{m}(t)$ for all $m\geqq1,$ there
holds
\[
{\overline{u}_{1}}\leqq\kappa\mathbf{W}_{1,p}^{2\mathrm{diam}\Omega}%
[\omega]+||u_{0}||_{\infty,\Omega},\qquad{\overline{u}_{m+1}}\leqq
\kappa\mathbf{W}_{1,p}^{2\mathrm{diam}\Omega}[\overline{u}_{m}^{q}%
+\omega]+||u_{0}||_{\infty,\Omega}\quad\quad\forall m\geqq1.
\]
From Lemma \ref{12047}, we can find $\lambda_{0}=\lambda_{0}(N,p,q,$%
\textrm{$diam$}$\Omega)$ and $b_{0}=b_{0}(N,p,q,\mathrm{diam}\Omega)$ such
that if (\ref{051120132}) is satisfied with $\lambda_{0}$ and $b_{0}$, then
\begin{equation}
u_{m}\leqq{\overline{u}_{m}}\leqq2\beta_{p}\kappa\mathbf{W}_{1,p}%
^{2\mathrm{diam}\Omega}[\omega]+2||u_{0}||_{\infty,\Omega}\quad\quad\forall
m\geqq1. \label{12049}%
\end{equation}
Thus $\left\{  u_{m}\right\}  $ converges $a.e.$ in $Q$ and in $L^{1}(Q)$ to
some function $u,$ for which (\ref{maw}) is satisfied in $\Omega$ with
$c=2\beta_{p}\kappa.$ Finally, one can apply Theorem \ref{sta} to the sequence
of measures $\left\{  u_{m}^{q}+\mu\right\}  ,$ and obtain that $u$ is a
R-solution of (\ref{pro3}).\medskip
\end{proof}

Next we consider the exponential case.

\begin{theorem}
\label{MTH1} Let $A:\Omega\times\mathbb{R}^{N}\longmapsto\mathbb{R}^{N}$
satisfying (\ref{condi3}),(\ref{condi4}). Let $\tau>0,l\in\mathbb{N}$ and
$\beta\geqq1$ such that $l\beta>p-1.$ Set
\begin{equation}
\mathcal{E}(s)=e^{s}-\sum\limits_{j=0}^{l-1}{\frac{{{s^{j}}}}{{j!}}}%
,\qquad\forall s\in\mathbb{R}.\label{ess}%
\end{equation}
Let $\mu\in\mathcal{M}_{b}^{+}(Q)$, $\omega\in\mathcal{M}_{b}^{+}(\Omega)$
such that $\mu\leqq\chi_{(0,T)}\otimes\omega$. Then, there exist $b_{0}$ and
$M_{0}$ depending on $N,p,\beta,\tau,l$ and $\mathrm{diam}\Omega,$ such that
if
\[
||\mathbf{M}_{p,2\mathrm{diam}\Omega}^{\frac{{(p-1)(\beta-1)}}{\beta}}%
[\omega]|{|_{{L^{\infty}}({\mathbb{R}^{N}})}}\leqq M_{0},\qquad||u_{0}%
||_{\infty,\Omega}\leqq b_{0},
\]
the problem
\begin{equation}
\left\{
\begin{array}
[c]{l}%
u_{t}-\text{div}(A(x,\nabla u))=\mathcal{E}(\tau u^{\beta})+\mu\qquad\text{in
}Q,\\
u=0\qquad\text{on }\partial\Omega\times(0,T),\\
u(0)=u_{0}\qquad\text{in }\Omega
\end{array}
\right.  \label{pro2}%
\end{equation}
admits nonnegative R- solution $u$, which satisfies, $a.e.$ in $Q,$ for some
$c,$ depending on $N,p,c_{3},c_{4}$
\begin{equation}
{u(x,t)}\leqq c\mathbf{W}_{1,p}^{2\mathrm{diam}\Omega}[\omega](x)+2{b_{0}%
}.\label{1334}%
\end{equation}

\end{theorem}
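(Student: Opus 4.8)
The plan is to iterate exactly as in the proof of Theorem \ref{120410}, but replacing the pure power estimate by a potential estimate adapted to the exponential nonlinearity $\mathcal E(\tau s^\beta)$. First I would set up the approximating scheme: using Corollary \ref{051120131} and Corollary \ref{TH5} (applicable since $\mu\leqq\chi_{(0,T)}\otimes\omega$), construct a nondecreasing sequence $\{u_m\}$ of nonnegative R-solutions with $u_1$ a R-solution of (\ref{pmu}) with data $(\mu,u_0)$, and $u_{m+1}$ a nonnegative R-solution of $(u_{m+1})_t-\mathrm{div}(A(x,\nabla u_{m+1}))=\mathcal E(\tau u_m^\beta)+\mu$ with $u_{m+1}(0)=u_0$. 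Monotonicity of $\{u_m\}$ follows from the comparison statement in Corollary \ref{051120131} together with monotonicity of $s\mapsto\mathcal E(\tau s^\beta)$ on $[0,\infty)$. Writing $\overline u_m=\sup_{t\in(0,T)}u_m(t)$, Corollary \ref{TH5} applied to the measure $\mathcal E(\tau u_m^\beta)+\mu$ (which is $\leqq(\mathcal E(\tau\overline u_m^\beta)+\omega)\otimes\chi_{(0,T)}$) gives the pointwise a.e. bounds
\[
\overline u_1\leqq\kappa\mathbf W_{1,p}^{2\mathrm{diam}\Omega}[\omega]+\|u_0\|_{\infty,\Omega},\qquad
\overline u_{m+1}\leqq\kappa\mathbf W_{1,p}^{2\mathrm{diam}\Omega}\bigl[\mathcal E(\tau\overline u_m^\beta)+\omega\bigr]+\|u_0\|_{\infty,\Omega}.
\]

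The heart of the argument is a fixed-point-type a priori bound showing that, if $\|\mathbf M_{p,2\mathrm{diam}\Omega}^{(p-1)(\beta-1)/\beta}[\omega]\|_\infty$ and $\|u_0\|_\infty$ are small enough, then $\overline u_m\leqq c\,\mathbf W_{1,p}^{2\mathrm{diam}\Omega}[\omega]+2b_0$ for all $m$. This is the analogue of Lemma \ref{12047}, and the key external input should be an exponential Wolff-potential estimate from \cite{BiNQVe}: a bound of the form $\mathbf W_{1,p}^{2\mathrm{diam}\Omega}\bigl[\mathcal E(\tau(\cdot\,\mathbf W_{1,p}^{2\mathrm{diam}\Omega}[\omega])^\beta)\bigr]\leqq (\text{small})\,\mathbf W_{1,p}^{2\mathrm{diam}\Omega}[\omega]$ valid when the maximal fractional operator $\mathbf M_{p,2\mathrm{diam}\Omega}^{(p-1)(\beta-1)/\beta}[\omega]$ is small in $L^\infty$; here Proposition \ref{majex} (which controls $\int_\Omega\exp(\delta(\mathbf W_{1,p}^{2\mathrm{diam}\Omega}[\nu])^\beta/\|\mathbf M^{(p-1)/\beta'}_{p,2\mathrm{diam}\Omega}[\nu]\|_\infty^{\beta/(p-1)}))$) is the basic exponential-integrability tool, and note $(p-1)/\beta'=(p-1)(\beta-1)/\beta$ so the operator in the statement is exactly the one in Proposition \ref{majex}. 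The induction runs just as in Lemma \ref{12047}: assuming $\overline u_m\leqq c\mathbf W_{1,p}^{2\mathrm{diam}\Omega}[\omega]+2b_0$, one expands $\mathcal E(\tau\overline u_m^\beta)$ using convexity-type inequalities (splitting $(\alpha+\gamma)^\beta\leqq(1+\varepsilon)\alpha^\beta+C_\varepsilon\gamma^\beta$ and $\mathcal E(a+b)\leqq\mathcal E(\text{const})\,\mathcal E((1+\varepsilon)a)$ up to lower-order terms $s^j$, $j<l$, which are handled by Theorem \ref{12042} since $l\beta>p-1$), applies subadditivity and positive homogeneity of degree $\frac1{p-1}$ of the Wolff potential on sums, then invokes the exponential potential estimate and Proposition \ref{majex} to absorb the main term into $c\mathbf W_{1,p}^{2\mathrm{diam}\Omega}[\omega]$, while the constant part is absorbed into $2b_0$; this fixes $b_0$ and $M_0$ depending only on $N,p,\beta,\tau,l,\mathrm{diam}\Omega$ (via $\delta_0=((12\beta)^{-1})^\beta p\ln2$ in Proposition \ref{majex}).

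With the uniform bound (\ref{1334}) in hand for every $u_m$, monotone convergence gives $u_m\uparrow u$ a.e. in $Q$ and in $L^1(Q)$, and since $\{\mathcal E(\tau u_m^\beta)\}$ is nondecreasing and nonnegative with $\int_Q\mathcal E(\tau u_m^\beta)$ bounded (this boundedness comes from testing the equation with a truncation, exactly as in the proof of Theorem \ref{new}(i), giving $\int_Q\mathcal E(\tau u_m^\beta)\leqq\mu(Q)+\|u_0\|_{1,\Omega}$, or alternatively from (\ref{1334}) and the exponential integrability of $\mathbf W_{1,p}^{2\mathrm{diam}\Omega}[\omega]$ via Proposition \ref{majex}), the monotone convergence theorem yields $\mathcal E(\tau u_m^\beta)\to\mathcal E(\tau u^\beta)$ in $L^1(Q)$. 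One then applies the stability Theorem \ref{sta} to the sequence of measure data $\{\mathcal E(\tau u_m^\beta)+\mu\}$ — decomposing each into its regular and singular parts and checking the convergence hypotheses, which hold because $\mathcal E(\tau u_m^\beta)\to\mathcal E(\tau u^\beta)$ strongly in $L^1(Q)$ (this is the $f_n$-part), the singular part $\mu_s$ is fixed (approximated narrowly by a nondecreasing cutoff sequence $\phi_n\mu_s$), and the $g_n,h_n$ parts are chosen via Proposition \ref{P5} — to conclude that $u$ is a R-solution of (\ref{pro2}). The bound (\ref{1334}) passes to the limit, with $c=2\beta_pc$ in the notation of Lemma \ref{12047}. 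The main obstacle, and the step I expect to require the most care, is establishing the exponential Wolff-potential self-improvement estimate that drives the induction, i.e. controlling $\mathbf W_{1,p}^{2\mathrm{diam}\Omega}[\mathcal E(\tau(\mathbf W_{1,p}^{2\mathrm{diam}\Omega}[\omega])^\beta)]$ by a small multiple of $\mathbf W_{1,p}^{2\mathrm{diam}\Omega}[\omega]$; this is precisely where the smallness of $M_0$ enters and where one must invoke (or adapt) the quantitative estimates of \cite{BiNQVe}, and the algebra of splitting the exponential and its polynomial tail $\sum_{j<l}s^j/j!$ (using $l\beta>p-1$ to keep the tail in the scope of Theorem \ref{12042}) is the delicate bookkeeping part.
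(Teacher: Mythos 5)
Your proposal follows the same overall architecture as the paper: construct the nondecreasing sequence $\{u_m\}$ via Corollaries \ref{051120131} and \ref{TH5}, bound $\overline u_m=\sup_{t\in(0,T)}u_m(t)$ by $\kappa\mathbf W_{1,p}^{2\mathrm{diam}\Omega}[\mathcal E(\tau\overline u_{m-1}^{\beta})+\omega]+\|u_0\|_{\infty,\Omega}$, obtain a uniform bound of the form (\ref{1334}), and pass to the limit with Theorem \ref{sta}. The one real difference is how the uniform bound is obtained: you propose to re-derive the exponential self-improvement estimate by hand from Proposition \ref{majex}, Theorem \ref{12042} and convexity splittings (the step you rightly identify as the delicate one), whereas the paper simply invokes Theorem \ref{TH3}, quoted from \cite[Theorem 2.5]{NQVe}, which packages exactly this induction: under smallness of $\|\mathbf M_{p,2\mathrm{diam}\Omega}^{(p-1)(\beta-1)/\beta}[\omega]\|_{\infty}$ and of $b$, any sequence satisfying $v_{m+1}\leqq K\mathbf W_{1,p}^{2\mathrm{diam}\Omega}[\mathcal E(\tau v_m^{\beta})+\omega]+b$ stays below $Kc_p\mathbf W_{1,p}^{2\mathrm{diam}\Omega}[\omega]+2b_0$, and moreover $\exp(\tau(Kc_p\mathbf W_{1,p}^{2\mathrm{diam}\Omega}[\omega]+2b_0)^{\beta})\in L^1(\Omega)$. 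So your plan is workable but amounts to reproving that result; citing it shortens the argument considerably and is what fixes $b_0,M_0$. One small correction: your first suggestion for the $L^1$ bound on the nonlinear term, namely testing with $\varepsilon^{-1}T_\varepsilon(u_m)$ "exactly as in Theorem \ref{new}(i)" to get $\int_Q\mathcal E(\tau u_m^{\beta})\leqq\mu(Q)+\|u_0\|_{1,\Omega}$, does not work here: that argument uses the sign condition $\mathcal G(u)u\geqq0$ for an absorption term on the left-hand side, while here $\mathcal E(\tau u_m^{\beta})$ is a source on the right-hand side and cannot be controlled by the data in that way. The correct route is the alternative you mention, and the one the paper uses: the uniform pointwise bound (\ref{1334}) together with the exponential integrability of the Wolff potential (part of the conclusion of Theorem \ref{TH3}) gives an integrable majorant, so $\mathcal E(\tau u_m^{\beta})\to\mathcal E(\tau u^{\beta})$ in $L^1(Q)$ by dominated (or monotone) convergence, after which Theorem \ref{sta} applied to the data $\mathcal E(\tau u_m^{\beta})+\mu$ concludes the proof.
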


For the proof we first recall an approximation property, which is a
consequence of \cite[Theorem 2.5]{NQVe}:

\begin{theorem}
\label{TH3} Let $\tau>0$, $b\geqq0$, $K>0$, $l\in\mathbb{N}$ and $\beta\geqq1$
such that $l\beta>p-1$. Let $\mathcal{E}$ be defined by (\ref{ess}). Let
$\{v_{m}\}$ be a sequence of nonnegative functions in $\Omega$ such that, for
some $K>0,$
\[
v{_{1}}\leqq K\mathbf{W}_{1,p}^{2\mathrm{diam}\Omega}[\mu]+b,\qquad v{_{m+1}%
}\leqq K\mathbf{W}_{1,p}^{2\mathrm{diam}\Omega}[\mathcal{E}(\tau u_{m}^{\beta
})+\mu]+b,\quad\forall m\geqq1.
\]
Then, there exist $b_{0}$ and $M_{0},$ depending on $N,p,\beta,\tau,l,K$ and
$\mathrm{diam}\Omega$ such that if $b\leqq b_{0}$ and
\begin{equation}
||\mathbf{M}_{p,{2\mathrm{diam}\Omega}}^{\frac{{(p-1)(\beta-1)}}{\beta}}%
[\mu]|{|_{\infty,{\mathbb{R}^{N}}}}\leqq M_{0}, \label{mai}%
\end{equation}
then, setting $c_{p}=2{{{{{\max({1,}2{{^{\frac{{2-p}}{{p-1}}}),}}}}}}}$%
\[
{\exp(\tau{{{({Kc_{p}\mathbf{W}_{1,p}^{2\mathrm{diam}\Omega}[\mu]+2{b_{0})}}%
}^{\beta}})}\in L^{1}(\Omega),}%
\]%
\begin{equation}
{v_{m}}\leqq Kc_{p}W_{1,p}^{2\mathrm{diam}\Omega}[\mu]+2{b_{0}},\quad\forall
m\geqq1.\, \label{1232}%
\end{equation}

\end{theorem}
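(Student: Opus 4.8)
The plan is to establish \eqref{1232} by induction on $m$, obtaining the two $L^{1}$ exponential--integrability statements as by-products, and to isolate the only hard estimate -- a bound for the W\"olff potential of an exponential of a W\"olff potential -- which is exactly \cite[Theorem 2.5]{NQVe}, the exponential counterpart of Theorem \ref{12042} used in Lemma \ref{12047}. Throughout, all constants are allowed to depend on $N,p,\beta,\tau,l,K$ and $\mathrm{diam}\,\Omega$, and the thresholds $b_{0},M_{0}$ will be fixed once and for all \emph{before} the induction is run, so that every inequality below is uniform in $m$. I will use three elementary facts: (a) since $\mathcal{E}(s)=\sum_{j\geqq l}s^{j}/j!$, one has $0\leqq\mathcal{E}(s)\leqq(s^{l}/l!)e^{s}\leqq C_{\varepsilon}e^{(1+\varepsilon)s}$ for $s\geqq0$ and any $\varepsilon>0$ (the last step by $s^{l}\leqq C_{\varepsilon}e^{\varepsilon s}$); (b) for $a,b\geqq0$, $\beta\geqq1$ and $\varepsilon>0$ there is $C_{\varepsilon,\beta}$ with $(a+b)^{\beta}\leqq(1+\varepsilon)a^{\beta}+C_{\varepsilon,\beta}b^{\beta}$ (split according as $b\leqq\delta a$ or not); (c) the two--term subadditivity $\mathbf{W}_{1,p}^{R}[A+B]\leqq\tfrac{c_{p}}{2}\bigl(\mathbf{W}_{1,p}^{R}[A]+\mathbf{W}_{1,p}^{R}[B]\bigr)$, with $c_{p}/2=\max(1,2^{(2-p)/(p-1)})$, which follows from $(x+y)^{1/(p-1)}\leqq\max(1,2^{(2-p)/(p-1)})(x^{1/(p-1)}+y^{1/(p-1)})$ applied under the defining integral.

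For $m=1$, the hypothesis on $v_{1}$ together with $c_{p}\geqq1$ and $b\leqq b_{0}$ gives \eqref{1232}. Assume now $v_{m}\leqq Kc_{p}\mathbf{W}_{1,p}^{2\mathrm{diam}\,\Omega}[\mu]+2b_{0}$. Applying (a) and (b) with a parameter $\varepsilon$ (to be fixed) yields, a.e. in $\Omega$,
\[
\mathcal{E}(\tau v_{m}^{\beta})\leqq C_{1}\exp\!\Bigl((1+\varepsilon)^{2}\tau(Kc_{p})^{\beta}\,(\mathbf{W}_{1,p}^{2\mathrm{diam}\,\Omega}[\mu])^{\beta}\Bigr),\qquad C_{1}=C_{1}(\varepsilon,\beta,l,\tau,K,b_{0}),
\]
with $C_{1}$ bounded as $b_{0}\to0$. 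Since $\tfrac{p-1}{\beta'}=\tfrac{(p-1)(\beta-1)}{\beta}$, after fixing $\varepsilon$ small and then choosing $M_{0}$ so small that $(1+\varepsilon)^{2}\tau(Kc_{p})^{\beta}M_{0}^{\beta/(p-1)}<\delta_{0}$ with $\delta_{0}=((12\beta)^{-1})^{\beta}p\ln2$, Proposition \ref{majex} (applied with $\nu=\mu$; for $\beta=1$ its analogue) shows the right-hand side lies in $L^{1}(\Omega)$ with norm bounded independently of $m$; running the same computation on the fixed majorant $Kc_{p}\mathbf{W}_{1,p}^{2\mathrm{diam}\,\Omega}[\mu]+2b_{0}$ gives the stated $\exp\bigl(\tau(Kc_{p}\mathbf{W}_{1,p}^{2\mathrm{diam}\,\Omega}[\mu]+2b_{0})^{\beta}\bigr)\in L^{1}(\Omega)$. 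In particular $\mathcal{E}(\tau v_{m}^{\beta})\in L^{1}(\Omega)$, so $\mathbf{W}_{1,p}^{2\mathrm{diam}\,\Omega}[\mathcal{E}(\tau v_{m}^{\beta})+\mu]$ is well defined.

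The decisive step -- and the one I expect to be the main obstacle -- is to control $\mathbf{W}_{1,p}^{2\mathrm{diam}\,\Omega}[\mathcal{E}(\tau v_{m}^{\beta})]$. For this I invoke \cite[Theorem 2.5]{NQVe}: under the smallness \eqref{mai}, and possibly after shrinking $b_{0}$, it gives
\[
\mathbf{W}_{1,p}^{2\mathrm{diam}\,\Omega}[\mathcal{E}(\tau v_{m}^{\beta})]\leqq\mathbf{W}_{1,p}^{2\mathrm{diam}\,\Omega}[\mu]+\frac{2b_{0}}{Kc_{p}}\qquad\text{whenever }0\leqq v_{m}\leqq Kc_{p}\mathbf{W}_{1,p}^{2\mathrm{diam}\,\Omega}[\mu]+2b_{0}
\]
a.e. in $\Omega$; morally this comes from expanding the exponential as $\sum_{j}c^{j}(\mathbf{W}_{1,p}^{2\mathrm{diam}\,\Omega}[\mu])^{\beta j}/j!$ and summing the bounds of Theorem \ref{12042} with $q=\beta j$, the $\mathbf{M}$--condition being exactly what makes that series converge with a small coefficient in front of $\mathbf{W}_{1,p}^{2\mathrm{diam}\,\Omega}[\mu]$, while the $b_{0}$--part of the remainder goes to $0$ with $b_{0}$. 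Feeding this and (c) into the recursion,
\[
v_{m+1}\leqq K\mathbf{W}_{1,p}^{2\mathrm{diam}\,\Omega}[\mathcal{E}(\tau v_{m}^{\beta})+\mu]+b\leqq\frac{Kc_{p}}{2}\Bigl(2\,\mathbf{W}_{1,p}^{2\mathrm{diam}\,\Omega}[\mu]+\frac{2b_{0}}{Kc_{p}}\Bigr)+b=Kc_{p}\mathbf{W}_{1,p}^{2\mathrm{diam}\,\Omega}[\mu]+b_{0}+b\leqq Kc_{p}\mathbf{W}_{1,p}^{2\mathrm{diam}\,\Omega}[\mu]+2b_{0},
\]
which closes the induction and proves \eqref{1232}; the $L^{1}$ statements were already obtained above. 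The only delicate point is the order in which the parameters are fixed -- first $\varepsilon$, then $M_{0}$ (small enough both for Proposition \ref{majex} and for the coefficient in the displayed bound from \cite[Theorem 2.5]{NQVe}), and finally $b_{0}$ to absorb the $b_{0}$--dependent remainder, noting that $C_{1}$ and that remainder stay bounded / tend to $0$ as $b_{0}\to0$ -- so that the resulting $b_{0},M_{0}$ depend only on the admissible parameters and the estimate is genuinely uniform in $m$.
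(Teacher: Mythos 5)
The paper does not actually prove Theorem \ref{TH3}: it is recalled as an approximation property that ``is a consequence of \cite[Theorem 2.5]{NQVe}'', and no argument is given, so there is no in-paper proof to compare yours against. Your induction scaffolding is sound and is the exact analogue of the proof of Lemma \ref{12047}: the base case, the two-term subadditivity of the W\"{o}lff potential with constant $c_p/2=\max(1,2^{(2-p)/(p-1)})$, the use of Proposition \ref{majex} for the exponential integrability once \eqref{1232} is available, and the closing of the induction are all correct bookkeeping.

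The genuine gap is precisely the step you flag as decisive: the single-step bound $\mathbf{W}_{1,p}^{2\mathrm{diam}\Omega}\bigl[\mathcal{E}\bigl(\tau(Kc_p\mathbf{W}_{1,p}^{2\mathrm{diam}\Omega}[\mu]+2b_0)^{\beta}\bigr)\bigr]\leqq \mathbf{W}_{1,p}^{2\mathrm{diam}\Omega}[\mu]+2b_0/(Kc_p)$. You attribute it to \cite[Theorem 2.5]{NQVe}, but the phrasing of the present paper strongly suggests that that theorem is (essentially) the iteration statement \eqref{1232} itself, which is exactly why the authors cite it for the whole of Theorem \ref{TH3}; invoking it for the inductive step and then re-deriving \eqref{1232} from it is circular, and if instead it really were the single-step bound your proof would merely be at the same citation-level of rigor as the paper's. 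More importantly, the ``moral'' derivation you offer --- expand $\mathcal{E}$ and sum the power-case bound of Theorem \ref{12042} over $q=\beta j$ --- does not close: the constant $M=M(N,p,q,\mathrm{diam}\Omega)$ in (\ref{12044}) depends on $q$ and nothing stated controls its growth in $j$; the hypothesis of Theorem \ref{12042} is the capacity condition (\ref{cont}), which would have to be deduced from the single maximal-function condition (\ref{mai}) for every $q=\beta j$ simultaneously, with summable constants $\lambda_j$; and for $1<p<N$ with $p<2$ the map $\nu\mapsto\mathbf{W}_{1,p}^{R}[\nu]$ is only finitely subadditive, with a constant that compounds over the infinitely many terms of the series. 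Establishing this one inequality is the entire analytic content of the theorem, and it is not supplied by your proposal.
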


\begin{proof}
[Proof of Theorem \ref{MTH1}]From Corollary \ref{051120131} and \ref{TH5} we
can construct a sequence of nonnegative nondecreasing R-solutions
$\{u_{m}\}_{m\geqq1}$ defined in the following way: $u_{1}$ is a R-solution of
problem (\ref{pmu}), and by induction, $u_{m+1}$ is a R-solution of
\[
\left\{
\begin{array}
[c]{l}%
(u_{m+1})_{t}-\text{div}(A(x,\nabla u_{m+1}))=\mathcal{E}(\tau u_{m}^{\beta
})+\mu\qquad\text{in }Q,\\
u_{m+1}=0\qquad\text{on }\partial\Omega\times(0,T),\\
u_{m+1}(0)=u_{0}\qquad\text{in }\Omega.
\end{array}
\right.
\]
And, setting $\overline{u}_{m}=\sup_{t\in(0,T)}u_{m}(t),$ there holds
\[
{\overline{u}_{1}}\leqq{\kappa}W_{1,p}^{2\mathrm{diam}\Omega}[\omega
]+||u_{0}||_{\infty,\Omega},\qquad{\overline{u}_{m+1}}\leqq\kappa
W_{1,p}^{2\mathrm{diam}\Omega}[\mathcal{E}(\tau\overline{u}_{m}^{\beta
})+\omega]+||u_{0}||_{\infty,\Omega},\quad\quad\forall m\geqq1.
\]
Thus, from Theorem \ref{TH3}, there exist $b_{0}\in(0,1]$ and $M_{0}>0$
depending on $N,p,\beta,\tau,l$ and $\mathrm{diam}\Omega$ such that, if
(\ref{mai}) holds, then (\ref{1232}) is satisfied with $v_{m}=\overline{u}%
_{m}$. As a consequence, $u_{m}$ is well defined. Thus, $\left\{
u_{m}\right\}  $ converges $a.e.$ in $Q$ to some function $u,$ for which
(\ref{1334}) is satisfied in $\Omega$. Furthermore, $\left\{  \mathcal{E}(\tau
u_{m}^{\beta})\right\}  $ converges to $\mathcal{E}(\tau u^{\beta})$ in
$L^{1}(Q)$. Finally, one can apply Theorem \ref{sta} to the sequence of
measures $\left\{  \mathcal{E}(\tau u_{m}^{\beta})+\mu\right\}  ,$ and obtain
that $u$ is a R-solution of (\ref{pro2}).
\end{proof}

\section{Appendix}

\begin{proof}
[Proof of Lemma \ref{integ}]Let $\mathcal{J}{\ }$be defined by (\ref{lam}).
Let $\zeta\in C_{c}^{1}([0,T))$ with values in $[0,1],$ such that $\zeta
_{t}\leqq0$, and $\varphi=\zeta\xi\lbrack j{(S(v))}]_{l}$. Clearly,
$\varphi\in X\cap L^{\infty}(Q)$; we choose the pair of functions
$(\varphi,S)$ as test function in (\ref{renor}). Thanks to convergence
properties of Steklov time-averages, we easily will obtain (\ref{parti}) if we
prove that
\[
\lim_{\overline{l\rightarrow0,\zeta\rightarrow1}}(-\int_{Q}{{{\left(
{\zeta\xi{{\left[  {j(S(v))}\right]  }_{l}}}\right)  }_{t}}S(v))}\geqq
-\int_{Q}{{\xi_{t}}J(S(v)).}%
\]
We can write $-\int_{Q}{{{\left(  {\zeta\xi{{\left[  {j(S(v))}\right]  }_{l}}%
}\right)  }_{t}}S(v)}=F+G,$ with
\[
F=-\int_{Q}{(\zeta\xi)_{t}}{{\left[  {j(S(v))}\right]  }_{l}}S(v),\qquad
G=-\int_{Q}{\zeta\xi S(v)\frac{1}{l}\left(  {j(S(v))(x,t+l)-j(S(v))(x,t)}%
\right)  .}%
\]
\newline Using (\ref{222}) and integrating by parts we have
\begin{align*}
G  &  \geqq-\int_{Q}{\zeta\xi\frac{1}{l}\left(  \mathcal{J}{(S(v))(x,t+l)-}%
\mathcal{J}{(S(v))(x,t)}\right)  }\\
&  =-\int_{Q}{\zeta\xi\frac{\partial}{{\partial t}}\left(  {{{\left[
\mathcal{J}{(S(v))}\right]  }_{l}}}\right)  }=\int_{Q}{{(\zeta\xi)_{t}%
}{{\left[  \mathcal{J}{(S(v))}\right]  }_{l}}}+\int_{\Omega}{\zeta(0)\xi
(0)}{\left[  \mathcal{J}{(S(v))}\right]  _{l}}(0)\\
&  \geqq\int_{Q}{{(\zeta\xi)_{t}}{{\left[  \mathcal{J}{(S(v))}\right]  }_{l}}%
},
\end{align*}
since $\mathcal{J}{(S(v))}$ $\geqq0.$ Hence,
\[
-\int_{Q}{{{\left(  {\zeta\xi{{\left[  {j(S(v))}\right]  }_{l}}}\right)  }%
_{t}}S(v)}\geqq\int_{Q}{{(\zeta\xi)_{t}}{{\left[  \mathcal{J}{(S(v))}\right]
}_{l}}}+F=\int_{Q}{{(\zeta\xi)_{t}}\left(  {{{\left[  \mathcal{J}%
{(S(v))}\right]  }_{l}}-{{\left[  J{(S(v))}\right]  }_{l}}S(v)}\right)  }%
\]
Otherwise, $\mathcal{J}(S(v))$ and $J(S(v)\in C(\left[  0,T\right]  {;L}%
^{1}{(\Omega))}$, thus $\left\{  {(\zeta\xi)_{t}}\left(  {{{\left[
\mathcal{J}{(S(u))}\right]  }_{l}}-{{\left[  J{(S(u))}\right]  }_{l}}%
S(u)}\right)  \right\}  $ converges to $-{(\zeta\xi)_{t}}J(S(u))$ in $L^{1}%
(${$Q$}$)$ as $l\rightarrow0$. Therefore,
\begin{align*}
\lim_{\overline{l\rightarrow0,\zeta\rightarrow1}}({-}\int_{Q}{{{{\left(
{\zeta\xi{{[J(S(v))]}_{l}}}\right)  }_{t}}S(v))}}  &  \geqq\lim_{\overline
{\zeta\rightarrow1}}\left(  {-}\int_{Q}{{{{\left(  {\zeta\xi}\right)  }_{t}%
}J(S(v))}}\right) \\
&  \geqq\lim_{\overline{\zeta\rightarrow1}}\left(  {-}\int_{Q}{{\zeta\xi
_{t}J(S(v))}}\right)  =-\int_{Q}{{\xi_{t}J}(S(v))},
\end{align*}
which achieves the proof.
\end{proof}

\end{document}